\theoremstyle{definition}
\newtheorem{thm}{Theorem}[section]
\newtheorem{defn}[thm]{Definition}
\newtheorem{lem}[thm]{Lemma}
\newtheorem{prop}[thm]{Proposition}
\newtheorem{cor}[thm]{Corollary}
\newtheorem{rem}[thm]{Remark}
\newtheorem{question}{Question}
\newtheorem*{defn*}{Definition}
\newtheorem*{thm*}{Theorem}
\newtheorem*{cor*}{Corollary}
\newtheorem*{prp*}{Proposition}
\newtheorem{problem}{Problem}
\newtheorem*{ntt}{Notation}
\newtheorem{thmA}{Theorem}
\newtheorem{corA}[thmA]{Corollary}
\newcommand{\inn}{\in\mathbb{N}}
\newcommand{\de}{\delta}
\newcommand{\la}{\lambda}
\newcommand{\Sn}{\mathcal{S}_n}
\newcommand{\e}{\varepsilon}
\newcommand{\N}{\mathbb{N}}
\newcommand{\X}{X_{\mathbf{iw}}}
\newcommand{\W}{W_\mathbf{iw}}
\newcommand{\Xt}{\widetilde X_{\mathbf{iw}}}
\newcommand{\Wt}{\widetilde W_\mathbf{iw}}
\newcommand{\cose}{\makebox[-1.6pt][s]{\raisebox{-0.03ex}{$\not$}}\makebox[0.6pt][s]{\raisebox{0.2ex}{$\Rightarrow$}}\makebox[9pt][s]{\raisebox{-0.4ex}{\Tiny$\hookleftarrow$}}\;}
\newcommand{\iii}[1]{{\left\vert\kern-0.25ex\left\vert\kern-0.25ex\left\vert #1 
    \right\vert\kern-0.25ex\right\vert\kern-0.25ex\right\vert}}
\newcommand\restr[2]{{% we make the whole thing an ordinary symbol
  \left.\kern-\nulldelimiterspace % automatically resize the bar with \right
  #1 % the function
  \vphantom{\big|} % pretend it's a little taller at normal size
  \right|_{#2} % this is the delimiter
  }}
\DeclareMathOperator{\supp}{supp}
\DeclareMathOperator{\ran}{ran}
\long\def\symbolfootnote[#1]#2{\begingroup%
\def\thefootnote{\fnsymbol{footnote}}\footnote[#1]{#2}\endgroup}
\begin{document}

\title[Complete separation of asymptotic structures]{On the complete separation of asymptotic structures in Banach spaces}
%\dedicatory{In memory of Edward Odell}

\author[S. A. Argyros]{Spiros A. Argyros}
\address{National Technical University of Athens, Faculty of Applied Sciences,
Department of Mathematics, Zografou Campus, 157 80, Athens, Greece}
\email{sargyros@math.ntua.gr}

\author[P. Motakis]{Pavlos Motakis}
\address{Department of Mathematics, University of Illinois at Urbana-Champaign, Urbana, IL 61801, U.S.A.}
\email{pmotakis@illinois.edu}

%\symbolfootnote[0]{This research was supported by program API$\Sigma$TEIA-1082.}

\thanks{{\em 2010 Mathematics Subject Classification:} Primary 46B03, 46B06, 46B25, 46B45.}
\thanks{The second named author
  was  supported by the National Science Foundation under Grant Numbers
  DMS-1600600 and DMS-1912897.}

\begin{abstract}
Let $(e_i)_i$ denote the unit vector basis of $\ell_p$, $1\leq p< \infty$, or $c_0$. We construct a reflexive Banach space with an unconditional basis that admits $(e_i)_i$ as a uniformly unique spreading model while it has no subspace with a unique asymptotic model, and hence it has no asymptotic-$\ell_p$ or $c_0$ subspace. This solves a problem of E. Odell. We also construct a space with a unique $\ell_1$ spreading model and no subspace with a uniformly unique $\ell_1$ spreading model. These results are achieved with the utilization of a new version of the method of saturation under constraints that uses sequences of functionals with increasing weights.
\begin{comment}
We construct two reflexive Banach spaces with unconditional bases. The first space $X_\mathrm{iw}$ has a unique $\ell_1$ spreading model with a uniform constant 4 and does not contain any asymptotic-$\ell_p$ subspaces. This answers negatively a question of E. Odell. The second space $\widetilde X_\mathrm{iw}$ has a unique $\ell_1$ spreading model however in every subspace of $\widetilde X_\mathrm{iw}$ there exist sequences generating a spreading model with arbitrarily large equivalence constant to $\ell_1$. The definitions of these spaces are very similar and it uses a version of saturation under constraints where the constraints are applied to the weights of functionals.
\end{comment}
\end{abstract}

\maketitle

\setcounter{tocdepth}{1}
\tableofcontents

\section{Introduction}
The study of asymptotic properties lies at the heart of Banach space theory. It is intertwined with other central notions of Banach spaces, e.g., distortion, bounded linear operators, and metric embeddings. There exists a wide plethora of examples that demonstrate deep connections between each of the aforementioned topics and asymptotic properties. A Banach space that is boundedly distortable must contain an asymptotic-$\ell_p$ subspace \cite{MT}, properties of spreading models can be manipulated to construct reflexive Banach spaces on which every bounded linear operator has a non-trivial closed invariant subspace \cite{AM1}, and reflexive asymptotic-$c_0$ spaces provide the first known class of Banach spaces into which there is no coarse embedding of the Hilbert space \cite{BLS}. There exists plenty of motivation to further understand asymptotic notions and to work on problems in the theory defined by them. It is highly likely that such understanding may play a crucial role in solving open problems in other branches of the theory.
%Two such problems are whether Tsirelson space, the archetypal example of a non-trivial asymptotic $\ell_1$-space, is arbitrarily distortable and
%Such a problem is whether there exists a reflexive Banach space with the scalar-plus-compact property.

One of the main goals of this article is to answer an old open problem regarding the relationship between spreading models and asymptotic-$\ell_p$ spaces: if $X$ admits a unique spreading model with a uniform constant, must $X$ contain an asymptotic-$\ell_p$ subspace? It was first formulated by E. Odell in \cite{O1} and it was reiterated in \cite{O} as well as in \cite{JKO}. We construct a Banach space $\X$ that serves as a counterexample to this question. At the same time it reveals information regarding the relationship between asymptotic properties at a deeper level than the one suggested by the question of Odell. A property (P) of Banach spaces is called hereditary if whenever $X$ has (P) then all of its infinite dimensional closed subspaces have (P) as well. We discuss two degrees in which two asymptotic, and more generally hereditary, properties of Banach spaces can be distinct. 
\begin{defn*}
\label{property separation}

Let (P) and (Q) be two hereditary properties of Banach spaces and assume that (P) implies (Q).

\begin{itemize}

\item[(i)] If (Q)$\not\Rightarrow$(P), i.e., there exists a Banach space $X$ satisfying (Q) and failing (P) then we say that (P) is separated from (Q).

\item[(ii)] If there exists a Banach space $X$ satisfying (Q) and every infinite dimensional closed subspace $Y$ of $X$ fails (P) then we way that (P) is completely separated from (Q) and write (Q) \cose (P).

\end{itemize}
\end{defn*}

For example, if (P) is super-reflexivity and (Q) is reflexivity then (Q) \cose (P). Indeed, Tsirelson space from \cite{T} is reflexive, yet it contains no super-reflexive subspaces. In this paper we mainly consider properties that are classified into the following three categories: the sequential asymptotic properties, the array asymptotic properties, and the global asymptotic properties. For expository purposes in this introduction we shall only consider reflexive Banach spaces with a basis and block sequences of vectors, although these are in general not necessary restrictions. More details on this can be found in Section \ref{asymptotic structures section}. 

Sequential asymptotic properties are related to the spreading models generated by sequences in a space. Recall that a spreading model is a concept that describes the asymptotic behavior of a single sequence $(x_j)_j$ in a Banach space. It was introduced in \cite{BS} and it has been an integral part of Banach space theory ever since. We say that a Banach space has a unique block spreading model if any two spreading models generated by normalized block sequences in $X$ are equivalent and we say that $X$ has a uniformly unique block spreading model if the same as before holds with the additional assumption that the equivalence occurs for a uniform $C$. By the proof of Krivine's theorem from \cite{K}, uniform uniqueness of a spreading model implies that it has to be equivalent to the unit vector basis of $\ell_p$, for some $1\leq p<\infty$, or $c_0$.

The array asymptotic properties concern the asymptotic behavior of arrays of sequences $(x_j^{(i)})_j$, $i\in\N$, in a space. Two tools used for this purpose are the asymptotic models and the joint spreading models introduced in \cite{HO} and \cite{AGLM} respectively. Uniqueness of these notions is defined in a similar manner to uniform uniqueness of spreading models. They were used in \cite{BLMS} to show that the class of reflexive asymptotic-$c_0$ Banach spaces is coarsely rigid and in \cite{AGLM} to show that whenever a Banach space has a unique joint spreading model then it satisfies a property concerning its space of bounded linear operators, called the UALS. Although asymptotic models and joint spreading models are not identical they are strongly related. A Banach space has a unique block asymptotic model if and only if it has a unique block joint spreading model and then it has to be equivalent to the unit vector basis of $\ell_p$, for some $1\leq p<\infty$, or $c_0$. Another concept related to array asymptotic properties is that of asymptotically symmetric spaces from \cite{JKO}.

Global asymptotic properties, roughly speaking, describe the behavior of finite block sequences $(x_i)_{i=1}^n$ that are chosen sufficiently far apart in a space $X$ with a basis. If these exist $C\geq 1$ so that for all $n\in\N$, all normalized block sequences $(x_i)_{i=1}^n$ with support after $n$, are $C$-equivalent to one another, then it follows that they all have to be uniformly equivalent to the unit vector basis of $\ell_p^n$, for some $1\leq p\leq \infty$ and we say that $X$ is an asymptotic-$\ell_p$ space (or an asymptotic-$c_0$ space if $p=\infty$). This concept was introduced in  \cite{MT} and it was generalized in \cite{MMT} to a coordinate free version for spaces with or without a basis. Given a Banach space $X$ with a basis we will mainly focus on the properties in the following list. Here, $1\leq p\leq \infty$ and whenever $p=\infty$ then $\ell_p$ should be replaced with $c_0$.

\begin{itemize}
\item[(a)$_p$] The space $X$ is asymptotic-$\ell_p$.
\item[(b)$_p$] The space $X$ admits a unique $\ell_p$  block asymptotic model.% (and hence also joint spreading model).
\item[(c)$_p$] The space $X$ admits a uniformly unique  $\ell_p$ block spreading model.
\item[(d)$_p$] The space $X$ admits a unique  $\ell_p$ block spreading model.
\end{itemize}
Given the precise definitions, which will be provided in Section \ref{asymptotic structures section}, the following implications are fairly straightforward for all $1\leq p\leq \infty$: (a)$_p\Rightarrow$ (b)$_p\Rightarrow$ (c)$_p\Rightarrow$ (d)$_p$. Whether the corresponding converse implications hold depends on $p$.  In the case $1\leq p<\infty$ none of them is true: (d)$_p\not\Rightarrow$ (c)$_p$, $1\leq p<\infty$ is easy whereas (c)$_p\not\Rightarrow$ (b)$_p$, $1\leq p<\infty$ and (b)$_p\not\Rightarrow$ (a)$_p$, $1<p<\infty$  were shown in \cite{BLMS}. It was also shown in that paper that (c)$_\infty\not\Rightarrow$ (b)$_\infty$ and in \cite{AGM} it was shown that (b)$_1\not\Rightarrow$ (a)$_1$. However, it was proved in \cite{AOST} that (c)$_\infty\Leftrightarrow$ (d)$_\infty$ and a remarkable recent result from \cite{FOSZ} states that (b)$_\infty\Leftrightarrow$ (a)$_\infty$. This last result requires the coordinate free definition of asymptotic-$\ell_p$ from \cite{MMT}.

The problem of Odell that was mentioned earlier in the introduction can be formulated in the language of this paper as follows: is there $1\leq p\leq \infty$ so that (c)$_p$ \cose (a)$_p$? We actually prove something deeper, namely that (c)$_p$ \cose (b)$_p$ for all $1\leq p\leq \infty$. We also prove (d)$_1$ \cose (c)$_1$, although the same argument works for $1<p<\infty$ (as it was mentioned earlier (c)$_\infty\Leftrightarrow$ (d)$_\infty$). To achieve these results we present three constructions of Banach spaces. Let us describe the properties of these spaces one by one and later give an outline of how they are defined. The first construction yields (c)$_1$ \cose (b)$_1$.

\begin{thmA}
\label{ell1 case main theorem}

There exists a reflexive Banach space $\X$  that has a $1$-unconditional basis and the following properties:

\begin{itemize}

\item[(i)] every normalized weakly null sequence in $\X$ has a subsequence that generates a spreading model that is $4$-equivalent to the unit vector basis of $\ell_1$.

\item[(ii)] every infinite dimensional subspace of $\X$ contains an array of normalized weakly null sequences that generate the unit vector basis of $c_0$ as an asymptotic model.

\end{itemize}
That is, (c)$_1$ \cose (b)$_1$ and in particular (c)$_1$ \cose (a)$_1$. Additionally,

\begin{itemize}

\item[(iii)] every finite dimensional Banach space with a $1$-unconditional basis is finitely block representable in every block subspace of $\X$. More precisely, it is an asymptotic space of every infinite dimensional subspace of $\X$.

\end{itemize}

\end{thmA}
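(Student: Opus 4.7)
My plan is to build $\X$ as a mixed-Tsirelson-style space whose norming set $\W$ is constructed by a saturation under constraints, the novel constraint being that combinations of non-coordinate functionals must come with strictly increasing weights. Concretely, fix rapidly growing sequences $(m_j)_{j\in\N}$ and $(n_j)_{j\in\N}$ of positive integers with $m_1=2$ and $n_1=1$. Define $\W\subset c_{00}(\N)$ to be the smallest subset containing $\pm e_i^*$ (of weight $0$), closed under sign changes and interval restrictions, and subject to the rule: whenever $\phi_1<\cdots<\phi_d$ is an $\mathcal{S}_{n_j}$-admissible block sequence of elements of $\W$ and their nonzero weights are strictly increasing, then $\frac{1}{m_j}\sum_{k=1}^{d}\phi_k\in \W$ is a functional of weight $m_j$. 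Let $\X$ be the completion of $c_{00}(\N)$ in the norm $\|x\|=\sup\{f(x):f\in \W\}$. The basis $(e_i)$ is automatically $1$-unconditional.

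For item (i), given a normalized weakly null sequence $(x_n)_n$, pass to an essentially block subsequence and, for any $d\in\N$, select $x_{k_1}<\cdots<x_{k_d}$ with $\{k_1,\dots,k_d\}\in\mathcal{S}_1$. Pick functionals $\phi_i\in \W$ with $\supp\phi_i\subset\ran x_{k_i}$ and $\phi_i(x_{k_i})\geq 1/2$, arranging by a further thinning that their nonzero weights strictly increase along $i$ (nonzero weights are unbounded on tails of the basis by iterating the operation). Then $\frac{1}{2}\sum_{i=1}^d\phi_i\in \W$, giving $\|\sum_{i=1}^d a_i x_{k_i}\|\geq \tfrac{1}{4}\sum_{i=1}^d |a_i|$ for all scalars. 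The upper $\ell_1$ bound with constant $4$ follows by a standard tree analysis: any $f\in \W$ decomposes as a weighted tree of functionals, and the $\mathcal{S}_1$-admissibility restricts how many $x_{k_i}$ can be ``hit'' significantly. This yields (i) with uniform constant $4$. Item (iii) comes from refining the saturation: for any $n$-dimensional $1$-unconditional $Y$, one realizes $\ell_\infty^n$ from normalized $(m_j,n_j)$-averages with distant weights in any block subspace, then builds $Y$ itself via admissible combinations of such $\ell_\infty^n$-blocks, exploiting the availability of arbitrarily many distinct nonzero weights.

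The main difficulty is item (ii). Given a block subspace $Y$, the goal is to produce an array $(y^{(i)}_j)_{i,j\in\N}$ of normalized block vectors in $Y$ whose asymptotic model is the $c_0$-basis. The construction takes $y^{(i)}_j$ to be normalized $(m_{k_i},n_{k_i})$-averages with $k_1<k_2<\cdots$ strictly increasing. For any fast-growing selection $j_1<j_2<\cdots$ and any $f\in \W$, a tree analysis shows that each node $g$ in the tree of $f$ with weight $m_j$ can contribute non-negligibly to at most one $y^{(i)}_{j_i}$ whose weight index $k_i$ is compatible with $m_j$; the increasing-weights rule then forces contributions from the other $y^{(\ell)}_{j_\ell}$ at that node to be damped by the small ratio $m_{k_i}/m_{k_\ell}$. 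Summing the damped contributions across the tree and invoking a basic inequality yields a uniform bound $\|\sum_{i=1}^n y^{(i)}_{j_i}\|\leq C$, independent of $n$. Since each row $(y^{(i)}_j)_j$ is an $\ell_1$-spreading-model sequence by (i), a diagonalization produces such an array in any prescribed subspace, proving (ii). Finally, $\X$ is reflexive: $1$-unconditionality reduces reflexivity to the absence of $c_0$ and $\ell_1$ subspaces; (i) rules out $c_0$ subspaces, and (ii) rules out $\ell_1$ subspaces, since $\ell_1$ has the Schur property whereas (ii) supplies normalized weakly null sequences inside every subspace.
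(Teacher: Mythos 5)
Your norming set is defined by closing only under the single operations $(1/m_j,\mathcal{S}_{n_j})$ applied to functionals with increasing weights. That is precisely the definition of $\Wt$ and the space $\Xt$ from Section~\ref{the other space} of the paper, not $\X$. The paper's $\W$ is closed under the much larger family of operations $(1/(m_{j_1}\cdots m_{j_l}),\mathcal{S}_{n_{j_1}+\cdots+n_{j_l}})$ for all finite tuples $(j_1,\ldots,j_l)$, and this is not a cosmetic enlargement: Proposition~\ref{bad ell1 spreading models all over the place} shows that every block subspace of $\Xt$ contains normalized block sequences whose spreading models are isometrically $(\ell_1,\|\cdot\|_{\ell_1,j_0})$ for arbitrarily large $j_0$, hence arbitrarily far from $4$-equivalence with $\ell_1$. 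So the space you have actually written down fails conclusion (i), i.e.\ it does not have a uniformly unique $\ell_1$ spreading model; it only satisfies the weaker property (d)$_1$.

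The gap surfaces concretely in your proof of (i). You take norming functionals $\phi_i$ for the $x_{k_i}$ and assert that ``by a further thinning'' one can arrange their nonzero weights to strictly increase. This is unjustified: after extraction the $\phi_i$ may well all have the same bounded weight $m_{j_1}\cdots m_{j_l}$, and no amount of thinning changes that. The paper's Proposition~\ref{uniform ell1} handles exactly this case by stripping off the initial term of each $\phi_i$ (losing a factor $1/2$) so that the remaining pieces are genuinely very fast growing, and then packaging the result with the \emph{product} operation $(1/(m_{j_1}\cdots m_{j_l} m_{j_0}),\mathcal{S}_{n_{j_1}+\cdots+n_{j_l}+n_{j_0}})$. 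Without the product operations this step cannot be carried out; with only $(1/m_j,\mathcal{S}_{n_j})$ available one is forced to pay an unbounded admissibility cost, which is exactly why $\Xt$'s spreading-model constant degrades (cf.\ Proposition~\ref{at least you have ell1 spreading model}, where the constant $1/m_j$ depends on a quantity $K_0$ attached to the sequence). Fixing this requires adopting the full $\W$ of Section~\ref{definition section}, and with it the auxiliary-space/basic-inequality machinery (Lemmas~\ref{basis on basis}, \ref{upper auxiliary}, Proposition~\ref{basic inequality}) in order to carry out the upper estimates in (ii); the heuristic tree analysis you sketch for (ii) is in the right spirit but presupposes that machinery.
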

The third property was first shown to be satisfied by a space constructed by Odell and Th. Schlumprecht in \cite{OS1}. It yields that the set $[1,\infty]$ is a stable Krivine set of $\X$, i.e., it is a Krivine set of every block subspace of $\X$. The second construction is a variation of the first one and it yields  (c)$_p$ \cose (b)$_p$, $1<p<\infty$.

\begin{thmA}
\label{ellp case main theorem}

For every $1<p<\infty$ there exists a reflexive Banach space with a $1$-unconditional basis that has the following properties.

\begin{itemize}

\item[(i)] Every normalized weakly null sequence in $\X^p$ has a subsequence that generates a spreading model that is $8$-equivalent to the unit vector basis of $\ell_p$.

\item[(ii)] Every infinite dimensional subspace of $\X^p$ contains an array of normalized weakly null sequences that generate the unit vector basis of $c_0$ as an asymptotic model.

\end{itemize}
That is, (c)$_p$ \cose (b)$_p$ and in particular (c)$_p$ \cose (a)$_p$. Additionally,

\begin{itemize}

\item[(iii)] For every $1\leq q \leq \infty$ and block subspace $Y$ of $\X^p$ the unit vector basis of $\ell_q$ is finitely block representable in $Y$ if and only if $p\leq q\leq\infty$. More precisely, for $p\leq q\leq \infty$ and $n
\in\N$, $\ell_q^n$ is an asymptotic space of every infinite dimensional subspace of $\X^p$.

\end{itemize}

\end{thmA}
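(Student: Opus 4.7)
The plan is to construct $\X^p$ as a $p$-convexified variant of the space $\X$ from Theorem~\ref{ell1 case main theorem}, retaining the same saturation-under-increasing-weights method that governs the $\ell_1$ case. Let $q$ be the conjugate exponent of $p$. I would define a norming set $W^p$ as the smallest subset of $c_{00}(\N)$ containing the coordinate functionals $\{\pm e_n^*\}$ and closed under operations
\[
f = \frac{1}{m_j}\sum_{i=1}^{d} a_i f_i,
\]
where $(f_i)_{i=1}^{d}$ is $\mathcal{S}_{n_j}$-admissible, $\|(a_i)\|_{\ell_q} \leq 1$, and the weights $w(f_i)$ obey the same increasing pattern prescribed for $W$. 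The space $\X^p$ is the completion of $c_{00}(\N)$ under the norm induced by $W^p$. That the basis is $1$-unconditional and the space reflexive is proved by the same arguments that work for $\X$, since the operation above preserves both features.

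For part~(i), the lower $\ell_p$ estimate on spreading models proceeds by the standard duality argument: given a normalized weakly null sequence $(x_k)_k$, I would pass to a block subsequence $(x_{k_n})_n$ and pick $f_n \in W^p$ with $f_n(x_{k_n}) \geq 1/2$ and appropriately increasing weights. For $(a_n) \in B_{\ell_p}$ one selects $(b_n) \in B_{\ell_q}$ with $\sum a_n b_n \geq \|(a_n)\|_{\ell_p}$, so that $\frac{1}{m_j} \sum_n b_n f_n \in W^p$ for a suitable index $j$ provides the lower bound. The upper bound with uniform constant $8$ comes from the basic-inequality machinery: each $f \in W^p$ is decomposed along its tree of constituents, the increasing-weights constraint caps the number of functionals of any fixed weight that may appear along a single branch, and the surplus must decay geometrically. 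This translates the problem into an auxiliary norm that admits an $\ell_p$-upper estimate, the passage from the constant $4$ to $8$ accommodating the extra factor incurred by $\ell_q$-averaging.

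For (ii) and (iii), in any infinite dimensional $Y \subseteq \X^p$ I would construct an array $(x_j^{(i)})_{i,j}$ of normalized weakly null block sequences with the property that, for any choice $j_1 < j_2 < \cdots$ growing sufficiently fast, the horizontal sums $\sum_{i=1}^{k} x_{j_i}^{(i)}$ are bounded by an absolute constant independent of $k$; this produces the unit vector basis of $c_0$ as an asymptotic model of $Y$ and establishes (ii). The essential point is that the increasing-weights constraint ensures every $f \in W^p$ sees non-trivially only one vector per column at each weight level, preventing functionals from accumulating mass along the horizontal direction. For (iii), the same arrays can be combined diagonally, in the spirit of the Odell--Schlumprecht construction in~\cite{OS1}, so as to realize $\ell_q^n$ for every $q \in [p,\infty]$ as an asymptotic space of $Y$: the $\ell_p$ endpoint comes from the intrinsic norming lower bound, the $c_0$ endpoint from the horizontal behavior of the array, and intermediate values from a Krivine-style interpolation between the two extremes.

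The main obstacle is the verification of the upper bound in~(i) with the explicit uniform constant $8$. Although the basic-inequality framework is structurally the same as for $\X$, propagating the constants through functionals whose coefficients are $\ell_q$-convex combinations, rather than convex combinations as in the $\ell_1$ case, and simultaneously keeping track of how the increasing-weights restriction interacts with $\mathcal{S}_{n_j}$-admissibility, is the delicate technical point of the proof. Once it is settled, the remaining arguments for (ii) and (iii) transfer from Theorem~\ref{ell1 case main theorem} with essentially no extra difficulty beyond substituting $\ell_q$-averages for scalar averages in each estimate.
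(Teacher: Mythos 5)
Your overall strategy — construct $\X^p$ by allowing $\ell_{p^*}$-weighted sums in the norming operations and transport the lower-bound duality argument, the basic inequality, and the array construction from the $\ell_1$ case — matches the paper's plan. But there are two genuine gaps.

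First, the norming set as you wrote it is closed only under operations $f=\frac{1}{m_j}\sum_i a_i f_i$ with $\mathcal{S}_{n_j}$-admissible constituents. The paper's $\W^p$ is closed under the compound operations $f=\frac{1}{m_{j_1}\cdots m_{j_l}}\sum_q \lambda_q f_q$ with $\mathcal{S}_{n_{j_1}+\cdots+n_{j_l}}$-admissibility, for arbitrary $l$. This is not a cosmetic difference: the paper makes a point of the fact that dropping the compound operations turns the uniformly unique spreading model into a merely unique one with unbounded equivalence constants (this is exactly the distinction between $\X$ and $\Xt$). The reason shows up in your own duality argument for the lower bound: after writing each $f_n=\frac{1}{m_{j_0}}\sum_q f^n_q$ and stripping the first constituent, the remaining constituents across $n\in F$ are $\mathcal{S}_{n_{j_0}+n_{j_1}}$-admissible, and to recombine them into a single functional of controlled weight you need the operation $(1/(m_{j_0}m_{j_1}),\mathcal{S}_{n_{j_0}+n_{j_1}})$ available in the norming set. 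With only single-index operations, the smallest admissible index $j$ with $n_j\geq n_{j_0}+n_{j_1}$ carries weight $m_j$ that runs off to infinity, and you lose uniformity. So part (i) as you have set it up would actually fail.

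Second, your explanation of where the constant $8$ comes from conflates two separate estimates. The upper bound on the spreading model is not obtained from the basic inequality; the paper proves directly by induction on the construction of $\W^p$ that every normalized block sequence satisfies $\|\sum a_ix_i\|\leq 2(\sum|a_i|^p)^{1/p}$ (using an $\ell_p$--$\ell_{p^*}$ H\"older split of each functional), and the duality lower bound gives $\geq\frac{1}{2m_{j_0}}(\sum|a_i|^p)^{1/p}$, so with $m_1=2$, $n_1=1$ the $\mathcal{S}_1$-admissible estimate yields $\frac14$; the product $2\cdot4$ is the source of $8$. The basic inequality and auxiliary spaces are used for parts (ii) and (iii), not for (i), and there they require $p$-specific adaptations you do not mention: the auxiliary operations carry an extra $2^{1/p^*}$ normalizing factor (needed to push the $\ell_{p^*}$-coefficients through the inductive step), and the exact vectors are built from $(n,\e)$-special \emph{$p$-convex} combinations rather than ordinary s.c.c. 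Without these adjustments the auxiliary-space upper estimate and the basic inequality do not close. Finally, for (iii) the passage from the $\ell_\infty^k(\ell_p^l)$ asymptotic model to intermediate $\ell_q^n$, $p\leq q\leq\infty$, is done by a H\"older duality on the coefficient indices (choosing multipliers in $\ell_{r}$ with $1/r=1/p-1/q$), which is a more concrete mechanism than the ``Krivine-style interpolation'' you gesture at.
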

Property (iii) resembles the corresponding property of Theorem \ref{ell1 case main theorem}. We point out that for $1<p<\infty$ the space $\X^p$ is the first known example of a space with $[p,\infty]$ as a stable Krivine set. Recall that in \cite{BFM} for every discrete closed subset $F$ of $[1,\infty]$ a space is constructed with stable Krivine set $F$. The fact (c)$_\infty$ \cose (b)$_\infty$ is not achieved via a separate construction.

\begin{thmA}
\label{c0 case main theorem}

The space $\X^*$ has the following properties.

\begin{itemize}

\item[(i)] Every normalized weakly null sequence has a subsequence that generates a spreading model that is $4$-equivalent to the unit vector basis of $c_0$.

\item[(ii)] Every infinite dimensional subspace of $\X^*$ contains an array of normalized weakly null sequences that generate the unit vector basis of $\ell_1$ as an asymptotic model.

\end{itemize}
That is, (c)$_\infty$ \cose (b)$_\infty$ and in particular (c)$_\infty$ \cose (a)$_\infty$.
\end{thmA}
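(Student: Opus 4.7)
The plan is to derive both (i) and (ii) from Theorem \ref{ell1 case main theorem} by duality, exploiting the reflexivity of $\X$ and the $1$-unconditionality of its basis.

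For (i), take a normalized weakly null sequence $(x_n^*)$ in $\X^*$. A Bessaga--Pe\l{}czy\'nski perturbation reduces to the case where $(x_n^*)$ is a normalized block sequence of the biorthogonal basis $(e_i^*)$. The lower bound $\|\sum_l a_l x_{n_l}^*\|\geq\max_l|a_l|$ is immediate. For the upper bound, test against a unit vector $u\in\X$ and, using $1$-unconditionality, write $u=\sum_l u_l$ with $u_l$ supported on $\mathrm{ran}(x_{n_l}^*)$. Setting $d_l=\|u_l\|$ and $v_l=u_l/d_l$ yields a normalized weakly null block sequence $(v_l)$ in $\X$; a subsequence generates, by Theorem \ref{ell1 case main theorem}(i), an $\ell_1$ spreading model with constant $4$, so $\|u\|=\|\sum_l d_l v_l\|\geq (1/4)\sum_l d_l$, forcing $\sum_l d_l\leq 4$ and hence $|\sum_l a_l x_{n_l}^*(u)|\leq 4\max_l|a_l|$. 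A standard diagonalization over admissible coefficient tuples then yields a subsequence whose spreading model is $4$-equivalent to the $c_0$ basis.

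For (ii), let $Z\subseteq\X^*$ be infinite-dimensional. After a perturbation extract a normalized block sequence $(x_n^*)$ of $(e_i^*)$ inside $Z$, and choose near-norming $y_n\in\X$ with $\|y_n\|=1$, $\mathrm{supp}(y_n)\subseteq\mathrm{ran}(x_n^*)$, and $x_n^*(y_n)\geq 1/2$. Apply Theorem \ref{ell1 case main theorem}(ii) to the subspace $Y=\overline{\mathrm{span}}(y_n)$ to obtain a normalized weakly null array $(z_j^{(i)})_{i,j}$ in $Y$ generating the unit vector basis of $c_0$ as an asymptotic model, written as $z_j^{(i)}=\sum_n\lambda_{n,j,i}y_n$. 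Pass to a subarray so that the $(y_n)$-supports of the vectors along every admissible diagonal are pairwise disjoint. For each $(j,i)$ let $F_{j,i}$ be that support; by the near-biorthogonality $x_n^*(y_m)=\delta_{nm}x_n^*(y_n)$, the restriction map $r:\mathrm{span}\{x_n^*:n\in F_{j,i}\}\to(\mathrm{span}\{y_n:n\in F_{j,i}\})^*$ is a bijection, and item (i) of the present theorem (applied to $(x_n^*)$) bounds $\|r^{-1}\|$ uniformly. Pulling back through $r$ a near-norming functional of $z_j^{(i)}$ and normalizing produces $w_j^{(i)}\in Z$ of norm $1$ with $w_j^{(i)}(z_j^{(i)})$ bounded uniformly away from $0$ and supported (in the $(x_n^*)$ basis) inside $F_{j,i}$. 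Testing $\sum_l a_l w_{j_l}^{(l)}$ against $u=\sum_l\mathrm{sign}(a_l)z_{j_l}^{(l)}$, whose norm is at most $4$ by the $c_0$ asymptotic model property, and using disjointness to kill cross-terms, yields $\|\sum_l a_l w_{j_l}^{(l)}\|\geq (1/C)\sum_l|a_l|$ for a uniform constant $C$; the upper bound $\leq \sum_l|a_l|$ is trivial.

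The main obstacle is producing the dual array $(w_j^{(i)})$ \emph{inside} $Z$: the norm-attaining functional of $z_j^{(i)}$ in $\X^*$ need not lie in $\overline{\mathrm{span}}(x_n^*)$. The resolution is the finite-dimensional isomorphism $r$ above, whose norm control relies precisely on item (i) of the present theorem. Once the array $(w_j^{(i)})$ is in hand, it is normalized, weakly null in each row (its supports escape to infinity with $j$), contained in $Z$, and generates an asymptotic model equivalent to the unit vector basis of $\ell_1$; this establishes (c)$_\infty$ \cose (b)$_\infty$ and hence (c)$_\infty$ \cose (a)$_\infty$.
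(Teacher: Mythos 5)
There is a genuine gap in your argument for item (i), and it is conceptually central. You infer from the fact that a \emph{subsequence} of $(v_l)$ generates an $\ell_1$ spreading model with constant $4$ that $\|\sum_l d_l v_l\|\geq \frac{1}{4}\sum_l d_l$, hence $\sum_l d_l\leq 4$. But Theorem~\ref{ell1 case main theorem}(i) (equivalently Proposition~\ref{uniform ell1}) is only a subsequential, asymptotic statement: after passing to a subsequence $L$ of the block sequence, one gets $\|\sum_{i\in F}c_i v_i\|\geq \frac{1}{2m_{j_0}}\sum_{i\in F}|c_i|$ for $F\subset L$ that is $\mathcal{S}_{n_{j_0}}$-admissible, a constant that degenerates as the admissibility level grows and that gives no information about the full finite sum $\sum_l d_l v_l$ sitting at the beginning of the basis. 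If the inference you want were valid, then every normalized block sequence $(v_l)$ in $\X$ would satisfy $\|\sum_l c_l v_l\|\geq \frac{1}{4}\sum_l|c_l|$ for all scalars (by $1$-unconditionality), i.e.\ $\X$ would have a uniform block lower-$\ell_1$ estimate; combined with the trivial upper estimate this forces $\X$ to be isomorphic to $\ell_1$, contradicting reflexivity, and it also contradicts Theorem~\ref{ell1 case main theorem}(ii), which is precisely the statement that $\X$ is \emph{not} asymptotic-$\ell_1$. In short, the whole point of the construction is that the $\ell_1$ lower estimate in $\X$ cannot be globalized the way your argument needs.

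This is also why the paper does not prove (i) by dualizing Theorem~\ref{ell1 case main theorem}(i). Instead, it works directly inside $\mathrm{co}(\W)$: Lemmas~\ref{vfg covnex}, \ref{same weight covnex}, and especially the ``getting rid of'' Lemma~\ref{getting rid of} show how to iteratively strip off leading low-weight pieces of convex combinations of norming functionals, leading to Proposition~\ref{subseq mT dual} ($\|\sum_{k\in F}f_k\|\leq 2m_{j_0}+\e$ for $\mathcal{S}_{n_{j_0}}$-admissible $F$), from which the $4$-$c_0$ spreading model follows by taking $j_0=1$. Your outline for (ii) inherits the same problem, since it relies on item (i) to control the ``restriction map'' $r^{-1}$, and separately it lacks the construction of dual RIS pairs (Proposition~\ref{very good c0 vectors} and Corollary~\ref{dual RIS exists}) that the paper uses to produce the norming vectors $y_s\in\X$ and functionals $y_s^*\in Y$ simultaneously with quantitative control. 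Your guiding intuition---that reflexivity plus unconditionality should let one pass information between $\X$ and $\X^*$---is correct in spirit, and the paper does eventually use Theorem~\ref{ell1 case main theorem} (via the existence of RIS in $\X$) in proving (ii), but only after first establishing (i) by direct manipulation of the norming set.
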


We additomnally observe that the spaces $\X$ and $\X^*$ are asymptotically symmetric and obtain a  negative answer to \cite[Problem 0.2]{JKO}.

\begin{corA}
There exist Banach spaces that are asymptotically symmetric and have no asympotic-$\ell_p$ or $c_0$ subspaces.
\end{corA}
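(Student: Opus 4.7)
The corollary is obtained by combining Theorems \ref{ell1 case main theorem} and \ref{c0 case main theorem} with the additional observation that the spaces $\X$ and $\X^*$ are asymptotically symmetric in the sense of \cite{JKO}. I would organize the argument in two steps.

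Step 1 (asymptotic symmetry of $\X$ and $\X^*$). The notion of asymptotic symmetry is equivalent to requiring that vectors produced by the standard asymptotic game of order $n$ have norm invariant, up to a uniform constant, under permutation of the chosen scalars. For $\X$ I would derive this from two structural features of the construction: the $1$-unconditionality of the basis, and the permutation-invariance of the Schreier admissibility conditions that enter the definition of the norming set $\W$. The key point is that when block vectors $(x_i)_{i=1}^n$ are chosen sufficiently far out, any functional in $\W$ acting on $\sum a_i x_i$ has a $\W$-counterpart acting on a permutation $\sum a_{\pi(i)} x_i$ with the same norm-contribution up to a bounded factor, because admissibility is a condition on the unordered set of minimum supports of the constituent functionals together with the increasing-weight constraint, both of which are preserved under permutation of the blocks. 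This yields asymptotic symmetry of $\X$ with a uniform constant. The analogue for $\X^*$ follows by a dual argument, invoking Theorem \ref{c0 case main theorem}(i) in place of Theorem \ref{ell1 case main theorem}(i).

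Step 2 (no asymptotic-$\ell_p$ or $c_0$ subspaces). Theorem \ref{ell1 case main theorem}(i) ensures that every infinite dimensional subspace of $\X$ admits an $\ell_1$ spreading model, which also yields an $\ell_1$ block asymptotic model via the constant array. Theorem \ref{ell1 case main theorem}(ii) simultaneously produces a $c_0$ block asymptotic model in every subspace. Since $\ell_1$ and $c_0$ are not equivalent, no subspace of $\X$ has a unique block asymptotic model, and in particular no subspace of $\X$ is asymptotic-$\ell_p$ or asymptotic-$c_0$ for any $1\leq p \leq \infty$. The same reasoning applied to $\X^*$ via Theorem \ref{c0 case main theorem} produces the corresponding conclusion. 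Therefore $\X$ and $\X^*$ are asymptotically symmetric Banach spaces containing no asymptotic-$\ell_p$ or $c_0$ subspaces, answering \cite[Problem 0.2]{JKO} in the negative.

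The main difficulty is Step 1. The delicate point is that asymptotic symmetry must be extracted from the norm definition of $\X$ and requires a careful tracking of how the weight-based and Schreier-based admissibility constraints transform under permutations of block vectors of length $n$ chosen via the asymptotic game. Once this combinatorial invariance is verified, the rest of the argument reduces to comparing norm estimates via the now-symmetric action of $\W$, and Step 2 is essentially a restatement of what the main theorems have already established.
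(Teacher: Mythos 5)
Step~2 of your proposal is fine and matches what is actually needed: Theorems~\ref{ell1 case main theorem} and \ref{c0 case main theorem} already show that every subspace of $\X$ (resp.\ $\X^*$) admits both an $\ell_1$ and a $c_0$ block asymptotic model, so no subspace has a unique asymptotic model, and since (a)$_p \Rightarrow$ (b)$_p$, no subspace is asymptotic-$\ell_p$ or asymptotic-$c_0$.

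Step~1, however, is both a different route from the paper and, as written, does not work. The paper does not try to verify asymptotic symmetry combinatorially from the definition of $\W$: instead it proves Proposition~\ref{unique 1 or 0 am}, which is essentially an invocation of \cite[Theorem 2.3]{JKO} (uniformly unique $\ell_1$ block spreading model $\Rightarrow$ block asymptotic symmetry), \cite[Theorem 2.4]{JKO} (unique $c_0$ spreading model over weakly null sequences $\Rightarrow$ weak asymptotic symmetry), and \cite[Theorem 1.1(c)]{JKO} (for reflexive spaces with a basis these coincide with asymptotic symmetry). The corollary then follows from reflexivity of $\X$ together with Proposition~\ref{uniform ell1} for $\X$, and from reflexivity together with Proposition~\ref{subseq mT dual} for $\X^*$. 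This is a black-box reduction, deliberately avoiding any direct analysis of $\W$.

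Your Step~1 contains a genuine gap. You claim that ``admissibility is a condition on the unordered set of minimum supports \dots which [is] preserved under permutation of the blocks,'' and conclude that $\W$ acts symmetrically up to a bounded factor. This is not right. In Definition~\ref{defas} the permutation $\sigma$ changes the order of the iterated limits, so it changes which array's index tends to infinity first; after a diagonalization, this changes the \emph{relative position} of the supports of the blocks being summed. Schreier admissibility is certainly not invariant under this: whether $\{\min\supp(x_1),\ldots,\min\supp(x_d)\}\in\mathcal{S}_n$ depends on $\min\supp(x_1)$, and reordering which block is leftmost changes the value of the bound $d\leq\min\supp(x_1)$ in the inductive description of $\mathcal{S}_n$. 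The ``very fast growing'' constraint on weights is likewise an order-dependent condition. One could probably push through a direct argument, but it would in effect reproduce the content of \cite[Theorem 2.3]{JKO} from scratch, and your sketch does not engage with the order-sensitivity that makes the JKO argument nontrivial. You should instead appeal to Proposition~\ref{unique 1 or 0 am} (equivalently, to \cite{JKO}) together with reflexivity of $\X$ and $\X^*$.
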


A stronger version of the above corollary was obtained in \cite{KM} where it was shown that there exists an asymptotically symmetric Banach space with no subspace that admits a unique spreading model. The  final construction yields (d)$_1$ \cose (c)$_1$.

\begin{thmA}
\label{cd separation main theorem}

There exists a reflexive Banach space $\Xt$ that has a $1$-unconditional basis and the following properties.

\begin{itemize}%[leftmargin=*]

\item[(i)] Every normalized weakly null sequence has a subsequence that generates a spreading model that is equivalent to the unit vector basis of $\ell_1$.

\item[(ii)] In every infinite dimensional subspace of $\Xt$ and for every $C\geq 1$ there exists a normalized weakly null sequence that generates a spreading model that is not $C$-equivalent to the unit vector basis of $\ell_1$.

\end{itemize}
That is, (d)$_1$ \cose (c)$_1$.

\end{thmA}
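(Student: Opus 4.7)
The plan is to construct $\Xt$ via a mixed-Tsirelson-type saturation of $c_{00}$ subject to a new \emph{strictly increasing weights} constraint. Fix two rapidly increasing sequences of positive integers $(m_j)_j$ and $(n_j)_j$ as in classical mixed Tsirelson constructions. Let $\Wt \subset c_{00}(\N)$ be the minimal subset of $B_{\ell_\infty}$ containing $\{\pm e_n^* : n \in \N\}$, closed under projections onto intervals and sign changes, and closed under the following operation: for every $j \geq 1$ and every $\mathcal{S}_{n_j}$-admissible block family $f_1 < \cdots < f_d$ in $\Wt$ whose weights $w(f_i)$ are \emph{strictly increasing} in $i$, the averaged functional $m_j^{-1}(f_1 + \cdots + f_d)$ belongs to $\Wt$. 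Define $\Xt$ as the completion of $(c_{00}, \|\cdot\|_{\Wt})$ with $\|x\|_{\Wt} = \sup_{f \in \Wt} |f(x)|$. The $1$-unconditional basis is immediate from the invariances of $\Wt$; reflexivity would follow the established tree-analysis route for spaces of this type, showing simultaneously that the basis is boundedly complete and shrinking.

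For (i), the strategy is to produce an $\ell_1$-lower estimate for any normalized weakly null sequence, with a constant allowed to depend on the sequence. Given $(x_n)_n$, after passing to an almost-block subsequence one analyses the minimal weights required to norm each $x_n$. A stabilization/Ramsey argument along a further subsequence isolates a pattern in which one can assemble an $\mathcal{S}_{n_j}$-admissible family of functionals with strictly increasing weights that jointly evaluates $\sum a_i x_{n_i}$ from below, yielding a lower $\ell_1$ bound of order $m_j^{-1}$, where $j$ depends on the sequence. The matching upper $\ell_1$ estimate is automatic from $1$-unconditionality together with $\|x_n\|=1$, so the subsequence generates an $\ell_1$-spreading model with some finite equivalence constant, completing (i).

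The main obstacle, and the heart of the paper, is (ii): showing that the $\ell_1$ constants produced in (i) cannot be uniformly bounded within any subspace. The plan is to prove a basic inequality stating that for \emph{rapidly increasing sequences} (RIS) $(y_i)_{i=1}^d$ at level $j$, built as normalized $m_j^{-1}$-averages of functionals of prescribed increasing weights, one has
\[
\left\|\sum_{i=1}^{d} a_i y_i\right\| \leq \frac{K}{m_j}\sum_{i=1}^{d}|a_i|
\]
for a universal constant $K$. The strictly increasing weights condition is what makes this estimate possible: any functional attempting to average the $y_i$'s under a single low-weight operation is blocked because all $y_i$'s already carry weight $m_j^{-1}$, violating strict monotonicity, so any legitimate averaging must use weight $m_k^{-1}$ with $k > j$, incurring a multiplicative loss.

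Once the basic inequality is in place, in any infinite-dimensional subspace one would extract (for each target $j$) an RIS $(y_i)_i$ at level $j$ whose members lie close to the subspace, are normalized, and are weakly null, and one checks that its spreading model has $\ell_1$ constant at least of order $m_j / K$, which diverges as $j \to \infty$. The central technical difficulty is the basic inequality itself: its proof requires a detailed tree analysis of how arbitrary $g \in \Wt$ decompose over an RIS, one must verify that the strictly-increasing-weights restriction propagates through every branch of the evaluation tree to yield the $m_j^{-1}$ loss, and one must handle the interactions between averages at different levels as well as potential splittings of individual $y_i$'s across tree nodes.
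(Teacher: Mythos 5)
Your high-level plan matches the paper's: simplify the norming set of $\X$ by allowing only single-level operations $(1/m_j)\sum f_q$, then prove a basic inequality against an auxiliary space and exhibit, in every block subspace and for every $j_0$, a sequence whose spreading model has $\ell_1$-equivalence constant blowing up with $j_0$. But there are two genuine gaps.

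First, the constraint you impose is too weak. You ask only that the weights $w(f_1)<\cdots<w(f_d)$ be \emph{strictly increasing}, whereas the construction requires the much stronger \emph{very fast growing} condition $w(f_q)>\max\supp(f_{q-1})$. This disparity is not cosmetic: the entire basic-inequality machinery (and the proof of (i), where one strips off the initial summand $f_1^k$ and argues that the remaining tails $(f_q^k)_{q\geq 2}$ can be re-combined across different $k$'s) relies on the support-vs-weight coupling, not merely on monotonicity of weights. With only strictly increasing weights, a single low-weight functional can interact nontrivially with many of the $y_i$'s, and the tree analysis you gesture at in the final paragraph would break down.

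Second, the basic inequality you state cannot be correct as written: for normalized $y_i$'s, taking $d=1$, $a_1=1$ gives $1=\|y_1\|\le K/m_j$, impossible once $m_j>K$. The correct upper estimate, for normalized vectors $x_k = m_{j_0}\tilde x_k$ with $\tilde x_k$ an $(n_{j_0},\e_k)$ s.c.c., is
\[
\Big\|\sum_k a_k x_k\Big\| \le (1+\de)\max\Big\{\max_k|a_k|,\ \tfrac{m_{j_0}}{m_{j_0+1}}\sum_k|a_k|\Big\},
\]
and together with matching lower bounds this identifies the spreading model as the unit vector basis of $(\ell_1,\|\cdot\|_{\ell_1,j_0})$ with $\|\sum a_ke_k\|_{\ell_1,j_0}=\max\{\max|a_k|,\,(m_{j_0}/m_{j_0+1})\sum|a_k|\}$. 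The $\ell_1$-equivalence constant of this spreading model is $m_{j_0+1}/m_{j_0}$, which diverges as $j_0\to\infty$ by the lacunarity condition on $(m_j)$. Your conclusion ``$\ell_1$ constant of order $m_j/K$'' both rests on the impossible inequality and misses the quantity $m_{j_0}/m_{j_0+1}$ that actually drives the blow-up: the loss comes from being forced to norm an $\mathcal{S}_{n_{j_0}+1}$-admissible family at the next available level $n_{j_0+1}$, paying the factor $m_{j_0}/m_{j_0+1}$, and not from the factor $1/m_{j_0}$ alone. Without this correction, part (ii) is not established.
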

It is also possible to construct for each $1<p<\infty$ a variation $\Xt^p$ of $\Xt$ that yields (d)$_p$ \cose (c)$_p$. In contrast to $\X^*$, the space $\Xt^*$ does not have a unique $c_0$ spreading model.

Each of the aforementioned spaces are constructed with the use of a saturated norming set. We use the general scheme of saturation under constraints, which was first used in \cite{OS1} and \cite{OS2} and later refined in \cite{AM1}, \cite{ABM}, and others. In these aforementioned papers use Tsirelson-type constructions in which functionals in the norming set can only be constructed using very fast growing sequences of averages of elements in the same norming set. We shall refer to this particular version of the scheme as saturation under constraints with growing averages. In this paper we introduce a method that we call saturation under constraints with increasing weights. In this method the construction of functionals in the norming set is allowed only using sequences of functionals from the same norming set that have weights that increase sufficiently rapidly. The mainframe for the norming set $\W$ of $\X$ is the mixed-Tsirelson norming set  $W = W(1/m_j,\mathcal{S}_{n_j})_{j\in\N}$ for appropriate increasing sequences of natural numbers $(m_j)_j$ and $(n_j)_j$. This is the smallest symmetric subset of $c_{00}(\N)$ containing the unit vector basis and so that for all $\mathcal{S}_{n_j}$-admissible (see Subsection \ref{subsection Schreier sets}) elements $f_1<\cdots<f_d$ of $W$ the element $f = (1/m_j)\sum_{q=1}^df_q$ is also in $W$. The weight of such an $f$ is $w(f) = m_j$. In other words, $W$ is closed under the $(1/m_j,\mathcal{S}_{n_j})$-operations. It follows that if we take $i_1,\ldots,i_k$ in $\N$ then the set $W$ is closed under the $(1/(m_{i_1}\cdots m_{i_k}),\mathcal{S}_{n_{j_1}+\cdots +n_{j_k}})$-operation. The set $\W$ is defined to be the smallest subset of $W$ that is closed under the operations $(1/(m_{i_1}\cdots m_{i_k}),\mathcal{S}_{n_{j_1}+\cdots+n_{j_k}})$ applied only to sequences the weights of which increase sufficiently rapidly, i.e. their weights are very fast growing. Consequently, every functional $f\in\W$ is a weighted sum of the form
\begin{equation}
\label{operation equation}
f = \frac{1}{m_{i_1}\cdots m_{i_k}}\sum_{q=1}^d f_q,
\end{equation}
where $(f_q)_{q=1}^d$ is an $\mathcal{S}_{n_{j_1}+\cdots +n_{j_k}}$-admissible sequence of functionals in $\W$ with very fast growing weights. The weight of such an $f$ is $w(f) = m_{i_1}\cdots m_{i_k}$.

The constraint applied to weights of functionals instead of sizes of averages yields relatively easily that the space has a unique $\ell_1$ spreading model whereas including all $(1/(m_{i_1}\cdots m_{i_k}),\mathcal{S}_{n_{j_1}+\cdots +n_{j_k}})$-operations makes this spreading model uniform. With some work it is then shown that finite arrays of sequences of so-called exact vectors with appropriate weights generate an asymptotic model equivalent to the unit vector basis of $c_0$. The proof of this uses a basic inequality where the auxiliary space is also defined with the use of constraints. The spaces $\X^p$, $1<p<\infty$, are defined along the same lines with the difference being that in \eqref{operation equation} the functionals $f_q$ are multiplied by coefficients in the unit ball of $\ell_{p'}$, where $1/p+1/p'=1$. The proof of Theorem \ref{c0 case main theorem} is fundamentally different from the other cases. The fact that $\X^*$ admits a uniformly unique block $c_0$ spreading model is shown directly for elements in the convex hull of $\W$ by manipulating the definition of the norming set. However, the fact that $\X^*$ is rich with arrays of sequences that generate an $\ell_1$ asymptotic model uses some of the structural properties of $\X$.

The norming set of the space $\Wt$ is simpler that $\W$. It is the the smallest subset of $W$ that is closed under the operations $(1/m_{j},\mathcal{S}_{n_{j}})$ applied to very fast growing sequences of weighted functionals. This means that this norming set is closed under fewer operations and hence it is a subset of $\W$. The result is that the space admits only $\ell_1$ spreading models, albeit with arbitrarily bad equivalence constants in every subspace.

\section{Preliminaries}\label{preliminary section}
We remind basic notions such as Schreier families and special convex combinations. Given two non-empty subsets of the natural numbers $A$ and $B$ we shall write $A<B$ if $\max(A)<\min(B)$ and given $n\in\N$ we write $n \leq A$ if $n\leq \min(A)$. We also make the convention $\emptyset <A$ and $A < \emptyset$ for all $A\subset \N$. We denote by $c_{00}(\N)$ the space of all real valued sequences $(c_i)_i$ with finitely many non-zero entries. We denote by $(e_i)_i$ the unit vector basis of $c_{00}(\N)$. In some cases we shall denote it by $(e_i^*)_i$. For $x = (c_i)_i\in c_{00}(\N)$, the support of $x$ is defined to be the set $\supp(x) = \{i\inn:\;c_i\neq 0\}$ and the range of $x$, denoted by $\ran(x)$, is defined to be the smallest interval of $\N$ containing $\supp(x)$. We say that the vectors $x_1,\ldots,x_k$ in $c_{00}(\N)$ are successive if $\supp(x_i) < \supp(x_{i+1})$ for $i=1,\ldots,k-1$. In this case we write $x_1<\cdots<x_k$. Given $n\in\N$ and $x\in c_{00}(\N)$ we also write $n \leq x$ if $n\leq \min\supp(x)$. A (finite or infinite) sequence of successive  vectors in $c_{00}(\N)$ is called a block sequence.

\subsection{Schreier sets}
\label{subsection Schreier sets}
 The Schreier families form an increasing sequence of families of finite subsets of the natural numbers, which first appeared in \cite{AA}. It is inductively defined in the
following manner. Set
\begin{equation*}
\mathcal{S}_0 = \big\{\{i\}: i\inn\big\}\;\text{and}\;\mathcal{S}_1 = \{F\subset\mathbb{N}: \#F\leqslant\min(F)\}
\end{equation*}
and if $\Sn$ has been defined and set
\begin{equation*}
\begin{split}
\mathcal{S}_{n+1} = \left\{\vphantom{\cup_{i = 1}^d}\right.&F\subset\mathbb{N}:\; F = \cup_{i = 1}^d F_i, \;\text{where}\; F_1 <\cdots< F_d\in\Sn\\
&\left.\vphantom{\cup_{j = 1}^k}\text{and}\; d\leqslant\min(F_1)\right\}.
\end{split}
\end{equation*}
For each $n$, $\Sn$ is a regular family. This means that it is hereditary, i.e. if $F\in\Sn$ and $G\subset F$ then $G\in\Sn$, it is spreading, i.e. if $F = \{i_1<\cdots<i_d\} \in\Sn$ and $G = \{j_1 < \cdots < j_d\}$ with $i_p \leqslant j_p$ for $p=1,\ldots,d$, then $G\in\Sn$ and finally it is compact, if seen as a subset of $\{0,1\}^\N$. For each $n\in\N$ we also define the regular family  $$\mathcal{A}_n = \{F\subset \N: \#F\leq n\}.$$ For arbitrary regular families $\mathcal{A}$ and $\mathcal{B}$ we define
\begin{equation*}
\begin{split}
\mathcal{A}*\mathcal{B} = \left\{\vphantom{\cup_{i = 1}^d}\right.&F\subset\mathbb{N}: F = \cup_{i = d}^k F_i,\; \mbox{where}\; F_1 <\cdots< F_d\in\mathcal{B}\\
&\left.\vphantom{\cup_{j = 1}^k}\mbox{and}\; \{\min(F_i): i=1,\ldots,d\}\in\mathcal{A}\right\},
\end{split}
\end{equation*}
then it is well known \cite{AD2} and follows easily by induction that $\Sn*\mathcal{S}_m = \mathcal{S}_{n+m}$. Of particular interest to us is the family $\mathcal{S}_n\ast\mathcal{A}_m$, that is the family of all sets of the form $F = \cup_{i=1}^dF_i$ with $F_1<\cdots< F_d$ with $\#F_i\leq m$ for $1\leq i\leq d$ and $\{\min(F_i):1\leq i\leq d\}\in\mathcal{S}_n$. From the spreading property of $\mathcal{S}_n$ it easily follows that such an $F$ is the union at most $m$ sets in $\mathcal{S}_n$. Given a regular family $\mathcal{A}$ a sequence of vectors $x_1 <\cdots<x_k$ in $c_{00}(\N)$ is said to be $\mathcal{A}$-admissible if $\{\min\supp(x_i): i=1,\ldots,k\}\in\mathcal{A}$.

\subsection{Special convex combinations}
The reading of this subsection may be postponed until before Section \ref{section aux}. Here, we remind the notion of the $(n,\e)$ special convex combinations, (see \cite{AD2},\cite{AGR},\cite{AT}).

\begin{defn}\label{def of basic scc}
Let $x = \sum_{i\in F}c_ie_i$ be a vector in $c_{00}(\N)$, $n\inn$, and $\e>0$. The vector $x$ is called a $(n,\e)$-basic special convex combination (or a $(n,\e)$-basic s.c.c.) if the following are satisfied:

\begin{enumerate}

\item[(i)] $F\in\Sn$, $c_i\geqslant 0$ for $i\in F$ and $\sum_{i\in F}c_i = 1$,

\item[(ii)] for any $G\subset F$ with $G\in\mathcal{S}_{n-1}$ we have that $\sum_{i\in G}c_i < \e$.

\end{enumerate}
\end{defn}

The next result is from \cite{AMT}. For a proof see \cite[Chapter 2, Proposition 2.3]{AT}.

\begin{prop}\label{basic scc exist in abundance}
For every infinite subset of the natural numbers $M$, any $n\inn$, and $\e>0$ there exist $F\subset M$ and non-negative real numbers $(c_i)_{i\in F}$ so that the vector $x = \sum_{i\in F}c_ie_i$ is a $(n,\e)$-basic s.c.c.
\end{prop}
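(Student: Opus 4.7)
The plan is a standard induction on $n$.

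For the base case $n = 1$: given $M$ infinite and $\varepsilon > 0$, pick $k \in M$ with $k > 1/\varepsilon$, let $F$ consist of the first $k$ elements of $M \cap [k,\infty)$, and set $c_i = 1/k$ for $i \in F$. Then $|F| = k \leq \min F$, so $F \in \mathcal{S}_1$; the coefficients are nonnegative and sum to $1$; and each $c_i = 1/k < \varepsilon$, which verifies (ii) since $\mathcal{S}_0$ consists of singletons.

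For the inductive step $n \to n+1$, I would fix $M$ and $\varepsilon > 0$, choose $d \in M$ with $d > 2/\varepsilon$, and fix $\delta \in (0, \varepsilon/2)$. Applying the inductive hypothesis iteratively, I construct successive $(n, \delta)$-basic s.c.c.'s $x_1 < x_2 < \cdots < x_d$ supported in $M$ with $\min \supp x_1 \geq d$ and with sufficiently large gaps between consecutive supports (to be determined). Set $x = (1/d) \sum_{j=1}^d x_j = \sum_i c_i e_i$. Then $\supp x = \cup_{j=1}^d \supp x_j$ lies in $\mathcal{S}_1 * \mathcal{S}_n = \mathcal{S}_{n+1}$ because $d \leq \min \supp x_1$ and each $\supp x_j \in \mathcal{S}_n$, and the coefficient conditions follow from those of the $x_j$'s, so (i) is verified.

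For property (ii), let $G \in \mathcal{S}_n$ with $G \subseteq \supp x$, and set $G_j = G \cap \supp x_j$. I decompose $G = \cup_{l=1}^p H_l$ via $\mathcal{S}_n = \mathcal{S}_1 * \mathcal{S}_{n-1}$, so each $H_l \in \mathcal{S}_{n-1}$ and $p \leq \min G$. By hereditariness, $H_l \cap \supp x_j \in \mathcal{S}_{n-1}$, so the $(n, \delta)$-s.c.c. property of $x_j$ gives $\sum_{i \in H_l \cap \supp x_j} c_i^{(j)} < \delta$ for each incident pair $(l, j)$. Writing $\sum_{i \in G} c_i = (1/d) \sum_j \sum_{i \in G_j} c_i^{(j)}$ and using the above, one bounds each inner sum by $\min(L_j \delta,\ 1)$ with $L_j = \#\{l : H_l \cap \supp x_j \neq \emptyset\}$, from which the desired estimate $\sum_{i \in G} c_i < 1/d + \delta < \varepsilon$ should follow.

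The main obstacle is the combinatorial control of $\sum_j \min(L_j \delta, 1)$: the naive count $\sum_j L_j \leq |G|$ is too weak, since for fixed $\min G$ the size $|G|$ is unbounded when $n \geq 2$. The resolution exploits the compactness and spreading of $\mathcal{S}_{n-1}$ together with sufficiently large gaps between the $\supp x_j$'s, forcing $G_j$ to be contained in a single $H_l$ for all but a bounded number of ``transitional'' indices $j$. The transitional indices contribute at most a bounded amount independent of $\delta$, while the remaining indices contribute at most $\delta$ each; choosing $d$ and $\delta$ as above, together with appropriate spacing, then yields the bound. The delicate parameter tuning (and in particular the rate at which consecutive $\min \supp x_j$ must grow) is the technical heart of the inductive step.
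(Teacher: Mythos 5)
The paper does not give its own proof of this proposition: it is attributed to \cite{AMT} with a reference to \cite[Chapter~2, Proposition~2.3]{AT}. So I will assess the correctness of your argument on its own terms rather than against a proof in the text.

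Your base case is fine, and the overall shape of the inductive step (averaging $d$ s.c.c.'s of lower order, with $d>2/\e$, and controlling an $\mathcal{S}_n$-set $G$ by decomposing $G=\bigcup_{l=1}^p H_l$ with $H_l\in\mathcal{S}_{n-1}$, $p\le\min G$) is the right framework; the bound $\sum_{i\in G_j}c_i^{(j)}<\min(L_j\delta,1)$ is also correct. However, the step does not close as written, and the difficulty you flag is a genuine one, not a routine verification. Two points. First, the claim that the gaps force $G_j$ to lie in a single $H_l$ for all but a \emph{bounded} number of ``transitional'' $j$'s is false: the number of $j$ with $L_j\ge2$ can be as large as $\min(p-1,d-1)$, and already for $n=1$ (the step from $(1,\delta)$ to $(2,\e)$) one can make nearly every $j$ transitional by letting each $H_l$ straddle two consecutive $\supp x_j$'s. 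No choice of gaps repairs this, because ``transitional'' counts boundary crossings and there can be one per block. Second, and more fundamentally, using a single fixed $\delta$ for all $x_j$ cannot work: since $\sum_j L_j$ can be of order $p+d$ with $p\le\min G$ unbounded, the naive bound $\sum_j\min(L_j\delta,1)\le\delta\sum_j L_j$ is not $O(1)$, and the alternative bound by the number of blocks hit is also not $o(d)$.

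The repair is to let the parameter decrease along the construction. Set $\delta=\e/2$, put $\delta_1=\delta$, and having built $x_1<\cdots<x_{j-1}$, take $x_j$ to be an $(n,\delta_j)$-basic s.c.c.\ supported in $M\cap(\max\supp x_{j-1},\infty)$ with
\[
\delta_j\le\frac{\delta}{\max\supp x_{j-1}}\qquad(j\ge2).
\]
Then for $G\in\mathcal{S}_n$, writing $j_0=\min\{j:G\cap\supp x_j\ne\emptyset\}$ and using $G=\bigcup_{l=1}^p H_l$ with $p\le\min G$, one has for every $j>j_0$
\[
L_j\le p\le\min G\le\max\supp x_{j_0}\le\max\supp x_{j-1},
\]
hence $\sum_{i\in G_j}c_i^{(j)}<L_j\delta_j\le\delta$. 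Combined with $\sum_{i\in G_{j_0}}c_i^{(j_0)}\le1$, this gives $\sum_{i\in G}c_i<1/d+\delta<\e$, independently of any count of transitional indices. Note that no extra ``gap'' beyond consecutiveness is needed; the quantity that must grow is the coefficient bound (equivalently, the size of $\supp x_j$), not the distance between supports. Your phrase about ``the rate at which consecutive $\min\supp x_j$ must grow'' points at the wrong knob: it is $\delta_j$, tied to $\max\supp x_{j-1}$, that must shrink.
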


\begin{defn}\label{def scc}
Let $x_1 <\cdots<x_d$ be vectors in $c_{00}(\N)$ and $\psi(i) = \min\supp (x_i)$, for $i=1,\ldots,d$. If the vector $\sum_{i=1}^mc_ie_{\psi(i)}$ is a $(n,\e)$-basic s.c.c. for some $n\inn$ and $\e>0$ then the vector $x = \sum_{i=1}^mc_ix_i$ is called a $(n,\e)$-special convex combination (or $(n,\e)$-s.c.c.).
\end{defn}

We make a few simple remarks to be used in the sequel.
\begin{rem}
\label{some remarks for the far future 1}
Let $n\in\N$, $\e>0$, and $x = \sum_{i\in F}c_ie_i$ be a $(n,\e)$ special convex combination. If $k,m\in\N$ with $k<n$ and $G\subset F$ with $G\in\mathcal{S}_k\ast\mathcal{A}_m$ then $\sum_{i\in G}c_i < m\e$.
\end{rem}

\begin{rem}
\label{some remarks for the far future 2}
Let $n\in\N$, $\e>0$, and $x = \sum_{i\in F}c_ie_i$ be a $(n,\e)$ special convex combination. If $F = \{t_1<\cdots<t_d\}$ we can write $x = \sum_{i=1}^d \tilde c_i e_{t_i}$. If $G\subset \N$ is of the form $G = \{s_1<\cdots<s_d\}$ with $t_i\leq s_i$ for $1\leq i\leq d$ and $s_i\leq t_{i+1}$ for $1\leq i<d$ then the vector $x = \sum_{i=1}^d\tilde c_i e_{s_i}$ is a $(n,2\e)$ special convex combination. In particular, if $x = \sum_{i=1}^mc_ix_i$ is a $(n,\e)$-s.c.c. and $\phi(i) = \max\supp(x_i)$ for $1\leq i\leq d$ then the vector $\sum_{i=1}^dc_ie_{\phi(i)}$ is a $(n,2\e)$-basic s.c.c.
\end{rem}

\section{Asymptotic structures}
\label{asymptotic structures section}
In this lengthy section we remind, compare, and discuss different types of asymptotic notions in Banach space theory. We state known examples that separate these notions in various ways and we discuss how the present paper is an advancement in this topic.

\subsection{Sequential asymptotic notions}
We remind the definition of spreading models, which was introduced in \cite{BS}.

\begin{defn}
Let $(x_i)_i$ be a sequence in a seminormed vector space $(E,\iii{\cdot})$. and $m\in\mathbb{N}$
\begin{itemize}

\item[(i)] A set $s = \{j_1,\ldots,j_m\}\in[\mathbb{N}]$ will be called a spread of $I = \{1,\ldots,m\}$.
\item[(ii)] If $x = \sum_{i=1}^ma_ix_i$ and $s = \{j_1,\ldots,j_m\}$ is a spread of $\{1,\ldots,m\}$ then we call the vector $s(x) = \sum_{i=1}^ma_ix_{j_i}$ a spread of the vector $x$.
\item[(iii)] The sequence $(x_i)_i$ will be called spreading if for every $m\in\mathbb{N}$, every $s\in[\mathbb{N}]^m$, and every $x = \sum_{i=1}^ma_ix_i$ we have $\iii{x} = \iii{s(x)}$.
\end{itemize}
\end{defn}

\begin{defn}
\label{definition spreading model}
Let $X$ be a Banach space and $(x_i)_i$ be a sequence in $X$. Let also $E$ be a vector space with a Hamel basis $(e_i)_i$ endowed with a seminorm $\iii{\cdot}$. We say that the sequence $(x_i)_i$ generates $(e_i)_i$ as a spreading model if for every $m\in\mathbb{N}$ and any vector $x = \sum_{i=1}^ma_ix_i$ we have
\[\lim_{\substack{\min(s)\to\infty\\s\in[\mathbb{N}]^m}}\|s(x)\| = \iii{\sum_{i=1}^ma_ie_i}.\]
Given a subset $A$ of $X$ we shall say that $A$ admits $(e_i)_i$ as a spreading model if there exists a sequence in $A$ that generates $(e_i)_i$ as a spreading model.
\end{defn}

The spreading model $(e_i)_i$ of a sequence $(x_i)_i$ is always a spreading sequence. The above definition was given by Brunel and Sucheston in \cite{BS} where is was also proved that every bounded sequence in a Banach space has a subsequence that generates some spreading model.

\subsection{Array asymptotic notions}
We remind the notion of joint spreading models from \cite{AGLM} and the one of asymptotic models from \cite{HO}. We compare these similar notions later in Subsection \ref{uniqueness sec}.
\begin{defn}
\label{plegma}
Let $k$, $l\in\mathbb{N}$, and $M\in[\mathbb{N}]^\infty$. A plegma is a sequence $(s_i)_{i=1}^l$ in $[M]^k$ satisfying
\begin{itemize}
\item[(i)] $s_{i_1}(j_1) < s_{i_2}(j_2)$ for $i_1\neq i_2$ in $\{1,\ldots,l\}$ and $j_1<j_2$  in $\{1,\ldots,k\}$ and
\item[(ii)] $s_{i_1}(j) \leq s_{i_2}(j)$ for $i_1<i_2$ in $\{1,\ldots,l\}$ and $j\in\{1,\ldots,k\}$.
\end{itemize}
If additionally the set $s_1,\ldots,s_l$ are pairwise disjoint then we say that  $(s_i)_{i=1}^l$ is a strict plegma. Let $\mathrm{Plm}_l([M]^k)$ denote the collection of all plegmas in $[M]^k$ and let $\mathrm{S}$-$\mathrm{Plm}_l([M]^k)$ denote the collection of all strict plegmas in $[M]^k$.
\end{defn}

A plegma $(s_i)_{i=1}^l$ can also be described as follows
\[
\begin{split}
s_1(1) \leq s_2(1) \leq \cdots \leq s_l(1)&<s_1(2)\leq s_2(2)\leq\cdots\leq s_l(2)<\cdots\\
\cdots& <s_1(k)\leq s_2(k)\leq \cdots\leq s_l(k)
\end{split}
\]
whereas in a strict plegma all inequalities are strict.

\begin{defn}
\label{plegma spreading}
Let $l\in\mathbb{N}$ and $(x^{(i)}_j)_j$, $1\leq i\leq l$,   be an array of sequences in a seminormed vector space $(E,\iii{\cdot})$.
\begin{itemize}

\item[(i)] For $m\in\mathbb{N}$ let $\pi = \{1,\ldots,l\}\times\{1,\ldots,m\}$. Given a plegma $\bar s = (s_i)_{i=1}^l$ in $[M]^\infty$, the set $\bar s(\pi) = \{(i,s_i(j)): (i,j)\in\pi\}$ will be called a plegma shift of $\pi$.
\item[(ii)] If $x = \sum_{i=1}^l\sum_{j=1}^ka_{i,j}x^{(i)}_j$ and $\bar s\in\mathrm{Plm}_l([\mathbb{N}])^k$ we call the vector $\bar s(x) = \sum_{i=1}^l\sum_{j=1}^ka_{i,j}x^{(i)}_{s_i(j)}$ a plegma shift of the vector $x$.
\item[(iii)] The array $(x^{(i)}_j)_j$, $1\leq i\leq l$, will be called plegma spreading if for every $k\in\mathbb{N}$, every $\bar s\in\mathrm{Plm}_l[\mathbb{N}]^k$, and every $x = \sum_{i=1}^l\sum_{j=1}^ka_{i,j}x^{(i)}_j$ we have $\iii{x} = \iii{\bar s(x)}$.
\end{itemize}
\end{defn}

\begin{defn}
\label{definition joint spreading model}
Let $X$ be a Banach space, $l\in\mathbb{N}$, and $(x^{(i)}_j)_j$, $1\leq i\leq l$, be an array of sequences in $X$. Let also $E$ be a seminormed vector space and let $(e^{(i)}_j)_j$, $1\leq i\leq l$, be an array of sequences in $E$. We say that $(x^{(i)}_j)_j$, $1\leq i\leq l$, generates $(e^{(i)}_j)_j$, $1\leq i\leq l$, as a joint spreading model if for every $k\in\mathbb{N}$ and any vector $x = \sum_{i=1}^l\sum_{j=1}^ka_{i,j}x^{(i)}_j$ we have
\[\lim_{\substack{\min(s_1)\to\infty\\\bar s\in\mathrm{S}\text{-}\mathrm{Plm}_l([\mathbb{N}]^k)}}\|\bar s(x)\| = \iii{\sum_{i=1}^l\sum_{j=1}^ka_{i,j}e^{(i)}_j}.\]
Given a subset $A$ of $X$ we shall say that $A$ admits $(e^{(i)}_j)_j$, $1\leq i\leq l$, as a joint spreading model if there exists an array $(x^{(i)}_j)_j$, $1\leq i\leq l$, in $A$ that generates $(e^{(i)}_j)_j$, $1\leq i\leq l$ as a joint spreading model.
\end{defn}

The above notion was introduced in \cite{AGLM} and it was shown that every finite array $(x^{(i)}_j)_j$, $1\leq i\leq l$, of normalized Schauder basic sequences in a Banach space $X$ has a subarray $(x^{(i)}_{k_j})_j$ that generates some joint spreading model $(e_j^{(i)})_j$, $1\leq i\leq l$, which has to be a plegma spreading sequence.

Joint spreading models are a similar notion to that of asymptotic models,  from \cite{HO}, which was introduced and studied earlier. We modify the definition to make the connection to the above notions more clear.

\begin{defn}
Let $X$ be a Banach space, $(x^{(i)}_j)_j$, $i\in\mathbb{N}$ be an infinite array of normalized sequences in a Banach space $X$ and $(e_i)_i$ be a sequence in a seminormed space $E$. We say that $(x^{(i)}_j)_j$, $j\in\mathbb{N}$ generates $(e_i)_i$ as an asymptotic model if for any $l\in\mathbb{N}$ and vector $x = \sum_{i=1}^la_ix^{(i)}_1$ we have
\[ \lim_{\substack{\min(s_1)\to\infty\\ \bar s\in\mathrm{S}\text{-}\mathrm{Plm}_l([\mathbb{N}]^1)}}\left\|\bar s(x)\right\|  = \iii{\sum_{i=1}^la_ie_i}.
\]
\end{defn}

It was proved in \cite{HO} that any array $(x^{(i)}_j)_j$, $i\in\mathbb{N}$ of normalized sequences that are all weakly null have a subarray $(x^{(i)}_{j_k})_k$, $i\in\mathbb{N}$ that generates a 1-suppression unconditional asymptotic model $(e_i)_i$.

\subsection{Global asymptotic notions}

We first remind the definition of  an asymptotic-$\ell_p$ Banach space with a basis,  introduced by V. D. Milman and N. Tomczak-Jaegermann in \cite{MT}, and then we remind a coordinate free version of this definition from \cite{MMT}.

\begin{defn}
\label{asymptotic ellp}
Let $X$ be a Banach spaces with a Schauder basis $(e_i)_i$ and $1\leq p <\infty$. We say that the Schauder basis $(e_i)_i$ of $X$ is asymptotic-$\ell_p$ if there exist positive constants $D_1$ and $D_2$ so that for all $n\in\N$ there exists $N(n)\in\N$ so that whenever $N(n)\leq x_1 <\cdots <x_n$ are vectors in $X$ then
\begin{equation*}
 \frac{1}{D_1}\left(\sum_{i=1}^n\|x_i\|^p\right)^{1/p} \leq \left\|\sum_{i=1}^nx_i\right\| \leq D_2\left(\sum_{i=1}^n\|x_i\|^p\right)^{1/p}.
\end{equation*}
Specifically we say that $(e_i)_i$ is $D$-asymptotic-$\ell_p$ for $D = D_1D_2$. The definition of an asymptotic-$c_0$-space is given similarly.
\end{defn}

The classical examples of non-trivial asymptotic-$\ell_p$ spaces are Tsirelson's original Banach space from \cite{T} that is asymptotic-$c_0$ and the space constructed in \cite{FJ} (nowadays called Tsirelson space) that is asymptotic-$\ell_1$.

\begin{rem}
\label{asymptotic lp versions}
The definition above depends on the basis of $X$ and not only on $X$. A more general coordinate free version of the above for a whole space $X$ being asymptotic-$\ell_p$ can be found in \cite[Subsection 1.7]{MMT} (see also \cite{O}) and it is based on a game of two players. For each $n\in\N$ there is a version of this game that takes place in $n$ consecutive turns. In each turn $k$ of the game player (S) chooses a co-finite dimensional subspace $Y_k$ of $X$ and then player (V) chooses a normalized vector $y_k\in Y_k$. One of the formulations for being asymptotic-$\ell_p$ in this setting is that there exists $C$ so that for every $n\in\N$ player (S) has a wining strategy to force in $n$ turns player (V) to choose a sequence $(y_i)_{i=1}^n$ that is $C$-equivalent to the unit vector basis of $\ell_p^n$. Although this is not the initial formulation it is equivalent and this follows from \cite[Subsection 1.5]{MMT}.   Using this definition it is easy to show that if $X$ has a Schauder basis that is asymptotic-$\ell_p$ then $X$ is asymptotic-$\ell_p$. It also follows fairly easily that if a space $X$ is asymptotic-$\ell_p$ then it contains an asymptotic-$\ell_p$ sequence. In particular, a Banach space contains an asymptotic-$\ell_p$ subspace if and only if it contains an asymptotic-$\ell_p$ sequence.% {\color{red} Therefore, using either notion of asymptotic $\ell_p$ does not alter the meaning of the question of Odell mentioned in the introduction.}
\end{rem}

\subsection{Uniqueness of asymptotic notions}
\label{uniqueness sec}
The main purpose of this section is to discuss the property of a Banach space to exhibit a unique behavior with respect to the various asymptotic notions. Of particular interest to us is the question as to whether uniqueness with respect to one notion implies uniqueness with respect to another.

Throughout this subsection we let $\mathscr{F}$ denote one of two collections of normalized Schauder basic sequences in a given Banach space $X$, namely either $\mathscr{F}_0$, i.e., the collection of all normalized weakly null Schauder basic sequences, or $\mathscr{F}_b$, i.e. the collection of all normalized block sequences, if $X$ is assumed to have a basis.

\begin{defn}
Let $X$ be a Banach space and $\mathscr{F} = \mathscr{F}_0$ or $\mathscr{F} = \mathscr{F}_b$.
\begin{itemize}

\item[(i)] We say that $X$ admits a unique spreading model with respect to $\mathscr{F}$ if any two spreading models generated by sequences in $\mathscr{F}$ are equivalent.

\item[(ii)] We say that $X$ admits a uniformly unique spreading model with respect to $\mathscr{F}$ if there exists $C\geq 1$ so that any two spreading models generated by sequences in $\mathscr{F}$ are $C$-equivalent.

\end{itemize}
\end{defn}

The following is an open problem (see e.g. \cite[(Q8) on page 419]{O1}). 
\begin{problem}
Let $X$ be a Banach space and $\mathscr{F} = \mathscr{F}_0$ or $\mathscr{F} = \mathscr{F}_b$. Assume that $X$ admits a unique spreading model with respect to $\mathscr{F}$. Is this spreading model equivalent to the unit vector basis of some $\ell_p$, $1\leq p<\infty$, or $c_0$?
\end{problem}
It is well know that if the spreading model is uniformly unique then the answer is affirmative. This follows from an argument mentioned in \cite[Subsection 1.6.3]{MMT}.

\begin{defn}
Let $X$ be a Banach space and $\mathscr{F} = \mathscr{F}_0$ or $\mathscr{F} = \mathscr{F}_b$. We say that $X$ admits a unique joint spreading model with respect to $\mathscr{F}$ if there exists a constant $C$ so that for any $l\in\mathbb{N}$ and any two $l$-arrays generated as joint spreading models by $l$-arrays in $\mathscr{F}$ are $C$-equivalent.
\end{defn}

The existence of a uniform constant is included in the definition of unique joint spreading models. The reason for this is to separate uniqueness of spreading models from uniqueness of joint spreading models. If one assumes that $X$ admits a unique spreading model with respect to $\mathscr{F}$ then it follows that all $l$-joint spreading models generated by weakly null $l$-arrays in $\mathscr{F}$ are equivalent as well.
%{\color{red}The space $\X$ constructed in this paper has this property, but it does not have a unique joint spreading model. (move this)}

We remind that it was proved in \cite{AGLM} that if a Banach space $X$ admits a unique joint spreading model with respect to $\mathscr{F}$ then $X$ satisfies a property called the uniform approximation on large subspace. This is a property of families of bounded linear operators on $X$.

\begin{defn}
Let $X$ be a Banach space and $\mathscr{F} = \mathscr{F}_0$ or $\mathscr{F} = \mathscr{F}_b$. We say that $X$ admits a unique asymptotic model with respect to  $\mathscr{F}$ if any two asymptotic models generated by arrays of sequences in $\mathscr{F}$ are equivalent.
\end{defn}
It can be seen that if $X$ has a unique asymptotic model with respect to $\mathscr{F}$ then there must exist a $C$ so that any two asymptotic models generated by arrays of sequences in $\mathscr{F}$ are $C$-equivalent. This is because asymptotic models are generated by infinite arrays.

As it was mentioned in passing in \cite{AGLM} uniqueness of joint spreading models and uniqueness of asymptotic models are equivalent. We briefly describe a proof.
\begin{prop}
Let $X$ be a Banach space and $\mathscr{F} = \mathscr{F}_0$ or $\mathscr{F} = \mathscr{F}_b$. Then $X$ admits a unique joint spreading model with respect to $\mathscr{F}$ if and only if it admits a unique asymptotic model with respect to $\mathscr{F}$.
\end{prop}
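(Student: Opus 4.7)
The plan is to exhibit a bijection between strict plegmas in $[\N]^k$ for $l$ sequences and strictly increasing $lk$-tuples in $\N$, which lets one translate between $l$-joint spreading models at length $k$ and asymptotic models at length $lk$ of a cycled infinite array. The plegma conditions of Definition \ref{plegma} force
\begin{equation*}
s_1(1)<s_2(1)<\cdots<s_l(1)<s_1(2)<\cdots<s_l(2)<\cdots<s_l(k),
\end{equation*}
so setting $m_{(j-1)l+i}=s_i(j)$ is a bijection between strict plegmas $(s_i)_{i=1}^l\in[\N]^k$ and strictly increasing tuples $m_1<\cdots<m_{lk}$, under which $\min(s_1)\to\infty$ corresponds to $m_1\to\infty$. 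I will write $i(n)=((n-1)\bmod l)+1$ and $j(n)=\lfloor(n-1)/l\rfloor+1$ for the inverse.

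For the direction ($\Rightarrow$), assume uniform unique joint spreading models with constant $C$. Take two infinite arrays in $\mathscr{F}$ generating asymptotic models $(e_n)_n$ and $(\tilde e_n)_n$; fix $L\in\N$ and scalars $a_1,\dots,a_L$, restrict to the first $L$ rows of each, and pass to common subsequences so that the restricted $L$-arrays generate $L$-joint spreading models $(e^{(i)}_j)_j$ and $(\tilde e^{(i)}_j)_j$, $1\leq i\leq L$. Such sub-extraction preserves the asymptotic-model limits of the two full arrays, because those limits are taken over all strictly increasing tuples and a subsequence forms a cofinal sub-family. At $k=1$, a strict plegma in $[\N]^1$ is simply an increasing $L$-tuple, so
\begin{equation*}
\iii{\sum_{i=1}^L a_i e_i}_{\mathrm{AM}}=\lim_{t_1<\cdots<t_L,\,t_1\to\infty}\Bigl\|\sum_{i=1}^L a_i x^{(i)}_{t_i}\Bigr\|=\iii{\sum_{i=1}^L a_i e^{(i)}_1}_{\mathrm{JSM}},
\end{equation*}
and similarly for the tilded system; uniqueness of joint spreading models then yields $C$-equivalence of $(e_n)_n$ and $(\tilde e_n)_n$.

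For the direction ($\Leftarrow$), assume a unique asymptotic model with constant $C$ (the constant is automatic, as remarked just before the proposition). Let $(x^{(i)}_j)_j$ and $(\tilde x^{(i)}_j)_j$, $1\le i\le l$, be two $l$-arrays in $\mathscr{F}$ generating $l$-joint spreading models $(e^{(i)}_j)$ and $(\tilde e^{(i)}_j)$. Cycle the rows to form infinite arrays $y^{(n)}_j:=x^{(i(n))}_j$ and $\tilde y^{(n)}_j:=\tilde x^{(i(n))}_j$; each row of the new arrays is literally a row of the old one, so both lie in $\mathscr{F}$ in either case $\mathscr{F}=\mathscr{F}_0$ or $\mathscr{F}=\mathscr{F}_b$. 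Pass to a subarray so that each generates an asymptotic model $(f_n)_n$, respectively $(\tilde f_n)_n$. The dictionary above identifies, for every $k\in\N$ and scalars $(a_{i,j})_{i\leq l,\,j\leq k}$,
\begin{equation*}
\iii{\sum_{i,j}a_{i,j}e^{(i)}_j}_{\mathrm{JSM}}=\iii{\sum_{n=1}^{lk}a_{i(n),j(n)}f_n}_{\mathrm{AM}},
\end{equation*}
and likewise for the tilded system. Uniqueness of asymptotic models with constant $C$ therefore transfers to $C$-equivalence of the two joint spreading models, with $C$ independent of $l$ and $k$, which is precisely uniqueness of joint spreading models.

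The only technical obstacle I anticipate is the coordination of sub-extractions: in ($\Rightarrow$) one must choose subsequences for the first $L$ rows without destroying the asymptotic-model limits of the original infinite arrays, and in ($\Leftarrow$) one must extract the cycled arrays so that asymptotic models exist. Both are standard diagonal arguments using the existence theorems for asymptotic models from \cite{HO} and for joint spreading models from \cite{AGLM}.
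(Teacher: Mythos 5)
Your proof is correct and follows essentially the same route as the paper: the ($\Leftarrow$) direction uses the identical row-cycling construction and the bijection between strict plegmas and strictly increasing $lk$-tuples, and the ($\Rightarrow$) direction uses the same ``first $L$ rows at plegma level $k=1$'' observation. The only difference is that you spell out the cofinality/sub-extraction bookkeeping explicitly, which the paper leaves to the reader.
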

\begin{proof}
 If $X$ admits a unique asymptotic model then, as it was mentioned above, it does so for a uniform constant $C$. We start with two $l$-arrays $(x_j^{(i)})_j$, $(y^{(i)}_j)_j$, $1\leq i\leq l$, generating joint spreading models $(e_j^{(i)})_j$, $(d_j^{(i)})$, $1\leq i\leq l$, which we will show that they are equivalent. Define the infinite arrays $(\tilde x_j^{(i)})$, $(\tilde y_j^{(i)})$, $i\in\mathbb{N}$ given by $\tilde x^{(ml + i)}_j = x_j^{(i)}$ and $\tilde y^{(ml + i)}_j = y_j^{(i)}$ for $m\in\mathbb{N}\cup\{0\}$, $1\leq i\leq l$, and $j\in\mathbb{N}$. Any asymptotic model $(e_i)_i$ generated by a subarray of $(\tilde x_j^{(i)})_j$, $i\in\mathbb{N}$, is isometrically equivalent to $(e_j^{(i)})_j$, $1\leq i\leq l$ by mapping $e_{ml+i}$ to $e^{(i)}_{l}$, for $m\in\mathbb{N}\cup\{0\}$, $1\leq i\leq l$. We can make a similar observation about any asymptotic model $(d_i)_i$ generated by a subarray of $(\tilde y_j^{(i)})$, $i\in\mathbb{N}$. As $(e_i)_i$ and $(d_i)_i$ are $C$-equivalent we deduce that the same is true for $(e_j^{(i)})_j$, $(d_j^{(i)})$, $1\leq i\leq l$. The inverse implication is slightly easier. If we assume that there is $C$ so that for any $l\in\mathbb{N}$ any $l$-joint spreading models admitted by $l$-arrays in $\mathscr{F}$ then it is almost straightforward that the first $l$ elements of any two asymptotic models generated by arrays in $\mathscr{F}$ are $C$-equivalent.
\end{proof}

If a space admits a unique asymptotic model, and hence also spreading model, then it has to be equivalent to the unit vector basis of $\ell_p$ or $c_0$.  This follows, e.g., from the uniform uniqueness of the spreading model.

We now compare  uniqueness of the various asymptotic notions. Here, $1\leq p\leq \infty$ and whenever $p=\infty$ then $\ell_p$ should be replaced with $c_0$. The implications presented in the next statement are fairly obvious. 

\begin{prop}
Let $1\leq p\leq \infty$, $X$ be a Banach space, and $\mathscr{F} = \mathscr{F}_0$ or $\mathscr{F} = \mathscr{F}_b$. Consider the following properties.
\begin{itemize}
\item[(a1)$_p$] The space $X$ is coordinate free asymptotic-$\ell_p$.
\item[(a2)$_p$] The space $X$ has a basis that is asymptotic-$\ell_p$.
\item[(b)$_p$] The space $X$ admits a unique $\ell_p$ asymptotic model with respect to $\mathscr{F}$.
\item[(c)$_p$] The space $X$ admits a uniformly unique $\ell_p$ spreading model with respect to $\mathscr{F}$.
\item[(d)$_p$] The space $X$ admits a unique $\ell_p$-spreading model with respect to $\mathscr{F}$.
\end{itemize}
Then (a1)$_p \vee$(a2)$_p \Rightarrow$(b)$_p \Rightarrow$(c)$_p \Rightarrow$(d).
\end{prop}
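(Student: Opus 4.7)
The plan is to handle the three implications in turn. The implication (c)$_p \Rightarrow$ (d)$_p$ is immediate: $C$-equivalence of every spreading model to the unit vector basis of $\ell_p$ for a uniform $C$ is strictly stronger than mere pairwise equivalence.

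For (b)$_p \Rightarrow$ (c)$_p$, the idea is that spreading models are a special case of asymptotic models. Given a sequence $(x_j)_j$ in $\mathscr{F}$ generating a spreading model $(e_j)_j$, form the constant infinite array $(x^{(i)}_j)_j$, $i\in\mathbb{N}$, by setting $x^{(i)}_j := x_j$; this is an array in $\mathscr{F}$ since each row is the same sequence. For any $l$, scalars $a_1,\ldots,a_l$, and strict plegma $\bar s\in\mathrm{S}\text{-}\mathrm{Plm}_l([\mathbb{N}]^1)$, the plegma shift $\bar s(\sum a_i x^{(i)}_1) = \sum a_i x_{s_i(1)}$ is precisely a spread of $\sum a_i x_i$, whose norm tends to $\iii{\sum a_i e_i}$ as $s_1(1)\to\infty$ by the definition of spreading model. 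Hence the asymptotic model of the constant array is isometric to the spreading model $(e_j)_j$. As noted just before the proposition, (b)$_p$ provides a uniform constant $C$ for which every asymptotic model, and therefore every spreading model, is $C$-equivalent to the unit vector basis of $\ell_p$; this is (c)$_p$.

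For (a1)$_p \vee$ (a2)$_p \Rightarrow$ (b)$_p$, Remark \ref{asymptotic lp versions} reduces the task to (a1)$_p$, namely that $X$ is coordinate-free asymptotic-$\ell_p$ with constant $C$. Invoke the subspace game from \cite{MMT}: for each $l\in\mathbb{N}$ player (S) has a winning strategy $\sigma$ forcing every $l$-move play of (V) to be $C$-equivalent to $\ell_p^l$. Fix an array $(x^{(i)}_j)_j$ in $\mathscr{F}$ generating an asymptotic model $(e_i)_i$, together with $l$ and $\delta>0$, and simulate (V) as follows. Inductively choose $s_1(1)<\cdots<s_l(1)$ with $s_1(1)$ arbitrarily large, and normalized $y_k \in Y_k$ with $\|y_k - x^{(k)}_{s_k(1)}\|<\delta/l$, where $Y_k$ is $\sigma$'s response to $y_1,\ldots,y_{k-1}$. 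For $\mathscr{F}=\mathscr{F}_0$ this is possible because a normalized weakly null sequence has arbitrarily large terms within any prescribed distance of any given finite-codimensional subspace; for $\mathscr{F}=\mathscr{F}_b$ the same is achieved after a preliminary diagonal extraction that makes the array compatible with $\sigma$'s tail-type moves, which suffices in view of the basis-level (a2)$_p$ estimate. By $\sigma$ the play $(y_k)_{k=1}^l$ is $C$-equivalent to $\ell_p^l$, and the small perturbations pass to $(x^{(k)}_{s_k(1)})_{k=1}^l$ giving $C(1+O(\delta))$-equivalence. Sending $s_1(1)\to\infty$ first (taking $\liminf$ and $\limsup$ in the defining relation of the asymptotic model), then $\delta\to 0$, and finally $l\to\infty$, yields that $(e_i)_i$ is $C$-equivalent to the unit vector basis of $\ell_p$, which is (b)$_p$.

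The main obstacle lies in the last step: faithfully simulating the subspace game in general co-finite-dimensional subspaces, particularly when $\mathscr{F}=\mathscr{F}_b$. Circumventing the absence of a weak-null assumption on block sequences requires either a diagonal pre-processing of the array so that plegma shifts yield ordered block vectors, or restricting $\sigma$ to play only tail subspaces and applying the basis-level asymptotic-$\ell_p$ inequality of (a2)$_p$ directly. The rest of the argument is routine unpacking of definitions combined with standard perturbation bookkeeping.
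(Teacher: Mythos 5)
The paper does not actually give a proof of this proposition; it merely prefaces it with the remark that ``the implications presented in the next statement are fairly obvious'' and moves on. There is therefore no paper argument to compare against: you are supplying a proof where the authors supplied none.

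Your argument is sound. The reduction of (a2)$_p$ to (a1)$_p$ via Remark \ref{asymptotic lp versions} is exactly what the paper intends, and the constant-array device for (b)$_p\Rightarrow$(c)$_p$ is correct and elegant: with $x^{(i)}_j := x_j$ for all $i$, a strict plegma in $[\mathbb{N}]^1$ is precisely a strictly increasing sequence, so the defining limit for the asymptotic model of the constant array coincides verbatim with the defining limit for the spreading model of $(x_j)_j$; combined with the automatic uniformity of asymptotic-model uniqueness (noted in the text immediately before the proposition), this delivers (c)$_p$. Your sketch of (a1)$_p\Rightarrow$(b)$_p$ via simulating the Milman--Tomczak-Jaegermann game with small perturbations is also the standard and correct route. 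The one place where you are candid about a gap --- handling $\mathscr{F}_b$ when the block terms of the array need not live close to arbitrary finite-codimensional subspaces --- is real, but it is resolved exactly by the alternative you mention: use (a2)$_p$ and the shifting of supports of block vectors (for large $\min(s_1)$ each $x^{(i)}_{s_i(1)}$ is supported past $N(l)$), then reduce overlapping blocks to successive ones by passing to the coordinate-free formulation or by a standard disjointification. Given that the paper dismisses these implications with a single word, your level of detail is more than adequate.
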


The question as to whether any inverse implications hold is somewhat less straightforward. We can divide this problem into questions of separation and complete separation (see Definition on page \pageref{property separation}). We discuss this topic starting with the bottom of the list and moving upwards.

\begin{question}
Let $X$ be a Banach space and $\mathscr{F} = \mathscr{F}_0$ or $\mathscr{F} = \mathscr{F}_b$. If $X$ admits a unique spreading model with respect to $\mathscr{F}$ does it also admit a uniformly unique spreading model with respect to $\mathscr{F}$?
\end{question}

In other words, can property (c) be separated from (d). This can be answered fairly easily. Fix $1<p<\infty$ and consider for each $n\in\mathbb{N}$ a norm on $\ell_p$ given by $\|x\|_n = \|x\|_\infty\vee \|x\|_{\ell_p}$. The space $X = (\sum_n\oplus (\ell_p,\|\cdot\|_n))_{\ell_p}$ admits a unique $\ell_p$-spreading model with respect to $\mathscr{F}_0$ but not a uniformly unique $\ell_p$-spreading model with respect to $\mathscr{F}_0$. A slightly less trivial example can be given for $p=1$ by using e.g. a norm $\|x\|_n$ defined on $T$ and taking a $T$-sum. Interestingly it is not possible to do this for $c_0$. It follows from \cite[Proposition 3.2]{AOST} that if a space $X$ admits a unique $c_0$ spreading model with respect to $\mathscr{F}_0$ then this has to happen uniformly. The, more interesting, complete separation analogue of the above question is the following.

\begin{question}
Let $X$ be a Banach space and $\mathscr{F} = \mathscr{F}_0$ or $\mathscr{F} = \mathscr{F}_b$. If $X$ admits a unique spreading model with respect to $\mathscr{F}$ does $X$ have a subspace $Y$ that admit a uniformly unique spreading model with respect to $\mathscr{F}$?
\end{question}

This is less obvious. For example, if one considers $X = (\sum_n\oplus (\ell_2,\|\cdot\|_n))_{\ell_2}$ then $\ell_2$ is a subspace of $X$. To answer this question, in Section \ref{the other space} we construct a Banach space $\Xt$ with a unique $\ell_1$ spreading model with respect to $\mathscr{F}_0$ so that in every subspace of $\Xt$ one can find normalized weakly null sequences generating a spreading model with an arbitrarily ``bad'' equivalence to the unit vector basis of $\ell_1$.

\begin{question}
Let $X$ be a Banach space and $\mathscr{F} = \mathscr{F}_0$ or $\mathscr{F} = \mathscr{F}_b$. If $X$ admits a uniformly unique spreading model with respect to $\mathscr{F}$ does $X$ admit a uniformly unique asymptotic model with respect to $\mathscr{F}$?
\end{question}

The answer to the above question is negative in all cases of unique spreading models (which have to be some $\ell_p$, $1\leq p<\infty$, or $c_0$). It was observed in \cite{BLMS} that the space $T^*(T^*)$ admits $c_0$ as a uniformly unique spreading model whereas the space admits the unit vector basis of $T^*$ as an asymptotic model. Proposition 3.12 of \cite{BLMS} can also be used to show that $T(T)$ admits a uniformly unique $\ell_1$-spreading model, yet $T(T)$ admits the unit vector basis of $T$ as an asymptotic model. We can replace $T$ with $T_p$, the $p$-convexification of $T$, for $1<p<\infty$. It follows, again from , \cite[Proposition 3.12]{BLMS} that $T_p(T_p)$ has a uniformly unique $\ell_p$ spreading model. Is also easy to see that $T_p(T_p)$ admits the unit vector basis of $T_p$ as an asymptotic model.

\begin{question}
Let $X$ be a Banach space and $\mathscr{F} = \mathscr{F}_0$ or $\mathscr{F} = \mathscr{F}_b$. If $X$ admits a uniformly unique spreading model with respect to $\mathscr{F}$ does $X$ have a subspace that admits a uniformly unique asymptotic model with respect to $\mathscr{F}$?
\end{question}

We prove in this paper that the answer to the above question is conclusively negative, regardless of the assumption on the unique spreading model. We construct a Banach space $\X$ that admits a uniformly unique $\ell_1$-spreading mode so that every block subspace of $\X$ admits a $c_0$ asymptotic model. We also prove that $\X^*$ admits a uniformly unique $c_0$-spreading model and that every block subspace of $\X$ admits an $\ell_1$ asymptotic model. We also describe, for $1<p<\infty$, the construction of a space $\X^p$ that admits a uniformly unique $\ell_p$-spreading mode so that every block subspace of $\X$ admits a $c_0$ asymptotic model.

We remind that according to Remark \ref{asymptotic lp versions} a Banach space contains an asymptotic-$\ell_p$ subspace with a basis if and only if it contains a coordinate free asymptotic-$\ell_p$ subspace.
\begin{question}[E. Odell (Q7) \cite{O1} \& page 66 \cite{O} and M. Junge, D. Kutzarova, E. Odell Problem 1.2 \cite{JKO}]
Let $X$ be a Banach space that admits a uniformly unique spreading model with respect to $\mathscr{F}$. Does $X$ have an asymptotic-$\ell_p$ or asymptotic-$c_0$ subspace?
\end{question}
The spaces $\X$, $\X^*$, and $\X^p$, $1<p<\infty$, provide a negative answer to the above question for all possible assumptions on the unique spreading model.

\begin{question}
Let $X$ be a Banach space and $\mathscr{F} = \mathscr{F}_0$ or $\mathscr{F} = \mathscr{F}_b$. If $X$ admits a unique asymptotic model with respect to $\mathscr{F}$ is $X$ asymptotic-$\ell_p$ or asymptotic-$c_0$ in the coordinate free sense of \cite{MMT}?
\end{question}
Interestingly, for this question the type of unique spreading model makes a difference to the result. It was proved in \cite{FOSZ} that if a separable Banach space $X$ contains no copy of $\ell_1$ and $X$ has a unique $c_0$ asymptotic model with respect to $\mathscr{F}_0$ then $X$ is asymptotic-$c_0$ (in the sense of \cite{MMT}). Replacing $c_0$ with $\ell_p$, for $1<p<\infty$, completely changes the situation. In \cite[Subsection 7.2]{BLMS}, for each $1<p<\infty$ a reflexive Banach space is presented all asymptotic models of which are isometrically equivalent to the unit vector basis of $\ell_p$, yet the space is not asymptotically-$\ell_p$, in the sense of \cite{MMT}. A slightly different approach to the same question is based on a construction in \cite[Example 4.2]{OS3}. One can consider an infinite hight countably branching and well founded tree $\mathcal{T}$. Then, for $1<p<\infty$, define a norm on $c_{00}(\mathcal{T})$ as follows. If $x = \sum_{\lambda\in\mathcal{T}}c_\lambda e_\lambda$ then set
\[\|x\| = \sup\left\{\left(\sum_{i=1}^m\left(\sum_{\lambda \in \beta_i}|c_\lambda|\right)^p\right)^{1/p}: (\beta_i)_{i=1}^m\text{ are disjoint segments of }\mathcal{T}\right\}.\]
One can show, using \cite[Proposition 3.12]{BLMS} and induction on the hight of $\mathcal{T}$, that the completion of this space has only the unit vector basis of $\ell_p$ as an asymptotic model and it is not asymptotically-$\ell_p$.

The Definition from \cite[Subsection 7.2]{BLMS} also yields a non-reflexive Banach space with an unconditional Schauder basis that admits the unit vector basis of $\ell_1$ as a unique asymptotic model with respect to all arrays of block sequences of the basis yet the space is not asymptotic-$\ell_1$. In fact, this space is a Schur space. The first example of a reflexive non-asymptotic-$\ell_1$ space with a unique $\ell_1$ asymptotic model was first given in \cite{AGM}.

The following open question is the remaining implication from the list and it first appeared in \cite[Problem 6.1]{HO}.

\begin{problem}
Let $1\leq p<\infty$ and $X$ be a Banach space not containing $\ell_1$ so that every asymptotic model generated by a weakly null array in $X$ is equivalent to the unit vector basis of $\ell_p$. Does $X$ contain an asymptotic-$\ell_p$-subspace?
\end{problem}

\subsection{Finite block representability}
In this part of this section we recall the notion of finite block representability and the Krivine set of a space.

\begin{defn}
Let $X$ be a Banach space with a Schauder basis $(e_i)_i$ and let also $Y$ be a finite dimensional Banach space with a Schauder basis  $(y_i)_{i=1}^n$. We say that $(y_i)_{i=1}^n$ is block representable in $X$ if for every $\e>0$ there exists a block sequence $(x_i)_{i=1}^n$ in $X$ that is $(1+\e)$-equivalent to $(y_i)_{i=1}^n$. Given an infinite dimensional Banach space $Z$ with a Schauder basis $(z_i)_i$ we say that $(z_i)_i$ is finitely block representable in $X$ if for every $n\in\N$ the sequence $(z_i)_{i=1}^n$ is block representable in $X$.
\end{defn}

Given a Banach space $X$ with a basis the Krivine set $K(X)$ of $X$ is the set of all $p\in [1,\infty]$ so that the unit vector basis of $\ell_p$ (or of $c_0$ in the case $p=\infty$) is finitely block representable in $X$. It was proved by J-L Krivine in \cite{K} that this set is always non-empty. It is observed in \cite[Subsection 1.6.3]{MMT} that a stronger result holds, namely that there is $p\in[1,\infty]$ so that for all $\e>0$ and $n\in\N$ there exists a block sequence $(x_i)_{i=1}^\infty$ so that for all $k_1<\cdots<k_n$ the sequence $(x_{k_i})_{i=1}^n$ is $(1+\e)$-equivalent to the unit vector basis of $\ell_p^n$. We shall refer to the set of all such $p$'s as the strong Krivine set of $X$ and denote it by $\widetilde K(X)$. Clearly, $\widetilde K(X)\subset K(X)$.
%We show in the sequence that $\widetilde K(\X) \subsetneq K(\X)$.
It is clear that if $X$ is asymptotic-$\ell_p$, for some $1\leq p \leq\infty$ then $K(X) = \widetilde K(X) = \{p\}$.

\begin{question}
Let $X$ be a Banach space with a basis. Does there exist a block subspace $Y$ of $X$ so that $K(Y) = \tilde K(Y)$?
\end{question}
We answer with question negatively by showing that for every block subspace $Y$ of $\X$ we have $\tilde K(Y) = \{1\} \subsetneq [1,\infty] = K(Y)$. We also point out that for every $1<p<\infty$ and every block subspace $Y$ of $\X^p$ we have $\tilde K(Y) = \{p\} \subsetneq [p,\infty] = K(Y)$.

We additionally show that all 1-unconditional sequences are finitely block representable in every block subspace $Y$ of $\X$. To show this we use a result from \cite{OS1} where it was observed that there is a family of finite unconditional sequences that is universal for all unconditional sequences.
\begin{prop}[\cite{OS1}]
\label{universal for unc}
Let $n\in\N$ and $X_n$ be the finite dimensional space spanned by the sequence $(e_{i,j})_{i,j=1}^n$ ordered lexicographically and endowed with the norm
\begin{equation*}
 \left\|\sum_{i=1}^n\sum_{j=1}^na_{i,j}e_{i,j}\right\| = \max_{1\leq j\leq n}\sum_{i=1}^n|a_{i,j}|.
\end{equation*}
If $X$ is a Banach space with a Schauder basis $(x_i)_i$ so that for each $n\in\N$ the sequence $(e_{i,j})_{i,j=1}^n$ is block representable in $X$, then every 1-unconditional basic sequence is finitely block representable in $X$.
\end{prop}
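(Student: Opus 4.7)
The plan is to realize any normalized 1-unconditional basic sequence $(y_k)_{k=1}^n$ inside $X_M$ for some large $M$ as a block sequence with arbitrarily small error, and then transfer the realization into $X$ via the block representability hypothesis. The whole norm computation in $X_M$ is driven by the identity $\|\sum_{i,j}a_{i,j}e_{i,j}\|_{X_M} = \max_{j}\sum_{i}|a_{i,j}|$, which turns each ``row'' $\{e_{k,j}\}_{j=1}^M$ into a copy of $\ell_\infty^M$ and couples distinct rows via a sup-of-sums structure across the column index $j$.

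Let $F:\mathbb{R}^n_+\to\mathbb{R}$ be the 1-unconditional norm $F(a)=\|\sum_k a_ky_k\|$. Since $F$ is 1-unconditional and $\|y_k\|=1$, each coordinate functional $e_k^*$ satisfies $a_k\leq F(a)$ for $a\geq0$, hence $e_k^*\in B_{F^*}\cap\mathbb{R}^n_+$, and $F$ admits the dual representation $F(a)=\sup\{\langle v,a\rangle:v\in B_{F^*}\cap\mathbb{R}^n_+\}$. Fixing $\varepsilon>0$, by compactness of the $F$-unit sphere and continuity, I would select a finite family $v^{(1)},\ldots,v^{(N)}$ in $B_{F^*}\cap\mathbb{R}^n_+$ containing $e_1^*,\ldots,e_n^*$ so that
\[
G(a):=\max_{1\leq j\leq N}\langle v^{(j)},a\rangle
\]
satisfies $G(a)\leq F(a)\leq(1+\varepsilon)G(a)$ for all $a\in\mathbb{R}^n_+$. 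The inclusion of the $e_k^*$'s guarantees $G(e_k)=F(e_k)=1$.

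Working in $X_M$ for $M\geq\max(N,n)$, define
\[
z_k=\sum_{j=1}^{N}v^{(j)}_k e_{k,j}\quad(k=1,\ldots,n).
\]
Each $z_k$ is supported inside the $k$-th consecutive block of $M$ basis vectors of $X_M$, so $(z_k)_{k=1}^n$ is a block sequence. Unfolding the norm,
\[
\left\|\sum_{k=1}^n a_k z_k\right\|_{X_M}=\max_{j}\sum_{k} v^{(j)}_k|a_k|=G(|a|),
\]
so $(z_k)_{k=1}^n$ is $(1+\varepsilon)$-equivalent to $(y_k)_{k=1}^n$.

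Applying the hypothesis to $M$, choose block vectors $(\tilde e_{i,j})_{i,j=1}^M$ in $X$ that are $(1+\varepsilon)$-equivalent to $(e_{i,j})_{i,j=1}^M$, and set $\tilde z_k=\sum_j v^{(j)}_k\tilde e_{k,j}$; then $(\tilde z_k)_{k=1}^n$ is a block sequence in $X$ which is $(1+\varepsilon)^2$-equivalent to $(y_k)_{k=1}^n$ by chaining the two equivalences. Since $\varepsilon>0$ and $n\in\mathbb{N}$ are arbitrary, every 1-unconditional basic sequence is finitely block representable in $X$. The only technically nontrivial input is the compactness-based approximation of $F$ by $G$ in the first paragraph; the remainder is a direct manipulation of the $X_M$-norm followed by the pullback supplied by the hypothesis.
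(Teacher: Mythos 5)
Your argument is correct, and since the paper cites \cite{OS1} for this proposition without reproducing the proof, there is no internal argument to compare against; the route you take is exactly the standard one from Odell--Schlumprecht. Approximating the (positive part of the) dual ball of a $1$-unconditional norm $F$ by a finite family of positive functionals to get a polytope norm $G$ with $G\leq F\leq(1+\e)G$, realizing $G$ in $X_M$ by placing the $j$-th coordinate of $v^{(j)}$ in the $k$-th row (the lexicographic order makes each row a block, and the norm $\max_j\sum_i|a_{i,j}|$ evaluates $\sum_k a_k z_k$ to $G(|a|)$), and then pulling back via the block representability hypothesis is precisely the intended construction; including the coordinate functionals $e_k^*$ in the family is the right way to guarantee each $z_k\neq 0$, so $(z_k)_k$ really is a block sequence.
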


\subsection{Asymptotically symmetric spaces}
This is the final part of this section and we remind the notion of an asymptotically symmetric Banach space. It was introduced in \cite{JKO} and the motivation stems from the theory of non-commutative $L_p$ spaces.
\begin{defn}
\label{defas}
A Banach space $X$ is called asymptotically symmetric if there exists $C>0$ so that for all $l\in\N$, all  bounded arrays of sequences $(x_j^{(i)})_j$, $1\leq i\leq l$ in $X$, and all permutations $\sigma$ of $\{1,\ldots,l\}$ we have
\begin{equation}
\label{a.s.}
\lim_{j_1\to\infty}\cdots\lim_{j_l\to\infty}\Big\|\sum_{i=1}^lx^{(i)}_{j_i}\Big\| \leq C\lim_{j_{\sigma(1)}\to\infty}\cdots\lim_{j_{\sigma(l)}\to\infty}\Big\|\sum_{i=1}^lx^{(i)}_{j_i}\Big\|
\end{equation}
provided that both iterated limits exist.
\end{defn}
This is a notion that is weaker than the one of stable Banach spaces. It also follows from the discussion leading up to \cite[Proposition 1.1]{JKO} that a reflexive asymptotic-$\ell_p$ space is asymptotically symmetric. It was also observed there that $L_p$ provides a counterexample to the converse.

\begin{question}[Junge, D. Kutzarova, E. Odell Problem 0.2 \cite{JKO}]
Let $X$ be an asymptotically symmetric Banach space. Does $X$ contain an asymptotic-$\ell_p$ (or asymptotic-$c_0$) subspace?
\end{question}
It turns out that the spaces $\X$ and $\X^*$ are asymptotically symmetric and therefore each of them provides a negative answer to the above question. This is an immediate consequence of the next result, which follows easily from \cite{JKO}. We include a proof for completeness.

\begin{prop}
\label{unique 1 or 0 am}
Let $X$ be a reflexive Banach space that satisfies one of the following conditions.
\begin{itemize}
\item[(i)] The space $X$ has a Schauder basis $(e_i)_i$ and it admits a uniformly unique $\ell_1$ spreading model with respect to $\mathscr{F}_b$.
\item[(ii)] The space $X$ is separable and it admits a unique $c_0$ spreading model with respect to $\mathscr{F}_0$.
\end{itemize}
Then $X$ is asymptotically symmetric.
\end{prop}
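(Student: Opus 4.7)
The plan is to sandwich the iterated limit
\[L_\sigma = \lim_{j_{\sigma(1)}\to\infty}\cdots\lim_{j_{\sigma(l)}\to\infty}\Big\|\sum_{i=1}^l x^{(i)}_{j_i}\Big\|\]
between two permutation-invariant quantities whose ratio is controlled by the spreading model constant, thereby establishing asymptotic symmetry. Passing to subsequences (which preserves iterated limits), reflexivity permits us to assume $x^{(i)}_j \to x^{(i)}$ weakly for each $i$; setting $v := \sum_i x^{(i)}$ and $y^{(i)}_j := x^{(i)}_j - x^{(i)}$, the sequences $(y^{(i)}_j)_j$ are weakly null with $\|y^{(i)}_j\| \to \alpha_i$.

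For Case (i), by Bessaga-Pelczynski and diagonalization, I would arrange that each $(y^{(i)}_j)_j$ is a small perturbation of a block sequence supported past $\mathrm{supp}(v)$. Viewing the normalized $\hat y^{(i)}_{j_i}$'s as entries $u_n$ of a single normalized block sequence, the uniformly unique $\ell_1$ spreading model hypothesis yields the estimate $\|\sum_k c_k u_{n_k}\| \geq (1/C)\sum|c_k|$ for $n_1$ large enough. Combining this with a duality bound -- choose $f\in X^*$ with $\|f\| = 1$ and $f(v) = \|v\|$; since $f(u_{n_k}) \to 0$, we obtain $\lim_{n_1\to\infty}\|v + \sum c_k u_{n_k}\| \geq \|v\|$ -- through the reverse triangle inequality (case-splitting on whether $\|v\|$ dominates $(1/(2C))\sum|c_k|$) yields
\[\lim_{n_1\to\infty}\Big\|v + \sum_k c_k u_{n_k}\Big\| \geq \frac{1}{4C}\Big(\|v\| + \sum_k |c_k|\Big).\]
Inserting $c_k = \|y^{(i)}_{j_i}\| \to \alpha_i$ in the iterated limit produces the permutation-invariant window
\[\frac{1}{4C}\Big(\|v\| + \sum_i \alpha_i\Big) \leq L_\sigma \leq \|v\| + \sum_i \alpha_i,\]
hence $L_{\mathrm{id}} \leq 4C\, L_\sigma$.

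For Case (ii), by \cite{AOST} the unique $c_0$ spreading model is automatically uniformly unique with some constant $C$. In the absence of a basis, I would extract basic weakly null subsequences via Mazur's theorem. The analogous analysis, using the $c_0$ spreading model estimates $(1/C) \max_k |c_k| \leq \|\sum c_k u_{n_k}\| \leq C \max_k |c_k|$ in place of the $\ell_1$ ones and the same duality bound, yields
\[\frac{1}{2C} \max\!\big(\|v\|, \max_i \alpha_i\big) \leq L_\sigma \leq (C+1) \max\!\big(\|v\|, \max_i \alpha_i\big),\]
again permutation-invariant, so $L_{\mathrm{id}} \leq 2C(C+1)\, L_\sigma$.

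The main technical obstacle is that $v = \sum_i x^{(i)}$ is generally not weakly null, so the spreading model hypothesis does not govern the extended sequence $(v, u_1, u_2, \ldots)$ as a whole. This is sidestepped by isolating the contribution of $v$ via Hahn-Banach and combining with the spreading model estimate on the weakly null tail; the two regimes (whether $\|v\|$ dominates or is dominated by the $\alpha_i$ terms) are merged into a single permutation-invariant expression via the case split above.
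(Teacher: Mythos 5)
The paper's own proof is a two-sentence citation of \cite{JKO}: Theorem 2.3 (resp.\ 2.4) there reduces condition (i) (resp.\ (ii)) to block (resp.\ weak) asymptotic symmetry, and Theorem 1.1(c) of \cite{JKO} upgrades these to full asymptotic symmetry under reflexivity. Your proof takes a genuinely different, self-contained route: you subtract weak limits (using reflexivity), perturb the weakly null residuals to block sequences via Bessaga--Pelczynski, and then sandwich each iterated limit $L_\sigma$ between two permutation-invariant quantities via a case split combining a Hahn--Banach duality estimate with a reverse-triangle/spreading-model estimate. What the paper's citation buys is brevity and certainty; what your reconstruction buys is transparency, including explicit constants ($4C$ in case (i), $2C(C{+}1)$ in case (ii)), and independence from the internals of \cite{JKO}. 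Your observation in case (ii) that \cite{AOST} upgrades uniqueness to uniform uniqueness for $c_0$, and that Mazur replaces the block-perturbation step in the basis-free setting, is also correct.

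There is, however, a genuine gap in the key lower-bound step, and it is precisely the content that the JKO theorems supply. You assert that ``the uniformly unique $\ell_1$ spreading model hypothesis yields the estimate $\|\sum_k c_k u_{n_k}\| \geq (1/C)\sum|c_k|$ for $n_1$ large enough'' after interleaving the $\hat y^{(i)}_{j_i}$'s into a single normalized block sequence $(u_n)$. But the hypothesis only gives a $C$-$\ell_1$ spreading model along \emph{some subsequence} of $(u_n)$; it does not directly give a lower $\ell_1$ estimate for all far-out tuples of the whole interleaving. The far-out tuples arising in your iterated limit $L_\sigma$ are of a very constrained form (exactly one residual per index $i$, ordered according to $\sigma$), and the Brunel--Sucheston subsequence of the interleaving has no reason to pass through those particular tuples. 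One needs a stabilization argument (Ramsey/Nash--Williams applied to the array $(j_1,\ldots,j_l)\mapsto \|\sum_i c_i\hat y^{(i)}_{j_i}\|$, compatible with the ``one entry per row'' structure) to ensure that after refining the arrays the spreading-model estimate actually governs the iterated limit; this is exactly what is established inside \cite[Theorem 2.3 and 2.4]{JKO}. As written, your proof implicitly assumes that uniqueness of $\ell_1$ spreading models already controls these mixed (array-indexed) sums, which is not automatic --- the rest of the paper is built around the fact that spreading-model structure alone says nothing about array/asymptotic-model structure. The remaining ingredients (weak-limit decomposition, duality estimate $L_\sigma\geq\|v\|$, reverse triangle, case split, upper triangle-inequality bound) are all correct as stated.
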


\begin{proof}
The statement of \cite[Theorem 2.3]{JKO} is that if a (not necessarily reflexive) Banach space satisfies (i) then it is block asymptotically symmetric, i.e., it satisfies \eqref{a.s.} for arrays of bounded block sequences in $X$. The statement of \cite[Theorem 1.1 (c)]{JKO} is that when $X$ is reflexive with a basis then being asymptotic symmetric is equivalent to being block asymptotic symmetric. Similarly, \cite[Theorem 2.4]{JKO} yields that any Banach space satisfying (ii) is weakly asymptotically symmetric and once more \cite[Theorem 1.1 (c)]{JKO} states that for reflexive spaces this is equivalent to being asymptotically symmetric.
\end{proof}

\section{Definition of the space $\X$}\label{definition section}
We define the space $\X$ by first defining a norming set $\W$. This is a norming set of the mixed-Tsirelson type with certain constraints applied to the weights of the functionals used in the construction. Fix a pair of strictly increasing sequences of natural numbers $(m_j)_j$, $(n_j)_j$ with $m_1 = 2$ and $n_1 = 1$ satisfying the growth conditions
\begin{itemize}
\item[(i)] for all $C>1$ we have $\displaystyle{\lim_j\frac{C^{n_j}}{m_j} = \infty}$,

\item[(ii)] $\displaystyle{\lim_j\frac{m_j}{m_{j+1}} = 0}$, and

\item[(iii)] $n_{j+1} > n_{j_1}+\cdots+n_{j_l} + 1$ for all $l\in\N$ and $1\leq j_1,\ldots,j_l \leq j$ with the property $m_{j_1}\cdots m_{j_l} < m_{j+1}^2$.   % {\color{red}$\displaystyle{\lim_{j\to\infty}\max_{1\leq i < j}\frac{2^{n_i}}{m_i}\frac{m_{j}}{2^{n_{j}}}} = 0$.}
\end{itemize}
These properties can be achieved by taking any strictly increasing sequence of natural numbers $(m_j)_j$, with $m_1 = 2$, satisfying (ii) and afterwards choosing any strictly increasing sequence of natural numbers $(n_j)_j$, satisfying $n_1 = 1$ and so that $n_{j+1} > n_j\log(m^2_{j+1})$ for all $j\in\N$.
%Condition (i) is satisfied by taking any strictly increasing sequence of natural numbers $(m_j)_j$ with $m_1 = 2$, any sequence of real numbers $(C_j)_j$ that decreases strictly to one, and any strictly increasing sequence of natural numbers $(n_j)_j$ with $n_j\geq \log_{C_j}(m_j)$ for $j\geq 2$. Then for all $C>1$ and $j\geq 2$ we have $C^{n_j}/m_j\geq (C/C_j)^{n_j}$, which goes to infinity as $j$ increases. For conditions (ii) and (iii) pass to an appropriate common subsequence of  $(m_j)_j$ and $(n_j)_j$. 
%{\color{red} specify more necessary growth conditions}.
\begin{ntt}
Let $G$ be a subset of $c_{00}(\N)$.
\begin{itemize}
\item[(i)] Given $j_1,\ldots,j_l\in\N$ and $\mathcal{S}_{n_{j_1}+\cdots+n_{j_l}}$ admissible functionals $f_1 < \cdots < f_d$ in $G$ we call a functional of the form
\begin{equation*}
f = \frac{1}{m_{j_1}\cdots m_{j_l}}\sum_{q = 1}^d f_q
\end{equation*}
a weighted functional of $G$ of weight $w(f) = m_{j_1}\cdots m_{j_l}$ and vector weight $\vec w(f) = (j_1,\ldots,j_l)$. For all $i\in\N$, we also call $f = \pm e_i^*$ a weighted functional of weight $w(f) = \infty$ and in this case we do not define $\vec w(f)$.

\item[(ii)] A (finite or infinite) sequence $f_1<f_2<\cdots<f_q<\cdots$ of weighted functionals of $G$ is called very fast growing if $w(f_q) > \max\supp(f_{q-1})$ for $q>1$.
\end{itemize}
\end{ntt}
Note that if $(f_q)_q$ is a sequence of very fast growing weighted functionals then any of the $f_q$'s may be of the form $\pm e_i^*$ for $i\in\N$. Furthermore, the weight and vector weight of a functional may not be uniquely defined but this causes no problems.

\begin{defn}
Let $\W$ be the smallest subset of $c_{00}(\N)$ that satisfies the following two conditions.
\begin{itemize}

\item[(i)] $\pm e_i^*$ is in $\W$ for all $i\in\N$ and

\item[(ii)] for every $j_1,\ldots,j_l\in\N$, and every $\mathcal{S}_{n_{j_1}+\cdots+n_{j_l}}$-admissible and very fast growing sequence of weighted functionals $(f_q)_{q=1}^d$ in $\W$ the functional $$f = \frac{1}{m_{j_1}\cdots m_{j_l}}\sum_{q=1}^df_q$$ is in $\W$.

\end{itemize}
We define a norm on $c_{00}(\N)$ given by $\|x\| = \sup\{f(x): x\in \W\}$ and we set $\X$ to be the completion of $(c_{00}(\N), \|\cdot\|)$.
\end{defn}

\begin{rem}
\label{inductive construction norming}
Alternatively the set $\W$ can be defined to be the increasing union of a sequence of sets $(W_n)_{n=0}^\infty$ where $W_0 = \{\pm e_i: i\in\N\}$ and
 \begin{equation*}
% \label{inductive definition}
 \begin{split}
W_{n+1} = W_n\cup\Bigg\{& \frac{1}{m_{j_1}\cdots m_{j_l}} \sum_{q=1}^d f_q: ~ j_1,\ldots,j_l\in\N, \mbox{ and }(f_q)_{q=1}^d \mbox{ is an}\\&
\mbox{$\mathcal{S}_{n_{j_1}+\cdots+n_{j_l}}$ admissible and very fast growing sequence}\\
& \mbox{of weighted functionals in $W_n$}\Bigg\}.
 \end{split}
 \end{equation*}
\end{rem}

\begin{rem}
By induction on $n$ it easily follows that each set $W_n$ is closed under changing signs and under taking projections onto subsets, hence the same holds for $\W$. This yields that the unit vector basis of $c_{00}(\N)$ forms a 1-unconditional basis for the space $\X$.
\end{rem}

\begin{rem}
It is easy to check by induction on the construction of $\W$ that for every $f\in\W$ each of it coordinates is either zero or of the form $1/d$ for  some non-zero integer $d$. As $\W$ is closed under projections onto arbitrary subsets, we deduce that for every $k\in\N$ the set $\W|_k$ of all $f\in\W$ with $\max\supp(f)\leq k$ is compact in the topology of point-wise convergence. This yields that for every $x\in\X$ with $\supp(x)$ finite there is $f\in\W$ with $f(x) = \|x\|$.  
\end{rem}

\section{The spreading model of block sequences in $\X$}
\label{uniform unique spreading model section}
We prove that every normalized block sequence in $\X$ has a subsequence that generates a $4$-$\ell_1$ spreading model. This is unusual for constructions using saturations under constraints where typically at least two different spreading models appear (see, e.g., \cite{AM1}). As it will be shown later the constraints impose a variety of asymptotic models and local block structure in $\X$.
\begin{prop}
\label{uniform ell1}
Let $(x_i)_i$ be a normalized block sequence in $\X$. Then there exists $L\in[\N]^\infty$ so that for every $j_0\in\N$, every $F\subset L$ with $(x_i)_{i\in F}$ being $\mathcal{S}_{n_{j_0}}$-admissible, and every scalars $(c_i)_{i\in F}$ we have
\begin{equation*}
\left\|\sum_{i\in F}c_ix_i\right\| \geq \frac{1}{2m_{j_0}}\sum_{i\in F}|c_i|. 
\end{equation*}
In particular, every normalized block sequence in $\X$ has a subsequence that generates a spreading model that is 4-equivalent to the unit vector basis of $\ell_1$.
\begin{comment}
Let $(x_k)_k$ be a normalized block sequence in $\X$. Then $(x_k)_k$ has a subsequence $(x_k')$ so that for all natural numbers $n\leq k_1 <\cdots <k_n$ and all scalars $(a_i)_{i=1}^n$ we have
\begin{equation}
\left\|\sum_{i=1}^na_ix_{k_i}'\right\| \geq \frac{1}{4}\sum_{i=1}^n|a_i|.
\end{equation}
\end{comment}
\end{prop}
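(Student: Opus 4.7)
The plan is to leverage the inductive definition of $\W$: for each $i$ I select a norming functional $f_i\in\W$ for $x_i$, pass to a subsequence along which these form a very fast growing sequence, and for any $\mathcal{S}_{n_{j_0}}$-admissible $F$ I combine the $f_i$'s into a single weighted functional of $\W$ that evaluates $\sum_{i\in F}c_ix_i$ from below.

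For each $i$ select $f_i\in\W$ with $f_i(x_i)\geq 1/2$ and $\supp(f_i)\subseteq \ran(x_i)$; when $\supp(x_i)$ is finite the final remark of Section \ref{definition section} lets us in fact achieve $f_i(x_i)=1$. Each $f_i$ is a weighted functional with $w(f_i)\in\N\cup\{\infty\}$. By a diagonal extraction, pass to an infinite $L\subseteq\N$ so that $(f_i)_{i\in L}$ is very fast growing. If infinitely many $f_i$ are of the form $\pm e_k^*$ (i.e., $w(f_i)=\infty$), those indices work with no further condition; otherwise the weights are all finite and one arranges $w(f_{l_{k+1}})>\max\supp(f_{l_k})$ by extraction. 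This step is the one genuine technical point I expect: if the weights $w(f_i)$ happen to be bounded one must re-select the $f_i$'s to have larger weight while preserving a uniform lower bound on $f_i(x_i)$, using the structure of $\W$ and the fact that the coordinates of $x_i$ must carry sufficient mass to support a high-weight norming functional.

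Now fix $j_0\in\N$, $F\subseteq L$ with $(x_i)_{i\in F}$ being $\mathcal{S}_{n_{j_0}}$-admissible, and scalars $(c_i)_{i\in F}$. Set $\epsilon_i=\mathrm{sign}(c_i)$; since $\W$ is closed under sign changes, $\epsilon_i f_i\in\W$. The inclusion $\supp(f_i)\subseteq \ran(x_i)$ gives $\min\supp(f_i)\geq \min\supp(x_i)$, so by the spreading property $\{\min\supp(f_i):i\in F\}\in\mathcal{S}_{n_{j_0}}$, and $(\epsilon_i f_i)_{i\in F}$ inherits the very fast growing property from $L$. Hence
\[
f:=\frac{1}{m_{j_0}}\sum_{i\in F}\epsilon_i f_i\in\W,
\]
and using disjointness of the ranges of the $x_i$'s,
\[
\Big\|\sum_{i\in F}c_i x_i\Big\|\geq f\Big(\sum_{i\in F}c_i x_i\Big)=\frac{1}{m_{j_0}}\sum_{i\in F}|c_i|f_i(x_i)\geq \frac{1}{2m_{j_0}}\sum_{i\in F}|c_i|.
\]

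For the \emph{in particular} statement, apply the main inequality with $j_0=1$, so $n_1=1$ and $m_1=2$. For $n\leq k_1<\cdots<k_n$ in $L$, the set $\{\min\supp(x_{k_i}):1\leq i\leq n\}$ has $n$ elements each at least $k_1\geq n$, hence belongs to $\mathcal{S}_1$; the inequality then gives $\|\sum_{i=1}^n c_i x_{k_i}\|\geq (1/4)\sum|c_i|$, which combined with the triangle inequality $\|\sum c_i x_{k_i}\|\leq \sum|c_i|$ and a standard Brunel--Sucheston subsequencing yields a spreading model $4$-equivalent to the unit vector basis of $\ell_1$.
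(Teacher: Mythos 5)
Your outline is correct for the easy case but has a genuine gap exactly at the point you flag, and the remedy you suggest does not work. When $\limsup_i w(f_i)<\infty$ it is impossible to extract a subsequence $(f_i)_{i\in L}$ that is very fast growing: the definition requires $w(f_q)>\max\supp(f_{q-1})$, and the right-hand side is unbounded while the left-hand side is not. Your proposed fix---"re-select $f_i$ to have larger weight while keeping $f_i(x_i)\geq 1/2$"---is not available in $\W$: the only way to raise the weight of a given $f\in\W$ using the defining operation, e.g.\ replacing $f$ by $(1/m_J)f$, cuts the evaluation on $x_i$ by the same factor $m_J$, and there is no general reason a norming functional of predetermined high weight exists for a given $x_i$.

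The paper's actual argument in the bounded-weight case works one level deeper. After stabilizing the vector weight $\vec w(f_i)=(j_1,\ldots,j_l)$, it writes $f_i=(1/(m_{j_1}\cdots m_{j_l}))\sum_{q=1}^{d_i}f^i_q$ and \emph{discards the first summand} $f^i_1$. Since $f^i_1\in\W$ one has $f^i_1(x_i)\leq 1$, so the remaining part still evaluates $x_i$ at least $1-1/(m_{j_1}\cdots m_{j_l})\geq 1/2$; this is exactly where the factor $1/2$ in the statement comes from. Crucially, the discarded term forces $w(f^i_q)>\max\supp(f^i_1)\geq\min\supp(x_i)$ for $q\geq 2$, so the \emph{interior} functionals $(f^{k}_q)_{q\geq 2}$ ranging over $k\in L$ do form a very fast growing sequence. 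One then applies the iterated operation $(1/(m_{j_1}\cdots m_{j_l}m_{j_0}),\;\mathcal{S}_{n_{j_1}+\cdots+n_{j_l}+n_{j_0}})$ to this collection to land back in $\W$. This two-level decomposition (operate on the pieces $f^i_q$ rather than the $f_i$ themselves, at a strictly deeper weight) is the content your proposal is missing; without it the bounded-weight case cannot be closed. Your handling of the unbounded-weight case and of the ``in particular'' reduction to $j_0=1$, $m_1=2$, $n_1=1$ is fine.
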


\begin{proof}
We quickly observe that the second statement quickly follows from the first one and $m_1 = 2$, $n_1 = 1$. We now proceed to prove the first statement. For every $k\in\N$ choose $f_k\in\W$ with $f(x_k) = 1$ so that $\ran(f_k)\subset \ran(x_k)$. We distinguish two cases, namely the one in which $\limsup_kw(f_k)$ is finite and the one in which it is infinite.

In the first case, take an infinite subset $L$ of $\N$ and $j_1,\ldots,j_l\in\N$ so that for all $k\in\N$ we have $\vec w(f_k) = m_{j_1}\cdots m_{j_l}$. For each $k\in L$ write
\begin{equation*}
f_k = \frac{1}{m_{j_1}\cdots m_{j_l}}\sum_{q=1}^{d_k}f_q^k 
\end{equation*}
where each sequence $(f_q^k)_{q=1}^{d_k}$ is $\mathcal{S}_{n_{j_1}+\cdots+n_{j_l}}$ admissible and very fast growing with $\min\supp(x_k) \leq \max\supp(f_1^l) < w(f_2^k)$, which implies that the sequence $((f_q^k)_{q=2}^{d_k})_{k\in L}$, enumerated in the natural way, is very fast growing. Also, for every $k_1<\cdots<k_d$ in $L$ so that $(x_k)_{k\in F}$ is $\mathcal{S}_{n_{j_0}}$-admissible the functionals $((f_q^{k_i})_{q=2}^{d_k})_{i=1}^n$ are $\mathcal{S}_{n_{j_1}+\cdots+n_{j_l} + n_{j_0}}$ admissible and it follows that the functional
\begin{equation*}
\label{uniform ell1 eq1}
f = \frac{1}{m_{j_1}\cdots m_{j_l}m_{j_0}}\sum_{i=1}^n\sum_{q=2}^{d_{k_i}}f_q^{k_i} \text{ is in }\W.
\end{equation*}

As for each $k\in\N$ the functional $f_1^k\in\W$ we have $f_1^k(x_k) \leq 1$ and therefore
\begin{equation}
\label{uniform ell1 eq 1}
\frac{1}{m_{j_1}\cdots m_{j_l}}\sum_{q=2}^{d_k}f_q^k(x_k) \geq f(x_k) - \frac{1}{m_{j_1}\cdots m_{j_l}}f_1^k(x_k) \geq 1 - 1/2 =  1/2.
\end{equation}
For any $k_1<\cdots<k_n$ in $L$ so that $(x_k)_{k\in F}$ is $\mathcal{S}_{n_{j_0}}$-admissible and scalars $a_1,\ldots,a_n$ we conclude
\begin{equation*}
\begin{split}
 \left\|\sum_{i=1}^na_ix_{k_i}\right\| & = \left\|\sum_{i=1}^n|a_i|x_{k_i}\right\| \geq \frac{1}{m_{j_1}\cdots m_{j_l}m_{j_0}}\sum_{i=1}^n\sum_{q=2}^{d_{k_i}}f_q^{k_i}\left(\sum_{j=1}^n|a_j|x_{k_j}\right)\\
 &= \frac{1}{m_{j_0}}\sum_{i=1}^n|a_i|\frac{1}{m_{j_1}\cdots m_{j_l}}\sum_{q=2}^{d_{k_i}}f_q^{k_i}\left(x_{k_i}\right)\\
 & \geq \frac{1}{m_{j_0}}\sum_{i=1}^n\frac{1}{2}|a_i| = \frac{1}{2m_{j_0}}\sum_{i=1}^n|a_i|.
\end{split}
\end{equation*}

In the second case we may choose an infinite subset of $L$ so that $(f_k)_{k\in L}$ is very fast growing. As $m_1 = 2$ and $n_1 = 1$ we deduce that for any $k_1<\cdots<k_n$ in $L$ so that $(x_k)_{k\in F}$ is $\mathcal{S}_{n_{j_0}}$-admissible the functional
\begin{equation*}
 f = \frac{1}{m_{j_0}}\sum_{i=1}^nf_{k_i}
\end{equation*}
is in $\W$. As before, for every $k_1<\cdots<k_n$ in $L$ so that $(x_k)_{k\in F}$ is $\mathcal{S}_{n_{j_0}}$-admissible and scalars $a_1,\ldots,a_n$ we conclude that $\|\sum_{i=1}^na_ix_{k_i}\| \geq (1/m_{j_0})\sum_{i=1}^n|a_i|$.
\end{proof}

An easy consequence of the above result is the following.
\begin{cor}
\label{strong Krivine singleton}
The strong Krivine set of $\X$ is $\widetilde K(X) = \{1\}$. 
\end{cor}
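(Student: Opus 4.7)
The plan is to combine Proposition \ref{uniform ell1} with the strong version of Krivine's theorem. By the latter (recalled in the discussion following the definition of $\widetilde K(X)$ above), $\widetilde K(\X)$ is non-empty, so fix any $p \in \widetilde K(\X)$. It suffices to prove $p = 1$, which I would establish by contradiction.

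Assume $p > 1$, fix a small $\vp > 0$ and a large $n \in \N$, and use the definition of $\widetilde K(\X)$ to obtain a normalized block sequence $(x_i)_i$ in $\X$ such that every $n$-term subsequence $(x_{k_i})_{i=1}^n$ is $(1+\vp)$-equivalent to the unit vector basis of $\ell_p^n$. Applying Proposition \ref{uniform ell1} to $(x_i)_i$ with $j_0 = 1$ (so that $m_{j_0} = 2$ and $\mathcal{S}_{n_{j_0}} = \mathcal{S}_1$) produces $L \in [\N]^\infty$ such that for every $\mathcal{S}_1$-admissible $F \subset L$ and scalars $(c_i)_{i \in F}$,
\[ \left\|\sum_{i \in F} c_i x_i\right\| \geq \frac{1}{4} \sum_{i \in F} |c_i|. \]
After passing to a tail of $L$ I may pick $k_1 < \cdots < k_n$ in $L$ with $n \leq \min\supp(x_{k_1})$, which makes $(x_{k_i})_{i=1}^n$ an $\mathcal{S}_1$-admissible block sequence whose $(1+\vp)$-equivalence to the unit vector basis of $\ell_p^n$ is inherited from the parent sequence. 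Evaluating the two estimates on $\sum_{i=1}^n x_{k_i}$ gives
\[ \frac{n}{4} \leq \left\|\sum_{i=1}^n x_{k_i}\right\| \leq (1+\vp) n^{1/p}, \]
so that $n^{1-1/p} \leq 4(1+\vp)$. For $p > 1$ this fails once $n$ is chosen large enough, producing the desired contradiction.

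There is no serious obstacle: the argument is simply a head-on comparison of the uniform $\ell_1$ lower bound from Proposition \ref{uniform ell1} against the hypothetical $\ell_p^n$ upper bound coming from membership in $\widetilde K(\X)$. The only points requiring mild care are ensuring that the selected $n$-tuple is $\mathcal{S}_1$-admissible, which is automatic after passing to a sufficiently late tail of $L$, and that $(1+\vp)$-equivalence to $\ell_p^n$ is hereditary to subsequences, which is immediate from its formulation.
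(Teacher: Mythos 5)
Your proof is correct and takes essentially the approach the paper intends: the paper labels the corollary an ``easy consequence'' of Proposition~\ref{uniform ell1}, and you carry out exactly that deduction by comparing the $\mathcal{S}_1$-admissible $\ell_1$-lower estimate from the proposition (with $j_0=1$, $m_1=2$, $n_1=1$) against the hypothetical $(1+\vp)$-equivalence to $\ell_p^n$, and then invoking nonemptiness of $\widetilde K(\X)$ from the strong form of Krivine's theorem to conclude $\widetilde K(\X)=\{1\}$.
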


\section{The auxiliary space}\label{section aux}

For every $N$ we define an auxiliary space that is defined by a norming set $W^N_\mathrm{aux}$ very similar to $\W$. The reason for which we define an infinite family of auxiliary spaces is because we are interested in the almost isometric representation of finite unconditional sequences as block sequences in $\X$. To define this norming set we slightly alter the notions of weighted functionals and very fast growing sequences. In this case, given a subset $G$ of $c_{00}(\N)$ we will call a functional $f$ an auxiliary weighted functional of weight $w(f) = m_{j_1}\cdots m_{j_l}$ and vector weight $\vec w(f) = (m_{j_1},\ldots,m_{j_l})$, for $j_1,\ldots,j_n\in\N$, if it is of the form
\begin{equation*}
 f = \frac{1}{m_{j_1}\cdots m_{j_l}}\sum_{q=1}^df_q
\end{equation*}
where the functionals $(f_q)_{q=1}^d$ are in $G$ and they are $\mathcal{S}_{n_{j_1}+\cdots+n_{j_l}}\ast\mathcal{A}_3$ admissible. For all $i\in\N$ we will also say that $f = \pm e_i^*$ is an auxiliary weighted functional of weight $w(f) = \infty$ and we do not define $\vec w(f)$ in this case. A sequence of auxiliary weighted functionals $(f_q)_q$ will be called $N$-sufficiently large if $w(f_q) > N$ for $q\geq2$. There is no restriction on $w(f_1)$.

%A sequence of auxiliary weighted functionals $(f_q)_q$ will be called skipped very fast growing if for $q>2$ we have $w(f_q) > \max\supp(f_{q-2})$.

\begin{defn}
\label{aux}
For $N\in\N$ let $W_\mathrm{aux}^N$ be the smallest subset of $c_{00}(\N)$ that satisfies the following to conditions.
\begin{itemize}

\item[(i)] $\pm e_i^*$ is in $W^N_\mathrm{aux}$ for all $i\in\N$ and

\item[(ii)] for every $j_1,\ldots,j_l\in\N$ and every $\mathcal{S}_{n_{j_1}+\cdots+n_{j_l}}\ast\mathcal{A}_3$ admissible sequence of $N$-sufficiently large auxiliary weighted functionals $(f_q)_{q=1}^d$ in $W^N_\mathrm{aux}$ the functional $$f = \frac{1}{m_{j_1}\cdots m_{j_l}}\sum_{q=1}^df_q$$ is in $W^N_\mathrm{aux}$.

\end{itemize}
We define a norm $\|\cdot\|_{\mathrm{aux},N}$ on $c_{00}(\N)$ by setting  $\|x\|_{\mathrm{aux},N} = \sup\{f(x): f\in W_\mathrm{aux}^N\}$ for $x\in c_{00}(\N)$.
\end{defn}

%\begin{rem}
%It is clear that for $N\leq M$ we have $W_\mathrm{aux}^M\subset W_\mathrm{aux}^N$.
%\end{rem}

\begin{rem}
 As in Remark \ref{inductive construction norming} the set $W_\mathrm{aux}^N$ can be defined as an increasing union of sets $(W_n^N)_{n=0}^\infty$ where $W^N_0 = \{\pm e_i:i\in\N\}$ and for each $n\in\N$ the set $W_{n+1}^N$ is defined by using $N$-sufficiently large $\mathcal{S}_{n_{j_1}+\cdots+n_{j_l}}\ast\mathcal{A}_3$ admissible sequences in $W_n^N$.
\end{rem}

The purpose of the following two lemmas is to bound the norm of linear combinations of certain vectors in the auxiliary spaces from above. The final estimate of this section is \eqref{upper auxiliary eq} which will be used to bound the norm of appropriately chosen vectors in $\X$.

\begin{lem}
\label{basis on basis}
Let $j_0\in\N$, $\e>0$, $x = \sum_{r\in F}c_re_r$ be a $(n_{j_0}-1,\e)$ basic s.c.c., and $\tilde x = m_{j_0}x$. Let also $j_1,\ldots,j_l\in\N$ with $\max_{1\leq i\leq l} j_i \neq j_0$, $G\in \mathcal{S}_{n_{j_1}+\cdots+n_{j_l}}\ast\mathcal{A}_3$ and $f = (m_{j_1}\cdots m_{j_l})\sum_{i\in G}e_i^*$. Then
\begin{equation}
\label{basis on basis eq}
|f(x)| \leq  \max\left\{2\e m_{j_0},\frac{m_{j_0}}{m_{j_0+1}},\frac{1}{m_{j_0}}\right\}.
\end{equation}
\end{lem}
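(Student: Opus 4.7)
The plan is to evaluate $f$ pointwise against $\tilde x = m_{j_0} x$, which gives
$$f(\tilde x) \;=\; \frac{m_{j_0}}{m_{j_1}\cdots m_{j_l}}\sum_{i \in G\cap F} c_i,$$
and then to trichotomize according to how the auxiliary weight $m_{j_1}\cdots m_{j_l}$ compares to the thresholds $m_{j_0+1}$ and $m_{j_0}^2$. The three terms of the stated maximum each carry a factor $m_{j_0}$, which is exactly the scaling in $\tilde x$, so the trichotomy corresponds naturally to the three summands. The hypothesis $\max_i j_i \neq j_0$ splits cleanly into $\max_i j_i > j_0$ or $\max_i j_i < j_0$, and in the latter case I further distinguish whether the weight exceeds $m_{j_0}^2$ or not.

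The two easy cases use only the normalization $\sum_{i \in F} c_i = 1$. If $\max_i j_i \geq j_0 + 1$, then $m_{j_1}\cdots m_{j_l}\geq m_{j_0+1}$, and the trivial bound yields $|f(\tilde x)| \leq m_{j_0}/m_{j_0+1}$. If instead all $j_i \leq j_0-1$ but $m_{j_1}\cdots m_{j_l} \geq m_{j_0}^2$, the same trivial bound yields $|f(\tilde x)| \leq 1/m_{j_0}$. These are precisely the second and third terms of the max.

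The substantive case is when all $j_i \leq j_0-1$ and $m_{j_1}\cdots m_{j_l} < m_{j_0}^2$. Here I invoke growth condition (iii) with $j = j_0 - 1$, obtaining $n_{j_0} > n_{j_1}+\cdots+n_{j_l} + 1$, so the Schreier complexity $n_{j_1}+\cdots+n_{j_l}$ governing $G$ sits strictly below $n_{j_0}-1$, the exponent governing $x$. After noting that $\mathcal{S}_k \ast \mathcal{A}_3$ is hereditary (routine from the definition, using the heredity and spreading of $\mathcal{S}_k$), we have $G \cap F \in \mathcal{S}_{n_{j_1}+\cdots+n_{j_l}}\ast \mathcal{A}_3$, and Remark~\ref{some remarks for the far future 1} applied to the $(n_{j_0}-1,\varepsilon)$-basic s.c.c.\ $x$ gives $\sum_{i \in G\cap F} c_i < 3\varepsilon$. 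Using $m_{j_1}\cdots m_{j_l}\geq m_1 = 2$, this produces
$$|f(\tilde x)| \;<\; \frac{3\varepsilon\, m_{j_0}}{m_{j_1}\cdots m_{j_l}} \;\leq\; \tfrac{3}{2}\varepsilon\, m_{j_0} \;\leq\; 2\varepsilon\, m_{j_0},$$
which is the first term of the max.

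The principal obstacle is the calibration in the third case: it is precisely the threshold $m_{j_0}^2$ (not $m_{j_0}$) that aligns with growth condition (iii) to force the strict gap $n_{j_1}+\cdots+n_{j_l} < n_{j_0}-1$, which in turn triggers the basic s.c.c.\ estimate. The assumption $\max_i j_i \neq j_0$ is essential, since if $j_0$ itself appeared among the $j_i$'s, neither the large-weight nor the growth argument would suffice to produce a bound of this form.
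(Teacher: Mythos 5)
Your proof is correct and follows essentially the same approach as the paper: the same case analysis, the same two trivial $\|f\|_\infty\|\tilde x\|_1$ bounds, and the same application of Remark~\ref{some remarks for the far future 1} in the nontrivial case. The only cosmetic difference is that you trichotomize on the weight $m_{j_1}\cdots m_{j_l}$ and apply growth condition (iii) forward, whereas the paper trichotomizes on the Schreier index $n_{j_1}+\cdots+n_{j_l}$ and applies (iii) in contrapositive form; these yield identical cases.
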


\begin{proof}
If $\max_{1\leq i\leq l} j_i > j_0$ then $\|f\|_\infty \leq 1/(m_{j_0 +1})$ which yields
\begin{equation}
\label{basis on basis eq proof 1}
|f(\tilde x)| \leq \|f\|_\infty\|\tilde x\|_1 \leq \frac{m_{j_0}}{m_{j_0+1}}.
\end{equation}
If $\max_{1\leq i\leq l} j_i < j_0$ we distinguish two cases, namely whether $n_{j_1}+\cdots+n_{j_l} < n_{j_0} - 1$ or otherwise. In the first case, as $G\in\mathcal{S}_{n_{j_1}+\cdots+n_{j_l}}\ast\mathcal{A}_3$ we obtain
\begin{equation}
\label{basis on basis eq proof 2}
|f(\tilde x)| \leq \frac{m_{j_0}}{m_{j_1}\cdots m_{j_l}}\sum_{i\in G\cap F}c_i \leq \frac{m_{j_0}}{2}3\e.
\end{equation}
If on the other hand $\max_{1\leq i\leq l} j_i < j_0$ and $n_{j_1}+\cdots+n_{j_l}\geq n_{j_0} - 1$, by property (iii) of the sequences $(m_j)_j$, $(n_j)_j$ we obtain $m_{j_1}\cdots m_{j_l} \geq m_{j_0}^2$ which gives $\|f\|_\infty \leq 1/m_{j_0}^2$. We conclude
\begin{equation}
\label{basis on basis eq proof 3}
|f(\tilde x)| \leq \|f\|_\infty\|\tilde x\|_1 \leq \frac{m_{j_0}}{m_{j_0}^2} = \frac{1}{m_{j_0}}.
\end{equation}
The result follows from combining \eqref{basis on basis eq proof 1}, \eqref{basis on basis eq proof 2}, and \eqref{basis on basis eq proof 3}.
\end{proof}

\begin{lem}
\label{upper auxiliary}
Let $N,k,l\in\N$, $\e>0$, $(t_i)_{i=1}^k$ be pairwise different natural numbers and $(x_{i,j})_{1\leq i \leq k, 1\leq j\leq l}$ be vectors in $c_{00}(\N)$ so that for each $i,j$ the vector $x_{i,j}$ is of the form
\begin{equation}
x_{i,j} = m_{t_i}\tilde x_{i,j},\text{ where } \tilde x_{i,j} = \sum_{r\in F_{i,j}}c_r^{i,j}e_r \text{ is a } (n_{t_i}-1,\e) \text{ basic s.c.c.}
\end{equation}
Then, for any scalars $(a_{i,j})_{1\leq i \leq k, 1\leq j\leq l}$ and $f\in W_\mathrm{aux}^N$ we have
\begin{equation}
\label{upper auxiliary eq}
\left|f\left(\sum_{j=1}^l\sum_{i=1}^ka_{i,j}x_{i,j}\right)\right| \leq (1+\de)\max_{1\leq i\leq k}\sum_{j=1}^l|a_{i,j}|,
\end{equation}
for any $\de$ satisfying
\begin{equation}
\label{this long delta}
 \de \geq \sum_{i=1}^k \max\left\{12\e m_{t_i},12\frac{1}{m_{t_i}},6\frac{1}{N}m_{t_i},6\frac{m_{t_i}}{m_{t_i+1}}\right\}.
\end{equation}
\end{lem}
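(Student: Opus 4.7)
The proof proceeds by induction on the complexity of $f \in W_\mathrm{aux}^N$ along the stratification $W_0^N \subset W_1^N \subset \cdots$ described in the remark following Definition \ref{aux}. In the base case $f = \pm e_r^*$, the $(n_{t_i}-1,\e)$ basic s.c.c.\ property of $\tilde x_{i,j}$ applied to the singleton $\{r\} \in \mathcal{S}_0 \subseteq \mathcal{S}_{n_{t_i}-2}$ (provided $n_{t_i} \geq 2$) gives $c_r^{i,j} < \e$, whence $|f(x_{i,j})| < \e m_{t_i}$ whenever $r \in F_{i,j}$. Summing the non-zero contributions produces a total of at most $\big(\sum_i \e m_{t_i}\big) \max_i \sum_j |a_{i,j}|$, absorbed by the $12\e m_{t_i}$ terms in $\delta$.

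For the inductive step write $f = (m_{j_1}\cdots m_{j_l})^{-1}\sum_{q=1}^d f_q$, where $(f_q)_{q=1}^d$ is $\mathcal{S}_{n_{j_1}+\cdots+n_{j_l}}\ast\mathcal{A}_3$-admissible and $N$-sufficiently large. Split the index set $\{1,\ldots,k\}$ into $I_1 = \{i : t_i \in \{j_1,\ldots,j_l\}\}$ and $I_2 = \{1,\ldots,k\}\setminus I_1$. For each $i \in I_2$ the weight of $f$ contains no factor $m_{t_i}$, so replacing each $|f_q|$ by $\chi_{\mathrm{supp}(f_q)}$ (valid since $\|f_q\|_\infty \leq 1$) and invoking the admissibility together with Remark \ref{some remarks for the far future 2}, the resulting upper envelope is a pure-weight functional of the type covered by Lemma \ref{basis on basis}, yielding a bound of the form $|f(x_{i,j})| \leq 3\max\{2\e m_{t_i}, m_{t_i}/m_{t_i+1}, 1/m_{t_i}\} |a_{i,j}|$; summation over $j$ is then absorbed by the $12\e m_{t_i}$, $6m_{t_i}/m_{t_i+1}$, and $12/m_{t_i}$ terms in \eqref{this long delta}. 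For $i \in I_1$, say $t_i = j_{s_i}$, only $f_1$ has unconstrained weight: its contribution, rescaled by the factor $m_{t_i}/(m_{j_1}\cdots m_{j_l}) \leq 1$, combines with $\sum_j |a_{i,j}|$ to produce the main term $\max_i \sum_j |a_{i,j}|$, while each subsequent $f_q$ ($q \geq 2$) has $w(f_q) > N$, giving a contribution of order $m_{t_i}/N$ per summand, absorbed by the $6(1/N)m_{t_i}$ term.

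The main obstacle is the careful admissibility bookkeeping: the union $\bigcup_q \mathrm{supp}(f_q)$ is controlled only through the admissibility of the starting points $\{\min \mathrm{supp}(f_q)\}_q$, while the individual supports may extend further. The $\mathcal{A}_3$ buffer in the definition of $W_\mathrm{aux}^N$ is designed precisely to absorb this gap, allowing $\bigcup_q \mathrm{supp}(f_q)$ (after a suitable reorganization) to be realized as a member of $\mathcal{S}_{n_{j_1}+\cdots+n_{j_l}}\ast\mathcal{A}_3$ amenable to Lemma \ref{basis on basis}, at the cost of a factor of $3$; combined with the doubling in Remark \ref{some remarks for the far future 2} this explains the constants $6$ and $12$ appearing in \eqref{this long delta}. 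A secondary subtlety is ensuring that indices $i \in I_1$ do not accumulate a factor of $l$ in the final bound; this is handled by observing that the extra weight factors $\prod_{s \neq s_i} m_{j_s}$ in the denominator decouple the matched indices, so that the sum over $i \in I_1$ telescopes to a single $\max_i \sum_j |a_{i,j}|$ rather than $l$ copies of it.
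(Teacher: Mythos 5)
Your base case is essentially correct and matches the paper's, but the inductive step contains a genuine gap.

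In the inductive step you bound the contribution of indices $i \in I_2$ by ``replacing each $|f_q|$ by $\chi_{\mathrm{supp}(f_q)}$'' and then invoking the admissibility of $(f_q)_q$ together with Lemma~\ref{basis on basis}. This does not work: the admissibility hypothesis in Definition~\ref{aux} only constrains the set $\{\min\mathrm{supp}(f_q):1\leq q\leq d\}$, not the full supports $\mathrm{supp}(f_q)$, which can be arbitrarily large. Consequently $\bigcup_q \mathrm{supp}(f_q)$ need not lie in $\mathcal{S}_{n_{j_1}+\cdots+n_{j_l}}\ast\mathcal{A}_3$ and the ``upper envelope'' you construct is not a functional to which Lemma~\ref{basis on basis} applies. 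Your assertion that the $\mathcal{A}_3$ buffer is designed to absorb precisely this gap is a misreading of its purpose; it is there to accommodate the triple $(e_{t_i}^*, h_q, g_q)$ in the basic inequality, Proposition~\ref{basic inequality}, not to enlarge starting-point admissibility into full-support admissibility.

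What is missing is the split of $\sum_{q=2}^d f_q$ into the functionals equal to $\pm e_n^*$ (call this $g_2$) and the genuine weighted functionals with $w(f_q)>N$ (call this $g_3$). Only for $g_2$ are the supports singletons, so that the admissibility of the starting points actually controls the union of supports; this is the only place Lemma~\ref{basis on basis} can legitimately be used, and it produces the $12\e m_{t_i}$, $12/m_{t_i}$, $6 m_{t_i}/m_{t_i+1}$ terms. The part $g_3$ is small for an entirely different reason: the $N$-sufficiently large condition gives $\|f_q\|_\infty \leq 1/w(f_q)<1/N$, so $\|g_3\|_\infty \leq 1/(N\,m_{j_1}\cdots m_{j_a})$, and this $\ell_\infty$-smallness (not admissibility) yields the $6\,m_{t_i}/N$ term. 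Finally $g_1 = w(f)^{-1}f_1$ is handled by the inductive hypothesis alone. Your decomposition conflates $g_2$ and $g_3$ and so cannot obtain the claimed estimates. A secondary, less critical issue: the relevant dichotomy on indices $i$ is whether $t_i$ equals $\max\{j_1,\ldots,j_a\}$ (at most one such $i$ can exist, and for it $|g_2(x_{i,j})|\leq 1$ while the factor $1/w(f)\leq 1/m_{t_i}$ rescues the estimate); it is not whether $t_i\in\{j_1,\ldots,j_a\}$, since Lemma~\ref{basis on basis} already applies whenever $t_i\neq\max_s j_s$.
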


%\begin{rem}
%Before proceeding to the proof of Lemma \ref{upper auxiliary} let us note the following. Given $n\in\N$ we can choose $(m_{j_t})_{t=1}^n$ and then $\e>0$ and $N\in\N$ so that $\de$ is arbitrarily small. This follows from properties (ii) and (iii) of the sequences $(m_j)$, $(n_j)_j$ from Section \ref{definition section}, page \pageref{definition section}.
%\end{rem}

\begin{rem}
We point out that the vectors $x_{i,j}$, $1\leq i \leq k, 1\leq j\leq l$, are not required to have successive or disjoint supports.
\end{rem}

\begin{proof}[Proof of Lemma \ref{upper auxiliary}]
The proof is performed by induction on $m = 0,1,\ldots$ by showing that \eqref{upper auxiliary eq} holds for every $f\in W_m^N$. For $m = 0$ the result easily follows from the fact that for all $n\in\mathbb{N}$ and $1\leq i\leq k$, $1\leq j\leq l$ we have $|e_n^*(x_{i,j})| \leq m_{t_i}\e$ which yields
\[\left|e_n^*\left(\sum_{j=1}^l\sum_{i=1}^ka_{i,j}x_{i,j}\right)\right| \leq \left(\e \sum_{i=1}^km_{t_i}\right)\max_{1\leq i\leq k}\sum_{j=1}^l|a_{i,j}|.\]

Assume that the conclusion holds for every $f\in W_m^N$ and let $f\in W_{m+1}^N\setminus W_{m}^N$. Write
$$f = \frac{1}{m_{j_1}\cdots m_{j_a}}\sum_{q=1}^df_q$$
where $j_1,\ldots,j_a\in\N$ and $(f_q)_{q=1}^d$ is an $N$-sufficiently large and $\mathcal{S}_{n_{j_1}+\cdots+n_{j_a}}\ast\mathcal{A}_3$-admissible sequence of functionals in $W_m^N$. We define $b = \max\{j_1,\ldots,j_a\}$. The inductive assumption yields
\begin{equation}
\label{upper auxiliary proof eq1}
 \left|f_1\left(\sum_{j=1}^l\sum_{i=1}^ka_{i,j}x_{i,j}\right)\right| \leq (1+\de)\max_{1\leq i\leq k}\sum_{j=1}^l|a_{i,j}|.
\end{equation}
Set $B = \{2\leq q\leq  d: f_q = \pm e_n^* \text{ for some }n\in\N\}$ and $C = \{2,\ldots,d\}\setminus B$. Define
$$g_1 = \frac{1}{m_{j_1}\cdots m_{j_a}}f_1,\; g_2 = \frac{1}{m_{j_1}\cdots m_{j_a}}\sum_{q\in B}f_q, \text{ and } g_3 = \frac{1}{m_{j_1}\cdots m_{j_a}}\sum_{q\in C}f_q.$$
Clearly, $f = g_1 + g_2 + g_3$. It follows from the definition of $N$-sufficiently large that $\|g_3\|_\infty\leq 1/(Nm_{j_1}\cdots m_{j_a})$ which implies that for all $1\leq i \leq k, 1\leq j\leq l$ we have $|g_3(x_{i,j})|\leq m_{t_i}/(Nm_{j_1}\cdots m_{j_a})$ and hence
\begin{equation}
\begin{split}
\left|g_3\left(\sum_{j=1}^l\sum_{i=1}^ka_{i,j}x_{i,j}\right)\right| &\leq \left(\frac{1}{Nm_{j_1}\cdots m_{j_a}}\sum_{i=1}^km_{t_i}\right)\max_{1\leq i\leq k}\sum_{j=1}^l|a_{i,j}|\\
&\leq \frac{\de}{6}\max_{1\leq i\leq k}\sum_{j=1}^l|a_{i,j}|.
\end{split}
\end{equation}
Lemma \ref{basis on basis} yields that if we set $D = \{1\leq i\leq k:t_i\neq b\}$ then
\begin{equation*}
\begin{split}
\left|g_2\left(\sum_{j=1}^l\sum_{i\in D}^na_{i,j}x_{i,j}\right)\right| &\leq
\sum_{j=1}^l\sum_{i\in D}|a_{i,j}|\max\left\{2\e m_{t_i},\frac{m_{t_i}}{m_{t_i+1}},\frac{1}{m_{t_i}}\right\}\\
&\leq
\frac{\de}{6}\max_{1\leq i\leq k}\sum_{j=1}^l|a_{i,j}|,\end{split}
\end{equation*}
whereas an easy computation yields that if there is $1\leq i_0\leq k$ with $b = t_{i_0}$ then for all $1\leq j\leq l$ we have $|g_2(x_{i_0,j})|\leq 1$ and hence
\begin{equation}
\left|g_2\left(\sum_{j=1}^la_{i_0,j}x_{i_0,j}\right)\right| \leq \sum_{j=1}^l|a_{i_0,j}| \leq \max_{1\leq i\leq k}\sum_{j=1}^l|a_{i_0,j}|.
\end{equation}
We now have all the necessary components to complete the inductive step. We consider two cases, namely one in which such an $i_0$ does not exist  (i.e. when $D = \{1,\ldots,k\}$) and one in which such an $i_0$ exists  (i.e. $b = t_{i_0}$ for some  $1\leq i_0\leq k$). In the first case we obtain
\begin{align*}
\left|f\left(\sum_{j=1}^l\sum_{i=1}^ka_{i,j}x_{i,j}\right)\right|\span\\
&\leq \left|g_1\left(\sum_{j=1}^l\sum_{i=1}^ka_{i,j}x_{i,j}\right)\right| + \left|g_2\left(\sum_{j=1}^l\sum_{i=1}^ka_{i,j}x_{i,j}\right)\right| + \left|g_3\left(\sum_{j=1}^l\sum_{i=1}^ka_{i,j}x_{i,j}\right)\right|\\
& = \left|g_1\left(\sum_{j=1}^l\sum_{i=1}^ka_{i,j}x_{i,j}\right)\right| + \left|g_2\left(\sum_{j=1}^l\sum_{i\in D}ka_{i,j}x_{i,j}\right)\right| + \left|g_3\left(\sum_{j=1}^l\sum_{i=1}^ka_{i,j}x_{i,j}\right)\right|\\
&\leq \left(\frac{1+\de}{m_{j_1}\cdots m_{j_a}} + \frac{\de}{6} + \frac{\de}{6}\right)\max_{1\leq i\leq k}\sum_{j=1}^l|a_{i,j}| \leq (1+\de)\max_{1\leq i\leq k}\sum_{j=1}^l|a_{i,j}|.
\end{align*}
In the second case
\begin{align*}
\left|f\left(\sum_{j=1}^l\sum_{i=1}^ka_{i,j}x_{i,j}\right)\right|\span\\
&\leq \left|g_1\left(\sum_{j=1}^l\sum_{i=1}^ka_{i,j}x_{i,j}\right)\right| + \left|g_2\left(\sum_{j=1}^l\sum_{i=1}^ka_{i,j}x_{i,j}\right)\right| + \left|g_3\left(\sum_{j=1}^l\sum_{i=1}^ka_{i,j}x_{i,j}\right)\right|\\
& = \left|g_1\left(\sum_{j=1}^l\sum_{i=1}^ka_{i,j}x_{i,j}\right)\right| + \left|g_2\left(\sum_{j=1}^l\sum_{i\in D}a_{i,j}x_{i,j}\right)\right|  + \left|g_2\left(\sum_{j=1}^la_{i_0,j}x_{i_0,j}\right)\right|\\
&+ \left|g_3\left(\sum_{j=1}^l\sum_{i=1}^ka_{i,j}x_{i,j}\right)\right|\text{ (use $m_{j_1}\cdots m_{j_a} \geq m_{t_{i_0}}$)}\\
& \leq \left(\frac{1+\de}{m_{t_{i_0}}} + \frac{\de}{6} + 1 +\frac{\de}{6}\right)\max_{1\leq i\leq k}\sum_{j=1}^l|a_{i,j}| \text{ (use $\delta\geq 6/m_{t_{i_0}}$)}\\
&\leq\left(1 +3\frac{\de}{6} + \frac{\de}{2}\right)\max_{1\leq i\leq k}\sum_{j=1}^l|a_{i,j}|.
\end{align*}
The proof is complete
\end{proof}

\section{Rapidly increasing sequences and the basic inequality}
\label{ris and basic section}
Rapidly increasing sequences appear in every HI-type construction and this case is no different as the definition below follows the line of classical examples such as \cite{AH}. The basic inequality on such sequences is the main tool used to bound the norm of such vectors from above by the norm of vectors in the auxiliary spaces. To achieve the isometric representation of unconditional sequences as block sequences in subspaces of $\X$ we give a rather tight estimate in the basic inequality \eqref{this is the basic inequality in the flesh}.

\begin{defn}
\label{definition ris}
Let $C\geq 1$, $I$ be an interval of $\N$ and $(j_i)_{i\in I}$ be a strictly increasing sequence of natural numbers. A block sequence $(x_i)_{i\in I}$ is called a $(C,(j_i)_{i\in I})$ rapidly increasing sequence (RIS) if the following are satisfied.
\begin{itemize}
 \item[(i)] For all $i\in I$ we have $\|x_i\| \leq C$,
 \item[(ii)] for $i\in I\setminus\{\min(I)\}$ we have $\max\supp(x_{i-1}) < \sqrt{m_{j_i}}$, and
 \item[(iii)] $|f(x_i)| \leq C/w(f)$ for every $i\in I$ and $f\in\W$ with $w(f) < m_{j_i}$.
\end{itemize}
\end{defn}

\begin{prop}[basic inequality]
\label{basic inequality}
Let $(x_i)_{i\in I}$ be a $(C,(j_i)_{i\in I})$-RIS, $(a_i)_{i\in I}$ be a sequence of scalars, and $N < \min\{m_{j_{\min(I)}}, \min\supp(x_{\min(I)})\}$ be a natural number. Then, for every $f\in\W$ there exist $h\in\{\pm e_i^*:i\in\N\}\cup\{0\}$ and $g\in W_\mathrm{aux}^N$ with $w(f) = w(g)$ so that if $t_i = \max\supp(x_i)$ for $i\in I$ then we have
\begin{equation}
\label{this is the basic inequality in the flesh}
\left|f\left(\sum_{i\in I}a_i x_i\right)\right| \leq C\left(1+\frac{1}{\sqrt{m_{j_{i_0}}}}\right)\left|(h + g)\left(\sum_{i\in I}a_ie_{t_i}\right)\right|.
\end{equation}
\end{prop}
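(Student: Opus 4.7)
The plan is to proceed by induction on the least $n$ with $f\in W_n$ (using Remark~\ref{inductive construction norming}). The base case $f = \pm e_k^*$ is immediate: I would set $g = 0$ and take $h = \pm e_{t_{i_1}}^*$ when $k \in \supp(x_{i_1})$ for a (necessarily unique) $i_1 \in I$, else $h = 0$. Then $|f(\sum_i a_i x_i)| \leq C|a_{i_1}| = C|h(\sum_i a_i e_{t_i})|$ using $\|x_i\|_\infty \leq \|x_i\| \leq C$.

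For the inductive step I would write $f = (m_{j_1}\cdots m_{j_l})^{-1}\sum_{q=1}^d f_q$ with $(f_q)_{q=1}^d$ very fast growing and $\mathcal{S}_{n_{j_1}+\cdots+n_{j_l}}$-admissible in $\W$, and for each $i\in I$ put $Q_i = \{q : \ran(f_q)\cap\ran(x_i)\neq \emptyset\}$, a (possibly empty) interval of indices. The key classification is by weight: call $f_q$ \emph{light for $x_i$} if $w(f_q) < m_{j_i}$ and \emph{heavy for $x_i$} otherwise. The very fast growing condition forces $w(f_q) > \max\supp(f_{q-1}) \geq \min\supp(x_i)$ for every $q > \min(Q_i)$, so combined with condition (ii) of Definition~\ref{definition ris} at most one $f_q\in Q_i$ can be light for $x_i$, and that $f_q$ is necessarily $f_{\min(Q_i)}$. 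For heavy $f_q$ I would apply the inductive hypothesis directly when $\ran(f_q)\subseteq \ran(x_i)$ or estimate using $\|x_i\|\leq C$ when $f_q$ straddles a boundary; for the one possible light $f_q$, I would invoke RIS condition (iii) to get $|f_q(x_i)|\leq C/w(f_q)$.

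All collected data would assemble into $g = (m_{j_1}\cdots m_{j_l})^{-1}\sum_q g_q \in W_\mathrm{aux}^N$ of weight $w(g) = w(f)$, where each $g_q$ is either an inductive output or a signed basis functional $\pm e_{t_i}^*$ coming from heavy straddles and from the RIS-controlled light terms. The $\mathcal{S}_{n_{j_1}+\cdots+n_{j_l}} \ast \mathcal{A}_3$-admissibility of the auxiliary norming set is tailored for this: each $i\in I$ may contribute up to three positions to the final admissible family (a left-straddle, a cover, and a right-straddle), and the spreading property applied to the terminal supports $t_i$ preserves admissibility. The $N$-sufficient-largeness of the $g_q$'s follows from $N < \min\supp(x_{i_0})$ combined with very fast growth. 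The unavoidable exception is the case where $f_1$ itself is light for $x_{i_0}$---its weight being unconstrained by the very-fast-growing condition---and this single contribution would be absorbed into the $h$ term, producing the multiplicative error $1/\sqrt{m_{j_{i_0}}}$ via condition (ii) of the RIS definition.

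The hard part will be the combinatorial bookkeeping at the inductive step: verifying at every layer that the concatenation $(g_q)$ is $\mathcal{S}_{n_{j_1}+\cdots+n_{j_l}}\ast\mathcal{A}_3$-admissible (this rests on the $\mathcal{S}_{n_{j_1}+\cdots+n_{j_l}}$-admissibility of $(f_q)$, the Schreier spreading property applied to terminal supports, and the uniform boundary bound of $3$ per $x_i$), and simultaneously maintaining the tight multiplicative constant $C(1+1/\sqrt{m_{j_{i_0}}})$. Sharpness matters because the basic inequality is the principal tool for obtaining isometric finite block representability of $1$-unconditional sequences in subspaces of $\X$, so any slack introduced at an intermediate layer of induction would compound and spoil that application.
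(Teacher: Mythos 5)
Your inductive skeleton matches the paper's, but the key dissection of $f=\frac{1}{w(f)}\sum_q f_q$ that you propose is genuinely different, and it contains an error. The claim that at most one $f_q\in Q_i$ can be light for $x_i$ does not follow from the argument you give: for $q>\min(Q_i)$ you get $w(f_q) > \max\supp(f_{q-1}) \geq \min\supp(x_i)$, but RIS condition (ii) only bounds $\max\supp(x_{i-1})$ from \emph{above} by $\sqrt{m_{j_i}}$; it gives no lower bound on $\min\supp(x_i)$ in terms of $m_{j_i}$, so $w(f_q) > \min\supp(x_i)$ does not imply $w(f_q)\geq m_{j_i}$, and several $f_q$'s in $Q_i$ can indeed be light for $x_i$. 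Your account of where $h$ and the factor $1+1/\sqrt{m_{j_{i_0}}}$ come from (``$f_1$ light for $x_{i_0}$'', absorbed by RIS (iii)) also does not match what is needed: RIS (iii) on a single light $f_1$ produces a \emph{small} term $\leq C|a_{i_0}|/w(f_1)$ that needs no special absorption, and it says nothing about the contributions of the remaining $f_q$'s to the same $x_i$'s.

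The paper's split is by the RIS index $i$ and compares the weight of $f$ itself, not of the individual $f_q$'s, against $m_{j_i}$. One sets $i_0 = \max\{i\in I : w(f) \geq m_{j_i}\}$; for $i\leq i_0$, the bound $\|f\|_\infty\leq 1/w(f)\leq 1/m_{j_{i_0}}$ together with RIS (ii) (so $\max\supp(x_{i_0-1})<\sqrt{m_{j_{i_0}}}$) controls $f(\sum_{i<i_0}a_ix_i)$ by $C|a_{i_1}|/\sqrt{m_{j_{i_0}}}$ and $f(a_{i_0}x_{i_0})$ by $C|a_{i_1}|$ --- this is the source of $h=\pm e_{t_{i_1}}^*$ and the $\sqrt{m_{j_{i_0}}}$ error. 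For $i>i_0$ one then splits into the set $A$ of $i$'s for which $x_i$ meets at most one $f_q$ (handled by applying the inductive hypothesis to $f_q$ and the full sub-RIS $(x_i)_{i\in I_q}$, not to singletons as you suggest) and the set $B$ of $i$'s for which $x_i$ straddles several $f_q$'s (handled by RIS (iii) applied to $f$ itself, legal precisely because $w(f)<m_{j_i}$ for such $i$, and matched by a $\pm e_{t_i}^*$ term inside $g$). The factor $\mathcal{A}_3$ then arises because each $f_q$ contributes at most three functionals to $g$: the $h_q$, the $g_q$, and at most one $\pm e_{t_i}^*$ for an $i\in B$ assigned to $q$ --- a per-$q$ count, not your per-$i$ ``left-straddle / cover / right-straddle'' count. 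Finally, the inductive hypothesis must be strengthened to record $\supp(h),\supp(g)\subseteq\{t_i:i\in I\}$ and that whenever $h\neq 0$ one has $t_{i_1}<\min\supp(g)$ and $w(f)>N$; this last clause is exactly what lets one verify that $g_1$ either occurs first in the admissible enumeration or satisfies $w(g_1)>N$, which your outline does not address.
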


\begin{proof}
We use Remark \ref{inductive construction norming} to prove the statement by induction on $n=0,1,\ldots$ for every $f\in W_n$ and every RIS. We shall also include in the inductive assumption that $\supp(h)$ and $\supp(g)$ are subsets of $\{t_i:i\in I\}$ as well as the following:
\begin{itemize}
 \item[(i)] either $h = 0$,
 \item[(ii)] or $h$ is of the form $\pm e_{t_{i_1}}^*$ for some $i_1\in I$,  $t_{i_1} < \min\supp(g)$, and $w(f) > N$.
\end{itemize}

For $n=0$ the result is rather straightforward so let us assume that the conclusion holds for every $f\in W_n$ and let $f\in W_{n+1}$.
%Without loss of generality we may assume that the coefficients of $f$, of the vectors $x_i$, as well as the scalars $a_i$ are all non-negative.
Let
$$f = \frac{1}{m_{s_1}\cdots m_{s_l}}\sum_{q=1}^df_q$$
with $(f_q)_{q=1}^d$ being  an $\mathcal{S}_{n_{s_1}+\cdots+n_{s_l}}$ admissible and very fast growing sequence of weighted functionals in $W_n$. By perhaps omitting an initial interval of the $f_q$'s we may assume that $\max\supp(f_1) \geq \min\supp (x_1)$. This means that for all $1<q\leq d$ we have $w(f_q) > \max\supp (f_1) > N$. We shall use this near the end of the proof. Define
$$i_0 = \max\left\{i\in I: m_{s_1}\cdots m_{s_l}\geq m_{j_i}\right\},$$
if such an $i_0$ exists (we will treat the case in which such an $i_0$ does not exist slightly further below). In this case $w(f) = m_{s_1}\cdots m_{s_l}\geq m_{j_{i_0}} >N$. Choose $\min(I)\leq i_1\leq i_0$ that maximizes the quantity $|a_i|$ for $i$ in $\{\min(I),\ldots,i_0\}$ and set $h = \mathrm{sign}(f(a_{i_1}x_{i_1}))e_{i_1}^*$. If $i_0>\min(I)$ it is straightforward to check $\|\sum_{i<i_0}a_ix_i\|_\infty \leq C|a_{i_1}|$ and we use this to show
\begin{equation}
\label{basic inequality eq1}
\begin{split}
\left|f\left(\sum_{i\leq i_0}a_i x_i\right)\right| &\leq \max\supp(x_{i_0-1})\left\|\sum_{i < i_0}a_i x_i\right\|_\infty \frac{1}{w(f)} + \left|f(a_{i_0}x_{i_0})\right|\\
&\leq C\frac{\max\supp(x_{i_0-1})}{m_{j_{i_0}}}|a_{i_1}| + C|a_{i_1}| \leq C\left(1+\frac{1}{\sqrt{m_{j_{i_0}}}}\right)|a_{i_1}|\\
& = C\left(1+\frac{1}{\sqrt{m_{j_{i_0}}}}\right)\left|h\left(\sum_{i\in I}a_ie_{t_i}\right)\right|.
\end{split}
\end{equation}
If $i_0 = \min(I)$ we simply obtain $|f(\sum_{i\leq i_0}a_ix_i)| \leq C|a_{i_1}|$. In either case estimate \eqref{basic inequality eq1} holds.

If such an $i_0$ does not exist (i.e. when $w(f) < m_{j_{\min(I)}}$) then set $h = 0$ and we have no lower bound for $w(f)$. This is of no concern as such a restriction is not included in the inductive assumption when $h=0$.

\begin{comment}
Otherwise set $i_0 = \min(I)$. Choose $\min(I)\leq i_1\leq i_0$ that maximizes the quantity $|a_i|$ for $i$ in $\{\min(I),\ldots,i_0\}$. If $i_0>\min(I)$ it is straightforward to check $\|\sum_{i<i_0}a_ix_i\|_\infty \leq C|a_{i_1}|$ and we use this to show
\begin{equation}
\label{basic inequality eq1}
\begin{split}
\left|f\left(\sum_{i\leq i_0}a_i x_i\right)\right| &\leq \max\supp(x_{i_0-1})\left\|\sum_{i < i_0}a_i x_i\right\|_\infty \frac{1}{w(f)} + \left|f(a_{i_0x_{i_0}})\right|\\
&\leq C\frac{\max\supp(x_{i_0-1})}{m_{j_{i_0}}}|a_{i_1}| + C|a_{i_1}| \leq C\left(1+\frac{1}{\sqrt{m_{j_{i_0}}}}\right)|a_{i_1}|.
\end{split}
\end{equation}
If $i_0 = \min(I)$ we simply obtain $|f(\sum_{i\leq i_0}a_ix_i)| \leq C|a_{i_1}|$. In either case estimate \eqref{basic inequality eq1} holds.
\end{comment}

Depending on whether the above $i_0$ exists or not define $\tilde I = \{i\in I:i>i_0\}$ or $\tilde I = I$. It remains to find $g\in W_\mathrm{aux}^N$ with $w(g) = w(f)$ and $\supp(g)\subset \{t_i:i\in\tilde I\}$ so that $|f(\sum_{i\in\tilde I}a_ix_i)| \leq C(1+1/m_{j_0})|g(\sum_{i\in \tilde I}a_ie_{t_i})|$. Define
\begin{align*}
A &= \left\{i\in\tilde I: \text{ there exists at most one } q \text{ with }\ran(x_i)\cap\ran(f_q)\neq\emptyset\right\},\\
I_q &= \left\{i\in A: \ran(f_q)\cap\ran(x_i)\neq\emptyset\right\}\text{ for }1\leq q\leq d,\\
D &= \{1\leq q\leq d: I_q\neq \emptyset\}\text{ and}\\
B &= \tilde I\setminus A.
\end{align*}
Observe that the $I_q$'s are pairwise disjoint intervals. Apply the inductive assumption for each $f_q$ with $q\in D$ and the $(C,(m_{j_i})_{i\in I_q})$ RIS $(x_i)_{i\in I_q}$ to find $h_q\in\{\pm e_{t_i}^*:i\in I_q\}\cup\{0\}$ and $g_q\in W_\mathrm{aux}^N$ satisfying the inductive assumption, in particular
\begin{equation*}
\left|f_q\left(\sum_{i\in I_q}a_i x_i\right)\right| \leq C\left(1+\frac{1}{\sqrt{m_{j_{i^q_0}}}}\right)\left|(h_q + g_q)\left(\sum_{i\in I_q}a_ie_{t_i}\right)\right|.
\end{equation*}
Using the above it is not hard to see that $h$ and
$$g = \frac{1}{m_{s_1}\cdots m_{s_l}}\left(\sum_{i\in B}\mathrm{sign}(f(a_ix_i))e^*_{t_i}+ \sum_{q=1}^dh_q +\sum_{q=1}^d g_q\right)$$
satisfy \eqref{this is the basic inequality in the flesh}. To complete the proof it remains to show that the vectors $(e^*_{t_i})_{i\in B}{}^\frown (h_q)_{q\in D}{}^\frown (g_q)_{q\in D}$ can be ordered to form an $\mathcal{S}_{n_{s_1}+\cdots+n_{s_l}}\ast\mathcal{A}_3$ admissible and $N$-sufficiently large sequence.

For each $1\leq q \leq d$ we shall define a collection of at most three functionals $\mathcal{F}_q$ (it may also be empty) with the following properties:
\begin{itemize}
 %\item[(a)] for $1\leq q\leq d$ the functionals in $\mathcal{F}_q$ have disjoint ranges,
 \item[(a)] for each $\phi\in\mathcal{F}_q$ we have $\min\supp(f_q) \leq \min\supp(\phi)$ and if $1\leq q<d$ the $\max\supp(\phi) < \min\supp(f_{q+1})$
 \item[(b)] $\cup_{1\leq q\leq d}\mathcal{F}_q = \{e^*_{t_i}: i\in B\}\cup\{h_q:q\in D\}\cup\{g_q:q\in D\}$
\end{itemize}
For each $i\in B$ set $q_i = \max\{1\leq q\leq d: \min\supp(f_q)\leq \max\supp(x_i)\}$. Note that the correspondence $i\to q_i$ is strictly increasing. For each $q$ for which there is $i$ so that $q = q_i$ set $\mathcal{F}_q = \{h_q,g_q,e_{t_i}^*\}$. Depending on whether $q\in D$ and whether $h_q = 0$, some of the functionals $h_q$, $g_q$ may be omitted. For $q$ for which there is no $i$ with $q=q_i$ define $\mathcal{F}_q= \{h_q,g_q\}$, omitting if necessary any of $h_q$ or $g_q$. Properties (a) and (b) are not very hard to show.

It now follows from (a) and the spreading property of the Schreier families that the set $\{\min\supp(h):h\in\cup_{1\leq q\leq d}\mathcal{F}_q$ is $\mathcal{S}_{n_{s_1}+\cdots+n_{s_l}}\ast\mathcal{A}_3\}$ admissible. It follows from (b) that ordering the functionals in $(e^*_{t_i})_{i\in B}{}^\frown (h_q)_{q\in D}{}^\frown (g_q)_{q\in D}$ according to the minimum of their supports they are $\mathcal{S}_{n_{s_1}+\cdots+n_{s_l}}\ast\mathcal{A}_3$ admissible.

We now show that the sequence is $N$ sufficiently large. Recall now that for all $q>1$ we have $w(f_q) > N$ and hence if $g_q$ is defined we have $w(g_q)>N$. It remains to show that if $g_1$ is defined and it does not appear first in the enumeration above then $w(g_1) > N$. For this to be the case, the set $\mathcal{F}_1$ must contain the functional $h_1\neq 0$. By the inductive assumption this means $w(g_1) = w(f_1) > N$ and the proof is complete.
\end{proof}

\subsection{Existence of rapidly increasing sequences}
\label{section ris existence}
As is the case in past constructions, rapidly increasing sequences are given by special convex combinations of normalized block vectors that are bounded from bellow. To achieve the desired isometric representation we show that this lower bound may be chosen arbitrarily close to one. We then show that such sequences can be chosen to be $C$-RIS for any $C>1$.

\begin{prop}
\label{very good ell1 vectors}
Let $Y$ be a block subspace of $X$. Then for every $n\in\N$, $\e$, and $\de>0$ there exists a $(n,\e)$ s.c.c. $x = \sum_{i=1}^mc_ix_i$ with $\|x\| > 1/(1+\de)$ where $x_1,\ldots,x_m$ are in the unit ball of $Y$.
\end{prop}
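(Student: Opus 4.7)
The plan is to proceed by induction on $n$, leveraging the uniform $\ell_1$-spreading model estimate from Proposition \ref{uniform ell1} and the existence of basic special convex combinations from Proposition \ref{basic scc exist in abundance}. The key tension is that Proposition \ref{uniform ell1} gives a lower bound of $1/(2m_{j_0})$ on $\mathcal{S}_{n_{j_0}}$-admissible convex combinations, which is far from the claimed $1/(1+\de)$. The construction must therefore extract from the spreading model a near-isometric $\ell_1$-type combination by iterating.

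For the base case $n=1$, the task is to find $d$ normalized successive block vectors $x_1<\cdots<x_d$ in $Y$ with $d\leq\min\supp(x_1)$ such that $\|(1/d)\sum_r x_r\|$ exceeds $1/(1+\de)$. First I would apply Proposition \ref{uniform ell1} to obtain a normalized block sequence $(y_k)$ in $Y$ with the uniform $\ell_1$ lower estimate on every $\mathcal{S}_{n_j}$-admissible subset. Then, rather than averaging the $y_k$ directly (which yields only $\geq 1/4$), I would replace each building block by a normalized convex combination of $y_k$'s chosen so that a single functional $f\in\W$ simultaneously witnesses the norm of many of them; specifically, one constructs $x_r$'s whose norming functionals can be aggregated into a very-fast-growing $\mathcal{S}_{n_{j_0}}$-admissible sequence, producing an $f\in\W$ with $f(x_r)$ uniformly close to $1$ for all $r$, whence $f\bigl((1/d)\sum x_r\bigr)$ is close to $1$.

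For the inductive step, I would use the decomposition $\mathcal{S}_n=\mathcal{S}_1\ast\mathcal{S}_{n-1}$. By the inductive hypothesis, for an appropriate $\e'$ and $\de'$ I can build $d$ successive $(n-1,\e')$-s.c.c.'s $z^{(l)}=\sum_k c^{(l)}_k x^{(l)}_k$ in $Y$, each of norm exceeding $1/(1+\de')$. Taking a $(1,\e_1)$-basic s.c.c. $\sum_l b_l z^{(l)}$ produces, by construction of the coefficients $b_l c^{(l)}_k$, a $(n,\e)$-s.c.c. of the original unit-ball vectors $x^{(l)}_k$. Applying the base case to the sequence $(z^{(l)})_l$ (treated after subsequencing as a block sequence of approximately normalized vectors) yields a combination whose norm multiplicatively combines factors $(1-\eta_1)(1-\eta_2)>1/(1+\de)$ for judicious choice of parameters, since the very-fast-growing constraint in $\W$ allows the requisite norming functionals at each level to be pasted together.

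The main obstacle is the base case: upgrading the spreading-model constant $4$ from Proposition \ref{uniform ell1} to an essentially isometric $\ell_1$-average in $Y$. Unlike the situation of a pure $\ell_1$-subspace (where James' non-distortability would apply directly), here reflexivity of $\X$ prevents such a subspace, so the argument must exploit the specific $\W$-structure: one chooses the $x_r$'s at levels deep enough that the very-fast-growing weight condition on the corresponding norming functionals guarantees they can be combined into a single functional in $\W$, and the slack $1/\sqrt{m_{j_0}}$ in the basic inequality (Proposition \ref{basic inequality}) ensures that upper and lower bounds meet within a factor $1+\de$.
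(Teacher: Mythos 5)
The paper proves this by contradiction, not by a direct construction, and that difference is essential. Your base case is exactly where the argument breaks down: you need a single $f\in\W$ with $f(x_r)$ uniformly close to $1$ for all $r$, so that $f\bigl((1/d)\sum_r x_r\bigr)$ is close to $1$. But any $f\in\W$ that ``sees'' more than one of the successive normalized blocks $x_1<\cdots<x_d$ arises from an operation $(1/(m_{j_1}\cdots m_{j_l}))\sum_q f_q$ and hence has a weight $\geq m_1=2$; aggregating the individual norming functionals $f_r$ (with $f_r(x_r)=1$) into such an $f$ gives $f(x_r)\leq 1/2$, and so $f\bigl((1/d)\sum_r x_r\bigr)\leq 1/2$. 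The only functionals with action of size $1$ on some $x_r$ act on a single one of them and give $\bigl|f\bigl((1/d)\sum_r x_r\bigr)\bigr|\approx 1/d$. So the best \emph{direct} lower bound you can extract is $1/2$, not $1/(1+\de)$; the ``slack $1/\sqrt{m_{j_0}}$'' in the basic inequality supplies \emph{upper} estimates and does not help here. Your inductive step then reduces to the same obstacle at each level, so the gap is not localized to $n=1$.

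The paper's proof avoids this by assuming, for contradiction, that \emph{every} $(n,\e)$-s.c.c.\ in the unit ball of $Y$ has norm $\leq 1/(1+\de)$. That assumption licenses a renormalization: any such s.c.c.\ $\tilde x$ can be multiplied by $(1+\de)$ and stays in the unit ball, so one may form a fresh $(n,\e)$-s.c.c.\ of these rescaled vectors, multiply by $(1+\de)$ again, and iterate. After $k=\lfloor n_j/n\rfloor$ iterations one obtains a unit-ball vector of the form $(1+\de)^k$ times an $\mathcal{S}_{nk}$-admissible convex combination of the original normalized blocks. Proposition \ref{uniform ell1} (the uniform $\ell_1$ spreading model estimate with constant $1/(2m_j)$) then forces $(1+\de)^{k}/(2m_j)\leq 1$, and choosing $j$ according to the growth condition $\lim_j ((1+\de)^{1/n})^{n_j}/m_j=\infty$ yields the contradiction. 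The point is that the $(1+\de)$ factors compound geometrically while the $\ell_1$ constant $1/(2m_j)$ appears only once; this is precisely what a direct construction cannot exploit. To repair your proposal you would need to replace the constructive base case with this renormalize-and-iterate contradiction scheme.
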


\begin{proof}
Towards a contradiction assume that the conclusion is false. That is, for all $\mathcal{S}_n$-admissible vectors $(x_i)_{i=1}^m$ in the unit ball of $Y$ so that the vector $x = \sum_{i=1}^mc_ix_i$ is a $(n,\e)$ s.c.c. we have $\|x\| \leq 1/(1+\de)$.

Start with a normalized block sequence $(x_i)_i$ in $Y$ and take a subsequence $(x_i^0)_i$ that satisfies the conclusion of Proposition \ref{uniform ell1}. Using the properties of $(m_j)$, $(n_j)_j$ fix $j\in\N$ with $n_j \geq n$ and
\begin{equation}
\label{very good ell1 vectors eq1}
\frac{\left(\left(1+\de\right)^{\frac{1}{n}}\right)^{n_j}}{m_j} \geq 2(1+\de). 
\end{equation}
Define inductively block sequences $(x^k_i)_i$ for $0\leq k\leq \lfloor n_j/n\rfloor$ satisfying.
\begin{itemize}
 \item[(i)] for each $i,k$ there is a subset $F_i^k$ of $\N$ so that  $(x_m^{k-1})_{m\in F_i^k}$ is $\mathcal{S}_n$ admissible and coefficients $(c_m^{k-1})_{m\in F_i^k}$ so that $\tilde x_i^k = \sum_{m\in F_i^k}c_m^{k-1}x_m^{k-1}$ is a $(n,\e)$ s.c.c.
 \item[(ii)] for each $i,k$ we set $x_i^k = (1+\de)\tilde x_i^k$.
\end{itemize}
Using the negation of the desired conclusion, it is straightforward to check by induction that $\|x_i^k\| \leq 1$ and that for $k\leq \lfloor n_j/n\rfloor$ each vector $x_i^k$ can be written in the form
$$x_i^k = (1+\de)^k\sum_{m\in G_i^k}d_m^kx_m^0$$
for some subset $G_i^k$ of $\N$ so that $(x_m^0)_{m\in G_i^k}$ is $\mathcal{S}_{nk}$ admissible and the coefficients satisfy $\sum_{m\in G_i^k}d_m^k = 1$. As the sequence satisfies the conclusion of Proposition \ref{uniform ell1} we deduce that  for $k = \lfloor n_j/n\rfloor$ we have $n_j - n < kn\leq n_j$
\begin{equation*}
1\geq \|x_i^k\| \geq \frac{(1+\de)^k}{2m_j}  > \frac{(1+\de)^{\frac{n_j}{n}}}{2m_j},
\end{equation*}
and therefore by \eqref{very good ell1 vectors eq1} $1\geq1+\de$ which is absurd.
\end{proof}

\begin{prop}
\label{scc are ris}
Let $x = \sum_{i=1}^mc_ix_i$ be a $(n,\e)$ s.c.c. with $\|x_i\| \leq  1$ for $1\leq i \leq m$ and $f\in\W$ with $\vec w(f) = (j_1,\ldots,j_l)$ so that $n_{j_1} +\cdots +n_{j_l} < n$. Then we have
\begin{equation*}
 |f(x)| \leq \frac{1+2\e w(f)}{w(f)}.
\end{equation*}
\end{prop}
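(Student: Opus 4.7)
The plan is to write $f = \tfrac{1}{w(f)}\sum_{q=1}^d f_q$, where $(f_q)_{q=1}^d$ is $\mathcal{S}_{n_{j_1}+\cdots+n_{j_l}}$-admissible (and very fast growing, though only the admissibility matters here), and split the index set $\{1,\ldots,m\}$ into
\[
A = \{i:\ \ran(x_i) \text{ intersects at most one } \ran(f_q)\},\qquad B = \{1,\ldots,m\}\setminus A,
\]
estimating $|f(x)|$ separately on each piece.

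The $A$-piece is easy: for $i\in A$ there is at most one index $q(i)$ with $f_{q(i)}(x_i)\neq 0$, and $|f_{q(i)}(x_i)|\leq \|x_i\|\leq 1$ since $f_{q(i)}\in\W$. Hence
\[
\Big|f\Big(\sum_{i\in A}c_ix_i\Big)\Big| \leq \frac{1}{w(f)}\sum_{i\in A}c_i \leq \frac{1}{w(f)}.
\]
The $B$-piece is where the combinatorics of special convex combinations enter. For each $i\in B$ the support of $x_i$ straddles at least two ranges, so I can pick the smallest $q(i)\geq 2$ with $\min\supp(f_{q(i)})\in\ran(x_i)$; in particular $\min\supp(f_{q(i)})\leq \phi(i):=\max\supp(x_i)$. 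Because the $x_i$ are successive blocks, the assignment $i\mapsto q(i)$ is strictly increasing on $B$, so $\{\min\supp(f_{q(i)}):i\in B\}$ sits inside $\{\min\supp(f_q):1\leq q\leq d\}\in\mathcal{S}_{n_{j_1}+\cdots+n_{j_l}}$. Hereditariness and the spreading property of $\mathcal{S}_{n_{j_1}+\cdots+n_{j_l}}$ (together with $\min\supp(f_{q(i)})\leq \phi(i)$) then force $\{\phi(i):i\in B\}\in\mathcal{S}_{n_{j_1}+\cdots+n_{j_l}}\subset\mathcal{S}_{n-1}$, using the hypothesis $n_{j_1}+\cdots+n_{j_l}<n$.

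Now I apply Remark \ref{some remarks for the far future 2}: the vector $\sum_i c_i e_{\phi(i)}$ is a $(n,2\e)$-basic s.c.c., so the $B$-subsum satisfies $\sum_{i\in B}c_i<2\e$. Combined with $|f(x_i)|\leq\|x_i\|\leq 1$, this gives $|f(\sum_{i\in B}c_ix_i)|<2\e$, and adding the two bounds yields $|f(x)|\leq 1/w(f)+2\e=(1+2\e w(f))/w(f)$, as required. The only non-routine step is the passage from admissibility of $\{\min\supp(f_q)\}$ to a Schreier bound on $\{\phi(i):i\in B\}$; this is where I must ensure that $i\mapsto q(i)$ is strictly increasing and that the resulting coordinates are spreads of admissible ones so the spreading property of $\mathcal{S}_{n_{j_1}+\cdots+n_{j_l}}$ can be invoked.
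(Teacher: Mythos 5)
Your proof is correct and follows essentially the same approach as the paper: decompose the index set into $A$ (at most one overlap with a $\ran(f_q)$) and $B$, bound the $A$-piece by $1/w(f)$ and the $B$-piece by $2\e$ using the Schreier smallness of the corresponding index set. The only cosmetic difference is that you track $\phi(i)=\max\supp(x_i)$ and invoke Remark \ref{some remarks for the far future 2}, whereas the paper uses a shifting argument on $\min\supp(x_i)$ after discarding $\min(B)$; both give $\sum_{i\in B}c_i<2\e$.
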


\begin{proof}
Let $f = (1/m_{j_1}\cdots m_{j_l})\sum_{q=1}^df_q$ with $(f_q)_{q=1}^d$ $\mathcal{S}_{n_{j_1}+\cdots+n_{j_l}}$-admissible. Consider the subset of $\{1,\ldots,m\}$
$$A = \left\{i: \text{ there is at most one } 1\leq q\leq 	d \text{ with } \ran(x_i)\cap\ran(f_q)\neq\emptyset\right\}$$
and observe that for each $i\in A$ we have $|f(x_i)| \leq 1/(m_{j_1}\cdots m_{j_l})$ and hence
\begin{equation}
\label{scc are ris eq1}
\left|f\left(\sum_{i=1}^mc_ix_i\right)\right| \leq \frac{1}{m_{j_1}\cdots m_{j_l}}\sum_{i\in A}c_i + \sum_{i\notin A}c_i.
\end{equation}
Set $B = \{1,\ldots,m\}\setminus A$. By the shifting property of the Schreier families it follows that the vectors $(x_i)_{i\in B\setminus\{\min(B)\}}$ are $\mathcal{S}_{n_{j_1}+\cdots+n_{j_l}}$ admissible. As the singleton $\{x_1\}$ is $\mathcal{S}_1$ admissible we conclude that $\sum_{i\in B}c_i < 2\e$. Applying this to \eqref{scc are ris eq1} immediately yields the desired conclusion.
\end{proof}

\begin{cor}
\label{building ris}
Let $Y$ be a block subspace of $X$ and $C>1$. Then there exists an infinite $(C,(j_i)_{i})$-RIS $(x_i)_i$ in $Y$ with $\|x_i\| \geq 1$ for all $i\in\N$.
\end{cor}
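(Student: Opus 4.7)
The plan is to build $(x_i)_i$ inductively as scaled $(n_{j_i},\varepsilon_i)$-special convex combinations of unit vectors of $Y$. Proposition~\ref{very good ell1 vectors} lets us force the norm of such a combination to be arbitrarily close to $1$, so after a mild dilation we obtain vectors with norm slightly above $1$ but still below $C$; Proposition~\ref{scc are ris} will then yield the RIS-condition (iii).

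Fix $\delta>0$ with $(1+\delta)^2\le C$. Suppose $x_1<\cdots<x_{i-1}$ and $j_1<\cdots<j_{i-1}$ have been constructed, and set $N_{i-1}=\max\supp(x_{i-1})$ (with $N_0=0$). Choose $j_i>j_{i-1}$ large enough that $\sqrt{m_{j_i}}>N_{i-1}$, and set $\varepsilon_i=\delta/(2m_{j_i})$. Let $Y_i=\{y\in Y:\min\supp(y)>N_{i-1}\}$. Applying Proposition~\ref{very good ell1 vectors} to $Y_i$ with parameters $n=n_{j_i}$, $\varepsilon=\varepsilon_i$, and $\delta$ produces a $(n_{j_i},\varepsilon_i)$-s.c.c. $\tilde x_i=\sum_k c_k y_k$ with each $y_k$ in the unit ball of $Y_i$ and $\|\tilde x_i\|>1/(1+\delta)$. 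Set $x_i=(1+\delta)\tilde x_i$, so that $1<\|x_i\|\le 1+\delta\le C$.

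Clauses (i) and (ii) of Definition~\ref{definition ris} are immediate. For (iii), let $f\in\W$ with $w(f)<m_{j_i}$ and write $\vec w(f)=(j_1',\dots,j_l')$. Since $m_{j_k'}\ge m_1=2$ for each $k$, we have $m_{j_k'}\le m_{j_1'}\cdots m_{j_l'}=w(f)<m_{j_i}$, so $j_k'\le j_i-1$. Property~(iii) of the sequences $(m_j),(n_j)$, applied with $j=j_i-1$ and using $w(f)<m_{j_i}<m_{j_i}^2$, gives $n_{j_1'}+\cdots+n_{j_l'}<n_{j_i}$. Proposition~\ref{scc are ris} therefore yields
$$|f(\tilde x_i)|\le\frac{1+2\varepsilon_i w(f)}{w(f)}\le\frac{1+2\varepsilon_i m_{j_i}}{w(f)}=\frac{1+\delta}{w(f)},$$
whence $|f(x_i)|\le (1+\delta)^2/w(f)\le C/w(f)$.

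The only nontrivial step is the transition from the weight bound $w(f)<m_{j_i}$ to the Schreier-complexity bound $n_{j_1'}+\cdots+n_{j_l'}<n_{j_i}$ that Proposition~\ref{scc are ris} demands; this is precisely what growth hypothesis~(iii) on $(m_j),(n_j)$ is designed to guarantee. Everything else is bookkeeping of the supports and norms along the inductive construction.
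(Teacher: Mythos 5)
Your proof is correct and follows essentially the same strategy as the paper's: build the sequence inductively as $(1+\delta)$-dilations of $(\cdot,\varepsilon_i)$-s.c.c.'s of unit vectors from $Y$ (obtained via Proposition \ref{very good ell1 vectors}), and invoke Proposition \ref{scc are ris} to verify RIS-condition (iii). The one genuine difference is cosmetic: the paper chooses the Schreier order $k_i$ of the $i$-th s.c.c. freely, taking it large enough so that every vector weight $(s_1,\dots,s_l)$ with $m_{s_1}\cdots m_{s_l}<m_{j_i}$ satisfies $n_{s_1}+\cdots+n_{s_l}<k_i$, and never needs the growth hypothesis (iii) on $(m_j),(n_j)$; you instead fix the order to be $n_{j_i}$ and then verify, correctly, that hypothesis (iii) guarantees this particular choice is already large enough. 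Your route is slightly more constrained but equally valid, and your bookkeeping of $\delta$, $\varepsilon_i$, and the supports is accurate.
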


\begin{proof}
We define the sequence $(x_i)_i$ inductively as follows. Fix $\de > 0$ with $1+\de<C$ and having chosen $x_1,\ldots,x_{i-1}$ choose $j_i$ with $\sqrt{m_{j_i}} > \max\supp(x_{i-1})$, choose a natural number $k_i$ with the property that for all $s_1,\ldots,s_l\in\N$ that satisfy $m_{s_1}\cdots m_{s_l} < m_{j_i}$ we have $n_{s_1}+\cdots+n_{s_l} < k_i$, and choose $\e_i > 0$ with $(1+\de)(1+2\e_im_i) \leq C$. Use Proposition \ref{very good ell1 vectors} to find an $(k_i,\e_i)$ s.c.c. $(y_i)$ in $Y$ with $\min\supp(y_i)>\max\supp(x_i)$ and $1/(1+\de)\leq \|y_i\| \leq 1$ and set $x_i = (1+\de)y_i$. Proposition \ref{scc are ris} yields that $(x_i)_i$ is the desired vector.
\end{proof}

\section{Hereditary Asymptotic structure of $\X$}
\label{fbr in X}
This section is devoted to the study of the asymptotic behavior of subspaces of $\X$. As it was shown in Section \ref{uniform unique spreading model section} the space $\X$ only admits spreading models 4-equivalent to the unit vector basis of $\ell_1$. We show that the joint behavior of arrays of sequences does not retain this uniform behavior. In fact, $c_0$ is an asymptotic model of every subspace of $\X$ and every 1-unconditional sequence is block finitely representable in every block subspace of $\X$. These results in particular yield that $\X$ does not have an asymptotic-$\ell_p$ subspace.

\begin{prop}
\label{omega joint spreading models}
Let $Y$ be a block subspace of $\X$ and $\varepsilon>0$. Then there exists an array of block sequences $(x_j^{(i)})_j$, $i\in\mathbb{N}$, in $Y$ so that for any $k,l\in\mathbb{N}$, scalars $(a_{i,j})_{1\leq i\leq k, 1\leq j\leq l}$, and plegma family $(s_i)_{i=1}^k$ in $[\mathbb{N}]^l$ with $\min(s_1) \geq \max\{k,l\}$ we have
\begin{equation}
\label{omega equation}
\max_{1\leq i\leq k}\sum_{j=1}^l|a_{i,j}| \leq \left\|\sum_{i=1}^k\sum_{j=1}^la_{i,j}x_{s_i(j)}^{(i)}\right\| \leq (1+\varepsilon)\max_{1\leq i\leq k}\sum_{j=1}^l|a_{i,j}|.
\end{equation}
\end{prop}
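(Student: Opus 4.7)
The plan is to construct ``exact vectors'' of increasing weights. I would fix a pairwise distinct and rapidly increasing sequence $(t_i)_i$ in $\N$ and set $x_j^{(i)}=m_{t_i}\tilde x_j^{(i)}$, where $\tilde x_j^{(i)}$ is a $(n_{t_i}-1,\varepsilon_{i,j})$-special convex combination of a rapidly increasing sequence $(u_n^{(i,j)})_n$ in $Y$, produced by Corollary~\ref{building ris} with RIS-constant arbitrarily close to $1$ via Proposition~\ref{very good ell1 vectors}. A diagonal inductive argument chooses the supports and the norming functionals $g_n^{(i,j)}$ of the individual RIS vectors so that, when enumerated in support order across all indices $(i,j,n)$, they form a single very fast growing sequence of weighted functionals of $\W$, and so that $(x_j^{(i)})_j$ is a block sequence in $Y$ for every $i$.

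For the upper bound, given $f\in\W$, I would apply the basic inequality (Proposition~\ref{basic inequality}) to the global RIS $(u_n^{(i,j)})_{(i,j,n)}$ with coefficients $a_{i,j}m_{t_i}c_n^{(i,j)}$, producing $h\in\{\pm e_r^*\}\cup\{0\}$ and $g\in W_{\mathrm{aux}}^N$ of the same weight as $f$ with
\[
\Bigl|f\Bigl(\sum_{i,j}a_{i,j}x_{s_i(j)}^{(i)}\Bigr)\Bigr|\leq C\Bigl(1+\tfrac{1}{\sqrt m}\Bigr)\Bigl|(h+g)\Bigl(\sum_{i,j}a_{i,j}m_{t_i}\tilde y_{i,j}\Bigr)\Bigr|,
\]
where $\tilde y_{i,j}$ is the $(n_{t_i}-1,2\varepsilon_{i,j})$-basic s.c.c.\ produced by Remark~\ref{some remarks for the far future 2}. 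Lemma~\ref{upper auxiliary}, applied with pairwise distinct weights $t_i$, bounds the $g$-contribution by $(1+\delta)\max_i\sum_j|a_{i,j}|$ with the $\delta$ from (\ref{this long delta}) made arbitrarily small, while the $h$-contribution is $O\bigl(\sum_{i,j}|a_{i,j}|m_{t_i}\varepsilon_{i,j}\bigr)$ and is absorbed by the same choice of parameters.

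For the lower bound, fix $i^*$ achieving the maximum and use $1$-unconditionality to reduce to $\|\sum_j a_{i^*,j}x_{s_{i^*}(j)}^{(i^*)}\|$. The critical structural point is that each s.c.c.\ index set $F_{s_{i^*}(j)}^{(i^*)}$ lies in $\mathcal{S}_{n_{t_{i^*}}-1}$, and since $l\leq\min(s_1)$ is at most the minimum support of the first involved RIS vector, the union $\bigcup_{j=1}^l F_{s_{i^*}(j)}^{(i^*)}$ sits in $\mathcal{S}_1\ast\mathcal{S}_{n_{t_{i^*}}-1}=\mathcal{S}_{n_{t_{i^*}}}$. Combined with the global very fast growing enumeration built in the construction, this allows the weighted functional
\[
\Phi=\frac{1}{m_{t_{i^*}}}\sum_{j=1}^l\sum_{n\in F_{s_{i^*}(j)}^{(i^*)}}\operatorname{sign}(a_{i^*,j})\,g_n^{(i^*,s_{i^*}(j))}
\]
of weight $m_{t_{i^*}}$ to belong to $\W$. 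Since $g_n^{(i^*,s_{i^*}(j))}$ essentially norms $u_n^{(i^*,s_{i^*}(j))}$ and vanishes on all the other $u$'s by support-disjointness, a direct computation gives $\Phi(x_{s_{i^*}(j)}^{(i^*)})\approx\operatorname{sign}(a_{i^*,j})$, whence $\Phi\bigl(\sum_j a_{i^*,j}x_{s_{i^*}(j)}^{(i^*)}\bigr)\approx\sum_j|a_{i^*,j}|$.

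The hard part will be the diagonal choice of parameters that simultaneously achieves (a) a globally very fast growing enumeration of all $g_n^{(i,j)}$'s across $(i,j,n)$, (b) control of the compound error $C(1+1/\sqrt m)(1+\delta)$ within $1+\varepsilon$ uniformly in the plegma, and (c) the use of s.c.c.\ degree $n_{t_i}-1$ rather than $n_{t_i}$, which is precisely what keeps the union $\bigcup_j F_{s_{i^*}(j)}^{(i^*)}$ inside $\mathcal{S}_{n_{t_{i^*}}}$ and prevents an extra factor $1/m_1=1/2$ loss in the lower bound.
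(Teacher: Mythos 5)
Your proposal is correct and follows essentially the same route as the paper: choose a single global RIS $(y_s)_s$ in $Y$ with $\|y_s\|\geq 1$ (your separate RIS's $(u_n^{(i,j)})_n$ glued together by a diagonal choice amount to the same thing), build $x_j^{(i)}=m_{t_i}\tilde x_j^{(i)}$ with $\tilde x_j^{(i)}$ a $(n_{t_i}-1,\cdot)$-s.c.c.\ over disjoint index blocks, use the basic inequality together with Lemma \ref{upper auxiliary} for the upper estimate, and for the lower estimate assemble the normalizers of the RIS vectors into a single weight-$m_{t_{i^*}}$ functional using the $\mathcal{S}_1\ast\mathcal{S}_{n_{t_{i^*}}-1}\subset\mathcal{S}_{n_{t_{i^*}}}$ observation. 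One small point worth making explicit: the very fast growing property of the normalizers $g_n^{(i,j)}$ across all $(i,j,n)$ is not something you can freely impose in the diagonal construction; it is a \emph{consequence} of the RIS definition together with $\|u_n^{(i,j)}\|\geq 1$, which forces $w(g_n^{(i,j)})\geq m_{j_n}$ (otherwise item (iii) of the RIS definition would give $g_n(u_n)<1$), and then item (ii) supplies the growth. Also, since the RIS normalizers satisfy $g_n(u_n)=\|u_n\|\geq 1$ exactly and the s.c.c.\ coefficients sum to $1$, your $\Phi(x_{s_{i^*}(j)}^{(i^*)})$ is $\geq 1$ exactly, not merely approximately, which is what gives the clean lower bound in \eqref{omega equation} with no error term.
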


\begin{proof}
Fix $1 < C < \min\{(1+\e)^{1/4},2\}$ and $0<\de \leq ((1+\e)^{1/2}-1)/2$. Using the properties of the sequences $(m_j)_j$, $(n_j)_j$ from Section \ref{definition section}, page \pageref{definition section} we fix a sequence of pairwise different natural numbers $(t_i)_{i=1}^\infty$ satisfying for $i\in\mathbb{N}$
\begin{equation}
\frac{1}{m_{t_i}}\leq \frac{\de}{12\cdot 2^i}\text{ and } \frac{m_{t_i}}{m_{t_i+1}} \leq \frac{\de}{6\cdot 2^i}.
\end{equation}
For each $k\in\mathbb{N}$ fix $\bar \e_k >0$ and $N_k\in\mathbb{N}$ so that for $1\leq i \leq k$
\begin{equation}
\bar \e_k \leq \frac{\de}{12m_{t_i}2^i} \text{ and } \frac{m_{t_i}}{N_k} \leq \frac{\delta}{6\cdot 2^i}.
\end{equation}
Observe that for any $k\in\N$ we have that $\de$, $N_k$, $\bar\e_k,$ and $(m_{t_i})_{i=1}^k$ satisfy \eqref{this long delta}.

Use Corollary \ref{building ris} to find an infinite $(C,(\bar{j_s})_{s})$-RIS $(y_s)_s$ in $Y$ with $\|y_s\| \geq 1$ for all $s\in\N$. By perhaps passing to a subsequence we may assume that for all $s\in\N$ we have
\begin{equation}
\label{*how to find that one basis eqegg}
\begin{split}
&\frac{1}{\sqrt{m_{\bar{j_s}}}} \leq (1+\e)^{1/4} - 1,\\
&{N_s}\leq m_{\bar{j_s}}, \text{ and } \min\supp(y_s)\geq \max_{1\leq i\leq s}\{n_{t_i},N_i,6/\bar\e_i\}.
\end{split}
\end{equation}
For each $s$ find $f_s$ in $\W$ with $\supp(f_s)\subset \supp(y_s)$ and $f_s(y_s) = \|y_s\| \geq 1$. Note that for all $s$ we have $w(f_s) \geq m_{\tilde{j_s}}$, otherwise by Property (iii) of \ref{definition ris} we would have $1\leq f_s(y_s) \leq C/w(f_s) < 2/w(f_s) \leq 1$ (because $m_1 = 2$) which is absurd. Hence, using Property (ii) of \ref{definition ris}, for all $s>1$ we have $w(f_s) \geq m_{\tilde{j_s}} \geq (\max\supp(y_{s-1}))^2 \geq (\max\supp(f_{s-1}))^2 > \max\supp(f_{s-1})$, i.e. $(f_s)_s$ is very fast growing.

Choose disjoint finite subsets of $\mathbb{N}$, $F^{(i)}_j$, $i,j\in\mathbb{N}$, so that for each $i,j\in\mathbb{N}$ we have $F^{(i)}_j < F^{(i)}_{j+1}$ and $\{\min\supp(y_s): s\in F^{(i)}_{j}\}$ is a maximal $\mathcal{S}_{n_{t_i}-1}$. Using Proposition \ref{basic scc exist in abundance} find coefficients $(c_{s}^{i,j})_{s\in F^{(i)}_{j}}$ so that the vector $\tilde x_{i,j} = \sum_{s\in F^{(i)}_{j}}c_s^{i,j}y_s$ is an $(n_{j_i}-1,\bar\e_j/2)$ s.c.c. Note that by Remark \ref{some remarks for the far future 2} if $\phi_s = \max\supp(y_s)$ then the vector $\tilde z_{i,j} = \sum_{s\in F^{(i)}_{j}}c_s^{i,j}e_{\phi_s}$ is a $(n_{j_i}-1,\bar\e_j)$ basic s.c.c. Hence, for any $k,l\in\mathbb{N}$ and $k \leq s_i(1) < \cdots<s_i(l)$, for $1\leq i\leq k$ the vectors $z^{(i)}_{s_i(j)} = m_{t_i}\tilde z_{i,s_i(j)}$, $1\leq i\leq k$ $1\leq j\leq l$ satisfy \eqref{upper auxiliary eq} of Lemma \ref{upper auxiliary} with the $\delta$, $N_k$, $\bar \varepsilon_k$ chosen above.

Define $x^{(i)}_{j} = m_{t_i}\tilde x_{i,j}$ for $i,j\in\mathbb{N}$. We will show that this is the desired sequence and to that end let $k,l\in\mathbb{N}$ and let $(s_i)_{i=1}^k$ be a plegma in $[\mathbb{N}]^l$ with $\min(s_1)\geq \max\{k,l\}$. For the upper inequality, Proposition \ref{basic inequality} yields that for any scalars $(a_{i,j})_{1\leq i\leq k, 1\leq j\leq l}$ we have
\begin{align*}
\left\|\sum_{j=1}^l\sum_{i=1}^ka_{i,j}x^{(i)}_{s_i(j)}\right\|\leq\span \\
\leq& C\left(1+ \frac{1}{\sqrt{m_{\bar{j_1}}}}\right)\left(\max_{\substack{1\leq i\leq k\\1\leq j\leq l}}\max_{s\in F^{(i)}_{j}}\left(m_{t_i}|a_{i,j}|c^{i,j}_s\right) +\left\|\sum_{j=1}^l\sum_{i=1}^ka_{i,i}z^{(i)}_{s_i(j)}\right\|_{\mathrm{aux},N_k}\right)\\
\leq& (1+\e)^{1/4}\left(1+\e\right)^{1/4}\left(\max_{\substack{1\leq i\leq k\\1\leq j\leq l}}\left(m_{t_i}|a_{i,j}|\bar\varepsilon_k\right)+\left\|\sum_{j=1}^l\sum_{i=1}^ka_{i,i}z^{(i)}_{s_i(j)}\right\|_{\mathrm{aux},N_k}\right)\\
\leq& (1+\e)^{1/2}\left(\de\max_{1\leq i\leq k}\sum_{j=1}^l|a_{i,j}|+\left\|\sum_{j=1}^l\sum_{i=1}^ka_{i,i}z^{(i)}_{s_i(j)}\right\|_{\mathrm{aux},N_k}\right)\\
\leq& (1+\e)^{1/2}\left(\de\max_{1\leq i\leq k}\sum_{j=1}^l|a_{i,j}|+(1+\de)\max_{1\leq i\leq k}\sum_{j=1}^l|a_{i,j}| \right)\text{ (from \eqref{upper auxiliary eq})}\\
\leq& (1+\e)^{1/2}(1+2\de)\max_{1\leq t\leq n}\sum_{s=1}^n|a_{s,t}| \leq (1+\e)\max_{1\leq t\leq n}\sum_{s=1}^n|a_{s,t}|.
\end{align*}
For the lower inequality we observe that for fixed $1\leq i_0\leq n$ the functionals $((f_s)_{s\in F^{(i_0)}_{s_{i_0}(j)}})_{j=1}^l$ are very fast growing and for each $1\leq j\leq l$ the functionals $(f_s)_{s\in F^{(i_0)}_{s_{i_0}(j)}}$ are $\mathcal{S}_{n_{t_{i_0}}-1}$ admissible. It follows from \eqref{*how to find that one basis eqegg} that $((f_s)_{s\in F^{(i_0)}_{s_{i_0}(j)}})_{j=1}^l$ is $\mathcal{S}_{n_{t_{i_0}}}$-admissible and hence $f = (1/m_{t_{i_0}})\sum_{j=1}^l\sum_{s\in F^{(i_0)}_{i_0,j}}f_s$ is in $\W$. It follows that $f(x^{(i_0)}_{s_{i_0}(j)})\geq 1$ for all $1\leq j\leq l$ which means that for any coefficients $(a_{i,j})_{1\leq i\leq k, 1\leq j\leq l}$ we have
\begin{equation*}
\begin{split}
 \left\|\sum_{j=1}^l\sum_{i=1}^ka_{i,j}x^{(i)}_{j}\right\| &=  \left\|\sum_{j=1}^l\sum_{i=1}^k|a_{i,j}|x^{(i)}_{j}\right\| \geq f\left(\sum_{j=1}^l\sum_{i=1}^k|a_{i,j}|x^{(i)}_{j}\right)\\
  &= f\left(\sum_{j=1}^l|a_{i_0,j}|x^{(i_0)}_{j}\right)\geq \sum_{j=1}^l|a_{i_0,j}|.
\end{split}
\end{equation*}
\end{proof}

\begin{thm}
\label{c0 asmodel}
Let $Y$ be a block subspace of $\X$.
\begin{itemize}

\item[(a)] For every $\varepsilon>0$ there exists an array of block sequences in $Y$ that generate an asymptotic model that is $(1+\varepsilon)$-equivalent to the unit vector basis of $c_0$.

\item[(b)] For every $\varepsilon>0$ and $k\in\mathbb{N}$ there exists a $k$-array of block sequences in $Y$ that generate a joint spreading model $(1+\varepsilon)$-equivalent to the basis of $\ell_\infty^k(\ell_1)$.
\end{itemize}
In particular, $X$ does not contain an asymptotic-$\ell_1$ subspace.
\end{thm}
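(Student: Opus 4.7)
The plan is to deduce Theorem \ref{c0 asmodel} essentially directly from Proposition \ref{omega joint spreading models}. For each $\varepsilon>0$, I would fix once and for all an array $(x^{(i)}_j)_j$, $i\in\mathbb{N}$, in $Y$ satisfying the two-sided $\ell_\infty(\ell_1)$-estimate of that proposition, namely
\[
\max_{1\leq i\leq k}\sum_{j=1}^l|a_{i,j}| \leq \left\|\sum_{i=1}^k\sum_{j=1}^la_{i,j}x_{s_i(j)}^{(i)}\right\| \leq (1+\varepsilon)\max_{1\leq i\leq k}\sum_{j=1}^l|a_{i,j}|
\]
for plegmas $(s_i)_{i=1}^k$ in $[\mathbb{N}]^l$ with $\min(s_1)\geq\max\{k,l\}$. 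Both (a) and (b) then follow by specialization; the only real work is in recognizing the resulting asymptotic structures.

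For part (b), I would fix $k\in\mathbb{N}$ and restrict to the first $k$ rows of this array. Since every strict plegma is a plegma, the estimate above holds along strict plegmas in $[\mathbb{N}]^l$ with $\min(s_1)$ large, sandwiched between the constants $1$ and $1+\varepsilon$. The quantity being bounded is independent of the plegma, so the limit defining a joint spreading model exists trivially, and the resulting joint spreading model is $(1+\varepsilon)$-equivalent to the unit vector basis of $\ell_\infty^k(\ell_1)$.

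For part (a), I would observe that taking $l=1$ in the estimate reduces it to
\[
\max_{1\leq i\leq k}|a_i| \leq \left\|\sum_{i=1}^ka_ix^{(i)}_{s_i(1)}\right\| \leq (1+\varepsilon)\max_{1\leq i\leq k}|a_i|
\]
for all strictly increasing $s_1(1)<\cdots<s_k(1)$ with $s_1(1)\geq k$. This is exactly the convergence condition required of the infinite array to generate an asymptotic model along strict plegmas in $[\mathbb{N}]^1$, and the sandwich identifies the model as $(1+\varepsilon)$-equivalent to the unit vector basis of $c_0$.

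For the final statement, suppose towards a contradiction that $\X$ contains an asymptotic-$\ell_1$ subspace $Z$. By Remark \ref{asymptotic lp versions} combined with a standard gliding-hump perturbation argument, I may assume that $Z$ contains, up to an arbitrarily small perturbation, a block subspace $Y$ of $\X$ that is itself asymptotic-$\ell_1$ with some constant $C$. Applying part (a) to this $Y$ with $\varepsilon=1$ produces an array there; after a routine diagonal extraction I may choose indices $j_i\to\infty$ growing fast enough that $(x^{(i)}_{j_i})_i$ is a block sequence in $Y$ with $\min\supp(x^{(1)}_{j_1})$ as large as desired. The asymptotic-$\ell_1$ property then forces $\left\|\sum_{i=1}^lx^{(i)}_{j_i}\right\|\geq l/C$, contradicting the uniform bound $\left\|\sum_{i=1}^lx^{(i)}_{j_i}\right\|\leq 2$ inherited from the $c_0$-estimate of part (a) once $l>2C$. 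The only mildly technical step is the diagonal extraction producing a block diagonal so that the asymptotic-$\ell_1$ inequality becomes applicable; everything else is a direct reading of Proposition \ref{omega joint spreading models}.
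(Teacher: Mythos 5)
Your argument is essentially the paper's: both derive (a) and (b) by reading off the two-sided $\ell_\infty(\ell_1)$ estimate of Proposition \ref{omega joint spreading models} along plegmas of length $l=1$ (for (a)) and along arbitrary strict plegmas of the first $k$ rows (for (b)), and your contradiction argument for the ``in particular'' clause is the natural unpacking of what the paper leaves implicit. One small overstatement worth flagging, shared with the paper's terse proof: the estimate of Proposition \ref{omega joint spreading models} bounds $\|\bar s(x)\|$ uniformly in the plegma $\bar s$ but does not make that quantity independent of $\bar s$, so the limit defining the asymptotic (resp.\ joint spreading) model does not ``exist trivially''; one should pass to a subarray generating an asymptotic model (as guaranteed by the results of \cite{HO}, \cite{AGLM}) and then note that the two-sided estimate persists along the subarray, forcing the resulting model to be $(1+\varepsilon)$-equivalent to $c_0$ (resp.\ $\ell_\infty^k(\ell_1)$).
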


\begin{proof}
Let $(x_j^{(i)})_j$, $i\in\mathbb{N}$ be the infinite array given by Proposition \ref{omega joint spreading models}, for some fixed $\varepsilon >0$. Then, it easily follows that this infinite array generates the unit vector basis of $c_0$ as a spreading model. This is because the asymptotic model is witnessed by taking one vector from each sequence. It is entirely immediate by the definition of joint spreading models that the first $k$ sequences in the array generate the basis of $\ell_\infty^k(\ell_1)$ as a joint spreading model.
\end{proof}

\begin{cor}
\label{hereditary fbr}
Let $Y$ be a block subspace of $\X$. Every 1-unconditional basic sequence is finitely block representable in $Y$. In fact, for every $k\in\mathbb{N}$ every $k$-dimensional space with a 1-unconditional basis is an asymptotic space for $Y$, in the sense of \cite{MMT}.
\end{cor}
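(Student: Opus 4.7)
The strategy is to invoke Proposition~\ref{universal for unc}, which reduces the first assertion to showing that for each $n\in\N$ and $\e>0$, the $1$-unconditional basis $(e_{i,j})_{i,j=1}^n$ of $X_n$ is $(1+\e)$-block representable in $Y$.

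Fix $n\in\N$ and $\e>0$. Theorem~\ref{c0 asmodel}(b) (or more directly Proposition~\ref{omega joint spreading models}) yields an $n$-array of block sequences $(x_j^{(i)})_j$, $1\leq i\leq n$, in $Y$ generating a joint spreading model $(1+\e)$-equivalent to the basis of $\ell_\infty^n(\ell_1)$. Inspection of the proof of Proposition~\ref{omega joint spreading models} shows that the disjoint support sets $F^{(i)}_j$ can be arranged (for instance, interleaved so that $F^{(1)}_j<F^{(2)}_j<\cdots<F^{(n)}_j<F^{(1)}_{j+1}$ for each $j$) so that for any strict plegma $(s_i)_{i=1}^n$ in $[\N]^n$ with $\min(s_1)\geq n$, the $n^2$ vectors $\{x^{(i)}_{s_i(j)}:1\leq i,j\leq n\}$ form a genuine block sequence in $Y$. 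Choosing such a plegma and applying \eqref{omega equation} with $k=l=n$ produces a block sequence of length $n^2$ that is $(1+\e)$-equivalent to the basis of $\ell_\infty^n(\ell_1^n)$. Up to relabeling of the two indices, this is precisely the basis of $X_n$, and Proposition~\ref{universal for unc} then delivers the finite block representability of every $1$-unconditional basic sequence in $Y$.

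For the strengthened claim that every $k$-dimensional $1$-unconditional basis lies in the asymptotic structure of $Y$ in the sense of \cite{MMT}, the plan is to let Player~(S) implement the RIS construction of Corollary~\ref{building ris} adaptively across rounds. At each round (S) prescribes a co-finite dimensional tail subspace chosen deep enough that (V)'s normalized response is forced to obey both the RIS lower bound from Proposition~\ref{uniform ell1} (after passing to block approximants) and the RIS upper bound from the basic inequality of Proposition~\ref{basic inequality}. Grouping the $n^2$ responses into $n$ blocks of $n$ vectors and applying Proposition~\ref{basic inequality} together with Lemma~\ref{upper auxiliary} in the auxiliary space yields the $(1+\e)$-equivalence to the basis of $X_n$ regardless of (V)'s individual choices; Proposition~\ref{universal for unc} then transfers this to every $k$-dimensional $1$-unconditional basis.

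The main obstacle is the second, game-theoretic claim: (V) may play \emph{any} normalized vector in the prescribed tail, not merely the RIS elements constructed in Proposition~\ref{omega joint spreading models}. Overcoming this requires the observation that, because of the saturation of the norming set $\W$, any normalized vector supported deep enough in $Y$ can be treated (for the purposes of the upper and lower norm estimates in Proposition~\ref{basic inequality} and Proposition~\ref{uniform ell1}) as if it were an RIS element of appropriate weight, after which the analysis of Proposition~\ref{omega joint spreading models} applies verbatim.
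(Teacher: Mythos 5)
Your proof of the first assertion (finite block representability) is correct and essentially reproduces the paper's argument: apply Proposition~\ref{universal for unc}, then realize the basis of $X_n$ as a block sequence by picking a sparse strict plegma from the array of Proposition~\ref{omega joint spreading models}.

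The second part, however, contains a fundamental error that you yourself half-detect. You have the roles of the two players reversed. For a finite-dimensional space $E$ to belong to the asymptotic structure $\{Y\}_k$ in the sense of \cite{MMT}, the \emph{vector} player (V) must have a winning strategy: no matter which tail subspaces the \emph{subspace} player (S) prescribes, V can select normalized vectors that are $(1+\e)$-equivalent to the basis of $E$. The statement is about what V \emph{can} do, not what V is \emph{forced} to do. Your proposal instead has S "implement the RIS construction" and insists on $(1+\e)$-equivalence to $X_n$ "regardless of (V)'s individual choices" --- that is the wrong quantifier, and moreover it is false: $Y$ also admits $\ell_1^n$-like $n$-tuples arbitrarily far out (Proposition~\ref{uniform ell1}), so no choice by S can force $X_n$'s basis regardless of V's play. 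Your attempted repair, that "any normalized vector supported deep enough in $Y$ can be treated as if it were an RIS element," is also false; if it held, every deep enough $n$-tuple would be $\ell_1^n$-like, making $Y$ asymptotic-$\ell_1$, which is exactly what the paper disproves. The correct and much shorter argument is the one implicit in the first part: V's winning strategy is simply to pick, at each of the $n^2$ rounds, the next available vector $x^{(j)}_{s_j(i)}$ from the array of Proposition~\ref{omega joint spreading models}, choosing the plegma indices sparse enough so that each chosen vector lies in the tail subspace S just prescribed. Since \eqref{omega equation} holds for \emph{every} sufficiently sparse strict plegma, this strategy always succeeds, and no RIS analysis of arbitrary V-vectors is needed.
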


\begin{proof}
By Proposition \ref{universal for unc} it is sufficient to show that the sequence $(e_{i,j})_{i,j=1}^n$ mentioned in the statement of that result, with the lexicographical order, is an asymptotic space for $Y$. Fix $\varepsilon >0$ and let $(x_j^{(i)})_j$, $i\in\mathbb{N}$ be the infinite array given by Proposition \ref{omega joint spreading models}. It is an easy observation that for a sufficiently sparsely chosen strict plegma $(s_j)_{j=1}^n$ in $[\mathbb{N}]^n$  that the sequence $(x^{(j)}_{s_j(i)})_{i,j=1}^n$ is a block sequence with the lexicographical order. Moreover, if $\min(s_1) \geq n$ then $(x^{(j)}_{s_j(i)})_{i,j=1}^n$ is $(1+\varepsilon)$-equivalent to $(e_{i,j})_{i,j=1}^n$.
\end{proof}

\begin{cor}
\label{krivine set and refl}
Let $Y$ be a block subspace of $\X$. Then $K(Y) = [1,\infty] \supsetneq \{1\} = \widetilde K(Y)$. Furthermore, $\ell_1$ and $c_0$ don't embed into $\X$, hence $\X$ is reflexive.
\end{cor}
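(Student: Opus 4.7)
The plan is to split the statement into three parts: identifying $K(Y)$ and $\widetilde K(Y)$, excluding isomorphic copies of $\ell_1$ and $c_0$, and deducing reflexivity by James's theorem.

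For the Krivine sets, the equality $K(Y)=[1,\infty]$ is immediate from Corollary \ref{hereditary fbr}, since the class of $1$-unconditional basic sequences includes the $\ell_p$-bases for $1\le p<\infty$ and the $c_0$-basis. For $\widetilde K(Y)$, the inclusion $1\in\widetilde K(Y)$ is proved exactly as in Corollary \ref{strong Krivine singleton}, whose proof applies verbatim inside any block subspace $Y$ because Proposition \ref{uniform ell1} concerns arbitrary normalized block sequences. To see $\widetilde K(Y)\subseteq\{1\}$, suppose some $p\in(1,\infty]$ were in $\widetilde K(Y)$; then for each $n$ some normalized block sequence $(z_i)_i$ in $Y$ has every $n$-term subsequence $(1+\varepsilon)$-equivalent to the $\ell_p^n$-basis (reading $\ell_\infty^n=c_0^n$ when $p=\infty$), in particular $\bigl\|\sum_{i=1}^n z_{k_i}\bigr\|\le (1+\varepsilon)n^{1/p}$. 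Proposition \ref{uniform ell1} then yields a further subsequence and indices $n\le k_1<\cdots<k_n$ with $\bigl\|\sum_{i=1}^n z_{k_i}\bigr\|\ge n/4$, and these two estimates are incompatible once $n$ is large, because $n^{1/p}=o(n)$ for $p>1$.

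For the non-embedding of $c_0$ and $\ell_1$, I first reduce each to a block-basis statement by a standard gliding-hump argument relative to the $1$-unconditional basis of $\X$. If $c_0\hookrightarrow\X$, then a normalized block sequence $(z_n)$ is equivalent to the $c_0$-basis, so $\bigl\|\sum_{i=1}^n z_{k_i}\bigr\|$ is uniformly bounded, contradicting the $\ge n/4$ lower bound from Proposition \ref{uniform ell1}. If $\ell_1\hookrightarrow\X$, then a block sequence $(z_n)$ is equivalent to the $\ell_1$-basis, and $Y:=\overline{\mathrm{span}}(z_n)$ is a block subspace of $\X$ isomorphic to $\ell_1$ with some constant $C$. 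Every normalized array in $Y$ then satisfies $\bigl\|\sum_i a_i x^{(i)}_{j_i}\bigr\|\ge C^{-1}\sum_i |a_i|$, so every asymptotic model generated by arrays in $Y$ is $C$-equivalent to the $\ell_1$-basis. This directly contradicts Theorem \ref{c0 asmodel}(a), which produces a $c_0$ asymptotic model in every block subspace of $\X$. Having excluded both $\ell_1$ and $c_0$, James's theorem for spaces with an unconditional basis yields reflexivity of $\X$.

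The main conceptual point is the exclusion of $\ell_1$: ruling out $c_0$ follows immediately from the uniform $\ell_1$ spreading model, but ruling out $\ell_1$ genuinely requires the array-asymptotic machinery supplied by Theorem \ref{c0 asmodel}. Everything else is an essentially formal extraction from results already proved in the paper.
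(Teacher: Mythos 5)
Your proof is correct and is the intended route: the paper states this corollary without proof precisely because each claim is an immediate extraction from Corollary \ref{hereditary fbr} (for $K(Y)=[1,\infty]$), Proposition \ref{uniform ell1} (for $\widetilde K(Y)=\{1\}$ and for excluding $c_0$), Theorem \ref{c0 asmodel} (for excluding $\ell_1$), and James's theorem for spaces with unconditional bases. One could phrase the exclusion of $\ell_1$ slightly differently — if some block subspace $Y_0$ were isomorphic to $\ell_1$, then $K(Y_0)=\{1\}$, contradicting $K(Y_0)=[1,\infty]$ — but this rests on the same array-asymptotic content (Proposition \ref{omega joint spreading models}), so it is not a genuinely different argument.
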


Reflexivity and Proposition \ref{unique 1 or 0 am} yield the following (see Definition \cite{defas}).
\begin{cor}
The space $\X$ is asymptotically symmetric.
\end{cor}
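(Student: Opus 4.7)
The plan is to simply verify that $\X$ satisfies hypothesis (i) of Proposition \ref{unique 1 or 0 am} and then quote that proposition. Two ingredients are needed: reflexivity of $\X$, and a uniformly unique $\ell_1$ spreading model with respect to the family $\mathscr{F}_b$ of normalized block sequences of the basis.

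First I would recall that the canonical basis $(e_i)_i$ of $\X$ is a $1$-unconditional Schauder basis, and that $\X$ is reflexive by Corollary \ref{krivine set and refl}. Next, I would invoke Proposition \ref{uniform ell1}, which asserts that any normalized block sequence in $\X$ has a subsequence generating a spreading model that is $4$-equivalent to the unit vector basis of $\ell_1$. In particular, any two spreading models generated by normalized block sequences of $\X$ are $16$-equivalent to each other (each is $4$-equivalent to $\ell_1$), and each is $4$-equivalent to $\ell_1$. Hence $\X$ admits a uniformly unique $\ell_1$ spreading model with respect to $\mathscr{F}_b$ with uniform constant $4$, so condition (i) of Proposition \ref{unique 1 or 0 am} is satisfied.

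Applying Proposition \ref{unique 1 or 0 am} directly yields that $\X$ is asymptotically symmetric, completing the proof. There is essentially no obstacle here: the only work is the bookkeeping above, and every ingredient has already been established in the paper (reflexivity in Corollary \ref{krivine set and refl}, uniformly unique $\ell_1$ spreading model in Proposition \ref{uniform ell1}, and the transfer from this hypothesis to asymptotic symmetry in Proposition \ref{unique 1 or 0 am}, whose proof quotes \cite[Theorems 2.3 and 1.1(c)]{JKO}).
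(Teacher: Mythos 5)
Your proposal is correct and coincides with the paper's argument: the paper derives this corollary in one line by combining reflexivity (Corollary \ref{krivine set and refl}) with Proposition \ref{unique 1 or 0 am}, where hypothesis (i) is supplied by Proposition \ref{uniform ell1} exactly as you explain. The bookkeeping you spell out (the constant $4$, invoking the $1$-unconditional basis) is implicit in the paper, so your write-up is simply a slightly more explicit version of the same proof.
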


\begin{rem}
The construction of $\X$ can be modified to obtain  for any $1\leq p <\infty$ a Banach space a space $X_\mathrm{iw}^p$. One takes a norming $W_\mathrm{iw}^p$ so that for any $\mathcal{S}_{n_{j_1}+\cdots+n_{j_l}}$ admissible sequence of very fast growing functionals $f_1<\cdots<f_d$ and any $(c_q)_{q=1}^d$ in the unit ball of $\ell_{p'}$ the function $f = (1/m_{j_1}\cdots m_{j_l})\sum_{q=1}^dc_qf_q$ is in $W_\mathrm{iw}^p$ as well. It is completely natural to expect that similar techniques will yield that this space has a unique and uniform $\ell_p$ spreading model, $c_0$ is an asymptotic model of every subspace, and the Krivine set of every subspace of $X_\mathrm{iw}^p$ is $[p,\infty]$. This modification does not apply to the case $p=\infty$. To obtain a space with a unique and uniform $c_0$ spreading model without an asymptotic-$c_0$ subspace we must look at the dual of $\X$ and this is the subject of Section \ref{dual section}.
\end{rem}

\section{The spaces $\X^p$, $1<p<\infty$}
We describe how the construction of $\X$ can be modified to obtain a space with a uniformly unique $\ell_p$-spreading model, where $1<p<\infty$, and a $c_0$-asymptotic model in every subspace. We give the steps that need to be followed in order to reach the conclusion but we omit most proofs as they are in the spirit of $\X$.

We fix a $p\in(1,\infty)$ and we denote by $p^*$ its conjugate. Given a subset $G$ of $c_{00}(\N)$, $j_1,\ldots,j_l\in\N$, real numbers $(\la_q)_{q=1}^d$  with $\sum_{q=1}^d|\la_q|^{p^*}\leq 1$, and $f_1 < \cdots < f_d$ in $G$ that are $\mathcal{S}_{n_{j_1}+\cdots+n_{j_l}}$-admissible we call a functional of the form
\begin{equation*}
f = \frac{1}{m_{j_1}\cdots m_{j_l}}\la_q\sum_{q = 1}^d f_q
\end{equation*}
a weighted functional of $G$ of weight $w(f) = m_{j_1}\cdots m_{j_l}$ and vector weight $\vec w(f) = (j_1,\ldots,j_l)$. For all $i\in\N$, we also call $f = \pm e_i^*$ a weighted functional of weight $w(f) = \infty$. We define very fast growing sequences as in Section \ref{definition section}. We then let $\W^p$ be the smallest subset of $c_{00}(\N)$ that satisfies the following two conditions.
\begin{itemize}

\item[(i)] $\pm e_i^*$ is in $\W^p$ for all $i\in\N$ and

\item[(ii)] for every $j_1,\ldots,j_l\in\N$, real numbers $(\la_q)_{q=1}^d$  with $\sum_{q=1}^d|\la_q|^{p^*}\leq 1$, and every $\mathcal{S}_{n_{j_1}+\cdots+n_{j_l}}$-admissible and very fast growing sequence of weighted functionals $(f_q)_{q=1}^d$ in $\W^p$ the functional $$f = \frac{1}{m_{j_1}\cdots m_{j_l}}\sum_{q=1}^d\la_qf_q$$ is in $\W^p$.

\end{itemize}
Set $\X^p$ to be the space defined by this norming set.

The following is  similar to \cite[Proposition 2.9]{DM} and \cite[Proposition 4.2]{BFM}. We give a short proof.
\begin{prop}
\label{upper p}
Let $(x_i)_{i=1}^n$ be a a normalized block sequence in $\X^p$. Then for any scalars $c_1,\ldots,c_n$ we have
\begin{equation}
\label{always upper ellp}
\left\|\sum_{i=1}^na_ix_i\right\| \leq 2 \left(\sum_{i=1}^n|a_i|^p\right)^{1/p}
\end{equation}
\end{prop}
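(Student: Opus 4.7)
The plan is to reduce to showing $|f(\sum_i a_i x_i)|\leq 2(\sum_i |a_i|^p)^{1/p}$ for every $f\in\W^p$ and then induct on the level $n$ in the construction $\W^p=\bigcup_n W_n^p$ (analogous to Remark~\ref{inductive construction norming}). The base case $n=0$ is immediate: if $f=\pm e_k^*$ then at most one $x_i$ has $k$ in its support, since $(x_i)$ is a block sequence, so $|f(\sum_i a_i x_i)|\leq |a_{i_0}|\leq(\sum_i |a_i|^p)^{1/p}$.

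For the inductive step, write $f=(m_J)^{-1}\sum_{q=1}^D \lambda_q f_q$ with $f_q\in W_n^p$, $(f_q)_{q=1}^D$ admissible and very fast growing, $\sum_q|\lambda_q|^{p^*}\leq 1$, and $m_J=m_{j_1}\cdots m_{j_l}\geq m_1=2$. H\"older's inequality applied to the $q$-sum with exponent pair $(p^*,p)$, using $\|\lambda\|_{\ell_{p^*}}\leq 1$, gives
\[|f(\textstyle\sum_i a_i x_i)|\leq (m_J)^{-1}\Big(\textstyle\sum_q |f_q(\sum_i a_i x_i)|^p\Big)^{1/p}.\]
For each $q$, setting $I_q=\{i:f_q(x_i)\neq 0\}$, the subfamily $(x_i)_{i\in I_q}$ is itself a normalized block sequence, so the inductive hypothesis applied to $f_q$ yields $|f_q(\sum_i a_i x_i)|\leq 2(\sum_{i\in I_q}|a_i|^p)^{1/p}$. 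Combining the two,
\[|f(\textstyle\sum_i a_i x_i)|\leq 2(m_J)^{-1}\Big(\textstyle\sum_i |a_i|^p|Q_i|\Big)^{1/p},\quad \text{where }Q_i=\{q:i\in I_q\}.\]

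The main technical obstacle is to bound $\sum_i |a_i|^p|Q_i|$ by $m_J^p\sum_i|a_i|^p$, so that the prefactor $(m_J)^{-1}$ absorbs the combinatorial over-counting and the induction closes at the target constant $2$. The essential structural observation is that the supports $\supp(f_q)$ are pairwise disjoint (as the $f_q$ are successive), so any $x_i$ whose range $\ran(x_i)$ is contained in some $\ran(f_q)$ automatically has $|Q_i|\leq 1$. The problematic indices are the ``wide'' $x_i$ that strictly contain several $\ran(f_q)$ or partially straddle one endpoint; for these I would split $x_i=\sum_{q\in Q_i}x_i^{(q)}$ with $x_i^{(q)}=x_i\,\mathbf{1}_{\supp(f_q)}$ (each of norm at most $1$, since the basis is $1$-unconditional), re-apply H\"older on $q\in Q_i$ using $\sum_{q\in Q_i}|\lambda_q|^{p^*}\leq 1$ to control the interior pieces, and handle the at most two genuine left/right boundary contributions by the trivial estimate $|f_q(x_i)|\leq 1$. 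The factor $m_J\geq 2$ together with this decomposition is what yields the final constant $2$, and is where the short proof relies most heavily on the mixed-Tsirelson structure of $\W^p$.
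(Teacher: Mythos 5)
Your opening moves are the right ones: bound $f(\sum a_i x_i)$ by H\"older over $q$ to get $(m_J)^{-1}\big(\sum_q|f_q(\sum_i a_ix_i)|^p\big)^{1/p}$, then apply the inductive hypothesis to each $f_q$. But the combinatorial obstacle you identify is genuine and your proposed fix does not resolve it. The inequality $\sum_i|a_i|^p|Q_i|\leq m_J^p\sum_i|a_i|^p$ is simply false in general: $|Q_i|$ counts how many $f_q$ overlap $x_i$, and since the number $d$ of functionals is bounded only by the $\mathcal{S}_{n_J}$-admissibility constraint (which allows $d$ enormous compared to $m_J$), a single wide $x_i$ can have $|Q_i|\gg m_J^p$. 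Your interior/boundary splitting then has to carry the whole load, and as written it does not close. For the ``at most two genuine boundary contributions'' per $x_i$ you invoke only the trivial bound $|f_q(x_i)|\leq 1$, which yields an estimate of the form $(2/m_J)\sum_i|a_i|$ — an $\ell_1$-bound on the coefficients, strictly weaker than the required $(\sum_i|a_i|^p)^{1/p}$ when $p>1$. What you are missing is a matching-type observation: if $f_q$ crosses the \emph{left} endpoint of $\ran(x_i)$, then $x_i$ is the \emph{last} vector $f_q$ hits, so $i\mapsto q_L(i)$ is injective; likewise $i\mapsto q_R(i)$ is injective because $x_i$ is the \emph{first} vector of $f_{q_R(i)}$. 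Without this injectivity you cannot H\"older the boundary term.

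The paper avoids the overcounting entirely with a different partition. Instead of classifying $f_q$'s as interior or boundary to $x_i$, it classifies each incidence pair $(q,i)$ with $\supp(f_q)\cap\supp(x_i)\neq\varnothing$ by whether $x_i$ is the \emph{first} vector hit by $f_q$. For the first-vector pairs it groups by $i$: the sets $E_i=\{q:x_i\text{ is first for }f_q\}$ are pairwise disjoint (each $f_q$ has a unique first vector), so normalizing by $\Lambda_i=(\sum_{q\in E_i}|\lambda_q|^{p^*})^{1/p^*}$ produces legitimate elements of $\W^p$ and H\"older over $i$ gives a contribution at most $(\sum|a_i|^p)^{1/p}(\sum\Lambda_i^{p^*})^{1/p^*}\leq(\sum|a_i|^p)^{1/p}$. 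For the non-first pairs it groups by $q$: the sets $G_q=\{i:(q,i)\text{ non-first}\}$ are pairwise disjoint (block successiveness of the $f_q$'s forces this), it applies the inductive hypothesis to each $f_q$ on $\sum_{i\in G_q}a_ix_i$, and H\"olders over $q$, producing at most $(2/m_J)(\sum|a_i|^p)^{1/p}\leq(\sum|a_i|^p)^{1/p}$ since $m_J\geq m_1=2$. The total is $2(\sum|a_i|^p)^{1/p}$. If you want to keep your interior/boundary framing you would still need, in effect, both the first/last-vector injectivity for the boundary pairs and a per-$q$ inductive step on the narrow vectors — at which point you have essentially recovered the paper's argument. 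Either way, the decomposition by ``first vector of $f_q$'' is the idea you need and cannot be replaced by the trivial bound on boundary terms.
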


\begin{proof}
This is proved by induction on $m$ with $\W^p = \cup_{m=0}^\infty W_m$. Assume that for every $f\in W_m$, every normalized block vectors $x_1<\cdots<x_n$, and every scalars $c_1,\ldots,c_n$ with $(\sum|c_j|^p)^{1/p}\leq 1$ we have $|f(c_1x_1+\cdots +c_nx_n)| \leq 2$. Let now $f = (1/m_j\cdots m_{j_l})\sum_{q=1}^d\la_qf_q$ be in $W_{m+1}$ with $f_1,\ldots,f_d\in W_m$, $(x_j)_{j=1}^n$ be a normalized block sequence, and $(c_j)_{j=1}^l$ be scalars with $(\sum|c_j|^p)^{1/p}\leq 1$. Set $x = \sum_{i=1}^nc_ix_i$. Define the sets
\[
\begin{split}
D_j &= \{i: \supp (f_i)\cap \supp(x_j)\neq\varnothing\}, \text{ for }j=1,\ldots,n\\
E_j &= \{i\in D_j: j = \min\{j': i\in D_{j'}\}\},\text{ for }j=1,\ldots,n,\\
F_j &= D_j\setminus E_j, \text{ for }j=1,\ldots,n,\text{ and }\\
G_i &= \{j: i\in F_j\},\text{ for }i=1,\ldots,d.
\end{split}
\]
Observe that the sets $(E_j)_{j=1}^n$ are pairwise disjoint and the sets $(G_i)_{i=1}^d$ are pairwise disjoint as well. For $j=1,\ldots,n$ set $\Lambda_j = (\sum_{i\in E_j}|\la_i|^{p^*})^{1/p^*}$ and for $i=1,\ldots,d$ set $C_i = (\sum_{j\in G_i}|c_j|^p)^{1/p}$. Then,
\[
\begin{split}
|f(x)| &= \left|\sum_{j=1}^mc_j\Lambda_j\!\!\left(\frac{1}{m_j\cdots m_{j_l}}\!\sum_{i\in E_j}\frac{\la_i}{\Lambda_j}f_i\right)\!\!(x_j) + \frac{1}{m_j\cdots m_{j_l}}\!\sum_{j=1}^nc_j\!\!\sum_{i\in F_j}\la_if_i(x_j)\right|\\
&\leq \left(\sum_{j=1}^n|c_j|^p\right)^{1/p}\!\!\!\left(\sum_{j=1}^n\Lambda_j^{p^*}\right)^{1/p^*}\!\!\!\! + \frac{1}{2}\sum_{i=1}^d|\la_i|\left|f_i\left(\sum_{j\in G_i}c_jx_j\right)\right|\\
&\leq 1+ \frac{1}{2}\sum_{i=1}^d|\la_i|2C_i\leq 1 + \left(\sum_{i=1}^d|\la_i|^{p^*}\right)^{1/p^*}\!\!\!\!\left(\sum_{i=1}^dC_i^p\right)^{1/p}\!\!\!\leq 2.
\end{split}
\]
\end{proof}

The proof of the following Proposition is practically identical to the proof of Proposition \ref{uniform ell1}
\begin{prop}
\label{uniform ellp}
Let $(x_i)_i$ be a normalized block sequence in $\X^p$. Then there exists $L\in[\N]^\infty$ so that for every $j_0\in\N$, every $F\subset L$ with $(x_i)_{i\in F}$ being $\mathcal{S}_{n_{j_0}}$-admissible, and every scalars $(c_i)_{i\in F}$ we have
\begin{equation*}
\left\|\sum_{i\in F}c_ix_i\right\| \geq \frac{1}{2m_{j_0}}\left(\sum_{i\in F}|c_i|^p\right)^{1/p}. 
\end{equation*}
In particular, every normalized block sequence in $\X$ has a subsequence that generates a spreading model that is 8-equivalent to the unit vector basis of $\ell_p$.
\end{prop}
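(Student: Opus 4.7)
The plan is to mirror the proof of Proposition \ref{uniform ell1} and exploit the extra $\ell_{p^*}$-coefficients $(\la_q)$ now available in the operations defining $\W^p$. For each $i$ I would fix $f_i\in\W^p$ with $\ran(f_i)\subseteq\ran(x_i)$ and $f_i(x_i)=1$, and pass to $L\in[\N]^\infty$ realizing one of the two dichotomous cases: either (A) $(f_k)_{k\in L}$ is very fast growing, or (B) $\vec w(f_k)=(j_1,\ldots,j_l)$ is constant on $L$ (with $f_k=(1/m_{j_1}\cdots m_{j_l})\sum_q\la_q^{(k)}f_q^k$).

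Given $F\subseteq L$ for which $(x_i)_{i\in F}$ is $\mathcal{S}_{n_{j_0}}$-admissible and scalars $(c_i)_{i\in F}$, the observation that drives the whole argument is that the duality choice $\la_i=\operatorname{sign}(c_i)|c_i|^{p-1}/(\sum_{j\in F}|c_j|^p)^{1/p^*}$ satisfies simultaneously $\sum_{i\in F}|\la_i|^{p^*}=1$ and $\sum_{i\in F}c_i\la_i=(\sum_{i\in F}|c_i|^p)^{1/p}$. In case (A), the functional $f=(1/m_{j_0})\sum_{i\in F}\la_if_i$ lies in $\W^p$ by exactly the verification used in case~2 of Proposition \ref{uniform ell1} (using $m_1=2$, $n_1=1$), and one computes directly $f(\sum_{i\in F}c_ix_i)=(1/m_{j_0})(\sum_{i\in F}|c_i|^p)^{1/p}$.

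Case (B) requires a tensorization. The same admissibility bookkeeping as in the $\ell_1$ proof shows that, after a further thinning of $L$, the flattened family $(f_q^{k_i})_{q\geq 2,\,i\in F}$ is very fast growing and $\mathcal{S}_{n_{j_0}+n_{j_1}+\cdots+n_{j_l}}$-admissible. Taking coefficients $\mu_{q,i}=\la_i\la_q^{(k_i)}$, one has $\sum_{q,i}|\mu_{q,i}|^{p^*}\leq\sum_i|\la_i|^{p^*}\sum_q|\la_q^{(k_i)}|^{p^*}\leq 1$, so
\[
f=\frac{1}{m_{j_0}m_{j_1}\cdots m_{j_l}}\sum_{i\in F}\sum_{q\geq 2}\mu_{q,i}f_q^{k_i}\in\W^p.
\]
Evaluating $f$ on $\sum_{i\in F}c_ix_i$ collapses the inner sum to $\la_im_{j_1}\cdots m_{j_l}\,\tilde f_{k_i}(x_{k_i})$, where $\tilde f_{k_i}$ is the truncation of $f_{k_i}$ obtained by dropping the $q=1$ term; since $|\la_1^{(k_i)}f_1^{k_i}(x_{k_i})|\leq 1$ and $m_{j_1}\cdots m_{j_l}\geq 2$, the lower bound $\tilde f_{k_i}(x_{k_i})\geq 1/2$ is inherited verbatim, giving $f(\sum_{i\in F}c_ix_i)\geq(1/(2m_{j_0}))(\sum_{i\in F}|c_i|^p)^{1/p}$.

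The main point to watch is the $\ell_{p^*}$-budget accounting when flattening the two-level expression in case (B): the tensorization $\mu_{q,i}=\la_i\la_q^{(k_i)}$ simultaneously encodes the inner operation already present in $f_{k_i}$ and the outer $\ell_p$-duality across $i$, repackaging them into a single admissible $\W^p$-operation of weight $m_{j_0}m_{j_1}\cdots m_{j_l}$. The spreading-model statement then follows by specializing to $j_0=1$, yielding the lower constant $1/(2m_1)=1/4$; combining this with the uniform upper $\ell_p$-estimate of constant $2$ from Proposition \ref{upper p} gives the claimed $8$-equivalence to the unit vector basis of $\ell_p$.
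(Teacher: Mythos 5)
Your argument is correct and is exactly the natural adaptation of the proof of Proposition \ref{uniform ell1}, which is what the paper means by "practically identical." The only genuinely new ingredient over the $\ell_1$ case is the H\"older duality trick: choosing $\la_i=\operatorname{sign}(c_i)|c_i|^{p-1}/(\sum_j|c_j|^p)^{1/p^*}$ to produce a normalized $\ell_{p^*}$-vector pairing perfectly against $(c_i)$, and in case (B) the tensorization $\mu_{q,i}=\la_i\la_q^{(k_i)}$ which stays in the $\ell_{p^*}$-unit ball because $\sum_{q,i}|\mu_{q,i}|^{p^*}=\sum_i|\la_i|^{p^*}\sum_q|\la_q^{(k_i)}|^{p^*}\leq 1$. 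The admissibility bookkeeping ($\mathcal{S}_{n_{j_0}}\ast\mathcal{S}_{n_{j_1}+\cdots+n_{j_l}}=\mathcal{S}_{n_{j_0}+n_{j_1}+\cdots+n_{j_l}}$ via the spreading property), the very-fast-growing check for the flattened family, and the truncation bound $\tilde f_{k_i}(x_{k_i})\geq 1-1/(m_{j_1}\cdots m_{j_l})\geq 1/2$ all carry over verbatim, since $|\la_1^{(k_i)}|\leq 1$ and $|f_1^{k_i}(x_{k_i})|\leq 1$. Combining the lower bound at $j_0=1$ (constant $1/(2m_1)=1/4$) with the upper $\ell_p$-estimate of constant $2$ from Proposition \ref{upper p} gives the claimed $8$-equivalence, as you say.
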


The auxiliary spaces are each defined via collection of norming sets $W^{p,N}_{\mathrm{aux}}$, $N\in\N$. For each $N\in\N$ the set $W^{p,N}_{\mathrm{aux}}$ contains all
\[f = \frac{2^{1/p^*}}{m_{j_i}\cdots m_{j_l}}\sum_{q=1}^d\la_qf_q,\]
where $(f_q)_{q=1}^d$ is a sequence of $\mathcal{S}_{n_{j_1}+\cdots+n_{j_l}}\ast\mathcal{A}_3$-admissible functionals in $W^{p,N}_{\mathrm{aux}}$ so that for $q\geq 2$ we have $w(f_q) > N$ and $(\la_q)_{q=1}^d$ satisfy $\sum_{q=1}^d|\la_q|^{p^*}\leq 1$. The factor $2^{1/p^*}$ is necessary to prove the basic inequality and it also appears in \cite[Section 3]{DM}.% The auxiliary space also satisfies \eqref{always upper ellp} with a slightly worse constant $2^{1/p}/(2^{1/p}-1)$. This is proved identically to Proposition \ref{upper p}.

Recall from \cite[Section 3]{DM} that a vector $x = \sum_{i\in F}a_ie_i$ is called a $(n,\e)$ basic special $p$-convex combination (or basic s.$p$-c.c.) if $a_i\geq 0$, for $i\in F$, and $\sum_{i\in F}a_i^pe_i$ is a $(n,\e^p)$ basic s.c.c. The proof of the following is in the spirit of the proof of Lemma \ref{basis on basis} and Lemma \ref{upper auxiliary}

\begin{lem}
\label{p-upper auxiliary}
Let $\delta>0$. Then there exists $M\in\N$ so that for any $k\in\N$, any pairwise different natural numbers $(t_i)_{i=1}^k$ with $t_i\geq M$, for any $l\in\N$  and $\e>0$, there exists $N\in\N$, so that for  any vectors $(x_{i,j})_{1\leq i \leq k, 1\leq j\leq l}$ of the form
\begin{equation}
x_{i,j} = \frac{m_{t_i}}{2^{1/p^*}}\tilde x_{i,j},\text{ where } \tilde x_{i,j} = \sum_{r\in F_{i,j}}c_r^{i,j}e_r \text{ is a } (n_{t_i},\e) \text{ basic s.$p$-c.c.,}
\end{equation}
$1\leq i \leq k, 1\leq j\leq l,$ any scalars $(a_{i,j})_{1\leq i \leq k, 1\leq j\leq l}$, and any $f\in W^{p,N}_{\mathrm{aux}}$ we have
\begin{equation}
\label{upper auxiliary eq}
\left|f\left(\sum_{j=1}^l\sum_{i=1}^ka_{i,j}x_{i,j}\right)\right| \leq (1+\de)\max_{1\leq i\leq k}\left(\sum_{j=1}^l|a_{i,j}|^p\right)^{1/p}.
\end{equation}
\end{lem}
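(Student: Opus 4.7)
The plan is to prove this by induction on the complexity $m$ in the stratification $W^{p,N}_{\mathrm{aux}} = \bigcup_{m\geq 0} W^{p,N}_m$, mirroring Lemma \ref{upper auxiliary} step by step but replacing $\ell_1/\ell_\infty$ duality with Hölder's inequality for the conjugate pair $(p, p^*)$ throughout. I would fix $M$ first, large enough that $\sum_{i \geq M}\big(m_i/m_{i+1}^{1/p} + 1/m_i^{1/p}\big) \leq \delta/6$; once $(t_i)_{i=1}^k$, $l$, and $\e$ are specified, I would then choose $N$ (and verify that $\bar\e$ is small enough) so that the matching-weight s.$p$-c.c. error and the ``large weight'' term $m_{t_i}/N$ are each bounded by $\delta/(6k)$.

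The first preparatory step is a $p$-analogue of Lemma \ref{basis on basis}: for a $(n_{t_i}, \e)$ basic s.$p$-c.c. $\tilde x_{i,j} = \sum_{r \in F_{i,j}} c_r^{i,j} e_r$ and a ``signed unit-vector sum'' weighted functional $f = (1/(m_{j_1} \cdots m_{j_l})) \sum_{r \in G} \pm e_r^*$ with $G \in \mathcal{S}_{n_{j_1}+\cdots+n_{j_l}} \ast \mathcal{A}_3$ and $\max_s j_s \neq t_i$, one wants
\begin{equation*}
|f(x_{i,j})| \leq C \max\!\Big\{\e \, m_{t_i}, \; \frac{m_{t_i}}{m_{t_i+1}^{1/p}}, \; \frac{1}{m_{t_i}^{1/p}}\Big\}.
\end{equation*}
This splits into the same three cases as Lemma \ref{basis on basis}: for $\max j_s > t_i$, combine $\|f\|_\infty \leq 1/m_{t_i+1}$ with $\|\tilde x_{i,j}\|_1 \leq (\#F_{i,j})^{1/p^*}$ via Hölder; for $\max j_s < t_i$ with small complexity $n_{j_1}+\cdots+n_{j_l} < n_{t_i}$, apply Remark \ref{some remarks for the far future 1} to $\sum_r (c_r^{i,j})^p e_r$ and then Hölder; for $\max j_s < t_i$ with large complexity, invoke growth property (iii) to obtain $m_{j_1}\cdots m_{j_l} \geq m_{t_i}^2$.

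For the inductive step, given $f = (2^{1/p^*}/(m_{s_1} \cdots m_{s_a})) \sum_{q=1}^d \la_q f_q$ in $W^{p,N}_{m+1}$, I would split $f = g_1 + g_2 + g_3$ as in Lemma \ref{upper auxiliary}, where $g_2$ collects the $q \geq 2$ with $f_q = \pm e^*_{n_q}$ and $g_3$ the rest, each of which has $w(f_q) > N$. The inductive hypothesis applied to $f_1$, combined with $|\la_1| \leq 1$, handles $g_1$; for $g_3$ the bound $\|g_3\|_\infty \leq 2^{1/p^*}/(N \, m_{s_1} \cdots m_{s_a})$ plus Hölder on the $\la_q$'s gives a term absorbed into $\delta/6$. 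For $g_2$, set $b = \max\{s_1,\ldots,s_a\}$; when $b \neq t_i$ for every $i$, the $p$-basis-on-basis estimate above, summed against $|a_{i,j}|$ and then against the $\la_q$ using Hölder, produces a term absorbed into $\delta/6$ by the choice of $M$.

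The main obstacle is the ``matching weight'' subcase of $g_2$ with $b = t_{i_0}$ for some $i_0$. Here the prefactors $m_{t_{i_0}}/2^{1/p^*}$ in $x_{i_0,j}$ and $2^{1/p^*}/m_{t_{i_0}}$ inside $g_2$ cancel exactly, leaving expressions of the form
\begin{equation*}
g_2(x_{i_0, j}) = \sum_{q \in B_j} \eta_q \, \la_q \, c^{i_0, j}_{n_q}
\end{equation*}
with $\eta_q \in \{-1,1\}$ and $n_q \in F_{i_0, j}$. Applying Hölder in $j$ to $\big|\sum_j a_{i_0, j} g_2(x_{i_0, j})\big|$ bounds it by $\big(\sum_j |a_{i_0, j}|^p\big)^{1/p} \cdot \big(\sum_j |g_2(x_{i_0, j})|^{p^*}\big)^{1/p^*}$, and the disjointness of the supports $F_{i_0, j}$ across $j$ combined with $\sum_q |\la_q|^{p^*} \leq 1$ and $\sum_r (c^{i_0, j}_r)^p = 1$ controls the second factor by $1$. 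This interaction of the $\ell_{p^*}$ constraint on the $\la_q$'s with the $\ell_p$-normalization of the s.$p$-c.c.'s is precisely what the factor $2^{1/p^*}$ in the definition of $W^{p,N}_{\mathrm{aux}}$ is engineered to balance, and it is where the main work of the proof is concentrated.
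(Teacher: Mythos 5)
Your framework is the right one --- induction over the stratification of $W^{p,N}_{\mathrm{aux}}$, a $p$-analogue of Lemma \ref{basis on basis}, and the decomposition $f = g_1 + g_2 + g_3$ --- and the matching-weight case $b = t_{i_0}$ is handled correctly: the prefactor $m_{t_{i_0}}/(m_{s_1}\cdots m_{s_a})\leq 1$ since $m_{t_{i_0}} = m_b \leq m_{s_1}\cdots m_{s_a}$, H\"older in $j$ trades the $\ell_{p^*}$ bound on $(\la_q)_q$ against the $\ell_p$-normalization of the s.$p$-c.c.'s, and disjointness of $(F_{i_0,j})_j$ ensures each $q$ lands in at most one $B_j$. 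That is exactly what the factor $2^{1/p^*}$ is engineered to balance, and it is the genuinely new point relative to Lemma \ref{upper auxiliary}.

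However, your non-matching estimates are broken because you transpose the $\ell_\infty/\ell_1$ pairing of the $\ell_1$ proof instead of switching to $\ell_{p^*}/\ell_p$. In case 1 of your $p$-basis-on-basis estimate you bound $|f(\tilde x_{i,j})| \leq \|f\|_\infty\|\tilde x_{i,j}\|_1$ and invoke $\|\tilde x_{i,j}\|_1 \leq (\# F_{i,j})^{1/p^*}$; but $F_{i,j}\in\mathcal{S}_{n_{t_i}}$ has unbounded cardinality, so this gives no estimate. The correct step is H\"older with the $\la_r$'s against the $c_r^{i,j}$'s: $|f(\tilde x_{i,j})| \leq (m_{j_1}\cdots m_{j_l})^{-1}\bigl(\sum_r|\la_r|^{p^*}\bigr)^{1/p^*}\bigl(\sum_r(c_r^{i,j})^p\bigr)^{1/p} \leq (m_{j_1}\cdots m_{j_l})^{-1}$, giving $|f(x_{i,j})|\leq m_{t_i}/(2^{1/p^*}m_{t_i+1})$ --- with a full power of $m_{t_i+1}$, not $m_{t_i+1}^{1/p}$. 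The distinction is not cosmetic: for any admissible $(m_j)_j$ the series $\sum_{i\geq M} m_i/m_{i+1}^{1/p}$ diverges once $p$ is moderately large, so no $M$ of the kind you propose exists under your stated bound. Case 3 has the same spurious exponent; the bound should be $1/m_{t_i}$. The $g_3$ term inherits the identical defect: for s.$p$-c.c.'s $\|x_{i,j}\|_1$ is unbounded, so $\|g_3\|_\infty\|x_{i,j}\|_1$ is dead on arrival. One must instead pair $\|g_3\|_{p^*}$ against $\|x_{i,j}\|_p = m_{t_i}/2^{1/p^*}$, which requires the auxiliary inductive fact that every weighted $g\in W^{p,N}_{\mathrm{aux}}$ satisfies $\|g\|_{p^*}\leq 2^{1/p^*}/w(g)$ (immediate from disjoint supports, $\sum_q|\la_q|^{p^*}\leq 1$, and $\|\cdot\|_{p^*}\leq 1$ for the constituents by induction). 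These are all local repairs and the architecture is sound, but as written the estimates cited in cases 1, 3, and $g_3$ do not establish the claim.
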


RIS are defined exactly like Definition \ref{definition ris}. The basic inequality is slightly different to Proposition \ref{basic inequality}.

\begin{prop}
\label{p-basic inequality}
Let $(x_i)_{i\in I}$ be a $(C,(j_i)_{i\in I})$-RIS, $(a_i)_{i\in I}$ be a sequence of scalars, and $N < \min\{m_{j_{\min(I)}}, \min\supp(x_{\min(I)})\}$ be a natural number. Then, for every $f\in\W^p$ there exist $h\in\{\pm e_i^*:i\in\N\}\cup\{0\}$, $g\in W_\mathrm{aux}^{p,N}$ with $w(f) = w(g)$, and $\la$, $\mu$ with $|\la|^{p^*} + |\mu|^{p^*} \leq 1$, so that if $t_i = \max\supp(x_i)$ for $i\in I$ then we have
\begin{equation}
\label{this is the basic inequality in the flesh}
\left|f\left(\sum_{i\in I}a_i x_i\right)\right| \leq C\left(1+\frac{1}{\sqrt{m_{j_{i_0}}}}\right)\left|(\la h + \mu g)\left(\sum_{i\in I}a_ie_{t_i}\right)\right|.
\end{equation}
\end{prop}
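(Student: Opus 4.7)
I would prove this by induction on $n$ along $\W^p = \bigcup_n W_n$, paralleling the proof of Proposition \ref{basic inequality} while adding $\ell_{p^*}$-bookkeeping on the coefficients $\la_q$ that now decorate every decomposition. A convenient preliminary reformulation: since $h$ and $g$ will be built with disjoint supports, the $\ell_p/\ell_{p^*}$-duality identity
\begin{equation*}
\sup_{|\la|^{p^*}+|\mu|^{p^*}\leq 1}|\la h(y) + \mu g(y)| = \bigl(|h(y)|^p+|g(y)|^p\bigr)^{1/p}
\end{equation*}
reduces the stated conclusion to the equivalent $\ell_p$-form
\begin{equation*}
\left|f\left(\sum_{i \in I}a_ix_i\right)\right| \leq C\!\left(1 + \frac{1}{\sqrt{m_{j_{i_0}}}}\right)\!\bigl(|h(\textstyle\sum_i a_ie_{t_i})|^p + |g(\textstyle\sum_i a_ie_{t_i})|^p\bigr)^{1/p},
\end{equation*}
from which the witnessing $\la, \mu$ are extracted at the end. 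The base case $n = 0$ and the case $w(f) < m_{j_{\min(I)}}$ (set $h = 0$, $\la = 0$) go through exactly as in Proposition \ref{basic inequality}.

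For the inductive step write $f = (1/W)\sum_{q=1}^d\la_q f_q$ with $W = m_{s_1}\cdots m_{s_l}$, $\sum|\la_q|^{p^*}\leq 1$, and $(f_q)_q$ very fast growing and $\mathcal{S}_{n_{s_1}+\cdots+n_{s_l}}$-admissible; after pruning initial $f_q$'s supported before $\min\supp(x_{\min(I)})$, $w(f_q) > N$ for $q \geq 2$. Let $i_0$ be maximal in $I$ with $W \geq m_{j_{i_0}}$, let $i_1$ maximize $|a_i|$ on $\{\min(I),\ldots,i_0\}$, and set $h = \mathrm{sign}(f(a_{i_1}x_{i_1}))e^*_{t_{i_1}}$. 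Since $f \in \W^p$ acts as a norming functional we have $\|f\| \leq 1$, and combining this with $\|x_i\| \leq C$, the RIS property, and the $\ell_\infty$-estimate on the tail gives, exactly as in Proposition \ref{basic inequality},
\begin{equation*}
\Bigl|f\Bigl(\sum_{i \leq i_0}a_ix_i\Bigr)\Bigr| \leq C\Bigl(1 + \tfrac{1}{\sqrt{m_{j_{i_0}}}}\Bigr)|a_{i_1}| = C\Bigl(1 + \tfrac{1}{\sqrt{m_{j_{i_0}}}}\Bigr)\bigl|h\bigl(\textstyle\sum a_ie_{t_i}\bigr)\bigr|.
\end{equation*}

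For $\tilde I = \{i > i_0\}$ partition $\tilde I = A \cup B$ and group $A$ into $(I_q)_{q \in D}$ as in Proposition \ref{basic inequality}; apply the inductive hypothesis to each $f_q$ on the RIS $(x_i)_{i \in I_q}$ to obtain $h_q, g_q, \tilde\la_q, \tilde\mu_q$ with $|\tilde\la_q|^{p^*}+|\tilde\mu_q|^{p^*}\leq 1$. Using $\ell_p/\ell_{p^*}$-duality choose $(\beta_i)_{i \in B}$ with $\sum_B|\beta_i|^{p^*}\leq 1$ realizing $\sum_B\beta_ia_i = (\sum_B|a_i|^p)^{1/p}$, and set
\begin{equation*}
g = \frac{2^{1/p^*}}{W}\Bigl(\sum_{i \in B}\beta_ie^*_{t_i} + \sum_{q \in D}\la_q\tilde\la_q h_q^{\pm} + \sum_{q \in D}\la_q\tilde\mu_q g_q^{\pm}\Bigr),
\end{equation*}
with internal signs chosen to align all contributions. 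The inner $\ell_{p^*}$-budget is
\begin{equation*}
\sum_{i \in B}|\beta_i|^{p^*} + \sum_q|\la_q|^{p^*}\bigl(|\tilde\la_q|^{p^*}+|\tilde\mu_q|^{p^*}\bigr) \leq 1 + \sum_q|\la_q|^{p^*} \leq 2,
\end{equation*}
exactly absorbed by the $2^{1/p^*}$ prefactor, placing $g \in W^{p,N}_\mathrm{aux}$; the $\mathcal{A}_3$-admissibility (absorbing the pairing of each $i \in B$ with $q_i$, the $h_q$'s, and the $g_q$'s into families of size at most $3$) and the $N$-sufficient-largeness arguments transfer verbatim from Proposition \ref{basic inequality}.

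The main obstacle is reassembling the lower- and upper-sum estimates into the targeted $\ell_p$-form. On $A$, $\ell_p/\ell_{p^*}$-H\"older applied to the disjoint $I_q$'s and the coefficients $(\la_q)$ yields $|f(\sum_Aa_ix_i)| \leq (1/W)(\sum_q|f_q(\sum_{I_q}a_ix_i)|^p)^{1/p}$, and insertion of the inductive $\ell_p$-bounds (whose $(1+\epsilon_q)$ factors are dominated by $(1+1/\sqrt{m_{j_{i_0}}})$ since every $i \in I_q$ satisfies $i > i_0$) turns this into $C(1+1/\sqrt{m_{j_{i_0}}})(1/W)(\sum_q(|h_q|^p+|g_q|^p))^{1/p}$; on $B$, the pairing $i \mapsto q_i$ together with $|f_{q_i}(x_i)| \leq \|x_i\| \leq C$ yields $|f(\sum_Ba_ix_i)| \leq C(1/W)(\sum_B|a_i|^p)^{1/p}$. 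These two contributions match, after sign alignment, the $A$- and $B$-blocks inside $g$, and the $2^{1/p^*}$ in front of $g$ reappears naturally via $x+y \leq 2^{1/p^*}(x^p+y^p)^{1/p}$. The principal delicacy is keeping the $2^{1/p^*}$-accounting consistent and verifying that the factor-$2$ arising when combining $A$- and $B$-blocks inside $g$ is precisely the one absorbed by the $2^{1/p^*}$ in the definition of $W^{p,N}_\mathrm{aux}$, rather than an additional factor on top — this is the reason the constant $2^{1/p^*}$ was inserted in Definition of the auxiliary norming set in the first place.
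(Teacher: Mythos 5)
Your high-level plan—induct along $\W^p = \bigcup_n W_n$ in parallel with Proposition \ref{basic inequality}, with $\ell_p/\ell_{p^*}$-bookkeeping replacing $\ell_1/\ell_\infty$—is the right spirit, but the argument has a gap at its very center, in the combination step. You reformulate the conclusion, correctly, as the existence of $h,g$ with
$\left|f\left(\sum_{i \in I}a_ix_i\right)\right| \leq C\left(1+\tfrac{1}{\sqrt{m_{j_{i_0}}}}\right)\bigl(|h(y)|^p + |g(y)|^p\bigr)^{1/p}$
where $y = \sum a_ie_{t_i}$. But your proof only establishes
$\left|f\left(\sum_{i\leq i_0}\right)\right| \leq C'\,|h(y)|$ and $\left|f\left(\sum_{i>i_0}\right)\right| \leq C'\,|g(y)|$,
and adding these gives $|f(\sum_I)| \leq C'(|h(y)| + |g(y)|)$, which is \emph{weaker} than the target: for $p>1$, $(a^p+b^p)^{1/p} < a+b$ whenever $a,b>0$. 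You try to bridge the gap via $x+y\leq 2^{1/p^*}(x^p+y^p)^{1/p}$, but that inequality goes in the \emph{wrong} direction—it introduces an extra factor $2^{1/p^*}$ on the right-hand side rather than removing one—and this factor is \emph{not} absorbed by the $2^{1/p^*}$ already built into the definition of $W^{p,N}_{\mathrm{aux}}$ (your $g$ is defined \emph{with} that prefactor and hence $2^{1/p^*}g$ is what would need to absorb it, which doesn't happen). Relatedly, your $g$ as written is not in $W^{p,N}_{\mathrm{aux}}$: the inner $\ell_{p^*}$-budget you compute is bounded by $2$, not $1$, and the $2^{1/p^*}$ prefactor in the definition is a fixed normalization, not slack for the coefficient budget—so your $g$ is $2^{1/p^*}$ times too large.

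The missing idea is that $\la$ and $\mu$ should not be "extracted at the end" by maximizing $|\la h + \mu g|$; they must be \emph{read off} from the $\ell_{p^*}$-budget of the coefficients $(\la_q)$ appearing in $f = \tfrac{1}{W}\sum_q \la_q f_q$, split according to which $f_q$'s act on $\{i\leq i_0\}$ versus $\{i>i_0\}$. The restriction of $f$ to $\mathrm{supp}(x_{i_0})$ is $\la\cdot\phi$ for a genuine $\phi\in\W^p$ and $\la = (\sum_{q\in Q}|\la_q|^{p^*})^{1/p^*}$, with $Q$ the set of indices $q$ whose ranges meet $\mathrm{supp}(x_{i_0})$; similarly the $\tilde I$-part comes with a factor $\mu$, and then $|\la|^{p^*}+|\mu|^{p^*}\leq 1$ holds for free and the triangle inequality closes the argument in the form $C'(\la|h(y)| + \mu|g(y)|) = C'|(\la h+\mu g)(y)|$ with no need to reach the tighter $(|h|^p+|g|^p)^{1/p}$. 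There is also a secondary gap in your $B$-term estimate: the "pairing $i\mapsto q_i$" only isolates one $f_{q_i}$ per $i$, but for $i\in B$ the vector $x_i$ is hit by several $f_q$'s, all of which contribute to $f(x_i)$; the crude pairing does not justify $|f(\sum_B a_ix_i)|\leq C(1/W)(\sum_B|a_i|^p)^{1/p}$, and the straight RIS estimate $|f(x_i)|\leq C/W$ yields the $\ell_1$-type bound $(C/W)\sum_B|a_i|$, not an $\ell_p$ bound. One has to use the block structure (that the overlap sets $Q_i$ for $i\in B$ overlap pairwise in at most one $q$) together with a H\"older argument to get a genuine $\ell_p$ estimate on the $B$-part.
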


Using Proposition \ref{upper p} and Proposition \ref{uniform ellp} one can perform an argument similar to that in the proof of Proposition \ref{very good ell1 vectors} to show that every block sequence in $\X^p$ has a further block sequence, with norm at least $(1-\delta)$, that is a $(2+\varepsilon)$-RIS. The next result is similar to Proposition \ref{omega joint spreading models}.

\begin{prop}
\label{p omega joint spreading models}
Let $Y$ be a block subspace of $\X^p$. Then there exists an array of block sequences $(x_j^{(i)})_j$, $i\in\mathbb{N}$, in $Y$ so that for any $k,l\in\mathbb{N}$, scalars $(a_{i,j})_{1\leq i\leq k, 1\leq j\leq l}$, and plegma family $(s_i)_{i=1}^k$ in $[\mathbb{N}]^l$ with $\min(s_1) \geq \max\{k,l\}$ we have
\begin{equation}
\label{omega equation}
\frac{1}{2^{1/p^*}}\max_{1\leq i\leq k}\left(\sum_{j=1}^l|a_{i,j}|^p\right)^{1/p}\!\!\!\!\! \leq \left\|\sum_{i=1}^k\sum_{j=1}^la_{i,j}x_{s_i(j)}^{(i)}\right\| \leq 3\max_{1\leq i\leq k}\left(\sum_{j=1}^l|a_{i,j}|^p\right)^{1/p}\!\!\!\!.
\end{equation}
\end{prop}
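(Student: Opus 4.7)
The plan is to follow the blueprint of Proposition \ref{omega joint spreading models} verbatim, replacing every $\ell_1$-style estimate by its $\ell_p$-analogue. First I would fix small $\varepsilon,\delta>0$, pick pairwise distinct naturals $(t_i)_i$ and auxiliary parameters $\bar\varepsilon_k, N_k$ exactly as in that proof so that the hypotheses of Lemma \ref{p-upper auxiliary} are satisfied at every truncation level $k$. Using the $\ell_p$-analogue of Corollary \ref{building ris} mentioned in the text, I would extract a $(2+\varepsilon, (\bar{j_s})_s)$-RIS $(y_s)_s$ in $Y$ with $\|y_s\|\geq 1$, together with norming functionals $f_s\in\W^p$ supported in $\supp(y_s)$ satisfying $f_s(y_s)\geq 1$; passing to a further subsequence makes $(f_s)_s$ very fast growing and forces $\min\supp(y_s)$ to dominate $\max_{1\leq i\leq s}\{n_{t_i},N_i,6/\bar\varepsilon_i\}$. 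For each $(i,j)$ I would choose pairwise disjoint sets $F^{(i)}_j\subset\mathbb{N}$ with $F^{(i)}_j<F^{(i)}_{j+1}$, invoke Proposition \ref{basic scc exist in abundance} on $\{\min\supp(y_s):s\in F^{(i)}_j\}$, take $p$-th roots to obtain basic s.$p$-c.c.\ coefficients $(c_s^{i,j})$, and set
\[
 x^{(i)}_j = \frac{m_{t_i}}{2^{1/p^*}}\sum_{s\in F^{(i)}_j} c_s^{i,j}\, y_s.
\]

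For the upper bound, I would take a plegma $(s_i)_{i=1}^k$ in $[\mathbb{N}]^l$ with $\min(s_1)\geq\max\{k,l\}$, note that $\sum_{i,j}a_{i,j}x^{(i)}_{s_i(j)}$ is a linear combination of the RIS $(y_s)$ restricted to $\bigcup_{i,j}F^{(i)}_{s_i(j)}$, and apply Proposition \ref{p-basic inequality}. This produces $h\in\{0,\pm e^*_r\}$, $g\in W^{p,N_k}_{\mathrm{aux}}$ and scalars $\lambda,\mu$ with $|\lambda|^{p^*}+|\mu|^{p^*}\leq 1$. The $h$-contribution involves only one pair $(i,j)$ and is bounded by $(m_{t_i}/2^{1/p^*})\bar\varepsilon_k|a_{i,j}|\leq\delta\max_i(\sum_j|a_{i,j}|^p)^{1/p}$ thanks to the choice of $\bar\varepsilon_k$. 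The $g$-contribution is controlled by Lemma \ref{p-upper auxiliary} applied to the vectors $z^{(i)}_j=(m_{t_i}/2^{1/p^*})\tilde z_{i,j}$ with $\tilde z_{i,j}=\sum_{s\in F^{(i)}_j}c_s^{i,j}e_{\max\supp(y_s)}$, a basic s.$p$-c.c.\ by the $p$-version of Remark \ref{some remarks for the far future 2}, giving at most $(1+\delta)\max_i(\sum_j|a_{i,j}|^p)^{1/p}$. Choosing constants so that $(2+\varepsilon)(1+1/\sqrt{m_{\bar{j_1}}})(1+2\delta)\leq 3$ closes the upper side.

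For the lower bound, fix $1\leq i_0\leq k$ and by unconditionality assume $a_{i_0,j}\geq 0$. Writing $\mu_{j,s}=a_{i_0,j}c_s^{i_0,j}$ and $\lambda_{j,s}=\mu_{j,s}^{p-1}/\|\mu\|_p^{p-1}$, the equality case of H\"older gives $\sum_{j,s}|\lambda_{j,s}|^{p^*}=1$, and the basic s.$p$-c.c.\ normalization $\sum_s(c_s^{i_0,j})^p=1$ yields $\|\mu\|_p=(\sum_j a_{i_0,j}^p)^{1/p}$. I would then set
\[
 f=\frac{1}{m_{t_{i_0}}}\sum_{j=1}^l\sum_{s\in F^{(i_0)}_{s_{i_0}(j)}}\lambda_{j,s}\,f_s.
\]
The admissibility and very-fast-growing verification proceeds exactly as in Proposition \ref{omega joint spreading models}, yielding $f\in\W^p$. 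Using the disjoint supports of the $y_s$'s together with $f_s(y_s)\geq 1$, the resulting pairing telescopes to $\geq(1/2^{1/p^*})\sum_{j,s}\lambda_{j,s}\mu_{j,s}=(1/2^{1/p^*})(\sum_j|a_{i_0,j}|^p)^{1/p}$, which is the required lower bound.

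I expect the main obstacle to be the construction of the witnessing functional $f$ for the lower bound, since three pieces must be simultaneously arranged: the H\"older-optimal weights $\lambda_{j,s}$ with $\sum|\lambda_{j,s}|^{p^*}=1$, Schreier admissibility at level $n_{t_{i_0}}$ (where one uses $\min(s_1)\geq l$ together with each $F^{(i_0)}_{s_{i_0}(j)}$ being $\mathcal{S}_{n_{t_{i_0}}-1}$-admissible), and the very-fast-growing condition (inherited from the growth of $(f_s)_s$). The remainder is essentially the $\ell_p$-version of the bookkeeping already present in the $\ell_1$ argument, with H\"older's inequality substituted for the trivial $\ell_1$-$\ell_\infty$ pairing.
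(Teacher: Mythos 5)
Your proposal is correct and follows essentially the same route the paper intends: the paper omits this proof entirely, stating only that it is "similar to Proposition \ref{omega joint spreading models}," and your adaptation (replacing s.c.c.'s by s.$p$-c.c.'s, inserting the $2^{1/p^*}$ normalization, invoking Lemma \ref{p-upper auxiliary} and Proposition \ref{p-basic inequality}, and using H\"older-optimal coefficients $\lambda_{j,s}$ with $\sum|\lambda_{j,s}|^{p^*}=1$ to build the norming functional for the lower bound) is precisely the $\ell_p$-analogue of that argument. The only point worth flagging is that the constant $C=2+\varepsilon$ for the RIS (rather than $C\to 1$ as in the $\ell_1$ case) is what forces the upper constant $3$, as you note.
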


The main result of this section follows in the same manner as Theorem \ref{c0 asmodel}

\begin{thm}
\label{c0 asmodel p}
Let $Y$ be a block subspace of $\X^p$.
\begin{itemize}

\item[(a)] There exists an array of block sequences in $Y$ that generate an asymptotic model that is $6$-equivalent to the unit vector basis of $c_0$.

\item[(b)] For every $k\in\mathbb{N}$ there exists a $k$-array of block sequences in $Y$ that generate a joint spreading model $6$-equivalent to the basis of $\ell_\infty^k(\ell_p)$.
\end{itemize}
In particular, $X$ does not contain an asymptotic-$\ell_p$ subspace.
\end{thm}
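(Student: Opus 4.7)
The plan is to deduce Theorem \ref{c0 asmodel p} directly from Proposition \ref{p omega joint spreading models}, exactly mirroring how Theorem \ref{c0 asmodel} was deduced from Proposition \ref{omega joint spreading models}. Fix a block subspace $Y$ of $\X^p$ and let $(x_j^{(i)})_j$, $i\in\N$, be the infinite array of block sequences in $Y$ furnished by that proposition. For (a), I will read the $l=1$ case of Proposition \ref{p omega joint spreading models} as saying that this array already generates an asymptotic model of $c_0$-type. An asymptotic model is governed by the limiting behavior of $\|\sum_{i=1}^k a_i x^{(i)}_{j_i}\|$ for strictly increasing $j_1<\cdots<j_k$ (which form a strict plegma in $[\N]^1$), and the proposition bounds this quantity on both sides by $\max_{1\leq i\leq k}|a_i|$, with constants $1/2^{1/p^*}$ and $3$, as soon as $j_1\geq k$. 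A standard diagonal argument produces a subarray on which the plegma limits actually exist; the resulting asymptotic model is then $3\cdot 2^{1/p^*}\leq 6$ equivalent to the unit vector basis of $c_0$.

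For (b), I will apply the same proposition to the finite $k$-array $(x_j^{(i)})_j$, $1\leq i\leq k$, but now with general $l$ and general strict plegmas $(s_i)_{i=1}^k$ satisfying $\min(s_1)\geq\max\{k,l\}$. The two-sided estimate produced there is of exactly the form
\begin{equation*}
\frac{1}{2^{1/p^*}}\max_{1\leq i\leq k}\Big(\sum_{j=1}^l|a_{i,j}|^p\Big)^{1/p} \leq \Big\|\sum_{i=1}^k\sum_{j=1}^l a_{i,j}\, x^{(i)}_{s_i(j)}\Big\| \leq 3\max_{1\leq i\leq k}\Big(\sum_{j=1}^l|a_{i,j}|^p\Big)^{1/p},
\end{equation*}
which is precisely the norm of $\ell_\infty^k(\ell_p)$ on the coefficients up to the ratio $3\cdot 2^{1/p^*}\leq 6$. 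After diagonalizing to ensure the plegma limits exist, the resulting joint spreading model is $6$-equivalent to the unit vector basis of $\ell_\infty^k(\ell_p)$.

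For the ``in particular'' assertion, I will argue that no block subspace of $\X^p$ can be asymptotic-$\ell_p$: such a subspace would force every asymptotic model generated by its normalized block sequences to be uniformly equivalent to the unit vector basis of $\ell_p$, while part (a) yields a $c_0$ asymptotic model in every block subspace of $\X^p$, and $c_0$ is not equivalent to $\ell_p$ for $1<p<\infty$. Remark \ref{asymptotic lp versions} then transfers this obstruction to arbitrary (not necessarily block) subspaces. There is no separate obstacle here: the substantive work has already been done in Proposition \ref{p omega joint spreading models}, which combines the uniform $\ell_p$ lower estimate of Proposition \ref{uniform ellp} with the basic inequality of Proposition \ref{p-basic inequality} and the auxiliary upper estimate of Lemma \ref{p-upper auxiliary} to produce the heterogeneous $\ell_\infty^k(\ell_p)$ behavior; Theorem \ref{c0 asmodel p} is then a direct unpacking of that information.
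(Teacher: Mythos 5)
Your proposal is correct and takes essentially the same approach as the paper: the paper simply remarks that the result "follows in the same manner as Theorem \ref{c0 asmodel}," whose proof reads the asymptotic model off the $l=1$ (one-vector-per-row) case of the corresponding proposition and reads the joint spreading model off the finite-array case, exactly as you do here with Proposition \ref{p omega joint spreading models} in place of Proposition \ref{omega joint spreading models}. Your computation of the constant $3\cdot 2^{1/p^*}\leq 6$ and your use of Remark \ref{asymptotic lp versions} for the ``in particular'' clause are both sound.
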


It is not true that all unconditional bases are finitely block representable in every subspace of $\X^p$. However the following is true. 
\begin{cor}
\label{krivine set p}
For every block subspace $Y$ of $\X^p$ the Krivine set of $Y$ is $K(Y) = [p,\infty]$. In fact, for every $q\in[p,\infty]$ the unit vector basis of $\ell_q^k$ is and asymptotic space for $Y$.
\end{cor}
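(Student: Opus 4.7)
The inclusion $K(Y)\subseteq[p,\infty]$ rests on the $\ell_p$-upper bound from Proposition \ref{upper p}: every normalized block sequence $(x_i)_{i=1}^n$ in $\X^p$ satisfies $\|\sum a_i x_i\|\leq 2(\sum|a_i|^p)^{1/p}$. Were $\ell_q^n$ $(1+\varepsilon)$-block representable in $Y$ for some $q<p$, substituting $a_i\equiv 1$ would force $n^{1/q}\leq 2(1+\varepsilon)n^{1/p}$, which fails for large $n$.

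For the asymptotic space claim, the two endpoints are essentially already in hand. The case $q=p$ combines the $\ell_p$-lower bound on $\mathcal{S}_{n_{j_0}}$-admissible block sequences from Proposition \ref{uniform ellp} with the $\ell_p$-upper bound from Proposition \ref{upper p}; player $(\mathrm{S})$ in the asymptotic game of \cite{MMT} forces player $(\mathrm{V})$ to choose each vector with support past a threshold so that the resulting $(y_1,\ldots,y_k)$ is automatically $\mathcal{S}_{n_{j_0}}$-admissible with $j_0$ large enough for $(1+\varepsilon)$-equivalence to $\ell_p^k$. The case $q=\infty$ is immediate from Theorem \ref{c0 asmodel p}(a): the array of Proposition \ref{p omega joint spreading models}, restricted to a block subspace $Y$ and with the plegma chosen sufficiently far out, produces block sequences of arbitrary length $k$ that are $(1+\varepsilon)$-equivalent to $\ell_\infty^k$.

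The intermediate case $p<q<\infty$ is the main obstacle. My plan is to modify the construction of Proposition \ref{p omega joint spreading models} by replacing each row's basic $(n_{t_s}-1,\varepsilon)$ $p$-special convex combination by a basic $(n_{t_s}-1,\varepsilon)$ $q$-special convex combination of an RIS coming from Corollary-analogue of \ref{building ris} for $\X^p$, and to tune the scale parameters $(n_{t_s},m_{t_s})_{s=1}^k$ so that a $q$-convex analogue of Lemma \ref{p-upper auxiliary}, fed through the basic inequality of Proposition \ref{p-basic inequality}, yields the upper estimate $\|\sum_s a_s y_s\|\leq(1+\varepsilon)(\sum_s|a_s|^q)^{1/q}$. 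For the matching lower bound one explicitly constructs a functional in $\W^p$ of the form $(1/(m_{t_1}\cdots m_{t_k}))\sum_s \lambda_s f_s$ with $f_s(y_s)\approx 1$ and $(\lambda_s)$ in the $\ell_{p^*}$-ball, giving $f(\sum_s a_s y_s)\approx (\sum_s |a_s|^p)^{1/p}/(m_{t_1}\cdots m_{t_k})$.

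The delicate point, and the main technical obstacle, is that the norming set $\W^p$ only permits $\ell_{p^*}$-weightings $(\lambda_s)$ and not $\ell_{q^*}$-weightings, so the sharp $\ell_{q^*}$ dual behavior must be produced indirectly: one must balance the $\ell_p$-internal scaling of each $y_s$ against the $\ell_{p^*}$-external weighting and the multiplicative $(1/m_{t_1}\cdots m_{t_k})$ normalization so that the upper and lower constants collapse to a common $\ell_q^k$-equivalence with ratio $1+\varepsilon$. This forces the $m_{t_s}$ to grow at a rate calibrated to both $p$ and $q$, and requires a fresh choice of admissibility parameters $n_{t_s}$, analogous to but quantitatively different from the choices made on page \pageref{*how to find that one basis eqegg} in the proof of Proposition \ref{p omega joint spreading models}.
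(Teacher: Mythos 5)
Your argument for the containment $K(Y)\subseteq[p,\infty]$ and for the endpoints $q=p$, $q=\infty$ is sound and matches the paper's. The gap is in the intermediate case $p<q<\infty$, which you correctly identify as the main difficulty, but the route you sketch does not close it and in fact aims at the wrong target. Any lower estimate obtained from a single functional of the form $(1/(m_{t_1}\cdots m_{t_k}))\sum_s\lambda_s f_s$ with $(\lambda_s)$ in the $\ell_{p^*}$-ball is intrinsically an $\ell_p$-lower estimate (scaled by $1/(m_{t_1}\cdots m_{t_k})$), since the permitted weightings dualize to $\ell_p$, not $\ell_q$. No calibration of the weights $m_{t_s}$ and admissibility levels $n_{t_s}$ will make this $\ell_p$-type lower bound and your proposed $\ell_q$-type upper bound collapse to a common $(1+\varepsilon)$-$\ell_q^k$ equivalence; trying to reshape the norming-set analysis to produce $\ell_q$ directly is the wrong level at which to introduce $q$.

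The paper avoids this entirely by working at the finite-dimensional level. By Theorem \ref{c0 asmodel p}(b), the lexicographically ordered basis of $\ell_\infty^n(\ell_p^l)$ (the $p$-analogue of $(e_{i,j})$ in Proposition \ref{universal for unc}) is already an asymptotic space of $Y$ up to the fixed constant $6$. It then suffices to show that $\ell_q^k$ is $(1+\varepsilon)$-block representable inside $\ell_\infty^n(\ell_p^l)$ for $n,l$ large; this is a purely finite-dimensional observation resting on the H\"older identity
\[
\Big(\sum_{i=1}^k|a_i|^q\Big)^{1/q} \;=\; \sup\Big\{\Big(\sum_{i=1}^k|a_ib_i|^p\Big)^{1/p}:\ \Big(\sum_{i=1}^k|b_i|^r\Big)^{1/r}\leq 1\Big\},\qquad r=\frac{pq}{q-p},
\]
realized by placing a finite net of the $\ell_r$-sphere along the rows of the $\ell_\infty^n(\ell_p)$ array so that the supremum becomes a maximum over the $\ell_\infty$-coordinate. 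This one computation replaces your entire third paragraph: no $q$-convex analogue of Lemma \ref{p-upper auxiliary}, no modification of Proposition \ref{p-basic inequality}, and no re-derivation of Proposition \ref{p omega joint spreading models} is needed.
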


\begin{proof}
The  inclusion $K(Y) \subset [p,\infty]$ is an immediate consequence of Proposition \ref{upper p}. To show the inverse inclusion we observe that by Theorem \ref{c0 asmodel p} (ii) for every $n\in\mathbb{N}$ the sequence $(e_{i,j})_{j=1}^n$, with the lexicographical order, endowed with the norm
\[\left\|\sum_{i,j}a_{i,j}e_{i,j} = \right\|\max_{1\leq i\leq k}\left(\sum_{j=1}^l|a_{i,j}|^p\right)^{1/p}\]
is an asymptotic space for $Y$, up to a constant 6.

A proof similar to Proposition \ref{universal for unc} gives that for any $\varepsilon>0$, $k\in\mathbb{N}$, and $p\leq q\leq \infty$ there is $n\in\mathbb{N}$ so that the unit vector basis of $\ell_q^k$ is $(1+\varepsilon)$-block representable in $(e_{i,j})_{j=1}^n$. To see this one needs to use the fact that for $p<q<\infty$ if we set $r = (qp)/(q-p)$ then
\[\left(\sum_{i=1}^k|a_i|^q\right)^{1/q} = \sup\left\{\left(\sum_{i=1}^k|a_ib_i|^p\right)^{1/p}: \left(\sum_{i=1}^k|b_i|^r\right)^{1/r}\leq 1\right\}.\]
The above follows from a simple application of H\"older's inequality.
\end{proof}

\begin{rem}
Because $\X^p$ has a uniformly unique $\ell_p$-spreading model the strong Krivine set of every block subspace of $\X^p$ is the singleton $\{p\}$.
\end{rem}

\section{The space $\X^*$}\label{dual section}
\label{dual section}
In this section we study the space $\X^*$. We prove that every normalized block sequence in $\X^*$ has a subsequence that generates a spreading model that is 4-equivalent to the unit vector basis of $c_0$. In addition, every block subspace of $\X^*$ admits the unit vector basis of $\ell_1$ as an asymptotic model and hence $\X^*$ does not have an asymptotic-$c_0$ subspace.

\begin{lem}
\label{vfg covnex}
Let $j_0\in\mathbb{N}$, $(g_k)_{k=1}^m$ be an $\mathcal{S}_{n_{j_0}}$-admissible sequence in $\mathrm{co}(\W)$ and assume the following: each $g_k$ has the form  $g_k = \sum_{j=1}^{d_k} c_j^k f_j^k$, where $d_k\in\mathbb{N}$ and $f_j^k\in \W$, for $1\leq k \leq m$, so that
\[\min\{w(f_j^k):1\leq j\leq d_k\} > \max\supp(g_{k-1}), \text{ for } 2\leq k \leq m,\]
then we have that $(1/m_{j_0})\sum_{k=1}^mg_k$ is in $\mathrm{co}(\W)$.
\end{lem}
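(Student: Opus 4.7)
The plan is to distribute the inner convex sums across the outer average. Substituting $g_k = \sum_{j=1}^{d_k} c_j^k f_j^k$ into $h := \frac{1}{m_{j_0}}\sum_{k=1}^m g_k$ and expanding, we obtain
\begin{equation*}
h = \sum_{(j_1,\ldots,j_m)} \left(\prod_{k=1}^m c_{j_k}^k\right)\phi_{j_1,\ldots,j_m}, \text{ where } \phi_{j_1,\ldots,j_m} := \frac{1}{m_{j_0}}\sum_{k=1}^m f_{j_k}^k,
\end{equation*}
with the outer sum ranging over tuples in $\prod_{k=1}^m \{1,\ldots,d_k\}$. Since each $g_k \in \mathrm{co}(\W)$, the coefficients $(c_j^k)_{j=1}^{d_k}$ are non-negative and sum to $1$; hence the tuple-coefficients $\prod_k c_{j_k}^k$ are also non-negative and sum to $1$ by Fubini. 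It therefore suffices to show that each $\phi_{j_1,\ldots,j_m}$ lies in $\W$, as $h$ will then be exhibited as a convex combination of elements of $\W$.

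For a fixed tuple, I would apply the defining closure property of $\W$ with $l=1$ and index $j_0$. This requires the sequence $(f_{j_k}^k)_{k=1}^m$ to be successive, $\mathcal{S}_{n_{j_0}}$-admissible, very fast growing, and composed of weighted functionals of $\W$ (the last being immediate). Successiveness is clear: $\supp(f_{j_k}^k)\subseteq \supp(g_k)$, and $(g_k)$ is successive by $\mathcal{S}_{n_{j_0}}$-admissibility. For admissibility, note $\min\supp(g_k)\leq \min\supp(f_{j_k}^k)$ for every $k$, so the set $\{\min\supp(f_{j_k}^k):1\leq k\leq m\}$ is a pointwise spread of $\{\min\supp(g_k):1\leq k\leq m\} \in \mathcal{S}_{n_{j_0}}$, and the spreading property of the Schreier family delivers membership. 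Finally, for very fast growth and $k\geq 2$, the hypothesis gives $w(f_{j_k}^k) \geq \min_{1\leq j\leq d_k} w(f_j^k) > \max\supp(g_{k-1})\geq \max\supp(f_{j_{k-1}}^{k-1})$.

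Essentially no genuine obstacle arises once one recognizes that the very-fast-growing hypothesis was phrased precisely to survive the selection $(f_j^k)_j \mapsto f_{j_k}^k$. Only minor bookkeeping is needed if some $f_j^k$ happens to be of the form $\pm e_i^*$ with $w(f_j^k) = \infty$, in which case the growth condition is satisfied trivially.
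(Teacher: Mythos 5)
Your proof is correct and rests on the same core idea as the paper's: from each $g_k$ select one summand $f_{j_k}^k$, check that $(f_{j_k}^k)_{k=1}^m$ is $\mathcal{S}_{n_{j_0}}$-admissible (spreading property applied to $\min\supp(g_k)\leq\min\supp(f_{j_k}^k)$) and very fast growing (the weight hypothesis is tailored so that any such selection survives), conclude $(1/m_{j_0})\sum_{k}f_{j_k}^k\in\W$, and exhibit $(1/m_{j_0})\sum_k g_k$ as a convex combination of such elements. Where you and the paper differ is in the bookkeeping of that convex combination. The paper first normalizes all $g_k$ to a common length $d$ and a common coefficient vector $(c_j)_{j=1}^d$ by repeating entries (a common-refinement step), and then needs only the $d$ diagonal selections $j_1=\cdots=j_m=j$, writing $(1/m_{j_0})\sum_k g_k=\sum_{j=1}^d c_j f_j$ with $f_j=(1/m_{j_0})\sum_k f_j^k$. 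You skip the normalization and expand multilinearly over the full product index set $\prod_{k=1}^m\{1,\ldots,d_k\}$ with coefficients $\prod_k c_{j_k}^k$; this is cleaner to set up but uses a larger (still finite) collection of $\W$-elements, while the paper's is more economical after its preprocessing. One minor point both arguments leave implicit: the containment $\supp(f_j^k)\subseteq\supp(g_k)$ is not automatic from $g_k=\sum_j c_j^k f_j^k$ (cancellation could occur), but it can be arranged by replacing each $f_j^k$ with its restriction to $\supp(g_k)$, which remains in $\W$ with the same weight; the paper records this as ``we can also assume,'' and you should flag it as well rather than assert it as clear.
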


\begin{proof}
By repeating some entries we may assume that $d_k = d$ and $c_j^k = c_j$ for each $1\leq k\leq m$. That is, for each $1\leq k\leq m$, we may assume $g_k = \sum_{j=1}^dc_jf_j^k$, where perhaps some $f_j^k$'s are repeated and perhaps some are the zero functional. We can also assume that $\supp(f_j^k\subset\supp(g_k))$, for $1\leq k\leq m$ and $1\leq j\leq d$. We conclude that for $1\leq j\leq d$ the sequence $(f^k_j)_{k=1}^m$ is an $\mathcal{S}_{n_{j_0}}$-admissible and very fast growing sequence in $\W$, so $f_j = (1/m_{j_0})\sum_{k=1}^mf_j^k$ is in $\W$. We conclude that $(1/m_{j_0})\sum_{k=1}^mg_k = \sum_{j=1}^dc_jf_j$ is in $\mathrm{co}(\W)$.
\end{proof}

\begin{lem}
\label{same weight covnex}
Let $j_0\in\mathbb{N}$, $(g_k)_{k=1}^m$ be an $\mathcal{S}_{n_{j_0}}$-admissible sequence in $\mathrm{co}(\W)$ and assume the following: there is $(j_1,\ldots,j_l)\in\mathbb{N}^{<\infty}$ so that each $g_k$ has the form  $g_k = \sum_{j=1}^{d_k} c_j^k f_j^k$, where $d_k\in\mathbb{N}$ and $f_j^k\in \W$, for $1\leq k \leq m$, so that $\vec w(f_j^k) = (j_1,\ldots,j_l)$ and if
\[
f_j^k = \frac{1}{m_{j_1}\cdots m_{j_l}}\sum_{r\in F_j^k} h_r^{k,j},
\]
with $(h_r^{k,j})_{r\in F_j^k}$ being $\mathcal{S}_{n_{j_1}+\cdots+n_{j_l}}$-admissible and very fast growing,  then
\[\min\{w(h_r^{k,j}):r\in F_j^k\} > \max\supp(g_{k-1}), \text{ for } 2\leq k \leq m,\]
then we have that $(1/m_{j_0})\sum_{k=1}^mg_k$ is in $\mathrm{co}(\W)$.
\end{lem}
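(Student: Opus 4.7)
The plan is to mimic the proof of Lemma \ref{vfg covnex} but replace its key step with a different construction of the target $\W$-functional. After the standard reduction (repeating entries) that allows us to assume $d_k = d$ and $c_j^k = c_j$ for all $k$, we have $g_k = \sum_{j=1}^d c_j f_j^k$. For each fixed $j$, define
\[
f_j \;:=\; \frac{1}{m_{j_0}} \sum_{k=1}^m f_j^k.
\]
Once every $f_j$ is shown to lie in $\W$, the identity $(1/m_{j_0}) \sum_{k=1}^m g_k = \sum_{j=1}^d c_j f_j$ exhibits the left-hand side as an element of $\mathrm{co}(\W)$, and we are done.

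Unlike in Lemma \ref{vfg covnex}, the functionals $f_j^k$ for varying $k$ all carry the same vector weight $(j_1, \ldots, j_l)$, so their weights are not very fast growing across $k$ and we cannot recognise $f_j$ as the result of a $(1/m_{j_0}, \mathcal{S}_{n_{j_0}})$-operation directly. Instead, I will unfold each $f_j^k = (1/(m_{j_1}\cdots m_{j_l})) \sum_{r \in F_j^k} h_r^{k,j}$ and write
\[
f_j \;=\; \frac{1}{m_{j_0} m_{j_1} \cdots m_{j_l}} \sum_{k=1}^m \sum_{r \in F_j^k} h_r^{k,j},
\]
aiming to recognise this as a $\W$-functional with vector weight $(j_0, j_1, \ldots, j_l)$. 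The verification reduces to showing that the concatenated sequence $(h_r^{k,j})_{k,r}$, in the natural lexicographic order, is (i) $\mathcal{S}_{n_{j_0}+n_{j_1}+\cdots+n_{j_l}}$-admissible and (ii) very fast growing. For (ii), within each $k$ this is assumed, while across $k-1$ to $k$ the standing hypothesis $\min\{w(h_r^{k,j}): r \in F_j^k\} > \max\supp(g_{k-1})$ handles the transition. For (i), I would combine the inner admissibility $\{\min\supp(h_r^{k,j})\}_{r \in F_j^k} \in \mathcal{S}_{n_{j_1}+\cdots+n_{j_l}}$ with outer admissibility $\{\min\supp(f_j^k)\}_{k=1}^m \in \mathcal{S}_{n_{j_0}}$ and then invoke the identity $\mathcal{S}_{n_{j_0}} \ast \mathcal{S}_{n_{j_1}+\cdots+n_{j_l}} = \mathcal{S}_{n_{j_0}+n_{j_1}+\cdots+n_{j_l}}$ recalled in Subsection \ref{subsection Schreier sets}.

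The main obstacle is that the convex combination $g_k = \sum_j c_j f_j^k$ may feature cancellations, so a priori $\supp(f_j^k)$ need not be contained in $\supp(g_k)$; without such containment neither the support bound $\max\supp(h_{r'}^{k-1,j}) \leq \max\supp(g_{k-1})$ required for (ii) nor the outer admissibility required for (i) is automatic. The planned remedy is to exploit the observation from Section \ref{definition section} that $\W$ is closed under coordinate projections: replace each $f_j^k$ by its restriction $\tilde f_j^k := f_j^k|_{\supp(g_k)}$ and each $h_r^{k,j}$ by $\tilde h_r^{k,j} := h_r^{k,j}|_{\supp(g_k)}$, discarding any that become zero. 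The $\tilde f_j^k$ still lie in $\W$ and still satisfy $g_k = \sum_j c_j \tilde f_j^k$; the $\tilde h_r^{k,j}$ inherit the inner admissibility by heredity of $\mathcal{S}_{n_{j_1}+\cdots+n_{j_l}}$, retain their weights and hence the very fast growing property, and now satisfy $\supp(\tilde f_j^k) \subseteq \supp(g_k)$. From $\min\supp(\tilde f_j^k) \geq \min\supp(g_k)$ together with the spreading property of $\mathcal{S}_{n_{j_0}}$, the $\mathcal{S}_{n_{j_0}}$-admissibility of $(g_k)_{k=1}^m$ transfers to $(\tilde f_j^k)_{k=1}^m$, and the entire verification outlined above goes through with the tildes inserted throughout, yielding $f_j \in \W$ as required.
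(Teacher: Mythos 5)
Your proof is correct and follows essentially the same route as the paper's: reduce to a common $d$ and coefficients, unfold each $f_j^k$ into its very fast growing $h$-blocks, recognize $f_j=(1/(m_{j_0}m_{j_1}\cdots m_{j_l}))\sum_{k,r}h_r^{k,j}$ as a weighted functional of $\W$ with vector weight $(j_0,j_1,\ldots,j_l)$ by checking $\mathcal{S}_{n_{j_0}+n_{j_1}+\cdots+n_{j_l}}$-admissibility and the very fast growing property across blocks, and conclude $(1/m_{j_0})\sum_k g_k=\sum_j c_j f_j\in\mathrm{co}(\W)$. The support-containment issue you flag is real and your projection fix is exactly right; the paper handles it by silently carrying over the reduction ``we can also assume $\supp(f_j^k)\subset\supp(g_k)$'' from the proof of Lemma \ref{vfg covnex} (invoked via ``as in the proof of Lemma \ref{vfg covnex}''), so you have simply made explicit a step the paper leaves implicit.
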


\begin{proof}
As in the proof of Lemma \ref{vfg covnex} we may assume that there are $d$ and $c_1,\ldots,c_d$ so that $g_k = \sum_{j=1}^{d} c_j f_j^k$ where perhaps some $f_j^k$'s are repeated and perhaps some are the zero functional. It follows that for fixed $1\leq j\leq d$ the sequence $((h_r^{k,j})_{r\in F_j^k})_{k=1}^m$ is $\mathcal{S}_{n_{j_1}+\cdots+n_{j_l}+n_{j_0}}$-admissible and very fast growing. This means that $f_j = (1/m_{j_1}\cdots m_{j_l}m_{j_0})\sum_{k=1}^m\sum_{r\in F_j^k}h_r^{k,j}$ is in $\W$. We conclude that $(1/m_{j_0})\sum_{k=1}^mg_k = \sum_{j=1}^dc_jf_j$ is in $\mathrm{co}(\W)$.
\end{proof}

\begin{lem}
\label{getting rid of}
Let $(f_k)_k$ be a block sequence in $\mathrm{co}(\W)$ and let $\varepsilon > 0$. Then there exists $L\in[\mathbb{N}]^\infty$ and a sequence $(g_k)_{k\in L}$ in $\mathrm{co}(\W)$ with $\mathrm{supp}(g_k) \subset \mathrm{supp}(f_k)$ for all $k\in L$, so that for all $j_0\in\mathbb{N}$ and all $F\subset L$ so that $(f_k)_{k\in F}$ is $\mathcal{S}_{n_{j_0}}$-admissible we have that
\[\left\|\sum_{k\in F}\left(f_k - \frac{1}{2}g_k\right)\right\| \leq m_{j_0} + \varepsilon.\]
\end{lem}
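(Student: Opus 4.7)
The strategy is to split each $f_k \in \mathrm{co}(\W)$ into a ``high-weight'' and a ``low-weight'' piece with respect to the growing threshold $N_k = \max\supp(f_{k-1}) + 1$, so as to choose $g_k$ in such a way that $f_k - \tfrac{1}{2} g_k$ becomes controllable by one of Lemma \ref{vfg covnex} and Lemma \ref{same weight covnex}. Writing $f_k = \sum_\ell c_\ell^k h_\ell^k$ with $h_\ell^k \in \W$ and $\sum_\ell c_\ell^k = 1$, let $\alpha_k = \sum_{\ell : w(h_\ell^k) > N_k} c_\ell^k$ denote the mass carried by the high-weight components, and write $f_k = \alpha_k u_k + (1-\alpha_k) v_k$ where $u_k, v_k \in \mathrm{co}(\W)$ are the normalised high- and low-weight pieces. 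By extracting a subsequence we may assume $\alpha_k \to \alpha \in [0,1]$.

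In the case $\alpha \geq 1/2$ we thin so that $\alpha_k \geq 1/2$ on $L$ and set $g_k = 2(1-\alpha_k) v_k$, which lies in $\mathrm{co}(\W)$ because $2(1-\alpha_k) \leq 1$. Then $f_k - \tfrac{1}{2} g_k = \alpha_k u_k$ is a convex combination of elements of $\W$ whose weights exceed $N_k \geq \max\supp(\alpha_{k-1} u_{k-1})$, so the hypotheses of Lemma \ref{vfg covnex} are met. It yields $(1/m_{j_0}) \sum_{k \in F} \alpha_k u_k \in \mathrm{co}(\W)$ for any $\mathcal{S}_{n_{j_0}}$-admissible $(f_k)_{k\in F}$, whence $\|\sum_{k \in F}(f_k - \tfrac{1}{2} g_k)\| \leq m_{j_0}$ without even using the $\varepsilon$ slack.

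The principal case is $\alpha < 1/2$. Here we set $g_k = 2\alpha_k u_k \in \mathrm{co}(\W)$, so that $r_k := f_k - \tfrac{1}{2} g_k = (1-\alpha_k) v_k$ consists exclusively of low-weight functionals of weight $\leq N_k$. Since only finitely many products $m_{j_1}\cdots m_{j_p}$ do not exceed $N_k$, a pigeonhole passage to an infinite $L \subset \mathbb{N}$ stabilises the vector weights of the low-weight constituents to a common $(j_1^*, \ldots, j_l^*)$. Writing each such $h_\ell^k = (m_{j_1^*}\cdots m_{j_l^*})^{-1} \sum_s h_{\ell, s}^k$, the very-fast-growing condition internal to $h_\ell^k$ forces $w(h_{\ell,s}^k) > \max\supp(h_{\ell,s-1}^k)$ for $s \geq 2$, and a further thinning arranges that these sub-components exceed $\max\supp(v_{k-1})$. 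The non-leftmost part of $v_k$, namely the convex combination $\tilde v_k = \sum_\ell (c_\ell^k/(1-\alpha_k)) \tilde h_\ell^k$ with $\tilde h_\ell^k = (m_{j_1^*}\cdots m_{j_l^*})^{-1} \sum_{s \geq 2} h_{\ell, s}^k \in \W$, is amenable to Lemma \ref{same weight covnex} and contributes at most $m_{j_0}$ to the sum. The leftmost remainder $v_k - \tilde v_k = (m_{j_1^*}\cdots m_{j_l^*})^{-1} p_k$ carries an attenuation factor $(m_{j_1^*}\cdots m_{j_l^*})^{-1} \leq 1/2$ with $p_k \in \mathrm{co}(\W)$, and one applies the lemma inductively to the new block sequence $(p_k)$ with halved tolerance $\varepsilon/2$; the geometric series of halved errors then sums to at most $\varepsilon$.

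The main obstacle is controlling this recursive descent into leftmost sub-components. Termination is guaranteed because each $h \in \W$ lies in some $W_n$ for finite $n$ by Remark \ref{inductive construction norming}, and a diagonal thinning bounds this depth uniformly in $k$ along a subsequence. Combined with the attenuation by a factor of at least $1/2$ at each recursion level, the procedure converges and produces the final bound $m_{j_0} + \varepsilon$.
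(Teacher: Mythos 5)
Your high-level idea---split $f_k$ into a high-weight piece (threshold $N_k$) and a low-weight piece, dispatch the high-weight piece with Lemma \ref{vfg covnex}, and handle the low-weight piece with Lemma \ref{same weight covnex} after deleting leftmost sub-functionals---is the correct template, and your treatment of the case $\alpha\ge 1/2$ is fine. But the treatment of the main case contains two genuine gaps.

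First, the pigeonhole claim is false. For a single $k$, the low-weight piece $v_k$ is a convex combination $\sum_\ell (c^k_\ell/(1-\alpha_k))h^k_\ell$ whose constituents in general have \emph{many distinct} vector weights, so there is no single $(j_1^*,\ldots,j_l^*)$ to stabilize to along a subsequence---the multiplicity lives inside each $v_k$, not across $k$. Consequently Lemma \ref{same weight covnex}, which requires a \emph{common} vector weight for all the pieces it sees, does not apply to $\tilde v_k$ as you have defined it. The paper avoids this by partitioning $F_k$ into the fibers $F_{\vec j,k}$ over \emph{all} admissible $\vec j$ simultaneously, passing to a subsequence so that every mass $\nu_{\vec j,k}\to\nu_{\vec j}$ converges (a diagonal argument over countably many $\vec j$), and then expressing $f_{k_i}-\frac12 g_{k_i}$ (after a small perturbation to $\tilde f_{k_i}$) as a \emph{convex} combination over $\vec j$ of terms each of a single vector weight, with the limiting weights $\nu_{\vec j}$ as coefficients. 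Lemma \ref{same weight covnex} is then applied separately to each $\vec j$-fiber and Lemma \ref{vfg covnex} to the single high-weight residue, and the convex combination of elements of $\mathrm{co}(\W)$ stays in $\mathrm{co}(\W)$. The $\varepsilon$ slack is consumed by replacing $\nu_{\vec j,k_i}$ with $\nu_{\vec j}$, not by a geometric recursion.

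Second, the recursive descent is both unjustified and insufficient. The termination argument ("diagonal thinning bounds this depth uniformly") does not hold: the depth of $f_k$ in the filtration $W_0\subset W_1\subset\cdots$ is not bounded along any subsequence in general, so there is no finite recursion depth. And even granting the recursion, the arithmetic does not close: the leftmost remainder carries a factor $\le 1/2$ but still contributes a full $m_{j_0}$ at the next level, so the total is a geometric series with sum $\approx 2m_{j_0}$, not $m_{j_0}+\varepsilon$. The paper's $g_k$ handles the leftmost functionals in one step: for each low-weight $f^{k_i}_r$ one subtracts $\frac12 g^{k_i}_r = (m_{j_1}\cdots m_{j_l})^{-1}h^{r,i}_1$ exactly, which removes the offending first term of the very-fast-growing sequence inside $f^{k_i}_r$ and makes that entire piece immediately admissible for Lemma \ref{same weight covnex}; no recursion is needed inside Lemma \ref{getting rid of} (the iteration you are reaching for appears later, in Proposition \ref{subseq mT dual}, where the lemma is applied repeatedly to get the factor $2$).
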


\begin{proof}
Let each $f_k = \sum_{r\in F_k}c_r^kf_r^k$, where $f_r^k\in\W$ and $\supp(f_r^k)\subset\supp(f_k)$ for all $r\in F_k$ and $k\in\mathbb{N}$. Without loss of generality we may assume that $\sum_{r\in F_k}c_r^k = 1$ for all $k\in\mathbb{N}$. Define
\[
\begin{split}
\N^{<\infty}_{N} &= \{\vec j = (j_1,\ldots,j_l)\in\N^{<\infty}: m_{j_1}\cdots m_{j_l}\leq N\},\\
F_{\vec j,k} &= \{r\in F_k: \vec w(f_r^k) = \vec j\},\quad \nu_{\vec j,k} = \sum_{r\in F_k}c_r^k\text{ for all }\vec j\in\N^{<\infty}\text{ and }k\in\mathbb{N},\\
F_{N,k} &=\cup_{\vec j\in \N^{<\infty}_N}F_{\vec j,k}\text{ and }G_{N,k} = F_k\setminus G_{N,k},\text{ for all }k,N\in\N.
\end{split}
\]

By passing to a subsequence of $(f_k)_k$ we may assume that for all $\vec j\in\N^{<\omega}$ the limits $\lim_k\nu_{\vec j,k} = \nu_{\vec j}$ exists. Define $\la = \sum_{\vec j\in\N^{<\infty}} \nu_{\vec j}$, which is in $[0,1]$. Fix a sequence of positive real numbers $(\e_i)_i$, with $\sum_i\e_i<\e$, and recursively pick strictly increasing sequences $(k_i)_i$ and $(N_i)_i$ so that the following are satisfied:
\begin{subequations}
\begin{align}
&\left|\la - \sum_{\vec j \in \N^{<\infty}_{N_i}}\nu_{\vec j}\right| < \e_i/3\text{ and if }i>1\text{ then }N_{i} > \max\supp(f_{k_{i-1}}),\label{choose N}\\
\label{close enuffb}
&\sum_{\vec j\in \N^{<\infty}_{N_i}}\left|\nu_{\vec j,k_i} - \nu_{\vec j}\right|< \e_i/3.
\end{align}
Define then for each $i\in\mathbb{N}$ the number $\mu_i = \sum_{r\in G_{N_i,k_i}} c_r^{k_i}$ and note that \eqref{choose N} and \eqref{close enuffb} yield
\begin{equation}
\label{close enuffc}
\begin{split}
\left|\mu_i - (1-\la)\right|&= \left|\la - \sum_{\vec j\in\N_{N_i}^{<\infty}}\nu_{\vec j,k_i}\right|\\
&\leq \sum_{\vec j\in \N^{<\infty}_{N_i}}\left|  \nu_{\vec j} - \nu_{\vec j,k_i}\right| +\!\!\!\!\!\! \sum_{\vec j\in\N^{<\infty}\setminus \N^{<\infty}_{N_i}}\nu_{\vec j}\\
&< \frac{2\e_i}{3}.
\end{split}
\end{equation}
\end{subequations}

For each $i\in\N$, using the convection 1/0 = 0, define
\[f_{\vec j,k_i} = \sum_{r\in F_{\vec j,{k_i}}}\frac{c_r^{k_i}}{\nu_{\vec j,k_i}}f_r^{k_i}, \text{ for }\vec j\in\N^{<\infty}_{N_i}, \text{ and }f_{\mathrm{iw},k_i} = \sum_{r\in G_{N_i,k_i}}\frac{c_r^{k_i}}{\mu_i}f_r^{k_i}.\]
Clearly, all the above functionals are in $\mathrm{co}(\W)$ and a quick inspection reveals that
\begin{equation}
\label{true form}
f_{k_i} = \sum_{\vec j\in\N^{<\infty}_{N_i}}\nu_{\vec j,k_i}f_{\vec j,k_i} + \mu_i f_{\mathrm{iw},k_i}, \text{ with }\sum_{\vec j\in\N^{<\infty}_{N_i}}\nu_{\vec j,k_i} + \mu_i = 1.
\end{equation}
By \eqref{choose N} we observe that if $j_0\in\N$ and $F\subset\mathbb{N}$ is such that $(f_{k_i})_{i\in F}$ is $\mathcal{S}_{n_{j_0}}$ admissible, then by Lemma \ref{vfg covnex} we have that
\begin{equation}
\label{vfg part sums}
\frac{1}{m_{j_0}}\sum_{i\in F}f_{\mathrm{iw},k_i}\in\mathrm{co}(\W).
\end{equation}

In the next step, for each $i\in\mathbb{N}$ and $\vec j\in\N^{<\infty}_{N_i}$, if $\vec j = (j_1,\ldots,j_l)$, write for each $r\in F_{\vec j,k_i}$
\[f_r^{k_i} = \frac{1}{m_{j_1}\cdots m_{j_l}}\sum_{t=1}^{d_r^{k_i}} h_t^{r,i},\]
with $(h_t^{r,i})_{t=1}^{d_r^{k_i}}$ being $\mathcal{S}_{n_{j_1}+\cdots+n_{j_l}}$-admissible and very fast growing, and define $g_r^{k_i} = \frac{2}{m_{j_1}\cdots m_{j_l}}h_1^{r,i}$, which is in $\mathrm{co}(\W)$. Define for each $i\in\mathbb{N}$ and $\vec j\in\N^{<\infty}_{N_i}$ the functional
\[g_{\vec j, k_i} = \sum_{r\in F_{\vec j,{k_i}}}\frac{c_r^{k_i}}{\nu_{\vec j,k_i}}g_r^{k_i},\]
which is in $\mathrm{co}(\W)$ and make the following crucial observations:
\begin{equation}
\label{satisfies same weight lemma}
\begin{array}{c}
f_{\vec j,k_i} - \frac{1}{2}g_{\vec j,k_i} = \sum_{r\in F_{\vec j,{k_i}}}\frac{c_r^{k_i}}{\nu_{\vec j,k_i}}\left(f_r^{k_i} - \frac{1}{2}g_r^{k_i}\right),\\
f_r^{k_i} - \frac{1}{2}g_r^{k_i} = \frac{1}{m_{j_1}\cdots m_{j_l}}\sum_{t=2}^{d_r^{k_i}} h_t^{r,i},\\
\mbox{with $(h_t^{r,i})_{t=2}^{d_r^{k_i}}$  $\mathcal{S}_{n_{j_1}+\cdots+n_{j_l}}$-admissible and very fast growing so that}\\
\min\{w(h_t^{r,i}):2\leq r\leq d_r^{k_i}\} > \min\supp(f_{k_i}).
\end{array}
\end{equation}
Now, Lemma \ref{same weight covnex} and \eqref{satisfies same weight lemma} yield that if we fix $\vec j\in\mathbb{N}^{<\infty}$ then we can deduce that if $j_0\in\mathbb{N}$ and $F\subset\mathbb{N}$ is such that $(f_{k_i})_{i\in F}$ is $\mathcal{S}_{n_{j_0}}$-admissible then, if $F_{\vec j} = \{i: \vec j\in \mathbb{N}^{<\infty}_{N_i}\}$, we have that
\begin{equation}
\label{same weight stuff inside}
\frac{1}{m_{j_0}}\sum_{i\in F_{\vec j}}\left(f_{\vec j,k_i} - \frac{1}{2}g_{\vec j,k_i}\right)\in\mathrm{co}(\W).
\end{equation}
Once we made this observation we set for all $i\in\mathbb{N}$
\[g_{k_i} = \sum_{\vec j\in\N^{<\infty}_{N_i}}\nu_{\vec j}g_{\vec j,k_i},\]
which is in $\mathrm{co}(\W)$ and $\supp(g_{k_i})\subset\supp(f_{k_i})$.

We next wish to show that the conclusion is satisfied for $(g_{k_i})_{i\in\mathbb{N}}$. That is, if $j_0\in\mathbb{N}$ and $(f_{k_i})_{i\in F}$ is $\mathcal{S}_{n_{j_0}}$-admissible, then
\[\left\|\sum_{i\in F}\left(f_{k_i} - \frac{1}{2}g_{k_i}\right)\right\| \leq m_{j_0} + \e.\]
Define for each $i\in\N$ the functional
\[\tilde f_{k_i} = \sum_{\vec j\in\N^{<\infty}_{N_i}}\nu_{\vec j}f_{\vec j,k_i} + (1-\la) f_{\mathrm{iw},k_i},\]
which is in $\mathrm{co}(W)$. By \eqref{close enuffb}, \eqref{close enuffc}, and \eqref{true form} we obtain $\|f_{k_i} - \tilde f_{k_i}\| <\e_i$. By this, it is now sufficient to prove that, if $(f_{k_i})_{i\in F}$ is $\mathcal{S}_{n_{j_0}}$-admissible, then
\begin{equation}
\label{the goal}
f = \frac{1}{m_{j_0}}\sum_{i\in F}\left(\tilde f_{k_i} - 
\frac{1}{2}g_{k_i}\right)\in\mathrm{co}(\W)
\end{equation}
because this will imply $\|f\| \leq 1$. The conclusion will then follow from a simple application of the triangle inequality. We are now ready to dissect $f$. Set $N_0 = \max_{i\in F} N_i$ and for each $\vec j\in\N^{<\infty}$ $F_j = \{i\in F: \vec j\in\N^{<\infty}_{N_i}\}$. Write
\begin{align*}
f& = \frac{1}{m_{j_0}}\sum_{i\in F}\left(\left(\sum_{\vec j\in\N^{<\infty}_{N_i}}\nu_{\vec j}\left(f_{\vec j,k_i} - \frac{1}{2}g_{\vec j,k_i}\right)\right)+(1 - \la)f_{\mathrm{iw},k_i}\right)\\
&=\left(\sum_{\vec j\in\N_{N_0}^{<\infty}}\nu_j\left(\frac{1}{m_{j_0}}\sum_{i\in F_{\vec j}}\left(f_{\vec j,k_i} - \frac{1}{2}g_{\vec j,k_i}\right)\right)\right) + (1-\la)\frac{1}{m_{j_0}}\sum_{i\in F}f_{\mathrm{iw},k_i}.
\end{align*}
Finally, by \eqref{vfg part sums} and \eqref{same weight stuff inside}, $f$ is a convex combination of elements of $\mathrm{co}(\W)$ and hence it is in $\mathrm{co}(\W)$.
\end{proof}

\begin{prop}
\label{subseq mT dual}
Let $(f_k)_k$ be a block sequence in the unit ball of $\X^*$. Then for any $\e>0$ there exists $L\in[\N]^\infty$ so that for any $j_0\in\mathbb{N}$ and $F\subset \N$ with $(f_k)_{k\in F}$ being $\mathcal{S}_{n_{j_0}}$-admissible we have
\[\left\|\sum_{k\in F}f_k\right\| \leq 2m_{j_0}+\e.\]
\end{prop}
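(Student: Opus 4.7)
The plan is to derive the bound by iterating Lemma~\ref{getting rid of} countably many times, diagonalizing with care, and combining the resulting iteration estimate with the trivial triangle inequality $\|\sum_{k\in F}g_k\|\leq|F|$.

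First, since $\W$ is symmetric the closed unit ball of $\X^*$ is the norm-closure of $\mathrm{co}(\W)$, so each $f_k$ may be approximated by $\tilde f_k\in\mathrm{co}(\W)$ with $\|f_k-\tilde f_k\|\leq\e\cdot 2^{-k-2}$, giving $\|\sum_{k\in F}(f_k-\tilde f_k)\|\leq\e/2$. Fix a positive sequence $(\e_n)_n$ with $\sum_n 2^{1-n}\e_n\leq\e/6$. Set $f_k^{(0)}=\tilde f_k$, $L_0=\N$, and for $n\geq 1$ apply Lemma~\ref{getting rid of} to $(f_k^{(n-1)})_{k\in L_{n-1}}$ with tolerance $\e_n$ to obtain $L_n\subset L_{n-1}$ and a block sequence $(f_k^{(n)})_{k\in L_n}$ in $\mathrm{co}(\W)$ with $\supp f_k^{(n)}\subset\supp f_k^{(n-1)}$ such that
\[
\Big\|\sum_{k\in F}\bigl(f_k^{(n-1)}-\tfrac{1}{2}f_k^{(n)}\bigr)\Big\|\leq m_{j_0}+\e_n
\]
for every $j_0\in\N$ and every $F\subset L_n$ with $(f_k^{(n-1)})_{k\in F}$ being $\mathcal{S}_{n_{j_0}}$-admissible. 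Since supports only shrink and the Schreier families are spreading, $\mathcal{S}_{n_{j_0}}$-admissibility of $(f_k)_{k\in F}$ transfers to each $(f_k^{(n)})_{k\in F}$.

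Next, I form the diagonal $L=\{l_s\}_s$ with $l_s\in L_{P(s)}$ and $l_s>l_{s-1}$, where $P\colon\N\to\N$ is an increasing function with $P(s)\geq s$ to be specified. For $F\subset L$ with $\min F=l_p$ we have $F\subset L_{P(p)}$, so the telescoping identity
\[
\tilde f_k=\sum_{i=1}^{P(p)}\frac{1}{2^{i-1}}\bigl(f_k^{(i-1)}-\tfrac{1}{2}f_k^{(i)}\bigr)+\frac{1}{2^{P(p)}}f_k^{(P(p))}
\]
together with $\|f_k^{(P(p))}\|\leq 1$ yields
\[
\Big\|\sum_{k\in F}\tilde f_k\Big\|\leq 2m_{j_0}\bigl(1-2^{-P(p)}\bigr)+\tfrac{\e}{6}+\frac{|F|}{2^{P(p)}}.
\]
This is at most $2m_{j_0}+\e/2$ provided $|F|\leq 2m_{j_0}+2^{P(p)}\e/3$, and the trivial bound $\|\sum_{k\in F}\tilde f_k\|\leq|F|$ also suffices whenever $|F|\leq 2m_{j_0}+\e/2$.

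Writing $b_n(m)$ for the maximum cardinality of a set in $\mathcal{S}_n$ with minimum $m$, admissibility yields $|F|\leq b_{n_{j_0}}(\min\supp f_{l_p})$. I split into two regimes according to whether $n_{j_0}\leq p$ or $n_{j_0}>p$. In the first regime I choose $P(p)$ large enough (depending on $\min\supp f_{l_p}$) that $2^{P(p)}\geq 3 b_p(\min\supp f_{l_p})/\e$, so that $|F|\leq b_p(\min\supp f_{l_p})\leq 2^{P(p)}\e/3$ and the iteration bound covers the $F$. In the second regime the growth conditions (i)--(iii) on $(m_j),(n_j)$ from Section~\ref{definition section}, which are calibrated so that $m_{j_0}$ grows quickly once $n_{j_0}$ exceeds sums of preceding $n_j$'s, force $b_{n_{j_0}}(\min\supp f_{l_p})\leq 2m_{j_0}$, so the trivial bound $|F|\leq 2m_{j_0}$ settles it. The hard part is orchestrating this coordinated case split through a single diagonal subsequence $L$, which works only because of the finely tuned interplay between the sequences $(m_j)$ and $(n_j)$ established in Section~\ref{definition section}.
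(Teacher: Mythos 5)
Your strategy of iterating Lemma~\ref{getting rid of} and telescoping is the right one, and the reduction to the two inequalities $|F|\leq 2m_{j_0}+2^{P(p)}\e/3$ or $|F|\leq 2m_{j_0}+\e/2$ is correctly derived. However, the case split fails: the claim in the ``second regime'' that the growth conditions (i)--(iii) force $b_{n_{j_0}}(\min\supp f_{l_p})\leq 2m_{j_0}$ whenever $n_{j_0}>p$ is false, and in fact the growth conditions imply the opposite. Condition (i) of Section~\ref{definition section} requires $\lim_j C^{n_j}/m_j=\infty$ for \emph{every} $C>1$, i.e.\ $m_{j_0}=o\bigl(C^{n_{j_0}}\bigr)$; the sequences are deliberately calibrated so that $m_j$ grows \emph{slowly} compared to exponentials in $n_j$. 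On the other hand, for a fixed base point $M=\min\supp f_{l_p}\geq 2$, the maximal cardinality $b_{n}(M)$ of an $\mathcal{S}_n$-set with minimum $M$ grows tower-exponentially in $n$, far faster than any $C^n$. So for fixed $p$ and $j_0\to\infty$ with $n_{j_0}>p$, one has $b_{n_{j_0}}(M)/m_{j_0}\to\infty$, and the ``trivial bound'' $|F|\leq |F|$ does not deliver $\|\sum_{k\in F}\tilde f_k\|\leq 2m_{j_0}+\e/2$. Since the truncation depth $P(p)$ is fixed once $\min F=l_p$ is fixed, you cannot compensate by enlarging $P$ either: the offending sets have $\min F$ fixed but $j_0$ arbitrarily large.

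The structural problem is that you truncate the telescoping at a fixed level determined by $\min F$, which forces you to control the residual $2^{-P(p)}\|\sum_{k\in F}f_k^{(P(p))}\|$ by the raw cardinality of $F$ --- a quantity uncontrollable in $j_0$. The paper's proof avoids this by running the telescoping to depth $N=|F|$, but shrinking the index set along the way: for $q$ past a fixed threshold $q_0$ (chosen so $2^{1-q_0}<\e/2$) it uses $F_q=\{k_q,\ldots,k_N\}$ rather than all of $F$. The diagonal $L=\{\ell_i\}$ is chosen so that $\ell_i\in L_q$ whenever $i\geq q$, which guarantees $F_q\subset L_q$ (since $k_q=\ell_{i_q}$ with $i_q\geq q$), so the iteration estimate from Lemma~\ref{getting rid of} applies to $F_q$ at \emph{every} level. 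The only extra cost is the singletons dropped at each step, $\sum_{q>q_0}2^{1-q}\|f_{k_{q-1}}^{(q-1)}\|\leq\sum_{q>q_0}2^{1-q}<\e$, which is independent of $j_0$ and $|F|$. That one-element-at-a-time shrinkage is exactly the idea missing from your argument, and it is what lets the bound close uniformly over all $j_0$ without invoking any (false) comparison between Schreier cardinalities and the $m_j$'s.
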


\begin{proof}
By reflexivity we have that the unit ball of $\X^*$ is the closed convex hull of $\W$. Actually, a compactness argument yields that every finitely supported vector in the unit ball of $\X^*$ must be in $\mathrm{co}(\W)$. Set $(f_k^{(0)})_k = (f_k)_k$ and apply Lemma \ref{getting rid of} inductively to find infinite sets $L_1\supset L_2\supset\cdots\supset L_q\supset\cdots$ and, for each $q\in\N$, $(f_k^{(q)})_{k\in L_q}$ in $\mathrm{co}(\W)$ so that for all $j_0\in\N$ and $F\subset L_q$ with $(f_k^{(q-1)})_{k\in L_q}$ being $\mathcal{S}_{n_{j_0}}$ we have that
\[\left\|\sum_{k\in F}\left(f_k^{(q-1)}-\frac{1}{2}f_k^{(q)}\right)\right\| \leq m_{j_0} + \frac{\e}{4}.\]
Pick $q_0\in\N$ with $1/2^{q_0-1} < \e/2$ and then pick an infinite subset of $L_{q_0}$ $L = \{\ell_i:i\in\N\}$ so that for all $q\geq q_0$ and $i\geq q$ we have $\ell_i\in L_q$.  Let now $j_0\in\mathbb{N}$ and $F\subset L$ so that $(f_k^{(0)})_{k\in F}$ is $\mathcal{S}_{n_{j_0}}$-admissible. If $F = \{k_1,\ldots,k_N\}$, define for $q = 0,1,\ldots,q_0$ the set $F_q = \{k_1,\ldots,k_N\}$ and for $q = q_0+1,\ldots,n$ the set $F_q = \{k_q,\ldots,k_N\}$. Observe that $F_q\subset L_q$ and $(f_k^{(q-1)})_{k\in F_q}$ is $\mathcal{S}_{n_{j_0}}$-admissible. Then,
\[
\begin{split}
\left\|\sum_{k\in F}f_k^{(0)}\right\| &= \left\|\sum_{k\in F}f_k^{(0)} + \sum_{q=1}^N\frac{1}{2^q}\sum_{k\in F_q}(f_k^{(q)} - f_k^{(q)})\right\|\\
&= \left\|\sum_{q=1}^N\left(\frac{1}{2^{q-1}}\sum_{k\in F_{q-1}}f_k^{(q-1)}-\frac{1}{2^q}\sum_{k\in F_q}f_k^{(q)}\right) + \frac{1}{2^N}f^{(N)}_{k_N}\right\|\\
&\leq \sum_{q=1}^{q_0}\frac{1}{2^{q-1}}\left\|\sum_{r=1}^Nf^{(q-1)}_{k_r} - \frac{1}{2}f^{(q)}_{k_r}\right\|\\
&+ \sum_{q=q_0+1}^N\frac{1}{2^{q-1}}\left(\left\|f^{(q-1)}_{k_{q-1}}\right\| + \left\|\sum_{r=q}^Nf^{(q-1)}_{k_r} - \frac{1}{2}f^{(q)}_{k_r}\right\|\right) + \frac{1}{2^N}\left\|f^{(N)}_{k_N}\right\| \\
&\leq \sum_{q=1}^N\frac{1}{2^{q-1}}\left(m_{j_0} + \frac{\e}{4}\right) + \sum_{q=q_0}^N\frac{1}{2^{q}} \leq 2m_{j_0} + \e.
\end{split}
\]
\end{proof}

\begin{cor}
Every normalized block sequence in $\X^*$ has a subsequence that generates a spreading model 4-equivalent to the unit vector basis of $c_0$.
\end{cor}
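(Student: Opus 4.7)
The plan is to deduce the corollary directly from Proposition \ref{subseq mT dual}, exploiting the specific values $m_1 = 2$ and $n_1 = 1$ together with the 1-unconditionality inherited by a block sequence from the unconditional basis of $\X^*$.

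First, I would apply Proposition \ref{subseq mT dual} with a sequence of tolerances $\varepsilon_r\downarrow 0$, producing, by a standard diagonal argument, a single infinite set $L\in[\N]^\infty$ and a subsequence $(f_{k_i})_{i}$ with the following property: for every $r\in\N$, there exists $N_r\in\N$ such that whenever $F\subset\{k_i:i\geq N_r\}$ is $\mathcal{S}_{n_1}=\mathcal{S}_1$-admissible, we have $\|\sum_{k\in F}f_k\|\leq 2m_1+\varepsilon_r = 4+\varepsilon_r$. Recall that $\mathcal{S}_1$-admissibility means $\#F\leq \min(F)$, so for every fixed $n\in\N$, all choices $i_1<\cdots<i_n$ with $i_1\geq N_r\vee n$ satisfy this condition.

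Next, for the upper bound on arbitrary coefficients, I would invoke the fact that $(f_{k_i})_i$ is a block sequence with respect to the 1-unconditional basis of $\X^*$, hence it is itself 1-unconditional. Therefore, for any scalars $a_1,\ldots,a_n$ and any $n\leq i_1<\cdots<i_n$ with $i_1\geq N_r$, the standard monotonicity estimate for 1-unconditional sequences gives
\[
\Big\|\sum_{j=1}^n a_j f_{k_{i_j}}\Big\| \;\leq\; \max_{1\leq j\leq n}|a_j|\cdot \Big\|\sum_{j=1}^n f_{k_{i_j}}\Big\| \;\leq\; (4+\varepsilon_r)\max_{1\leq j\leq n}|a_j|.
\]
The matching lower bound is free: by 1-unconditionality and the fact that each $f_{k_i}$ has norm one, we have $\|\sum a_j f_{k_{i_j}}\|\geq \max_j|a_j|$, for instance by projecting onto the coordinate of largest modulus.

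Finally, to extract a spreading model, I would pass to a further subsequence (via Brunel–Sucheston, exploiting Ramsey on the dense set of rational coefficients) so that all the limits $\|\sum a_j e_j\|_* := \lim_{i_1<\cdots<i_n,\,i_1\to\infty}\|\sum a_j f_{k_{i_j}}\|$ exist. Combining the two displayed estimates with $\varepsilon_r\to 0$, the resulting spreading sequence $(e_i)_i$ satisfies $\max_j|a_j|\leq \|\sum a_j e_j\|_*\leq 4\max_j|a_j|$, i.e., it is $4$-equivalent to the unit vector basis of $c_0$. The only nontrivial input is Proposition \ref{subseq mT dual}; the rest is a routine combination of unconditionality and the Brunel–Sucheston procedure, so I do not expect any real obstacle here.
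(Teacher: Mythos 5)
Your proposal is correct and follows essentially the same route as the paper: invoke Proposition~\ref{subseq mT dual} with $j_0=1$ so that $2m_1+\e=4+\e$, use the fact that a block sequence of the $1$-unconditional basis of $\X$ gives a $1$-unconditional sequence in $\X^*$ to get the matching upper and lower $c_0$-estimates, and pass to the Brunel--Sucheston spreading model. The paper's proof is terse and works with a single $\e>0$, stating the $(4+\e)$-equivalence for $\mathcal{S}_1$-admissible subsets without explicitly dispensing with the $\e$; your diagonal argument over $\e_r\downarrow 0$ makes the ``4'' exact in the spreading model and is a reasonable way to fill in that gloss, but it does not change the underlying mechanism.
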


\begin{proof}
 Let $(f_k)_k$ be a normalized block sequence in the unit ball of $\X^*$ and apply Proposition \ref{subseq mT dual}, for some $\e>0$, and relabel to assume that conclusion holds for the whole sequence. By 1-unconditionality we deduce that for any $F\subset \mathbb{N}$ so that $(f_k)_{k\in F}$ is $\mathcal{S}_{n_1}$-admissible we have that $(f_k)_{k\in F}$ is $(2m_1+\e)$-equivalent to the unit vector basis of $c_0$.  Recall that $m_1 = 2$ and $n_1 = 1$.
\end{proof}

Reflexivity of $\X$, the above stated corollary, and Proposition \ref{unique 1 or 0 am} yield the next result.
\begin{cor}
The space $\X^*$ is asymptotically symmetric.
\end{cor}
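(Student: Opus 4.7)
The proof will be a direct application of Proposition \ref{unique 1 or 0 am}\,(ii), whose hypotheses are essentially already verified in the excerpt. The plan is to show that $\X^*$ satisfies the three requirements: it is reflexive, it is separable, and it admits a unique $c_0$ spreading model with respect to $\mathscr{F}_0$.

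First I would observe that $\X^*$ is reflexive and separable. Reflexivity of $\X$ was established in Corollary \ref{krivine set and refl}, and reflexivity is inherited by duals. Separability of $\X^*$ follows from the fact that $\X$ has a $1$-unconditional Schauder basis together with reflexivity, which forces the basis to be shrinking, so that the biorthogonal functionals form a Schauder basis of $\X^*$.

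Second, I would lift the uniqueness of the $c_0$ spreading model from block sequences (i.e.\ $\mathscr{F}_b$, as established in the preceding corollary) to arbitrary normalized weakly null sequences (i.e.\ $\mathscr{F}_0$). Since $\X^*$ is a reflexive space with a Schauder basis, any normalized weakly null sequence in $\X^*$ admits, via a standard Bessaga--Pe{\l}czy\'nski gliding hump argument, a subsequence that is arbitrarily close to a normalized block sequence of the dual basis. Spreading models are preserved under such small perturbations, so the preceding corollary yields that every normalized weakly null sequence in $\X^*$ has a subsequence generating a spreading model that is $4$-equivalent to the unit vector basis of $c_0$. In particular, any two spreading models produced by sequences in $\mathscr{F}_0$ are equivalent (uniformly, with constant $16$), so $\X^*$ admits a unique $c_0$ spreading model with respect to $\mathscr{F}_0$.

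Finally, applying part (ii) of Proposition \ref{unique 1 or 0 am} to $\X^*$ gives that $\X^*$ is asymptotically symmetric. I do not foresee a genuine obstacle here; the only mildly delicate point is the passage from $\mathscr{F}_b$ to $\mathscr{F}_0$, which is standard once reflexivity and the existence of a basis are in hand.
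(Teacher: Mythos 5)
Your argument is correct and is essentially the paper's own: reflexivity of $\X$ (hence of $\X^*$), the preceding corollary giving the uniformly unique $c_0$ spreading model in $\X^*$, and Proposition \ref{unique 1 or 0 am}(ii). You spell out the passage from $\mathscr{F}_b$ to $\mathscr{F}_0$ via gliding hump, which the paper leaves implicit, but the route is the same.
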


For $n\in\N$ we shall say that a finite block sequence $(f_k)_{k=1}^d$ in $\X^*$ is maximally $\mathcal{S}_n$-admissible if $\{\min\supp(f_k):1\leq k\leq d\}$ is a maximal $\mathcal{S}_n$-set.

\begin{prop}
\label{very good c0 vectors}
Let $Y$ be a block subspace of $\X^*$. Then for every $n\in\N$ and $\de>0$ there exists a sequence $(f_k)_{k=1}^d$ that is maximally $\mathcal{S}_n$-admissible with $\|f_k\| \geq 1$ for $k=1,\ldots,d$ and $\|\sum_{k=1}^df_k\| \leq 1+\de$.
\end{prop}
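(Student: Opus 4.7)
The plan is to argue by contradiction, in a manner dual to the proof of Proposition \ref{very good ell1 vectors}. Suppose that for some $n\in\N$ and $\delta>0$ the conclusion fails, i.e., every maximally $\mathcal{S}_n$-admissible block sequence $(f_k)_{k=1}^d$ in $Y$ with $\|f_k\|\geq 1$ satisfies $\|\sum_{k=1}^d f_k\|>1+\delta$. The idea is that iterating this hypothesis produces vectors whose norms grow geometrically as $(1+\delta)^m$, while Proposition \ref{subseq mT dual} bounds the same norms linearly in $m_{j_0}$ for any $j_0$ with $n_{j_0}\geq nm$; the growth condition (i) on $(m_j)_j,(n_j)_j$ then forces a contradiction.

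First I would fix $\varepsilon>0$, pick an arbitrary normalized block sequence in $Y$, and apply Proposition \ref{subseq mT dual} to extract a subsequence $(g_i^{(0)})_i$ satisfying $\|\sum_{i\in F}g_i^{(0)}\|\leq 2m_{j_0}+\varepsilon$ for every $j_0\in\N$ and every $F$ with $(g_i^{(0)})_{i\in F}$ being $\mathcal{S}_{n_{j_0}}$-admissible. Then I would build inductively, for $m=1,2,\ldots$, block sequences $(g_k^{(m)})_k$ in $Y$ by partitioning the indices of $(g_i^{(m-1)})_i$ into consecutive maximally $\mathcal{S}_n$-admissible groups $(F_k^{(m)})_k$ and setting $g_k^{(m)}=\sum_{i\in F_k^{(m)}}g_i^{(m-1)}$. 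The key step is to apply the negated hypothesis to the rescaled sequence $\big((1+\delta)^{-(m-1)}g_i^{(m-1)}\big)_{i\in F_k^{(m)}}$, whose vectors have norm $\geq 1$ by induction; this forces $\|g_k^{(m)}\|\geq (1+\delta)^m$. Simultaneously, $\mathcal{S}_n\ast\mathcal{S}_{n(m-1)}=\mathcal{S}_{nm}$ yields, by a straightforward induction, that each $g_k^{(m)}$ is a sum over an $\mathcal{S}_{nm}$-admissible subset of the original sequence $(g_i^{(0)})_i$.

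To close the argument, I would fix a large $j_0$ and pick the unique $m$ with $nm\leq n_{j_0}<n(m+1)$, so that the upper bound on $(g_i^{(0)})_i$ applies and gives $(1+\delta)^m\leq 2m_{j_0}+\varepsilon$. Since $m\geq n_{j_0}/n-1$, this rearranges to
\begin{equation*}
\frac{[(1+\delta)^{1/n}]^{n_{j_0}}}{m_{j_0}}\leq (1+\delta)\Big(2+\frac{\varepsilon}{m_{j_0}}\Big),
\end{equation*}
so the left-hand side would stay bounded as $j_0\to\infty$, contradicting property (i) of $(m_j)_j,(n_j)_j$ applied with $C=(1+\delta)^{1/n}>1$. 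The main delicate point I expect is arranging the rescaling by the uniform factor $(1+\delta)^{-(m-1)}$ correctly at each stage, so that the negated hypothesis indeed compounds to a geometric lower bound rather than merely the static bound $1+\delta$.
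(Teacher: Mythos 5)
Your proof is correct and follows essentially the same route as the paper: argue by contradiction, apply Proposition \ref{subseq mT dual} to get the uniform upper estimate $\|\sum_{i\in F}f_i\|\le 2m_{j_0}+\varepsilon$ for $\mathcal{S}_{n_{j_0}}$-admissible $F$, iterate the negated hypothesis along maximally $\mathcal{S}_n$-admissible blocks (rescaling by $(1+\delta)^{-1}$ at each stage) to force a geometric lower bound $(1+\delta)^m$ on vectors supported on $\mathcal{S}_{nm}$-admissible sets of the original sequence, and then let $j_0\to\infty$ to contradict growth condition (i) with $C=(1+\delta)^{1/n}$. The paper's version is written in a compressed form and does not spell out the rescaling bookkeeping, but the underlying argument is identical to yours.
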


\begin{proof}
The proof goes along the lines of the proof of Proposition \ref{very good ell1 vectors}. Start with a normalized sequence $(f_i)_i$, to which we apply Proposition \ref{subseq mT dual}, and assume that the conclusion fails in the linear span of this sequence. We can then find for every $j\in\N$ with $j\geq n$ an integer $d_j$ with $n_j - n\leq d_jn\leq n_j$ and an $F_j$ so that $(f_k)_{k\in F_j}$  is maximally $\mathcal{S}_{d_jn}$-admissible with
\[2m_j + \e\geq\left\|\sum_{i\in F_j}f_i\right\| \geq \left(1+\de\right)^{d_j+1} \geq \left(1+\de\right)^{n_j/n}.\]
This implies that $\limsup_j((1+\de)^{1/n})^{n_j}/m_j\leq 2$ which contradicts the first property of the sequences $(m_j)_j$, $(n_j)_j$ (see Section \ref{definition section}).
\end{proof}

\begin{cor}
\label{dual RIS exists}
Let $Y$ be a block subspace of $\X^*$ and let $C>1$. Then there exist a block sequence $(y_n^*)_n$ in $Y$ and a block sequence $(y_n)_n$ in $\X$ so that the following hold.
\begin{itemize}
\item[(i)] $1\leq\|y_n\|$ and $\|y_n^*\| \leq C$ for all $n\in\N$,
\item[(ii)] $\supp(y_n) = \supp(y_n^*)$ and $y_n^*(y_n) = 1$, and
\item[(iii)] $(y_n)_n$ is a $C$-RIS.
\end{itemize}
\end{cor}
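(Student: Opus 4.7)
The plan is to build $(y_n, y_n^*)_n$ simultaneously by induction on $n$. Fix $\delta \in (0,1)$ small enough that $1+\delta \leq C$ with a bit of slack. At stage $n$, supposing $(y_i, y_i^*)_{i<n}$ have already been chosen, I would set $s_n := \max\supp(y_{n-1}^*)$ (or $0$ if $n=1$) and first pin down the parameters: an integer $j_n > j_{n-1}$ with $\sqrt{m_{j_n}} > s_n$ (which secures clause (ii) of Definition \ref{definition ris}); then $k_n \in \N$ large enough that $n_{j_1}+\cdots+n_{j_l} < k_n$ whenever $m_{j_1}\cdots m_{j_l} < m_{j_n}$ (only finitely many tuples satisfy this); and finally $\epsilon_n > 0$ with $(1+\delta)(1+2\epsilon_n m_{j_n}) \leq C$.

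Next I would pass to a tail block subspace $Y_n \subset Y$ supported on coordinates beyond both $s_n$ and a threshold $N_n = N(k_n, \epsilon_n)$ chosen so that every maximal $\mathcal{S}_{k_n}$-set with minimum $\geq N_n$ admits a $(k_n,\epsilon_n)$-basic s.c.c.\ (a routine companion to Proposition \ref{basic scc exist in abundance}, given the canonical decomposition of maximal Schreier sets). Applying Proposition \ref{very good c0 vectors} to $Y_n$ with parameters $k_n$ and $\delta$ yields a maximally $\mathcal{S}_{k_n}$-admissible sequence $(f_k)_{k=1}^{d_n}$ in $Y_n$ with $\|f_k\|\geq 1$ and $\|\sum_k f_k\| \leq 1+\delta$. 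I would then select positive coefficients $(c_k)_{k=1}^{d_n}$ with $\sum c_k = 1$ making $\sum_k c_k e_{\min\supp(f_k)}$ an $(k_n,\epsilon_n)$-basic s.c.c., and for each $k$ pick $\tilde y_k \in \X$ with $\supp(\tilde y_k) = \supp(f_k)$, $\|\tilde y_k\|\leq 1$, and $f_k(\tilde y_k) = 1$ (a norming vector in the finite-dimensional coordinate subspace $\spn\{e_i : i\in\supp(f_k)\}$, followed by a negligible perturbation to make the support exactly $\supp(f_k)$). Finally, set
\[y_n := (1+\delta)\sum_{k=1}^{d_n} c_k \tilde y_k, \qquad y_n^* := \frac{1}{1+\delta}\sum_{k=1}^{d_n} f_k.\]

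The verification is essentially algebraic. Clearly $y_n^* \in Y$ with $\|y_n^*\| \leq 1 \leq C$ and $\|y_n\| \leq 1+\delta \leq C$. Because the supports $\supp(f_k) = \supp(\tilde y_k)$ are pairwise disjoint across $k$, one has $y_n^*(y_n) = \sum_k c_k f_k(\tilde y_k) = \sum_k c_k = 1$, which both gives clause (ii) and forces $\|y_n\| \geq 1/\|y_n^*\| \geq 1$; equality of supports of $y_n$ and $y_n^*$ is automatic from our choice of $\tilde y_k$. For the RIS clause (iii), any $f \in \W$ with $w(f) < m_{j_n}$ has $\vec w(f) = (j_1,\ldots,j_l)$ satisfying $n_{j_1}+\cdots+n_{j_l} < k_n$ by choice of $k_n$, so Proposition \ref{scc are ris} applied to the $(k_n,\epsilon_n)$-s.c.c.\ $\hat y_n := \sum_k c_k \tilde y_k$ of unit-ball vectors yields $|f(\hat y_n)| \leq (1+2\epsilon_n w(f))/w(f)$, and hence $|f(y_n)| \leq (1+\delta)(1+2\epsilon_n m_{j_n})/w(f) \leq C/w(f)$. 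The step I expect to require the most care is producing the $(k_n,\epsilon_n)$-basic s.c.c.\ on the specific maximal $\mathcal{S}_{k_n}$-set $\{\min\supp(f_k)\}_{k=1}^{d_n}$ returned by Proposition \ref{very good c0 vectors}; alternatively one may first fix an s.c.c.\ on a generic late segment of $\N$ and, using the spreading property of $\mathcal{S}_{k_n}$ together with Remark \ref{some remarks for the far future 2}, arrange that the minima of the $f_k$'s dominate those positions, at the cost of replacing $\epsilon_n$ by $2\epsilon_n$ in the final bounds.
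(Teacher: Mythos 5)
Your argument is correct and follows essentially the paper's proof: both apply Proposition \ref{very good c0 vectors} to obtain a maximally $\mathcal{S}_{k}$-admissible family $(f_k)$ with $\|\sum_k f_k\|$ close to $1$, pick norming vectors in the coordinate subspaces determined by the $f_k$, place a basic special convex combination on the maximal Schreier set of minima, and invoke Proposition \ref{scc are ris} for the RIS estimate --- and the ``routine companion'' to Proposition \ref{basic scc exist in abundance} that you anticipate needing (an s.c.c.\ on a \emph{given} maximal $\mathcal{S}_n$-set with $\varepsilon$ of order $1/\min$) is exactly what the paper pulls from \cite[Chapter 2, Proposition 2.3]{AT}. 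The remaining differences are organizational: you build each pair $(y_n,y_n^*)$ inductively with a $(1+\delta)$ rescaling, whereas the paper constructs the whole sequence, extracts a $\sqrt{C}$-RIS from it, and then rescales by $\sqrt{C}$ at the end.
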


\begin{proof}
Fix $C>1$ and apply Lemma \ref{very good c0 vectors} to find a block sequence $(y_n^*)_n$ so that that for all $n\in\N$ we have $\|w^*_n\| \leq (1+\sqrt C)/2$, $\min\supp(w_n^*) \geq (6n)/(\sqrt C - 1)$, and $y_n$ is of the form $w^*_n = \sum_{i \in F_n}f_i$ with $f_i$ in $Y$, $\|f_i\| \geq 1$, for all $i\in\N$, $(f_i)_{i\in\mathbb{N}}$ is maximally $\mathcal{S}_n$-admissible. Pick for each $n\in\N$ and $i\in\N$ a normalized vector $x_i$ with $\supp(x_i)\subset\supp(f_i)$ and $f_i(x_i) \geq 1$. For each $n\in\N$ we may perturb each vector $x_i$ to assume that $\supp(x_i) = \supp(f_i)$. By scaling we can ensure that all the aforementioned properties are retained, only perhaps increasing the upper bound of $\|w_n^*\|$ to $\|w_n^*\|\leq \sqrt C$.

Because, for each $n\in\N$, $(x_i)_{i\in F_n}$ is maximally $\mathcal{S}_n$-supported, by \cite[Proposition 2.3]{AT}, we can find coefficients $(c_i)_{i\in F_n}$ so that the vector $w_n = \sum_{i\in F_n}c_ix_i$ is a $(n,\e)$-s.c.c. with $\e \leq 3/\min\supp(w_n^*) \leq (\sqrt C - 1)/(2n)$. By Proposition 6.5 we have that for every $f\in\W$, with $w(f) = (j_1,\ldots,j_l)$ and $n_1+\cdots+n_l < n$ the estimate $|f(w_n)| \leq \sqrt{C}/w(f)$. It follows that $(w_n)_n$ has a subsequence $(w_{k_n})_n$ that is a $\sqrt{C}$-RIS.

Note that $w_{k_n}^*(w_{k_n}) = \sum_{i\in F_{k_n}}c_if_i(x_i) = 1$, hence $1\geq \|w_{k_n}\| \geq 1/\|w_{k_n}^*\| \geq 1/\sqrt{C}$. Thus, the sequence $(y_n)_n = (\sqrt{C}w_{k_n})_n$ is a  $C$-RIS with $\|y_n\| \geq 1$ for all $n\in\N$ and the sequence $(y_n^*)_n = (w_{k_n}^*/\sqrt{C})_n$ satisfies $\|y_n^*\| \leq C$ and $y_n^*(y_n) = 1$ for all $n\in\N$.
\end{proof}

\begin{thm}
\label{dual omega joint}
Let $Y$ be a block subspace of $\X^*$. Then $Y$ contains an array of normalized block sequences $(f_j^{(i)})_j$, $i\in\mathbb{N}$, that generates an asymptotic model equivalent to the unit vector basis of $\ell_1$.
\end{thm}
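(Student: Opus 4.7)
The plan is to realize the desired $\ell_1$-asymptotic-model behaviour in $Y\subset\X^*$ as the dual side of the $c_0$-joint-spreading-model construction carried out in the proof of Proposition \ref{omega joint spreading models}: one builds simultaneously primal witnesses $x_j^{(i)}\in \X$ and nearly biorthogonal functionals $f_j^{(i)}\in Y$, so that the $c_0$-type upper estimate on the $x_j^{(i)}$ forces the $f_j^{(i)}$ to satisfy a lower $\ell_1$-estimate. First I would apply Corollary \ref{dual RIS exists} with some $C>1$ close to $1$ to obtain a block sequence $(y_n^*)_n$ in $Y$ and a $(C,(\bar j_s)_s)$-RIS $(y_n)_n$ in $\X$ with $\|y_n^*\|\le C$, $\|y_n\|\ge 1$, $\supp(y_n^*)=\supp(y_n)$, and $y_n^*(y_n)=1$. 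After a further thinning, by Proposition \ref{subseq mT dual} applied to $(y_n^*/C)_n$, I may assume that for every $j_0\in\N$ and every $\mathcal{S}_{n_{j_0}}$-admissible subset $F$ of the index set we have $\|\sum_{n\in F}y_n^*\|\le C(2m_{j_0}+\varepsilon)$, and that the growth conditions of type \eqref{*how to find that one basis eqegg} hold. Choosing distinct $t_i\in\N$ and pairwise disjoint index sets $F^{(i)}_j$ with $\{\min\supp(y_s):s\in F^{(i)}_j\}$ maximally $\mathcal{S}_{n_{t_i}-1}$-admissible, and selecting coefficients $(c_s^{i,j})_{s\in F_j^{(i)}}$ via Proposition \ref{basic scc exist in abundance} so that $\tilde x_{i,j}=\sum_{s\in F_j^{(i)}}c_s^{i,j}y_s$ is an $(n_{t_i}-1,\bar\varepsilon_j/2)$-s.c.c., I set
\[
x_j^{(i)}=m_{t_i}\tilde x_{i,j}\in\X,\qquad f_j^{(i)}=\frac{1}{m_{t_i}}\sum_{s\in F_j^{(i)}}y_s^*\in Y.
\]

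Second, I control the norms $\|f_j^{(i)}\|$ from both sides. The identity $y_s^*(y_s)=1$, the disjointness of the $F_j^{(i)}$'s, and the support-matching give the exact biorthogonality $f_j^{(i)}(x_j^{(i)})=\sum_{s\in F_j^{(i)}}c_s^{i,j}=1$; combined with the estimate $\|x_j^{(i)}\|\le 1+\varepsilon$ provided by the proof of Proposition \ref{omega joint spreading models} this yields $\|f_j^{(i)}\|\ge(1+\varepsilon)^{-1}$. For the upper bound I observe that every maximal $\mathcal{S}_{n_{t_i}-1}$-set is $\mathcal{S}_{n_{t_i}}$-admissible, so the thinned version of Proposition \ref{subseq mT dual} gives $\|\sum_{s\in F_j^{(i)}}y_s^*\|\le C(2m_{t_i}+\varepsilon)$ and therefore $\|f_j^{(i)}\|\le 2C+\varepsilon$. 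Set $\tilde f_j^{(i)}=f_j^{(i)}/\|f_j^{(i)}\|$.

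Third, I prove that any spread of the array $(\tilde f_j^{(i)})_j$, $i\in\N$, behaves like $\ell_1$. Fix $l\in\N$, scalars $a_1,\ldots,a_l$, and a strictly increasing $j_1<\cdots<j_l$ with $j_1$ sufficiently large. Testing $\sum_{i=1}^l a_i f_{j_i}^{(i)}$ against the primal vector $v=\sum_{i=1}^l\mathrm{sign}(a_i)x_{j_i}^{(i)}$ gives, using the pairwise disjointness of the $F_{j_i}^{(i)}$'s and the biorthogonality,
\[
\Big(\sum_{i=1}^la_if_{j_i}^{(i)}\Big)(v)=\sum_{i=1}^l|a_i|,
\]
while Proposition \ref{omega joint spreading models} applied to the primal array already built yields $\|v\|\le 1+\varepsilon$. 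Hence $\|\sum_i a_if_{j_i}^{(i)}\|\ge(1+\varepsilon)^{-1}\sum_i|a_i|$, and the triangle inequality combined with the bound from the previous paragraph gives $\|\sum_i a_i f_{j_i}^{(i)}\|\le(2C+\varepsilon)\sum_i|a_i|$. After renormalization both bounds persist, up to absolute multiplicative constants.

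Finally, a routine diagonal extraction produces a subarray of $(\tilde f_j^{(i)})_j$, $i\in\N$, along which $\lim_{j_1\to\infty,\,j_1<\cdots<j_l}\|\sum_{i=1}^la_i\tilde f_{j_i}^{(i)}\|$ exists for every $l$ and every choice of scalars; the resulting seminorm is by the two estimates above uniformly equivalent to the $\ell_1$-norm on $c_{00}(\N)$, which is exactly the statement that the array generates an $\ell_1$-equivalent asymptotic model. The main technical point is the upper bound on $\|f_j^{(i)}\|$: the cardinality of $F_j^{(i)}$ is vastly larger than $m_{t_i}$, so the triangle inequality alone is useless, and one must invoke the genuinely $c_0$-type estimate of Proposition \ref{subseq mT dual} to keep the dual ``averaged weighted sums'' uniformly bounded.
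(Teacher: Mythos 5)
Your proposal is correct and follows essentially the same route as the paper: obtain the biorthogonal pair $(y_s)$, $(y_s^*)$ from Corollary \ref{dual RIS exists}, thin by Proposition \ref{subseq mT dual} to control $\mathcal{S}_{n_{j_0}}$-admissible sums of the $y_s^*$, reuse the primal array $x_j^{(i)}=m_{t_i}\tilde x_{i,j}$ from Proposition \ref{omega joint spreading models}, set $f_j^{(i)}$ to be the $1/m_{t_i}$-scaled sum of $y_s^*$ over $F_j^{(i)}$, and convert the $(1+\varepsilon)$-upper $c_0$-estimate on plegma combinations of the $x_j^{(i)}$ into a lower $\ell_1$-estimate on the $f_j^{(i)}$ via the biorthogonality $f_j^{(i)}(x_j^{(i)})=1$. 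Your version is slightly more explicit — spelling out the lower bound on $\|f_j^{(i)}\|$, the verification that the single-column plegma condition is met for a strictly increasing choice $j_1<\cdots<j_l$, and the final diagonal extraction to a subarray generating a genuine asymptotic model — but these are exactly the details the paper leaves implicit; the key steps and the role of Proposition \ref{subseq mT dual} in making the averaged dual sums uniformly bounded are identical.
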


\begin{proof}
The proof of this result follows the proof of Proposition \ref{omega joint spreading models}. Fixing $\e>0$, choose $C>1$ and a sequence $(j_i)_i$ as in the aforementioned proof. Apply Corollary \ref{dual RIS exists} to find a $C$-RIS $(y_s)_s$ and a sequence $(y_s^*)_s$ in $Y$ with properties (i), (ii), and (iii) in the statement of that result. Pass to common subsequences, by applying Proposition \ref{subseq mT dual}, so that for any $j_0\in\N$ and any $F\subset \N$ so that $(y_s)_{s\in F}$ is $\mathcal{S}_{n_{j_0}}$-admissible we have $\|\sum_{s\in F}y_s\| \leq 3m_{j_0}$. 

Following the proof of Proposition \ref{omega joint spreading models} define an array of block sequences $(x^{(i)}_j)_j$, $i\in\N$, that satisfies \eqref{omega equation}, so that each vector $x^{(i)}_j$ is of the form $x^{(i)}_j = m_{j_i}\sum_{s\in F_j^{(i)}}c_s^{i,j}y_s$, with $(y_s)_{s\in F_j^{(i)}}$ $\mathcal{S}_{n_{j_i}-1}$-admissible and $\sum_{s\in F_j^{(i)}}c_s^{i,j} = 1$. Also, the sets $(F_j^{(i)})_j$, $i\in\N$ are all pairwise disjoint. If we then define $f_j^{(i)} = \sum_{s\in F_j^{(i)}}y_i$, for $i,j\in\N$, we have that $\|f_j^{(i)}\| \leq 3$, and $f_j^{(i)}(x_j^{(i)}) = 1$, and $f_j^{(i)}(x_{j'}^{(i')}) = 0$ if $(i,j)\neq(i',j')$. For every $n\leq j_1<\cdots<j_n$ the sequence $(x_{j_i}^{(i)})$ has a $(1+\e)$-upper $c_0$-estimate which yields that $(f_{j_i}^{(i)})$ has a $1/(1+\e)$-lower $\ell_1$ estimate and therefore it is $3(1+\e)$-equivalent to the unit vector basis of $\ell_1$.
\end{proof}

\begin{rem}
A slightly a more careful version of the above proof yields that in every block subspace $Y$ of $\X^*$, for every $m\in\N$ one can find a array $(f_j^{(i)})_j$, $1\leq i\leq m$ that generates a joint spreading model  3-equivalent to the unit vector basis of $\ell_1^m(c_0)$. It is not clear what the asymptotic spaces of $Y$ are. Although $\tilde K(Y) = \{\infty\}$ all we know about the set $K(Y)$ is $\{1,\infty\} \subset K(Y)$.
\end{rem}

\section{The space $\tilde{X}_{\mathbf{iw}}$}
\label{the other space}
The purpose of this section is to simplify the definition of the space $\X$ to obtain a new space $\Xt$. This new space also has the property that every normalized block sequence in $\Xt$ has a subsequence generating a spreading model equivalent to the unit vector basis of $\ell_1$ without containing a subspace where all spreading models of normalized block sequences are uniformly equivalent to $\ell_1$.

\subsection{Definition of $\Xt$}
We simplify the definition of the norming set $\W$ of $\X$ by only considering functionals of the form $(1/m_j)\sum_{q=1}^d f_j$.

\begin{defn}
Let $\Wt$ be the smallest subset of $c_{00}(\N)$ that satisfies the following to conditions.
\begin{itemize}

\item[(i)] $\pm e_i^*$ is in $\Wt$ for all $i\in\N$ and

\item[(ii)] for every $j\in\N$ and every $\mathcal{S}_{n_j}$ very fast growing sequence of weighted functionals $(f_q)_{q=1}^d$ in $\Wt$ the functional $$f = \frac{1}{m_j}\sum_{q=1}^df_q$$ is in $\Wt$.

\end{itemize}
We define a norm on $c_{00}(\N)$ given by $\iii{x} = \sup\{f(x): x\in \Wt\}$ and we set $\Xt$ to be the completion of $(c_{00}(\N), \iii{\cdot})$.
\end{defn}

\begin{defn}
For each $j\in\N$ we define the norm $\|\cdot\|_{\ell_1,j}$ on $\ell_1(\N)$ given by
\begin{equation}
\left\|\sum_{k=1}^\infty a_ke_k\right\|_{\ell_1,j} = \max\left\{\max_k |a_k|,\frac{m_{j}}{m_{j+1}}\sum_{k=1}^\infty|a_k|\right\}.
\end{equation}
\end{defn}

Clearly, this norm is equivalent to the usual norm of $\ell_1$, however this equivalence is not uniform in $j\in\N$. This can be seen by taking, e.g., the vector $x_j = \sum_{k=1}^{m_{j+1}}e_k$ in which case $\|x_j\|_{\ell_1,j} = m_j$ whereas $\|x_j\|_{\ell_1} = m_{j+1}$. We will see that every block subspace of $\Xt$ for every $j\in\N$ contains a block sequence that generates a spreading model isometrically equivalent to the unit vector basis of $\ell_1(\N)$ endowed with $\|\cdot\|_{\ell_1,j}$.

\subsection{The auxiliary space for $\Xt$}
The auxiliary spaces are almost identical as those for the space $\X$, the difference being the lack of the factors $1/2^l$.
\begin{defn}
For $N\in\N$ let $\widetilde W_\mathrm{aux}^N$ be the smallest subset of $c_{00}(\N)$ that satisfies the following to conditions.
\begin{itemize}

\item[(i)] $\pm e_i^*$ is in $W_\mathrm{aux}$ for all $i\in\N$ and

\item[(ii)] for every $j\in\N$ and every $\mathcal{S}_{n_j}\ast\mathcal{A}_3$ admissible sequence of $N$-sufficiently large auxiliary weighted functionals $(f_q)_{q=1}^d$ in $\widetilde W_\mathrm{aux}$ the functional $$f = \frac{1}{m_j}\sum_{q=1}^df_q$$ is in $\widetilde W_\mathrm{aux}$.

\end{itemize}
We define a norm $\iii{\cdot}_{\mathrm{aux},N}$ on $c_{00}(\N)$ by defining for all $x\in c_{00}(\N)$ the quantity  $\iii{x}_{\mathrm{aux},N} = \sup\{f(x): f\in W_\mathrm{aux}^N\}$.
\end{defn}

\begin{comment}
\begin{lem}
\label{upper first auxiliary second space}
Let $N$, $n$, $j_0\in\N$, $(\e_k)_{k=1}^n$ be a sequence of positive real numbers and $(x_{k})_{k=1}^n$ be vectors in $c_{00}(\N)$ so that for each $1\leq k\leq n$ the vector $x_{k}$ is of the form
\begin{equation}
x_{k} = m_{j_0}\tilde x_{k},\text{ where } \tilde x_{k} = \sum_{r\in F_{k}}c_r^{k}e_r \text{ is a } (n_{j_0},\e_k) \text{ basic s.c.c.}
\end{equation}
Let also $G\in\mathcal{S}_{n_{j_0}}$ and $f = (1/m_{j_0})\sum_{s\in G}e_s^*$. Then, for any scalars $(a_{k})_{k=1}^n$ we have
\begin{equation}
\label{upper first auxiliary second space eq}
\left|f\left(\sum_{k=1}^na_{k}x_{k}\right)\right|\leq (1+\eta)\max_{1\leq k\leq n}|a_k|,
\end{equation}
for $\eta = (1/m_{j_0})\max\supp(x_{1})\sum_{k=2}^n\e_k$.
\end{lem}

\begin{proof}
Set $k_0 = \min\{k: \min(G)\leq \max\supp (x_k)\}$, if such a $k_0$ exists. As $G\in\mathcal{S}_{n_{j_0}}$ there are $d\leq\min(G)\leq\max\supp(x_{k_0})$ and $G_1<\cdots<G_d$ in $\mathcal{S}_{n_{j_0}-1}$ so that $G = \cup_{q=1}^dG_q$. If we set $f_q = (1/m_{j_0})\sum_{s\in G_q}e_s^*$ then for $1\leq q\leq d$ and $k_0 < k\leq n$ we have $|f_q(x_k)| \leq (1/m_{j_0})\e_k$ which yields
$$\left|f_q\left(\sum_{k>k_0}a_kx_k\right)\right| \leq  \frac{1}{m_{j_0}}\max_{1\leq k\leq n}|a_k|\left(\sum_{k>k_0}\e_k\right).$$
It is easy to see that $|f(x_{k_0})|\leq 1$. Combine this with $d\leq \max\supp(x_{k_0})$ and the above inequality to obtain the desired result.
\end{proof}
\end{comment}

\begin{lem}
\label{upper auxiliary second space}
Let $n$, $j_0, N\in\N$ with $N\geq 2m_{j_0}$,  $(\e_k)_{k=1}^n$ be a sequence of real numbers with $0<\e_k < 1/(6m_{j_0})$ for $1\leq k\leq n$ and $(x_{k})_{k=1}^n$ be vectors in $c_{00}(\N)$ so that for each $1\leq k\leq n$ the vector $x_k$ is of the form
\begin{equation}
x_{k} = m_{j_0}\tilde x_{k},\text{ where } \tilde x_{k} = \sum_{r\in F_{k}}c_r^{k}e_r \text{ is a } (n_{j_0},\e_k) \text{ basic s.c.c.}
\end{equation}
Then, for any scalars $(a_{k})_{k=1}^n$ and $f\in\widetilde W_\mathrm{aux}^N$, we have
\begin{equation}
\label{upper auxiliary second space eq}
\left|f\left(\sum_{k=1}^na_{k}x_{k}\right)\right|\leq (1+\de)\max\left\{\max_{1\leq k\leq n}|a_k|,\frac{m_{j_0}}{m_{j_0+1}}\sum_{k=1}^n|a_k|\right\},%\left\|\sum_{k=1}^na_ke_k\right\|_{\ell_1,j_0},
\end{equation}
for any $\de$ satisfying
\begin{equation}
\label{this long delta second space}
 \de \geq \max\left\{\frac{2m_{j_0+1}}{N},6\sum_{k=2}^n\max\supp(x_{k-1})\e_k,6m_{j_0}\sum_{k=2}^n\e_k\right\}.
\end{equation}
\end{lem}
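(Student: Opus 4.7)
The plan is to argue by induction on $m$ in the stratification $\widetilde W_{\mathrm{aux}}^N = \bigcup_{m \geq 0} W_m^N$ (in the spirit of Remark \ref{inductive construction norming}), mirroring the proof of Lemma \ref{upper auxiliary}. The one genuinely new feature will be that all vectors $x_k$ share the same parameter $j_0$, and this is precisely what necessitates the extra term $6 \sum_{k=2}^n \max\supp(x_{k-1}) \e_k$ in the lower bound on $\delta$. The base case $f = \pm e_n^*$ is straightforward: since $\{n\} \in \mathcal{S}_0 \subset \mathcal{S}_{n_{j_0}-1}$ one gets $|e_n^*(x_k)| \leq m_{j_0} c_n^k < m_{j_0} \e_k$, and summing over $k$ can be absorbed using $m_{j_0} \e_1 < 1/6$ together with $\delta \geq 6 m_{j_0} \sum_{k \geq 2} \e_k$.

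For the inductive step I would write $f = (1/m_j)\sum_{q=1}^d f_q$ with $(f_q)_q$ admissible and $N$-sufficiently large in $W_m^N$. If $j > j_0$, the $\ell_\infty$-bound $\|f\|_\infty \leq 1/m_{j_0+1}$ together with $\|x_k\|_1 = m_{j_0}$ gives $|f(x_k)| \leq m_{j_0}/m_{j_0+1}$, which immediately yields the second quantity in the maximum of \eqref{upper auxiliary second space eq}. When $j \leq j_0$ I would decompose $f = g_1 + g_2 + g_3$ exactly as in Lemma \ref{upper auxiliary}: $g_1 = f_1/m_j$, $g_2$ gathers the atomic contributions $(1/m_j)\sum_{q \in B} f_q$ from those $f_q = \pm e^*$ with $q \geq 2$, and $g_3$ gathers the remaining $f_q$'s, each of weight greater than $N$. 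The term $g_1$ is handled by the inductive hypothesis combined with the gain $1/m_j \leq 1/2$; the term $g_3$ is controlled by $\|g_3\|_\infty \leq 1/(N m_j)$ and $\delta \geq 2 m_{j_0+1}/N$.

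\textbf{The main obstacle} will be bounding $g_2$. Setting $N_B = \{\min\supp(f_q) : q \in B\} \in \mathcal{S}_{n_j} \ast \mathcal{A}_3$, the problem reduces to estimating $\sum_{r \in F_k \cap N_B} c_r^k$ for each $k$. In the subcase $j < j_0$ one has $n_j < n_{j_0}$, so Remark \ref{some remarks for the far future 1} applies directly and yields $\sum_{r \in F_k \cap N_B} c_r^k < 3\e_k$, after which the bookkeeping of Lemma \ref{upper auxiliary} carries over using $\delta \geq 6 m_{j_0} \sum_{k \geq 2} \e_k$. The hard part will be the subcase $j = j_0$, where Remark \ref{some remarks for the far future 1} is unavailable because $n_{j_0} \not< n_{j_0}$. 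To handle this, one uses that for $k \geq 2$ the set $F_k$ lies entirely past $\max\supp(x_{k-1})$; combining this positional information with the spreading structure of $N_B \in \mathcal{S}_{n_{j_0}} \ast \mathcal{A}_3$ should yield an estimate of the form $\sum_{r \in F_k \cap N_B} c_r^k \lesssim \max\supp(x_{k-1}) \e_k$, whose sum over $k \geq 2$ is exactly what the third inequality in \eqref{this long delta second space} is designed to absorb, closing the induction.
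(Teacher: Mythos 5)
Your proposal correctly identifies both the overall framework (induction on the stratification, same $g_1+g_2+g_3$ decomposition as in Lemma \ref{upper auxiliary}) and the genuinely new obstacle: the case $j=j_0$, where all the $x_k$'s share the critical weight index and Remark \ref{some remarks for the far future 1} is unavailable. You also correctly diagnose that this is exactly why the term $6\sum_{k\geq 2}\max\supp(x_{k-1})\,\e_k$ appears in \eqref{this long delta second space}; the mechanism you sketch (decomposing the atomic index set inside $\mathcal{S}_{n_{j_0}}\ast\mathcal{A}_3$ into $l$ pieces of $\mathcal{S}_{n_{j_0}-1}\ast\mathcal{A}_3$ with $l$ bounded by $\max\supp(x_{k-1})$ for $k$ past the first one $f$ touches) is the right one.

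However, there is a real gap: your sketch does not handle the \emph{first} vector $x_{k_0}$ that $f$ meets, and this is not a cosmetic omission. The positional estimate $\sum_{r\in F_k\cap N_B}c_r^k\lesssim \max\supp(x_{k-1})\,\e_k$ only applies for $k>k_0$. When $j=j_0$ and the atomic $f_q$'s ($q\geq 2$) overlap $\supp(x_{k_0})$, you have no control on $\sum_{r\in F_{k_0}\cap N_B}c_r^{k_0}$ beyond the trivial bound $\leq 1$, which gives $|g_2(a_{k_0}x_{k_0})|\leq |a_{k_0}|\leq L$. Adding that to your inductive bound $\tfrac{1}{m_j}|f_1(\cdot)|\leq\tfrac{1+\de}{2}L$ already produces $\tfrac{3}{2}L$, overshooting the target $(1+\de)L$ for small $\de$. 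In Lemma \ref{upper auxiliary} the analogous ``bad'' index $i_0$ with $t_{i_0}=b$ causes no trouble because the target there is $\max_i\sum_j|a_{i,j}|$, so one can afford a whole $\sum_j|a_{i_0,j}|$; here the target has the small factor $m_{j_0}/m_{j_0+1}$ in front of $\sum_k|a_k|$, so the same crude bound is fatal. The paper avoids this by strengthening the inductive hypothesis to include the claim $|f(x_k)|\leq 1$ for all $k$, and then splitting into two cases according to whether $\max\supp(f_1)>\max\supp(x_{k_0})$ or not: in the first case the atoms with $q\geq 2$ miss $x_{k_0}$ entirely and one applies the inductive bound to $f_1$; in the second case $f_1$ touches only $x_{k_0}$ and one absorbs \emph{all} of $f$'s action on $a_{k_0}x_{k_0}$ into a single $|a_{k_0}|\leq L$ via the auxiliary claim, never touching the inductive hypothesis for $f_1$ at all. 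Without this extra inductive claim and case split, the induction does not close.
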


\begin{proof}
We perform an induction on $m=0,1,\ldots$ to show that for all $f\in \widetilde W^N_m$ and for all $1\leq k\leq n$ we have $|f(x_k)|\leq 1$ as well as that \eqref{upper auxiliary second space eq} holds  for $f$. The step $m=0$ is trivial so let $m\in\N$, assume that the inductive assumption holds for all $f\in\widetilde W_m^N$ and let $f\in\widetilde W_{m+1}^N\setminus\widetilde W_{m}^N$. Let $f = (1/m_j)\sum_{q=1}^df_q$ where $(f_q)_{q=1}^d$ is $\mathcal{S}_{n_j}$ admissible and $N$ sufficiently large. If $j > j_0$ then an elementary calculation yields $|f(x_k)| \leq m_{j_0}/m_{j_0+1}$ for $1\leq k\leq n$ and hence \eqref{upper auxiliary second space eq} easily follows. Therefore, we may assume that $j\leq j_0$.

Set $M_k = \max\supp(x_k)$ for $1\leq k\leq n$, $k_0 = \min\{k: \min\supp(f)\leq M_k\}$, if such a $k_0$ exists, and set $q_0 = \min\{q:\max\supp(f_q)\geq\min\supp(x_{k_0})\}$. For simplicity let us assume $q_0 = 1$. Set $\tilde f = (1/m_j)\sum_{q=2}^df_q$, $G = \{2\leq q\leq d: f_q = \pm e_i^* \text{ for some } i\in\N\}$, $D = \{2,\ldots,d\}\setminus G$, and
\begin{equation*}
g_1 = \frac{1}{m_j}\sum_{q\in G}f_q,\quad g_2 = \frac{1}{m_j}\sum_{q\in D}f_q.
\end{equation*}
As the sequence $(f_q)_{q=1}^d$ is $N$-sufficiently large we obtain $w(f_q) \geq N$ for all $q\in D$ which easily implies
\begin{equation}
\label{estimate g2 in this proof}
\begin{split}
\left|g_2\left(\sum_{k=k_0+1}^na_kx_k\right)\right| &\leq \frac{m_{j_0}}{m_j N}\sum_{k=k_0+1}^n|a_k| \leq \left(\frac{m_{j_0+1}}{2 N}\right)\frac{m_{j_0}}{m_{j_0+1}}\sum_{k=k_0+1}^n|a_k|\\
&\leq \frac{\de}{4}\frac{m_{j_0}}{m_{j_0+1}}\sum_{k=k_0+1}^n|a_k|
\end{split}
\end{equation}

We now estimate the quantity $g_1(\sum_{k = k_0+1}^na_kx_k)$ and we distinguish cases depending on the relation of $m_j$ and $m_{j_0}$. We first treat the case $j = j_0$. As $\{\min\supp (f_q):1\leq q\leq d\}$ is in $\mathcal{S}_{n_{j_0}}\ast\mathcal{A}_3$ it follows that $l\leq M_{k_0}$ and there are $G_1<\cdots<G_l$ in $\mathcal{S}_{n_{j_0}-1}\ast\mathcal{A}_3$ so that $G = \cup_{p=1}^lG_p$. If we set $h_p = (1/m_{j_0})\sum_{s\in G_p}f_s$ then for $1\leq p\leq l$ and $k_0 < k\leq n$ we have $|h_p(x_k)| \leq (1/m_{j_0})3\e_k$ which yields
\begin{equation*}
\begin{split}
\left|g_1\left(\sum_{k>k_0}a_kx_k\right)\right| &\leq \frac{1}{m_{j_0}}\sum_{p=1}^l\left|h_p\left(\sum_{k>k_0}a_kx_k\right)\right|\leq  \frac{M_{k_0}}{m_{j_0}}\sum_{k>k_0}3\e_k\max_{k_0< k\leq n}|a_k|\\
&\leq \left(\frac{3}{2}\sum_{k=2}^nM_{k-1}\e_k\right)\max_{1\leq k\leq n}|a_k|.
\end{split}
\end{equation*}
In the second case $j<j_0$ and we use a simpler argument to show that
$$\left|g_1\left(\sum_{k=k_0+1}^na_kx_k\right)\right| \leq \frac{m_{j_0}}{m_j}\sum_{k=2}^n3\e_k\max_{k_0<k\leq n}|a_k| \leq \left(\frac{3m_{j_0}}{2}\sum_{k=2}^n\e_k\right)\max_{1\leq k\leq n}|a_k|.$$
We conclude that in either case we have
\begin{equation}
\label{this specific part when weight is at most that much}
\left|g_1\left(\sum_{k=k_0+1}^na_kx_k\right)\right| \leq \frac{\de}{4}\max_{1\leq k\leq n}|a_k|.
\end{equation}
\begin{comment}
In the remaining case we have $j>j_0$ which easily yields
\begin{equation}
\label{when the weights are so much bigger}
\left|g_1\left(\sum_{k=k_0+1}^na_kx_k\right)\right| \leq \frac{m_{j_0}}{m_{j_0+1}}\sum_{k=k_0+1}^n|a_k|.
\end{equation}
\end{comment}
Before showing that $f$ satisfies \eqref{upper auxiliary second space eq} we quickly show that $|f(x_k)| \leq 1$ for $1\leq k\leq n$ (there is a more classical proof that depends on the properties of the sequences $(m_j)_j$ and $(n_j)_j$ however the constraints make the proof faster). If $j = j_0$ this is easy. Otherwise $j<j_0$ and arguments very similar to those above yield
\begin{equation*}
\begin{split}
|f(x_k)| &\leq \frac{1}{m_j}|f_1(x_k)| + |g_1(x_k)| + |g_2(x_k)| \leq \frac{1}{m_j} + \frac{m_{j_0}}{m_j}3\e_k + \frac{m_{j_0}}{m_jN}\\
&\leq \frac{1}{2} + \frac{1}{4} + \frac{1}{4} = 1.
\end{split}
\end{equation*}

Set $$L = \max\left\{\max_{1\leq k\leq n}|a_k|,\frac{m_{j_0}}{m_{j_0+1}}\sum_{k=1}^n|a_k|\right\}.$$
We now distinguish cases concerning the support of $f_1$ in relation to the support of $x_{k_0}$. If $\max\supp(f_1) > \max\supp(x_{k_0})$ then
\begin{equation*}
\begin{split}
\left|f\left(\sum_{k=1}^na_kx_k\right)\right| & \leq \frac{1}{m_{j_0}}\left|f_1\left(\sum_{k=1}^na_kx_k\right)\right| + \left|\left(g_1 + g_2\right)\left(\sum_{k=k_0+1}^na_kx_k\right)\right|\\
&\leq \frac{1}{m_{j_0}}(1+\de)L + \frac{2\de}{4}L \leq \left[\frac{1}{2} + \left(\frac{1}{2} + \frac{2}{4}\right)\de\right]L\leq (1+\de)L.
\end{split}
\end{equation*}
If $\max\supp(f_1) \leq \max\supp(x_{k_0})$ then
\begin{equation*}
\begin{split}
\left|f\left(\sum_{k=1}^na_kx_k\right)\right| & \leq \left|f\left(a_{k_0}x_{k_0}\right)\right| + \left|\left(g_1 + g_2\right)\left(\sum_{k=k_0+1}^na_kx_k\right)\right|\\
&\leq L + \frac{2\de}{4}L \leq \left(1 + \frac{2\de}{4}\right)L\leq (1+\de)L.
\end{split}
\end{equation*}
The inductive step is complete and so is the proof.
\end{proof}

\subsection{The spreading models of $\Xt$}
We observe that all spreading models of normalized block sequences in $\Xt$ are equivalent to $\ell_1$ and we construct in every subspaces a block sequence that generates a spreading model equivalent to $\ell_1$ but with arbitrarily bad isomorphism constant.

\begin{prop}
\label{at least you have ell1 spreading model}
Let $(x_i)_i$ be a normalized block sequence in $\Xt$. Then there exist $L\in[\N]^\infty$ of $(x_i)_i$ and $K_0\in\N\cup\{0\}$ so that for every $j,k\in\N$ with $k\leq n_j - K_0$, every $F\subset L$ with $(x_i)_{i\in F}$ $\mathcal{S}_k$ admissible, and every scalars $(c_i)_i\in F$ we have
\begin{equation*}
\left\|\sum_{i\in F}c_ix_i\right\| \geq \frac{1}{m_j}\sum_{i\in F}|c_i|. 
\end{equation*}
In particular, every normalized block sequence in $\Xt$ has a subsequence that generates a spreading model equivalent to the unit vector basis of $\ell_1$. 
\end{prop}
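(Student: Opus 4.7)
The plan is to follow the outline of the proof of Proposition \ref{uniform ell1}, taking care of the fact that $\Wt$ is closed only under the basic $(1/m_j,\mathcal{S}_{n_j})$-operations and not under their compositions. By $1$-unconditionality of the basis I may assume $c_i\ge 0$. For each $i$ I fix $f_i\in\Wt$ with $\ran(f_i)\subset\ran(x_i)$ and $f_i(x_i)=1$, and split into two cases according as $\limsup_i w(f_i)$ is finite or infinite. The main obstacle lies in the finite case: since $\Wt$ does not admit product operations, applying a single fresh outer $(1/m_j,\mathcal{S}_{n_j})$-operation on top of an inner weight $m_{j_0}$ costs a full additive shift of $n_{j_0}$ in the Schreier index, which is precisely what forces the constant $K_0$ in the statement and explains why the lower bound cannot be made uniform.

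When $\limsup_i w(f_i)=\infty$, I pass to $L\in[\N]^\infty$ on which $(f_i)_{i\in L}$ is very fast growing (any $f_i$ of the form $\pm e_t^*$ is harmless) and set $K_0=0$. For any $j$, any $k\le n_j$ and any $F\subset L$ with $(x_i)_{i\in F}$ being $\mathcal{S}_k$-admissible, the inclusion $\mathcal{S}_k\subset\mathcal{S}_{n_j}$ gives $(1/m_j)\sum_{i\in F}f_i\in\Wt$, which immediately yields the desired lower bound.

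When $\limsup_i w(f_i)<\infty$, I pass to $L\in[\N]^\infty$ on which $w(f_i)=m_{j_0}$ for a single $j_0\in\N$, and set $K_0=n_{j_0}$. Writing $f_i=(1/m_{j_0})\sum_{q=1}^{d_i}f_q^i$ with $(f_q^i)_{q=1}^{d_i}$ very fast growing and $\mathcal{S}_{n_{j_0}}$-admissible, I note that $w(f_2^i)>\max\supp(f_1^i)\ge\min\supp(x_i)>\max\supp(x_{i-1})$. For $k\le n_j-K_0$ and $F\subset L$ with $(x_i)_{i\in F}$ being $\mathcal{S}_k$-admissible, this observation together with the hereditariness of $\mathcal{S}_{n_{j_0}}$ and the identity $\mathcal{S}_k\ast\mathcal{S}_{n_{j_0}}=\mathcal{S}_{k+n_{j_0}}\subset\mathcal{S}_{n_j}$ implies that the concatenation $((f_q^i)_{q=2}^{d_i})_{i\in F}$ is very fast growing and $\mathcal{S}_{n_j}$-admissible. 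Hence $f=(1/m_j)\sum_{i\in F}\sum_{q=2}^{d_i}f_q^i\in\Wt$. Since for each $i\in F$
$$
\sum_{q=2}^{d_i}f_q^i(x_i)=m_{j_0}f_i(x_i)-f_1^i(x_i)\ge m_{j_0}-1\ge 1
$$
(using $m_{j_0}\ge m_1=2$ and $|f_1^i(x_i)|\le\|x_i\|=1$), the inclusion $\supp(f_q^i)\subset\ran(x_i)$ gives $f(\sum_{i\in F}c_ix_i)\ge(1/m_j)\sum_{i\in F}c_i$, as required.

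For the ``in particular'' statement I pass, via Brunel--Sucheston, to a further subsequence of $L$ generating some spreading model $(e_i)_i$. Taking $k=1$ and any $j$ with $n_j\ge K_0+1$, the first part applies to all sufficiently late finite sums and passes to the limit, giving $\iii{\sum c_ie_i}\ge(1/m_j)\sum|c_i|$; combined with the triangle inequality $\iii{\sum c_ie_i}\le\sum|c_i|$, this exhibits $(e_i)_i$ as equivalent to the unit vector basis of $\ell_1$.
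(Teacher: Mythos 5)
Your proof is correct and follows essentially the same route as the paper's: you fix norming functionals $f_i$ with $\ran(f_i)\subset\ran(x_i)$, split on whether $\limsup_i w(f_i)$ is finite, stabilize the weight to $m_{j_0}$ in the finite case and set $K_0=n_{j_0}$, discard the first summand $f_1^i$ of each $f_i$ to produce a very fast growing concatenation, and wrap a single $(1/m_j,\mathcal{S}_{n_j})$-operation around it using $\mathcal{S}_k\ast\mathcal{S}_{n_{j_0}}\subset\mathcal{S}_{n_j}$. The only (immaterial) deviation is that you observe the concatenation $((f_q^i)_{q\geq 2})_{i\in L}$ is already very fast growing from the chain $w(f_2^i)>\max\supp(f_1^i)\geq\min\supp(x_i)>\max\supp(x_{i'})$ for the predecessor $i'$ of $i$ in $L$, whereas the paper passes to a further subsequence to guarantee this; your shortcut is valid.
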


\begin{proof}
Take a sequence of functionals $(f_i)_i$ in $\W$ with $\ran(f_i)\subset \ran(x_i)$ and $f_i(x_i) = 1$ for all $i\in\N$. , namely the one in which $\limsup_kw(f_k)$ is finite and the one in which it is infinite. 

We shall only treat the first case as the second one is simpler and it follows for $K_0 = 0$. By passing to an infinite subset of $\N$ and relabeling there is $j_0\in\N$ with $w(f_i) = m_{j_0}$ for all $i\in\N$. Define $K_0 = n_{j_0} $. Write each $f_i$ as
\[f_i = \frac{1}{m_{j_0}}\sum_{q=1}^{d_i}f_q^i\]
with $(f_q^i)_{q=1}^{d_i}$ being $\mathcal{S}_{K_0}$-admissible and very fast growing. Arguing as in \eqref{uniform ell1 eq 1} it follows that for all $i$ we have $\sum_{q=2}^{d_i}f_q^i(x_i) \geq (1/2)m_{j_0}\geq 1$ and passing to a subsequence and relabeling we have that $((f_q^i)_{q=2}^{d_i})_i$ is very fast growing.

We can conclude that for any $j,k\in\N$ and any $F\subset \N$ so that $(x_i)_{i\in F}$ is $\mathcal{S}_k$-admissible with $k\leq n_j - K_0$, the sequence $((f_q^i)_{q=2}^{d_i})_{q\in F}$ is $\mathcal{S}_{n_j}$ admissible because $\mathcal{S}_k\ast\mathcal{S}_{K_0} = \mathcal{S}_{k+K_0}$ and $k + K_0 \leq n_j$. Hence, $f_F = (1/m_j)\sum_{i\in F}\sum_{q=2}^{d_i}f_q^i$ is in $\Wt$. This means that for any scalars $(c_i)_{i\in F}$ we have
\begin{equation}
 \left\|\sum_{i\in F}c_ix_i\right\| = \left\|\sum_{i\in F}|c_i|x_i\right\| \geq f_F\left(\sum_{i\in F}|c_i|x_i\right) \geq \frac{1}{m_j}\sum_{i\in F}|c_i|.
\end{equation}
\end{proof}

\begin{prop}
\label{bad ell1 spreading models all over the place}
Let $Y$ be a block subspace of $\Xt$. Then for every $j_0\in\N$ there exists a sequence $(x_k)_k$ in $Y$ that generates a spreading model isometrically equivalent to the unit vector basis of $(\ell_1,\|\cdot\|_{\ell_1,j_0})$.
\end{prop}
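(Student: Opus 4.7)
My plan is to adapt the construction of Proposition~\ref{omega joint spreading models} to the simpler norming set $\Wt$, tailoring the geometry of the vectors so as to force exactly the norm $\|\cdot\|_{\ell_1,j_0}$ in the limit. First, with Proposition~\ref{at least you have ell1 spreading model} in place of Proposition~\ref{uniform ell1}, the analogues of Proposition~\ref{very good ell1 vectors}, Proposition~\ref{scc are ris}, and Corollary~\ref{building ris} go through for $\Xt$ verbatim. I would use them to select inside $Y$ a $C$-RIS $(y_s)_s$, with $C$ close to $1$ and $\|y_s\|\geq 1$, together with norming functionals $f_s\in\Wt$ satisfying $\supp(f_s)\subset\supp(y_s)$, $f_s(y_s)=1$, and (after passing to a subsequence) $w(f_s)\to\infty$ so that $(f_s)_s$ is very fast growing. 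Partition the indices into successive finite sets $F_1<F_2<\cdots$ with each $(f_s)_{s\in F_k}$ being $\mathcal{S}_{n_{j_0}}$-admissible, and by Proposition~\ref{basic scc exist in abundance} pick coefficients $(c_s^k)_{s\in F_k}$ making $\tilde x_k=\sum_{s\in F_k}c_s^k y_s$ an $(n_{j_0},\varepsilon_k)$ s.c.c., with $\varepsilon_k$ decaying fast enough for the estimate below. Set $x_k=m_{j_0}\tilde x_k$. The functional $\phi_k=(1/m_{j_0})\sum_{s\in F_k}f_s$ lies in $\Wt$ and $\phi_k(x_k)=\sum_{s\in F_k}c_s^kf_s(y_s)=1$, hence $\|x_k\|\geq 1$.

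For the upper bound I would first establish a basic inequality for $\Xt$ exactly along the lines of Proposition~\ref{basic inequality}, but using $\Wt$ in place of $\W$ and $\widetilde W_\mathrm{aux}^N$ (from Section~\ref{the other space}) in place of $W_\mathrm{aux}^N$; this should be a notationally lighter version of that argument, since $\Wt$ permits only single-level weights $m_j$. Having it, for $f\in\Wt$ and scalars $(a_k)_{k=1}^n$ write $\sum_k a_k x_k=\sum_s b_s y_s$ with $b_s=a_k m_{j_0}c_s^k$ whenever $s\in F_k$, and apply the basic inequality on the RIS $(y_s)_s$ to bound $|f(\sum_k a_k x_k)|$ by $C(1+o(1))|(h+g)(\sum_s b_s e_{t_s})|$ for some $h\in\{\pm e_i^*\}\cup\{0\}$ and $g\in\widetilde W_\mathrm{aux}^N$, where $t_s=\max\supp(y_s)$. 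Setting $z_k=m_{j_0}\sum_{s\in F_k}c_s^k e_{t_s}$, which by Remark~\ref{some remarks for the far future 2} equals $m_{j_0}$ times an $(n_{j_0},2\varepsilon_k)$ basic s.c.c., we get $\sum_s b_s e_{t_s}=\sum_k a_k z_k$, and Lemma~\ref{upper auxiliary second space} yields
\[\Big|f\Big(\sum_k a_k x_k\Big)\Big|\leq (1+\delta)\max\Big\{\max_k|a_k|,\ \tfrac{m_{j_0}}{m_{j_0+1}}\sum_k|a_k|\Big\}\]
with $\delta$ arbitrarily small, provided the RIS is chosen sparse, $N$ large, and the $\varepsilon_k$'s small. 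Applied with $n=1$ this also forces $\|x_k\|\leq 1+\delta$.

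For the lower bound, $1$-unconditionality lets us assume $a_k\geq 0$. The estimate $\|\sum_k a_k x_k\|\geq\max_k a_k$ follows at once from $\|x_k\|\geq 1$ and suppression unconditionality. For the second estimate, consider the meta-functional $\Phi=(1/m_{j_0+1})\sum_{k=1}^n\sum_{s\in F_k}f_s$. Since each $(f_s)_{s\in F_k}$ is $\mathcal{S}_{n_{j_0}}$-admissible and, along $n$-term spreads of the sequence with sufficiently large minimum, $\{\min\supp(x_k):k=1,\ldots,n\}\in\mathcal{S}_{n_{j_0+1}-n_{j_0}}$ (using $n_{j_0+1}-n_{j_0}\geq 1$ and the spreading property of the Schreier families), the convolution identity $\mathcal{S}_a*\mathcal{S}_b=\mathcal{S}_{a+b}$ shows the concatenated family $((f_s)_{s\in F_k})_{k=1}^n$ is $\mathcal{S}_{n_{j_0+1}}$-admissible; together with the very-fast-growth of $(f_s)_s$ this places $\Phi\in\Wt$. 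A direct computation then gives $\Phi(\sum_k a_k x_k)=(m_{j_0}/m_{j_0+1})\sum_k a_k$. Letting $\delta\to 0$ and normalizing produces a subsequence generating a spreading model isometrically equivalent to the unit vector basis of $(\ell_1,\|\cdot\|_{\ell_1,j_0})$.

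The main obstacle is the basic inequality for $\Xt$: it is not stated in the paper, but its proof should be a direct simplification of Proposition~\ref{basic inequality}, because the operations defining $\Wt$ form a subclass of those defining $\W$ and the combinatorics (admissibility and very-fast-growth) of the inductive step survive with obvious modifications. Once this is in place, the parameter-stabilization at the end (jointly letting the sparsity of the RIS, $N$, and $\varepsilon_k$ drive $\delta$ to $0$) is the standard device already used in Proposition~\ref{omega joint spreading models}, so the remainder of the argument is essentially routine.
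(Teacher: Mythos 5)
Your proposal follows the same overall strategy as the paper's proof: build each $x_k$ as $m_{j_0}$ times an $(n_{j_0},\varepsilon_k)$ s.c.c.\ of a RIS $(y_s)_s$, get the lower $\|\cdot\|_{\ell_1,j_0}$-estimate from the functionals $\phi_k=(1/m_{j_0})\sum_{s\in F_k}f_s$ and the meta-functional $\Phi=(1/m_{j_0+1})\sum_k\sum_{s\in F_k}f_s\in\Wt$, and close the upper estimate with a basic inequality for $\Xt$ combined with Lemma~\ref{upper auxiliary second space}. The admissibility computation placing $\Phi$ in $\Wt$ and the passage to the auxiliary norm via Remark~\ref{some remarks for the far future 2} are both sound, and the paper itself only asserts (without proof) the analogue of Proposition~\ref{basic inequality} for $\Wt$ and $\widetilde W_\mathrm{aux}^N$, so deferring that step is consistent with the paper's level of detail.

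The one genuine gap is in achieving the \emph{isometric} conclusion. You fix a single $C$-RIS $(y_s)_s$ for some $C>1$ ``close to $1$.'' The basic inequality then bounds $|f(\sum_k a_kx_k)|$ by a factor $C\,(1+o(1))$ times the auxiliary norm: the $o(1)$ improves along far spreads, but the factor $C$ does not. Consequently the spreading model generated by $(x_k)_k$ is only $C$-equivalent (up to lower-order corrections) to the unit vector basis of $(\ell_1,\|\cdot\|_{\ell_1,j_0})$, not isometrically equivalent. Your closing remark, ``letting $\delta\to 0$ and normalizing produces a subsequence,'' does not repair this: once $(x_k)_k$ is chosen its spreading model is determined, and building a different sequence for each smaller $\delta$ does not yield a single sequence whose spreading model is exactly the target. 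The paper circumvents this by choosing one sequence $(y_i)_i$ so that for a sequence $C_k\downarrow 1$ each tail $(y_i)_{i\geq k}$ is a $(C_k,(j_i)_{i\geq k})$-RIS (a small strengthening of Corollary~\ref{building ris}, letting the parameter $\delta$ decrease along the inductive choice). Spreads $k_1<\cdots<k_n$ with $k_1\to\infty$ then live in tails with constant tending to $1$, and together with the fast decay of the $\varepsilon_k$'s the upper estimate converges to exactly $\max\{\max_l|a_l|,\ (m_{j_0}/m_{j_0+1})\sum_l|a_l|\}$. Replacing your fixed-$C$ RIS by such an improving RIS closes the gap; the rest of your argument then goes through.
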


Before proving the above statement we point out that RIS sequences in $\Xt$ are defined identically as in Definition \ref{definition ris} and Proposition \ref{basic inequality} is also true by taking the set $\widetilde W_\mathrm{aux}^N$. Furthermore all results of subsection \ref{section ris existence} are true for the space $\Xt$ and the proofs are very similar. In particular Corollary \ref{building ris} is true in $\Xt$ and this is proved by using Proposition \ref{at least you have ell1 spreading model}.

\begin{proof}[Proof of Proposition \ref{bad ell1 spreading models all over the place}]
For a sequence of positive numbers $(C_k)_k$ decreasing strictly to one apply Corollary \ref{building ris} to find a sequence $(y_i)_i$ in $Y$ so that for all $k\in\N$ the sequence $(y_i)_{i\geq k}$ is $(C_k,(j_i)_{i\geq k})$-RIS with $\|y_i\|\geq 1$ for all $i\in\N$ (this is possible via a minor modification of the proof of Corollary \ref{building ris} in which $\de$ is replaced by $\de_i$). Inductively build a sequence $(x_k)_k$ so that for all $k\in\N$ the vector $x_k$ is of the form $x_k = m_{j_0}\tilde x_k$ where $\tilde x_k = \sum_{i\in F_k}c_i^ky_i$ a $(n_{j_0},\e_k/2)$ s.c.c. with $\e_{k+1} < (2^k\max\supp(x_k))^{-1}$ for all $k\in\N$. As in the proof of Proposition \ref{omega joint spreading models} we can find for all $k\in\N$ a sequence of very fast growing and $\mathcal{S}_{n_{j_0}}$ admissible functionals $(f_i)_{i\in F_k}$  in $\Wt$ with $\supp(f_i)\subset y_i$ for all $i\in F_k$ so that if $f_k = (1/m_{j_0})\sum_{i\in F_k}f_i\in \Wt$ then $f_k(x_k) = 1$ and so that the sequence $((f_i)_{i\in F_k})_k$ enumerated in the obvious way is very fast growing. We deduce that for all natural numbers $n \leq k_1<\cdots<k_n$ the functionals $((f_i)_{i\in F_{k_l}})_{l=1}^n$ are $\mathcal{S}_{n_{j_0}+1}$ admissible. This means that they are also $\mathcal{S}_{n_{j_0+1}}$ admissible i.e. $f = (1/m_{j_0+1})\sum_{l=1}^n\sum_{i\in F_{k_l}}f_i = (m_{j_0}/m_{j_0+1})\sum_{l=1}^nf_{k_l}$ is in $\Wt$. We conclude that for any scalars $(a_l)_{l=1}^n$ we have
\begin{equation*}
\iii{\sum_{l=1}^na_lx_{k_l}} = \iii{\sum_{l=1}^n|a_l|x_{k_l}} \geq f\left(\sum_{l=1}^n|a_l|x_{k_l}\right) \geq \frac{m_{j_0}}{m_{j_0+1}}\sum_{l=1}^n|a_l|
\end{equation*}
and also 
\begin{equation*}
\iii{\sum_{l=1}^na_lx_{k_l}} = \iii{\sum_{l=1}^n|a_l|x_{k_l}} \geq \max_{1\leq l\leq n}f_{k_l}\left(\sum_{l=1}^n|a_l|x_{k_l}\right) = \max_{1\leq l\leq n}|a_l|.
\end{equation*}

For the upper inequality, Proposition \ref{basic inequality} and Lemma \ref{upper auxiliary second space} imply that there is a null sequence of positive numbers $\de_n$ so that for all natural numbers $n\leq k_1 <\cdots < k_n$ and scalars $(a_l)_{l=1}^n$ we have
\begin{equation*}
\iii{\sum_{l=1}^na_lx_{k_l}}\leq \left(1+\de_n\right)\max\left\{\max_{1\leq l\leq n}|a_l|,\frac{m_{j_0}}{m_{j_0+1}}\sum_{l=1}^n|a_l|\right\}.
\end{equation*}
\end{proof}

\begin{rem}
It can be shown that the space $\Xt$ satisfies the conclusions of Theorem \ref{c0 asmodel} and Corollary \ref{hereditary fbr}. Note also that unlike $\tilde K(\X)$, the set $\widetilde K(\Xt)$ contains $\{1,\infty\}$. It is unclear whether $\widetilde K(\Xt)$ contains any $p$'s in $(1,\infty)$.
\end{rem}

As it was shown in Section \ref{dual section} the space $\X^*$ admits only the unit vector basis of $c_0$ as a spreading model. This is false for the space $\Xt^*$.

\begin{prop}
The space $\Xt^*$ admits spreading models that are not equivalent to the unit vector basis of $c_0$.\end{prop}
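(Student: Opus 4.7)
The plan is to argue by contradiction, exploiting duality between the spreading models of $\Xt$ and of $\Xt^*$. The space $\Xt$ is reflexive and has a $1$-unconditional basis (this is asserted in the remark preceding the proposition, and is established as for $\X$), so $(e_i^*)$ is a $1$-unconditional basis of $\Xt^*$ and every normalized block sequence in $\Xt^*$ is weakly null. Suppose, toward a contradiction, that every spreading model generated by a normalized weakly null sequence in $\Xt^*$ is equivalent to the unit vector basis of $c_0$. Since $\Xt^*$ contains no copy of $\ell_1$ (by reflexivity of $\Xt$), \cite[Proposition 3.2]{AOST} applies and yields a uniform constant $C\geq 1$ such that every such spreading model is $C$-equivalent to the unit vector basis of $c_0$.

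Next I transfer this uniformity to $\Xt$ via a biorthogonal construction. Let $(x_k)_k$ be an arbitrary normalized block sequence in $\Xt$; by Proposition \ref{at least you have ell1 spreading model} we may assume it generates a spreading model equivalent to the unit vector basis of $\ell_1$. For each $k$ pick a norming functional $f_k$ in $\Wt$ with $\supp(f_k)\subseteq\ran(x_k)$ and $f_k(x_k)=\|x_k\|=1$; then $(f_k)_k$ is a normalized block sequence in $\Xt^*$, hence weakly null. After passing to a subsequence the spreading model of $(f_k)_k$ is $C$-equivalent to the unit vector basis of $c_0$, and by 1-unconditionality this gives, for all $n\leq k_1<\cdots<k_n$ sufficiently spread and all signs $(\varepsilon_l)_{l=1}^n$,
\begin{equation*}
\left\|\sum_{l=1}^n\varepsilon_l f_{k_l}\right\|\leq 2C.
\end{equation*}
The duality pairing now forces a lower $\ell_1$-bound on $(x_k)_k$: for any scalars $(a_l)_{l=1}^n$, putting $\varepsilon_l=\mathrm{sign}(a_l)$,
\begin{equation*}
\left\|\sum_{l=1}^n a_l x_{k_l}\right\|\cdot\left\|\sum_{l=1}^n\varepsilon_l f_{k_l}\right\|\geq\sum_{l=1}^n|a_l|f_{k_l}(x_{k_l})=\sum_{l=1}^n|a_l|,
\end{equation*}
so that $\iii{\sum_l a_l x_{k_l}}\geq (2C)^{-1}\sum_l|a_l|$. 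Thus the spreading model of every normalized weakly null sequence in $\Xt$ is $2C$-equivalent to the unit vector basis of $\ell_1$.

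This contradicts Proposition \ref{bad ell1 spreading models all over the place}. Indeed, choose $j_0\in\N$ large enough that $m_{j_0+1}/m_{j_0}>2C$ (using property (ii) of the sequences $(m_j)_j,(n_j)_j$). By that proposition there exists a normalized block sequence $(x_k)_k$ in $\Xt$ generating a spreading model isometrically equivalent to $(\ell_1,\|\cdot\|_{\ell_1,j_0})$. Applied to the vector $\sum_{l=1}^{m_{j_0+1}/m_{j_0}} e_l$ this spreading model gives norm $1$, whereas the genuine $\ell_1$ norm is $m_{j_0+1}/m_{j_0}>2C$, so the equivalence constant to $\ell_1$ is strictly larger than $2C$, yielding the desired contradiction.

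The main obstacle in fleshing this out rigorously is the application of \cite[Proposition 3.2]{AOST} and the uniform passage from spreading models in $\Xt^*$ to those in $\Xt$; in particular one must verify that $\Xt^*$ meets the hypotheses (no copy of $\ell_1$, which follows from reflexivity of $\Xt$), that one may reduce to block sequences in $\Xt$ by the standard extraction valid in spaces with an unconditional basis, and that the norming functionals $(f_k)_k$ chosen with support inside $\ran(x_k)$ can legitimately be taken from $\Wt$ (which follows from the fact that each finitely supported element of the unit ball of $\Xt^*$ lies in $\mathrm{co}(\Wt)$, together with a small perturbation). Once these points are in place, the duality argument above proceeds as sketched.
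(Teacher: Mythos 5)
Your proof is correct and follows essentially the same route as the paper's: invoke \cite[Proposition 3.2]{AOST} to upgrade the hypothetical $c_0$-uniqueness in $\Xt^*$ to a uniform constant, pass by duality to a uniform $\ell_1$-lower bound on spreading models of normalized block sequences in $\Xt$, and contradict Proposition \ref{bad ell1 spreading models all over the place} by choosing $j_0$ with $m_{j_0+1}/m_{j_0}$ large. The paper leaves the duality step as a one-line assertion; you spell it out via norming functionals in $\Wt$, which is exactly the intended argument.
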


\begin{proof}
\cite[Proposition 3.2]{AOST} yields that if a space has the property that every spreading model generated by a normalized weakly sequence in that space is equivalent to the unit vector basis of $c_0$, then there must exist a uniform constant $C$ so that this equivalence is always with constant $C$. We point out that this conclusion only works for the spacial case $p=\infty$ and not for other $p$'s, because the unit vector basis of $c_0$ is the minimum norm with respect to domination. By duality we would obtain that every spreading model generated by a normalized block sequence in $\Xt$ is $C$-equivalent to the unit vector basis of $\ell_1$. This would contradict the statement of Proposition \ref{bad ell1 spreading models all over the place}.
\end{proof}

\end{document}